

\documentclass[11pt,leqno]{amsart}


\usepackage{amsmath, amssymb, amscd, mathrsfs}

\newtheorem{theorem}{Theorem}[section]
\newtheorem{proposition}[theorem]{Proposition}
\newtheorem{lemma}[theorem]{Lemma}
\newtheorem{sublemma}[theorem]{Sublemma}
\newtheorem{corollary}[theorem]{Corollary}

\theoremstyle{remark}
\newtheorem{remark}[theorem]{Remark}

\theoremstyle{definition}
\newtheorem{definition}[theorem]{Definition}



\newcommand{\integer}{{\mathbb{Z}}}
\newcommand{\real}{{\mathbb{R}}}

\newcommand{\zero}{0}


\newcommand{\supp}{{\mathrm{supp}\,}}
\newcommand{\wt}{w^{(\lambda)}}
\newcommand{\Fourier}{\mathbb{F}} 

\newcommand{\cB}{\mathscr{B}}  
\newcommand{\bB}{\mathbf{B}}
\newcommand{\B}{B}
\newcommand{\cL}{\mathcal{L}}  
\newcommand{\bL}{\mathbf{L}}
\newcommand{\cH}{\mathcal{H}}
\newcommand{\cC}{\mathscr{C}}
\newcommand{\cM}{\mathcal{M}}  
\newcommand{\cN}{\mathcal{N}}
\newcommand{\cR}{\mathcal{R}}
\newcommand{\cone}{\mathbf{C}}
\newcommand{\bu}{\mathbf{u}}
\newcommand{\bk}{\mathbf{k}}
\newcommand{\dt}{d}
\newcommand{\disk}{\mathbb{D}}
\newcommand{\hG}{{\hat{G}}}
\newcommand{\tB}{\widetilde{\mathbf{B}}}
\newcommand{\cP}{\mathcal{P}}
\newcommand{\cQ}{\mathcal{Q}}

\newcommand{\unv}{{w}}
\newcommand{\Ze}{Z}
\newcommand{\talpha}{\widetilde{\alpha}}

\setcounter{tocdepth}{1}

\begin{document}

\title[Quasi-compactness of transfer operators]
{Quasi-compactness of transfer operators\\ for contact Anosov flows
}
\author[M. Tsujii]{Masato TSUJII}
\address{Department of Mathematics\\
Kyushu University\\
Hakozaki\\
  Fukuoka\\
  812-8581\\JAPAN}
\email{tsujii@math.kyushu-u.ac.jp}
\date{\today}
\thanks{This work is partly supported by KAKENHI (B) 18340044}
\keywords{Anosov flow, Transfer operator, Decay of correlations}
\subjclass[2000]{37D20,37A25}

\begin{abstract}
For any $C^r$ contact Anosov flow with $r\ge 3$, we construct a scale of Hilbert spaces, which are embedded in the space of distributions on the phase space and contain all the $C^r$ functions, such that the one-parameter family of the transfer operators for the flow extend to them boundedly and that the extensions are quasi-compact. Further we give explicit bounds on the essential spectral radii of those extensions in terms of the differentiability~$r$ and the hyperbolicity exponents of the flow. 
\end{abstract}

\maketitle


\section{Introduction}
\subsection{Main result}
Geodesic flows on closed Riemannian manifolds with negative sectional curvature are a typical class of flows that exhibit chaotic behavior of orbits and  have been studied extensively since the works of Hopf\cite{Hopf39} and Anosov\cite{Anosov69} for this reason. 
Ergodicity and mixing, which characterize chaotic dynamical systems qualitatively, are  established for those flows already in early stage of study\cite{Hopf39, Anosov69}. However, quantitative estimates on the rate of mixing were obtained only recently in late 90's, while there had been 
some precise results  in the case of constant curvature by means of representation theory\cite{ColletET84, Moore84, Pollicott92,Ratner87}.
This is quite in contrast to the case of  Anosov diffeomorphisms for which  exponential decay of correlations had been established  already  in 70's\cite{Bowen75}. The difficulty in the case of geodesic flows (or hyperbolic flows, more generally) is in brief that there is no exponential expansion nor contraction in the flow direction. The mechanism behind mixing in hyperbolic flows is  different from and in fact subtler than that in hyperbolic discrete  dynamical systems.

In 1998, Chernov\cite{Chernov98} made a breakthrough by showing that 
the rate of mixing is stretched exponential at slowest for 3-dimensional Anosov flows satisfying the uniform non-integrability condition and, in particular, for all geodesic flows on closed surfaces with negative variable curvature. 
\hbox{Chernov} also conjectured in \cite{Chernov98} that the rate should be exponential. 
Shortly, this conjecture is proved affirmatively by Dolgopyat\cite{Dolgopyat98}. Dolgopyat analyzed the perturbed transfer operators closely and gave a necessary estimate on the Laplace transforms of the correlations. Dolgopyat's method has been extended and applied to many situations to get exponential or rapid decay of correlations. (\cite{Anantharaman00, BaladiVallee05,  Dolgopyat98b, Dolgopyat02, FMNT05, MelbourneTorok02,  Pollicott99, PollicottSharp01,    Stoyanov01, Stoyanov07})

More recently, Liverani\cite{Liverani04} established exponential decay of correlations for $C^4$ contact Anosov flows and, in particular,  for $C^4$ geodesic flows on closed Riemannian manifolds with negative  curvature in arbitrary dimension. He combined Dolgopyat's method with his method of using Banach spaces of distributions developped in his previous paper\cite{Blank02} coauthored with M.~Blank and G.~Keller. A remarkable feature of the argument in \cite{Liverani04}  is that it is free from Markov partitions,  which was a convenient artifact used in many works including \cite{Chernov98} and \cite{Dolgopyat98} and was an obstacle in making use of the smoothness of the flow. 

In this paper, we proceed  the argument further along the line of study described above, providing  a clearer picture in terms of spectral properties of the associated transfer operators: For any $C^r$ contact Anosov flow with \hbox{$r\ge 3$}, we construct a scale of Hilbert spaces,
which are embedded in the space of distributions on the phase space and contain all  $C^r$ functions,
so that the one-parameter family of 
the transfer operators for the flow extend naturally to bounded operators on them and  that the extensions are {\em quasi-compact}. Moreover we give  an explicit upper bound on the essential spectral radii of the extensions in terms of differentiability $r$ and the hyperbolicity exponents of the flow. 
This implies  not only exponential decay of correlations but also a precise asymptotic estimate on the decay rate. (See 
Corollary~\ref{cor:corr}.) Our argument is also free from Markov partitions.

To state the main  result more precisely, we introduce some definitions. 
Let $d\ge 1$ and $r\ge 3$ be integers. 
Let $M$ be an orientable  $(2d+1)$-dimensional closed $C^r$  manifold and
$\alpha$ a $C^r$ contact form on $M$. By definition, $\alpha$ is a $1$-form such that $\omega:=\alpha\wedge  (d\alpha)^{d}$ is a volume form on $M$. 
Let  $F^t:M\to M$ be a $C^r$ Anosov flow preserving the contact  form $\alpha$. Such a flow is called a $C^r$ contact Anosov flow. 
 Geodesic flows on closed Riemannian manifolds with negative sectional  curvature are types of contact Anosov flows, 
when we regard them as flows on the unit cotangent bundles  equipped with the canonical contact forms. 
  
Let $v$ be the vector field that generates the flow $F^t$. By the definition of Anosov flow, there exists an invariant splitting of the tangent bundle, $
TM=E^c\oplus E^s \oplus E^u$, 
such that $E^c$ is the one-dimensional subbundle spanned by the vector field $v$ and that there exist $\lambda_0>0$ and $C>0$ such that\footnote{For convenience in the later argument, we consider the exponential function with base $2$ (instead of $e$), though this is of course not essential.}
\[
\|DF_z^t|_{E^s}\|\le C\cdot  2^{-\lambda_0 t}\quad\text{and}\quad
\|DF_z^{-t}|_{E^u}\|\le C\cdot 2^{-\lambda_0 t}\quad \text{$\forall t\ge 0$, $\forall z\in M$. }
\]
Since the flow $F^t$ preserves the contact form $\alpha$, the subspaces $E^s$ and $E^u$ should be contained in the null space of $\alpha$.
This implies that the subspace  $E^s\oplus E^u$ coincides with the null space of $\alpha$ and hence that $\alpha(v)\neq 0$ at any point. 
In what follows, we suppose $\alpha(v)\equiv 1$ by replacing  $\alpha$ by $\alpha/\alpha(v)$. 
Since the 2-form $d\alpha$ is preserved by the flow $F^t$ and gives a symplectic form on the null space of $\alpha$, we see that 
$\dim E^s=\dim E^u=d$ and also that $E^0$ coincide with the null space of $d\alpha$ at each point. 
Notice that the vector field $v$ is characterized by the conditions $\alpha(v)=1$ and $v\in \mathrm{null}\, d\alpha$. Such a vector field is called the Reeb vector field of $\alpha$.

Let $\Lambda_0>0$ be another constant for the flow $F^t$  such that, for some $C>1$,
\[
|\det (DF_z^{-t}|_{E^u})|\le C\cdot  2^{-\Lambda_0 t}\qquad \forall t\ge 0, \forall z\in M.
\]
Obviously we may take $\Lambda_0$ so that $\Lambda_0\ge d\lambda_0$. 
For the flow $F^t$, we associate the one-parameter family of transfer operators $
\cL^t: C^r(M)\to C^r(M)$ defined by 
$\cL^t(u)(z)= u\circ F^t(z)$.
For a real number $s$ with $|s|\le r$, let $W^s(M)$ be  the Sobolev space\footnote{See Remark \ref{rm:Sobolev} for the definition. For $s\ge 0$, $W^s(M)$ contains $C^s(M)$, and  $W^{-s}(M)$  is contained in the space of distributions of order $s$.} of order $s$ on $M$. 
Our main result is the following spectral property of $\cL^t$.
\begin{theorem}\label{mainth}
For each $0<\beta<(r-1)/2$, there exists a Hilbert space $\B^\beta$, which is contained in $W^s(M)$ for $s< -\beta$ and contains $W^s(M)$ for $s> \beta$, such that the transfer operator $\cL^t$ for sufficiently large $t$ extends to a bounded operator on $\B^\beta$ and the essential spectral radius of the extension $\cL^t:\B^\beta\to \B^\beta$ is bounded by $
\max\{ 2^{-\Lambda_0 t/2}, 2^{-\beta \lambda_0 t}\}<1$. 
\end{theorem}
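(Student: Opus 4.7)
The plan is to construct $\B^\beta$ as an anisotropic Hilbert space that behaves like a Sobolev space of order $+\beta$ along the unstable foliation and of order $-\beta$ along the stable foliation, with neutral regularity in the flow direction. Such a space is natural because the pull-back $\cL^t u = u \circ F^t$ smooths $u$ in the unstable direction (by the expansion $\|DF^{-t}|_{E^u}\| \lesssim 2^{-\lambda_0 t}$) and roughens it in the stable direction, so an unstable-positive/stable-negative weighting turns $\cL^t$ into a quasi-contraction modulo a compact remainder.

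To implement this, I would fix a finite cover of $M$ by charts in which the splitting $TM = E^c \oplus E^s \oplus E^u$ is approximately constant, together with a subordinate partition of unity, and introduce on each chart a wave-packet (or Littlewood--Paley) decomposition indexed by dyadic frequencies $2^n$ and phase-space cells adapted to the splitting. The norm $\|u\|_{\B^\beta}^2$ is then a weighted $\ell^2$-sum of $L^2$-norms of wave-packet coefficients, with weight $2^{2\beta n}$ in the stable-cotangent cone and $2^{-2\beta n}$ in the unstable-cotangent cone, and a neutral weight in the flow-direction cone. The inclusions $W^s(M) \subset \B^\beta \subset W^{s'}(M)$ for $s > \beta > -s'$ and $C^r(M) \subset \B^\beta$ for $\beta < (r-1)/2$, as well as the compactness of $\B^\beta \hookrightarrow \B^{\beta-\varepsilon}$, all follow by standard $\ell^2$-comparisons.

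The boundedness of $\cL^t$ on $\B^\beta$ and a Lasota--Yorke type inequality come from tracking how $F^t$ shuffles wave packets. A packet of frequency $2^n$ in the stable cotangent cone is mapped, modulo negligible tails, to a packet of frequency $\lesssim 2^n\|DF^{-t}|_{E^s}\|^{-1}$ still in the stable cone, and symmetrically for unstable packets. The contribution of these ``diagonal'' transitions to $\|\cL^t u\|_{\B^\beta}$ is of order $2^{-\beta \lambda_0 t}\|u\|_{\B^\beta}$ by the scaling of the weights under dilation, while the cross-cone (``off-diagonal'') transitions contribute the factor $2^{-\Lambda_0 t/2}$, and altogether one obtains
\[
\|\cL^t u\|_{\B^\beta} \le C \max\{2^{-\Lambda_0 t/2},\, 2^{-\beta \lambda_0 t}\}\,\|u\|_{\B^\beta} + K(t)\,\|u\|_{\B^{\beta-1}}.
\]
Hennion's theorem, combined with the compact inclusion $\B^\beta \hookrightarrow \B^{\beta-1}$, then yields the claimed essential-spectral-radius bound.

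The principal obstacle is the off-diagonal $2^{-\Lambda_0 t/2}$ estimate, which is precisely where the contact hypothesis enters. On the contact distribution $\ker \alpha = E^s \oplus E^u$, the form $d\alpha$ is non-degenerate and pairs $E^s$ with $E^u$ as Lagrangian subbundles; equivalently, $F^t$ acts symplectically on $\ker \alpha$. Under this symplectic structure the cross-cone transition, expressed in wave-packet coordinates, becomes an oscillatory integral operator whose phase is non-degenerate with Hessian of determinant comparable to $|\det(DF^{-t}|_{E^u})|$. A $TT^*$/stationary-phase argument then produces the square-root gain, $L^2 \to L^2$ norm $\lesssim 2^{-\Lambda_0 t/2}$; without the contact structure the phase would be degenerate and no analogous decay would be available. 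This semiclassical cancellation is the technical heart of the theorem, and the rest of the argument (anisotropic-space construction, formal Lasota--Yorke inequality, Hennion) is largely a matter of bookkeeping wave-packet interactions.
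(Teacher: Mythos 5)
Your overall architecture — an anisotropic wave-packet space with $\ell^2$-weighted norm, a Lasota--Yorke type estimate obtained by tracking wave-packet transitions, and quasi-compactness via a compact perturbation — matches the paper's, and the anisotropic weighting and the wave-packet reductions you describe are essentially what happens in the paper's treatment of the ``hyperbolic'' terms. But there is a genuine conceptual gap concerning the origin of the factor $2^{-\Lambda_0 t/2}$, and it is exactly the part of the argument that the paper flags as its main novelty.

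You attribute $2^{-\Lambda_0 t/2}$ to ``cross-cone (off-diagonal) transitions'' between the stable and unstable cotangent cones, and you propose to get it from a stationary-phase/$TT^*$ estimate whose non-degeneracy comes from the symplectic pairing $d\alpha|_{E^s\oplus E^u}$. In the paper this is not where the contact structure is used and not where $2^{-\Lambda_0 t/2}$ comes from. Transitions in which the transverse-frequency index changes sign are grouped into the hyperbolic part $\cM_2$ (specifically condition~(b) of $\cR(2)$) and are disposed of by the mere mismatch of weights, producing at worst the $2^{-\beta\lambda_0 t}$ factor — no contact structure needed. The genuinely problematic region is the \emph{central part} $\cM_1$: wave packets whose cotangent frequency lies close to the flow direction $\alpha(z)$, i.e.\ whose transverse component is small relative to $\sqrt{|\xi_0|}$. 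There the pullback $DF^{t*}$ fixes the direction $\alpha$, neither expands nor contracts, and the operator preserves the contact volume, so the naive $L^2$ bound is exactly $1$ on this sector. Your proposal puts a ``neutral weight'' there and offers no mechanism for decay, so the essential spectral radius bound would fail on this subspace and no compact perturbation can rescue it.

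The actual mechanism for the central part is two-fold. First, the raw gain $2^{-\Lambda}$ is obtained not from a Hessian determinant of a phase, but from the Jacobian bound (H4): $G^{-1}$ expands $(d+1)$-dimensional subspaces of the stable cone by at least $2^\Lambda$, and this appears when one integrates out the $x^+$-directions against a kernel which is a Dirac mass in $x^+$ (Lemma~\ref{lm:keypre}, formula~(\ref{eqn:Fsp})). A Schwarz/Young argument converts this to the square-root gain $2^{-\Lambda/2}$ per packet. Second, and this is where the non-integrability of $\alpha$ enters, one must show that the $\Psi_\sigma(D)$-projections of the images of distinct nearby wave packets are nearly orthogonal, so that the per-packet gain survives the sum over $\gamma$. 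This is Lemma~\ref{lm:key}, proved by a \emph{non}-stationary phase (integration-by-parts) argument along a carefully chosen vector field $\unv$, where the lower bound on the derivative of the phase is exactly $|d\alpha_0(\cdot,\cdot)|$ evaluated on the displacement; if $\alpha$ were integrable this derivative would vanish. There is a further technical refinement your sketch omits: the paper uses a genuinely two-scale frequency decomposition (flow-direction frequency $\sim 2^n$, transverse frequency $\sim 2^{n/2+m}$, weight depending on the secondary index $m$), not a single dyadic index; the scale $2^{n/2}$ is what makes the phase estimate quantitative. In short, the skeleton of your plan is sound, but the contact hypothesis is needed for the flow-direction sector, not the cross-cone transitions, and as written the proposal leaves the hardest part of the theorem unaddressed.
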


In the case where the flow $F^t$ is $C^\infty$, we may choose a large $\beta$ in the theorem above so that $2^{-\beta \lambda_0 t}<2^{-\Lambda_0 t/2}$ and hence that the essential spectral radius of $\cL^t:\B^\beta\to \B^\beta$ is bounded by 
$2^{-\Lambda_0 t/2}$.
It should be worth noting that this bound on the essential spectral radius is quite reasonable at least 
in the case of geodesic flows on closed surfaces with  constant negative curvature. 
In fact, if we suppose that the curvature is constantly $-1$, 
we may set $\Lambda_0=\lambda_0=-1/\log 2$, so that  the bound  equals $2^{-\Lambda_0 t/2}=e^{-t/2}$. 
From the famous result\cite{McKean72} of Selberg on his zeta function, we find that the dynamical Fredholm determinant of $\cL^t$ for such a flow has infinitely many zeros on the line $\Im(s)=-1/2$. 
Admitting the conjectural statement that  the zeros of  the dynamical Fredholm determinant of $\cL^t$  should coincide with the spectrum of the generator of $\cL^t$,  we expect that the essential spectral radius of  $\cL^t$ should not be smaller than $e^{-t/2}$ (for any reasonable choice of Banach spaces on which it acts). 

Since contact Anosov flows are mixing (or even Bernoulli\cite{Katok94}) with respect to the contact volume $\omega$, Theorem \ref{mainth}  implies not only exponential decay of correlations but also the following asymptotic estimate on correlations. (See \cite{Tsujii2008} for the detail of the deduction.)
\begin{corollary}\label{cor:corr}
For any $0<\alpha<\min\{\Lambda_0, (r-1)\lambda_0\}/2$, there exists finitely many complex numbers $\eta_i$ with $-\alpha\le \Re(\eta_i)<0$ and integers $k_i\ge 0$ for $1\le i\le \ell$ such that, 
for any $\psi$ and $\varphi$ in $C^r(M)$,  we have the asymptotic estimate for the correlation 
\begin{align*}
\frac{1}{\omega(M)} \int &\psi\cdot \varphi\circ F^t \; d\omega\; -\;\frac{1}{\omega(M)} \int \psi \; d\omega\cdot \frac{1}{\omega(M)}  \int \varphi \; d\omega\\
&= \sum_{i=1}^\ell \sum_{j=0}^{k_i} C_{ij}(\varphi, \psi)\cdot  t^j 
2^{t\eta_i}+\mathcal{O}(2^{-\alpha t})
\end{align*}
as $t\to \infty$, where $C_{ij}(\varphi, \psi)$ are constants depending on $\psi$ and $\varphi$ bilinearly. 
\end{corollary}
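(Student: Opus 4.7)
The plan is to apply the spectral decomposition afforded by Theorem~\ref{mainth} to $\cL^T$ for large $T$, extend it across the continuous-time semigroup, and expand the correlation against the resulting resonant subspaces. Concretely, I would first choose $\beta$ with $\alpha/\lambda_0 < \beta < (r-1)/2$, which is possible since $\alpha < (r-1)\lambda_0/2$; combined with $\alpha<\Lambda_0/2$ this ensures $\max\{2^{-\Lambda_0 T/2},2^{-\beta\lambda_0 T}\} < 2^{-\alpha T}$ for every $T>0$. Since $C^r(M)\subset W^s(M)\subset\B^\beta$ for any $\beta<s<r$, we have $\varphi\in\B^\beta$; by $L^2(\omega)$-duality $W^s(M)\subset(\B^\beta)^*$ as well, so the correlation rewrites as $\omega(M)^{-1}\langle\psi,\cL^t\varphi\rangle$ via the continuous extension of the $L^2(\omega)$ pairing to $(\B^\beta)^*\times\B^\beta$. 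I then fix a $T$ large enough for Theorem~\ref{mainth} to apply.

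Quasi-compactness supplies finitely many eigenvalues $1=\mu_0,\mu_1,\ldots,\mu_\ell$ of finite algebraic multiplicity in the annulus $\{|z|>2^{-\alpha T}\}$, with generalized eigenspaces $V_i$, spectral projectors $P_i$ and Jordan nilpotents $N_i$. Since $F^t$ is mixing with respect to $\omega$, the peripheral eigenvalue $1$ is simple, with eigenprojector $P_0\varphi=\omega(M)^{-1}\int\varphi\,d\omega$. Collecting terms yields
\[
\cL^T=P_0+\sum_{i=1}^{\ell}(\mu_i P_i+N_i)+R,\qquad \|R^n\|_{\B^\beta}=\mathcal{O}(2^{-\alpha T n}).
\]
Because $\cL^t$ commutes with $\cL^T$ for every $t$, each $V_i$ is $\cL^t$-invariant, and continuity of $\{\cL^t|_{V_i}\}_{t\ge 0}$ on the finite-dimensional space $V_i$ forces $\cL^t|_{V_i}=\exp(tA_i)$ for a linear $A_i$ with $\exp(TA_i)=\mu_i P_i|_{V_i}+N_i|_{V_i}$. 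Writing $(\log 2)\eta_i$ for the distinct eigenvalues of the $A_i$, one has $2^{T\eta_i}=\mu_i$ and $\Re(\eta_i)<0$, and expanding $\exp(tA_i)$ in a Jordan basis produces finitely many terms of the form $t^j 2^{t\eta_i}$. For $t=nT+s$ with $s\in[0,T)$, the remainder $\langle\psi,R^n\cL^s\varphi\rangle$ is $\mathcal{O}(2^{-\alpha t})$ since $\cL^s$ is uniformly bounded for $s\in[0,T]$; pairing with $\psi$ and gathering terms then gives the announced expansion with $C_{ij}(\varphi,\psi)$ bilinear in $(\varphi,\psi)$ through the pairing.

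The main obstacle will be the continuous-time lift: establishing enough continuity of $t\mapsto \cL^t$ on $\B^\beta$ to justify $\cL^t|_{V_i}=\exp(tA_i)$, and selecting logarithm branches so that the exponents $\eta_i$ are intrinsic to the flow rather than artifacts of the auxiliary choice of $T$. A related subtlety is the simplicity of the peripheral eigenvalue in the distributional space $\B^\beta$: mixing rules out non-constant fixed points in $L^2(\omega)$, but to exclude them in the larger space $\B^\beta$ one must combine the density of $C^r(M)$ in $\B^\beta$ with the explicit Riesz-functional-calculus representation of $P_0$ coming from the spectral decomposition.
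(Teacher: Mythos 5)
Your route---rewrite the correlation as a pairing $\langle\psi,\cL^t\varphi\rangle$, apply the spectral decomposition of $\cL^T$ provided by Theorem~\ref{mainth}, and lift to continuous time via commutativity---is the standard deduction, and is what the paper delegates to \cite{Tsujii2008}. Two of the steps you invoke, however, are not actually supplied by Theorem~\ref{mainth} as stated. The assertion ``$\cL^s$ is uniformly bounded for $s\in[0,T]$'' is not available as a statement about $\cL^s:\B^\beta\to\B^\beta$: the theorem gives boundedness only for $t$ large, and for small $s$ the extension is not asserted to exist at all. The remedy is that you apply $\cL^s$ only to the smooth datum $\varphi\in C^r(M)$; then $\cL^s\varphi=\varphi\circ F^s$ lies in $W^\sigma(M)\subset\B^\beta$ for $\beta<\sigma<r$, with $\sup_{s\in[0,T]}\|\varphi\circ F^s\|_{W^\sigma}<\infty$, so the remainder bound $\|R^n\cL^s\varphi\|_{\B^\beta}\le\|R^n\|\cdot C_T\|\varphi\|_{C^r}=\mathcal{O}(2^{-\alpha t})$ goes through (with a constant depending on $\|\varphi\|_{C^r}$, which is all the corollary requires).

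The second gap is more substantive: Theorem~\ref{mainth} controls only the \emph{essential} spectral radius and does not by itself give $|\mu_i|<1$ for $i\ge 1$, nor simplicity of the eigenvalue $1$, yet both are needed for $\Re(\eta_i)<0$. You flag simplicity, but the whole package (spectral radius $\le 1$; no Jordan blocks on the circle; $\mu=1$ is the unique peripheral eigenvalue and is simple) must be argued. It follows from ingredients you already have: for $\psi,\varphi\in C^\infty(M)$ the quantity $\langle\psi,(\cL^T)^n\varphi\rangle=\int\psi\,(\varphi\circ F^{nT})\,d\omega$ is uniformly bounded in $n$ (Cauchy--Schwarz in $L^2(\omega)$, since $F^t$ preserves $\omega$) and converges to $\omega(M)^{-1}\int\psi\,d\omega\int\varphi\,d\omega$ by mixing. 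Comparing with $\langle\psi,(\cL^T)^n\varphi\rangle=\sum_j\langle\psi,(\mu_jP_j+N_j)^n\varphi\rangle+\langle\psi,R^n\varphi\rangle$, boundedness excludes $|\mu_j|>1$ and Jordan blocks on the circle, convergence excludes peripheral $\mu_j\neq1$, and the value of the limit pins down $P_0\varphi$; density of $C^\infty(M)$ in $\B^\beta$ together with non-degeneracy of the pairing against $W^\sigma$ and continuity of $P_j,N_j$ then upgrades these pairing identities to $P_j=N_j=0$ for the offending $j$ and $P_0$ rank one. Finally, for the continuous-time lift you do not need continuity of $t\mapsto\cL^t|_{V_i}$: measurability of $t\mapsto\langle\psi,\cL^t v\rangle=\langle\psi\circ F^{-t},v\rangle$ for smooth $\psi$ and $v\in V_i$ (clear since $t\mapsto\psi\circ F^{-t}$ is continuous in $W^\sigma$) suffices, because a measurable one-parameter group on a finite-dimensional space is automatically $\exp(tA_i)$.
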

Also  we can deduce from Theorem \ref{mainth} the central limit theorem and  the (generalized) local limit theorem for observables in $C^r(M)$ by a general abstract argument.  (See \cite{Iwata07}.)

The main point in Theorem \ref{mainth} is definitely the   construction of the Hilbert spaces 
$\B^\beta$. 
The original idea for the construction was as follow\footnote{This idea originated in the work \cite{Baladi05} of V.~Baladi, which treated  spectral properties of transfer operators for Anosov diffeomorphisms. See \cite{BaladiTsujii07, BaladiTsujii08} for later developments.}: Take an appropriate  positive-valued smooth function $p_\beta:T^*M \to \real$ on the cotangent bundle and define a norm on $C^\infty(M)$ by  $\|u\|_{\beta}:=\| p_\beta(D) u\|_{L^2}$, using the pseudodifferential operator $p_\beta(D)$ associated to $p_\beta$. Then define the Hilbert space $\B^\beta$ as the completion of $C^\infty(M)$ with respect to this norm.  Unfortunately we could not put this idea directly into a rigorous argument because of  some technical difficulties. 
(See Remark \ref{re:ori} for more detail.) Instead, we  use a modified version of the Littlewood-Paley decomposition for the construction of the Hilbert spaces $B^\beta$. 
This makes the argument in this paper somewhat long and involved. Still the argument in each step is fairly elementary and does not require any knowledge on pseudodifferential operators.

\subsection{Plan of the proof}
In the following sections, we proceed  to  the proof of the main theorem as follows.  
Section \ref{sec:darboux}, \ref{sec:localtrans} and \ref{sec:localG} are devoted to  preliminary arguments. 
In Section \ref{sec:darboux}, we set up a finite system of local charts on $M$ adapted to the contact structure $\alpha$ and the hyperbolic structure of the flow. In Section \ref{sec:localtrans}, we then reduce the main theorem to the corresponding claim (Theorem \ref{th:mainlocal}) about transfer operators on the local charts. 
In Section \ref{sec:localG}, we give a local geometric property of the diffeomorphisms between the  local charts induced by the time-$t$-maps of the flow. This property  is simple but  crucial for our argument. 

In Section \ref{sec:part} and \ref{sec:Hil}, we define  Hilbert spaces $\cB^\beta_\nu$ for real numbers $\beta$ and~$\nu$, which consist of distributions on the unit disk $\disk$ in the Euclidean space $E$ of dimension $2d+1$.
The Hilbert spaces $B^\beta$ in the main theorem is made up from copies of such Hilbert spaces on the local charts by using a partition of unity on $M$. In Section \ref{sec:part}, we construct a $C^\infty$ countable partition of unity $\{p_{\gamma}\}_{\gamma\in \Gamma}$ on the cotangent bundle $T^*_{\disk}E=\disk\times E^*$. 
Then, in Section~\ref{sec:Hil}, we give  a method of decomposing  a function $u$ on  $\disk$ into countably many smooth components~$u_\gamma$, $\gamma\in \Gamma$, by using the pseudodifferential operators with  symbol~$p_\gamma$. 
By definition, each component $u_\gamma$ is a "{\em wave packet}\/" which are localized  both in the real and  frequency  spaces. The Hilbert space $\cB^\beta_\nu$ will be defined as the completions of the space $C^\infty(\disk)$ of  $C^\infty$ functions on the unit disk $\disk$ with respect to a norm $\|\cdot \|_{\beta,\nu}$ that counts the $L^2$ norms of  the components $u_\gamma$ with some appropriate weight. 

Our basic strategy  is that we regard  a transfer operator $\cL$ acting on $\cB^\beta_\nu$  as an infinite matrix of  operators $\cL_{\gamma\gamma'}$, each of which concerns the transition from one component to another induced by $\cL$ and  deduce the required properties of $\cL$ from relatively simple estimates on each $\cL_{\gamma\gamma'}$.  In Section~\ref{sec:aux}, we  introduce some definitions in order to describe the argument along this strategy. And we
 find that each operator  $\cL_{\gamma\gamma'}$ is a tame integral operator with smooth rapidly decaying kernel. Further we  give simple estimates on the kernel of $\cL_{\gamma\gamma'}$, regarding it as an oscillatory integral.

Section  \ref{sec:pre}--\ref{sec:center} are the main body of the proof. 
In the proof, we divide the transfer operator $\cL$ on the local charts into three parts: the {\em compact}, {\em central} and  {\em hyperbolic part}. The   compact part  is the part that  concerns the components of functions with low frequencies. In Section  \ref{sec:pre}, we show  that the compact part is in fact a compact operator and therefore negligible  in our argument because the essential spectral radius of an operator does not change by perturbation by compact operators. 
The definitions of the  central  and hyperbolic part are more involved. 
Roughly, the central part is the part that concerns the components of functions which are localized along the central (or flow) direction in the frequency space, and the hyperbolic part is the remainder. 

In Section \ref{sec:body}--\ref{sec:tail2}, we deal with  the  hyperbolic part and estimate its operator norm. 
The argument in these sections makes use of hyperbolicity of the flow in the  directions transversal to the flow, and 
is partially similar to that  
in the author's previous papers \cite{BaladiTsujii07, BaladiTsujii08} coauthored with V.~Baladi, which treat hyperbolic diffeomorphisms. 
The estimate on the hyperbolic part leads to the term $2^{-\beta\lambda_0 t}$ in the main theorem. 

In Section \ref{sec:center}, we deal with  the central part, which is responsible for the difficulty in the case of hyperbolic flow that we noted in the beginning. 
The argument on the central part is in fact the main point of this paper and makes use of the  non-integrability of the contact form $\alpha$ essentially. 
The estimate on the central part leads to the term $2^{-\Lambda_0 t/2}$ in the main theorem. 

  \begin{remark} 
A prototype of the argument on the central part can be found in the author's previous paper \cite{Tsujii2008}, where a class of expanding semi-flows are considered as a simplified model of Anosov flows. 
\end{remark}

\noindent{\bf Acknowledgement.} The author would like to thank the referees of this paper for many valuable comments, which were very important 
in improving descriptions in the text.


\section{Darboux theorem for contact structure}
\label{sec:darboux}
In this section, we set up a finite system of coordinate charts on $M$ which is adapted to the contact structure $\alpha$ on $M$ and also to the hyperbolic structure of the flow $F^t$. 
Let $E$ be an Euclidean space of dimension $2d+1$, equipped with an orthonormal coordinate
\[
x=(x_0,x^+_1, \dots, x^+_d, x^-_1, \dots, x^-_d). 
\]
Let $E^*$ be the dual space of $E$, equipped with the dual coordinate
\[
\xi=(\xi_0, \xi^+_1, \dots, \xi^+_d, \xi^-_1, \dots, \xi^-_d),
\]
so that evaluation of $\xi\in E^*$ at $x\in E$ is given by
\[
\langle \xi, x \rangle = \xi_0 \cdot x_0+\xi^+_1\cdot x^+_1+\dots+\xi^+_d\cdot x^+_d+
\xi^-_1\cdot x^-_1+\dots+\xi^-_d\cdot x^-_d.
\]
For brevity, we write $x=(x_0, x^{+},x^{-})$ and $\xi=(\xi_0, \xi^+,\xi^-)$ for $x$ and $\xi$ as above, setting $
x^{\pm}=(x^\pm_1,\dots, x^\pm_d)$ and 
$\xi^{\pm}=(\xi^\pm_1, \dots, \xi^\pm_d)$ respectively. 
Let $
E=E_0\oplus E_+\oplus  E_- $ and $
E^*=E_0^*\oplus E_+^*\oplus  E_-^*$ be the corresponding orthogonal decomposition. For $\sigma\in \{0,+,-\}$, let $
\pi_\sigma:E\to E_\sigma$ and $\pi^*_\sigma:E^*\to E^*_\sigma$
be the orthogonal projections. Also we  set $
\pi_{+,-}=\pi_+\oplus \pi_-:E\to  E_+\oplus  E_-$ and define  
$\pi_{0,+}$, $\pi_{0,-}$, $\pi^*_{+,-}$, $\pi^*_{0,+}$ and $\pi^*_{0,-}$ analogously. 

The standard contact form on the Euclidean space $E$ is the 1-form
\[
\alpha_0=dx_0+x^{-} \cdot dx^{+}-x^{+} \cdot dx^{-}
\]
where $
x^{-} \cdot dx^{+}=\sum_{i=1}^d\;  x^-_i\cdot  d x^+_i$ and 
$x^{+} \cdot dx^{-}=\sum_{i=1}^d\;  x^+_i\cdot  d x^-_i$.
We will refer to $
v_0=\partial/\partial x_0$ as 
the standard vector field on $E$, which is nothing but the Reeb vector field of $\alpha_0$.  
A local chart $\kappa:U\to V\Subset E$ on an open subset $U\subset M$ is called a Darboux chart if $\kappa^*(\alpha_0)=\alpha$ on $U$. 
Darboux theorem for contact structure\cite[pp.168]{Aebischer} tells that there exists a system of Darboux charts on $M$. Below we choose a finite system of Darboux charts adapted to the hyperbolicity  of the flow.

Let $\cone_+$ and $\cone_-$ be the closed cones on $E$ defined by
\begin{align*}
\cone_+&=\{(x_0,x^{+},x^{-})\in E\mid \|x^{-}\|\le \|x^{+}\|/10\}
\intertext{and}
\cone_-&=\{(x_0,x^{+},x^{-})\in E\mid \|x^{+}\|\le \|x^{-}\|/10\}.
\end{align*}
\begin{definition}
For $\lambda>1$ and $\Lambda>1$, let $\cH(\lambda,\Lambda)$ be the set of  $C^r$ diffeomorphisms $
G:V'\to V:=G(V')$ on $E$ satisfying the  conditions
\begin{itemize}
\setlength{\itemsep}{4pt}
\item[(H0)] $V'$ and $V$ are open subsets of the unit disk $\disk\subset E$,
\item[(H1)]
$G^*(\alpha_0)=\alpha_0$ on $V'$, and $G_*(v_0)=v_0$ on $V$,
\item[(H2)]
$DG_z(E\setminus \cone_{+})\subset \cone_-$ and $(DG_z)^{-1}(E\setminus \cone_-)\subset \cone_+$ for any $z\in V'$,
\item[(H3)]
$\|\pi_{+,-}(DG_z(v))\|\ge 2^\lambda \|\pi_{+,-}(v)\|$ for any $ z\in V'$ and $ v\in E\setminus \cone_{+}$, 
\\
$\|\pi_{+,-}((DG_z)^{-1}(v))\|\ge 2^\lambda \|\pi_{+,-}(v)\|$ for any $ z\in V'$ and $ v\in E\setminus \cone_{-}$,
\item[(H4)]
$\det (DG_z|_{Y})\ge 2^\Lambda$ for any $(d+1)$-dim subspaces $Y\subset \cone_-$, and \\
$\det ((DG_z)^{-1}|_{Y'})\ge  2^\Lambda$ for any $(d+1)$-dim subspaces $Y'\subset \cone_+$,
\end{itemize}
where $\det(A|_Y)$ is the expansion factor of the linear map $A:Y\to A(Y)$ with respect to the standard volumes on $Y$ and $A(Y)$. Let $\mathcal{H}$ be the union of $\mathcal{H}(\lambda, \Lambda)$ for all $\lambda>0$ and $\Lambda>0$.
\end{definition}

The following  is a slight modification of the Darboux theorem. 
\begin{proposition}\label{th:Darboux}
There exists a finite system of Darboux charts on $M$,
\[
\kappa_a:U_a\to V_a:=\kappa_a(U_a)\subset \disk\subset E \quad \text{for }a\in A, 
\]
and a constant $c_0>0$ 
such that, if $t$ is sufficiently large and if 
\[
V(a,b;t):=\kappa_a(U_a\cap F^{-t}(U_b))\neq \emptyset\quad \text{
for some $a,b\in A$,}
\]
the induced diffeomorphism on the charts,
\[
F^t_{ab}:=\kappa_b\circ F^t\circ \kappa_a^{-1}
:V(a,b;t) \to F^t_{ab}(V(a,b;t))\subset V_b,
\]
 belongs to the class $
\cH(\lambda_0 t-c_0, \Lambda_0 t-c_0)$ defined above.
\end{proposition}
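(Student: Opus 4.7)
The plan is to construct the Darboux charts $\kappa_a$ one patch at a time so as to simultaneously normalize the contact form and align the Anosov splitting with the coordinate subspaces, and then to read off (H0)--(H4) from the hyperbolicity bounds of the flow.

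Fix $z\in M$. The crucial algebraic fact is that $E^s(z)$ and $E^u(z)$ are transverse Lagrangian subspaces of the symplectic space $(\ker\alpha(z), d\alpha(z))$: invariance of $\alpha$ under $F^t$ gives $d\alpha(z)(u,w)=d\alpha(F^tz)(DF^tu, DF^tw)$ for $u,w\in E^s(z)$, and letting $t\to\infty$ (using $\|DF^t|_{E^s}\|\to 0$) forces $d\alpha|_{E^s\otimes E^s}\equiv 0$; the argument for $E^u$ is symmetric with $t\to-\infty$. Since $\dim E^s=\dim E^u=d$ and $E^s\oplus E^u=\ker\alpha$, both are Lagrangian and transverse. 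In the model, $(E_+, E_-)$ is another transverse Lagrangian pair in $(\ker\alpha_0(0), d\alpha_0(0))$. Because $\mathrm{Sp}(2d)$ acts transitively on ordered transverse Lagrangian pairs, the first step is to choose a linear chart $\psi_z$ sending $z\mapsto 0$, $v(z)\mapsto v_0$, $E^s(z)\mapsto E_+$, $E^u(z)\mapsto E_-$, with $\psi_z^*\alpha_0$ agreeing with $\alpha$ to first order at $z$. A refined Moser's trick applied to the path interpolating between $\psi_z^*\alpha_0$ and $\alpha$ yields $\phi_z$ with $\phi_z(z)=z$ and $d\phi_z(z)=\mathrm{id}$, so that $\kappa_z:=\psi_z\circ\phi_z^{-1}$ is a Darboux chart whose differential at $z$ realizes the alignment. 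A linear rescaling places its image inside $\disk$.

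By continuity of $E^s, E^u$ and smoothness of $\kappa_z$, there is an open neighborhood $U_z'\ni z$ on which $d\kappa_z(E^s(\cdot))$ and $d\kappa_z(E^u(\cdot))$ stay in the interiors of $\cone_+$ and $\cone_-$ with a uniform safety margin (say, ratio $\le 1/100$ rather than $1/10$). Compactness of $M$ extracts a finite subcover $\{U_a\}_{a\in A}$ whose charts $\kappa_a$ have uniformly bounded derivatives. Conditions (H0)--(H4) for $F^t_{ab}$ then follow by direct translation. (H0) is built in, and (H1) is automatic: $\kappa_a, \kappa_b$ both pull $\alpha_0$ back to $\alpha$ and $F^t$ preserves $\alpha$, so $(F^t_{ab})^*\alpha_0=\alpha_0$, and because the Reeb vector field is intrinsic to the contact form, $(F^t_{ab})_*v_0=v_0$. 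For (H2)--(H3) a standard cone-field calculation based on $\|DF^t|_{E^s}\|\le C\cdot 2^{-\lambda_0 t}$ and $\|DF^{-t}|_{E^u}\|\le C\cdot 2^{-\lambda_0 t}$ shows that for $t$ large every vector outside $\cone_+$ is sent by $DF^t_{ab}$ strictly into $\cone_-$, with its $\pi_{+,-}$-projection expanded by a factor $\ge 2^{\lambda_0 t}/C'$, and symmetrically for $(DF^t_{ab})^{-1}$. For (H4), any $(d+1)$-dimensional $Y\subset\cone_-$ is $C^0$-close to $E_0\oplus E_-$, hence $(d\kappa_b)^{-1}(Y)$ is close to $E^c\oplus E^u$; since $|\det(DF^t|_{E^c\oplus E^u})|=|\det(DF^t|_{E^u})|\ge 2^{\Lambda_0 t}/C$ (the flow direction contributes a factor $1$ because the contact volume $\omega$ is preserved), and dually for $\cone_+$, the determinant bounds follow. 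Absorbing all multiplicative constants and chart distortions into a single additive $c_0$ in the exponents yields $F^t_{ab}\in\cH(\lambda_0 t-c_0,\Lambda_0 t-c_0)$.

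The main technical hurdle is the opening step: securing a chart that is Darboux and simultaneously aligns the Anosov splitting with $E_\pm$. This reconciliation is made possible by the transitivity of $\mathrm{Sp}(2d)$ on transverse Lagrangian pairs combined with a version of Moser's trick that is tangent to the identity at the reference point; everything afterwards is a uniform-estimate exercise using compactness of $M$ and continuity of the invariant bundles.
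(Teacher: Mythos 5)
Your proof is correct and reaches the same conclusion, but it organizes the key construction differently from the paper. You build the alignment \emph{first} (a linear chart $\psi_z$ sending $E^s\mapsto E_+$, $E^u\mapsto E_-$, $v\mapsto v_0$, using transitivity of $\mathrm{Sp}(2d)$ on transverse Lagrangian pairs) and then invoke a refinement of Moser's trick to produce a correcting diffeomorphism $\phi_z$ that fixes $z$ and has $D\phi_z(z)=\mathrm{id}$. The paper does the reverse: it applies the standard Darboux theorem as a black box to get some chart $\kappa'$ with $\kappa'(z)=\zero$ (no control on $D\kappa'_z$), observes that $E'_\pm:=D\kappa'_z(E^{s/u})$ are transverse Lagrangian subspaces of $(E_+\oplus E_-, d\alpha_0(\zero))$, and then post-composes with the explicit linear map $L'(x_0,x^+,x^-)=(x_0,L(x^+,x^-))$ where $L\in\mathrm{Sp}(2d)$ sends $E'_\pm\mapsto E_\pm$; the single observation needed is that $L'$ preserves $\alpha_0$, which holds because a linear symplectic map preserves the radial Liouville form $x^-dx^+-x^+dx^-$. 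The paper's route is strictly more elementary: it avoids the ``Moser's trick tangent to the identity at a point'' refinement, which, while provable, is not standard and needs care (the Moser vector field must be chosen with $X_t(z)=0$ and $DX_t(z)=0$, which uses the freedom in the ansatz). Two small imprecisions in your write-up: a linear $\psi_z$ can only arrange that $\alpha$ and $\psi_z^*\alpha_0$ agree at $z$ and that $d\alpha(z)=\psi_z^*d\alpha_0(z)$ — not full ``first-order'' (1-jet) agreement — though this weaker matching is exactly what the refined Moser argument needs; and in (H4) the flow-direction factor is merely bounded above and below (because $DF^t_z(v(z))=v(F^tz)$ and $\|v\|$ is bounded on compact $M$), not identically $1$, but this is harmlessly absorbed into $c_0$.
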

\begin{proof} 
By compactness of $M$, it is enough to show, for each  $z\in M$, that there exists a Darboux chart
$\kappa:U\to V$ on a neighborhood $U$ of~$z$ so that $\kappa(z)=\zero$, 
$D\kappa_{z}(E^s(z))=E_+$ and $D\kappa_{z}(E^u(z))=E_-$. 
By Darboux theorem, there exists a Darboux chart $\kappa':U'\to V'$ on a neighborhood $U'$ of $z$ so that  $\kappa'(z)=\zero$. 
For $E'_+:=D\kappa'_{z}(E^s(z))$ and $E'_-:=D\kappa'_{z}(E^u(z))$, we have $
E'_+\oplus E'_-=D\kappa'_z(\mathrm{null}(\alpha_0(0)))=E_+\oplus E_-$.
Since $d\alpha$ is preserved by the flow $F^t$, we see $d\alpha|_{E^s}=d\alpha|_{E^u}=0$ and therefore   $d\alpha_0|_{E'_+}=d\alpha_0|_{E'_-}=0$.  So we can find 
a linear map $L:E_+\oplus E_-\to E_+\oplus E_-$ which preserves the symplectic form $d\alpha_0(0)|_{E_+\oplus E_-}$ and satisfies
$L(E'_+)=E_+$ and $L(E'_-)=E_-$. Define  $L':E\to E$  by $L'(x_0,x^+,x^-)=(x_0, L(x^+,x^-))$. 
Then it is easy to check that  $L'$ preserves the contact form~$\alpha_0$ and that the composition 
$\kappa:=L'\circ \kappa'$
is a chart with the required properties.
\end{proof}
Henceforth we fix a finite system of Darboux charts $\kappa_a:U_a\to V_a$, $a\in A$, with the property in Proposition~\ref{th:Darboux}. 

\section{Transfer operators on local charts}\label{sec:localtrans}

In this section, we reduce Theorem \ref{mainth} to the corresponding claim about transfer operators on the local charts. To state the claim, we prepare some definitions. 
For an  open subset $V\subset E$, 
let $C^r(V)$ be the set of $C^r$~functions whose supports are contained in $V$, and 
let $\cC^{r}(V)$ be the subset of $g\in C^r(V)$ such that
the differential  $(v_0)^k g=\partial^k g/\partial x_0^k$   for arbitrarily large $k$ exists and belongs to  the class $C^r(V)$.  
We henceforth  fix a large positive integer $r_*\ge 20(r+1)$
and set
\[
\|g\|_*=\max_{0\le k\le r_*} \|\partial^k g /\partial x_0^k \|_{L^\infty}\quad \text{for $g\in \cC^{r}(V)$.}
\]
For a $C^r$ diffeomorphism $G:V'\to V$ in $\cH$ and a function $g\in \cC^r(V')$,  we consider the transfer operator
$\cL(G,g):C^r(V)\to C^r(V')$ 
defined by
\[
\cL(G,g) u(x)=
\begin{cases} g(x)\cdot u(G(x)),&\text{for $x\in V'$};\\
0, &\text{otherwise.}
\end{cases}
\]

The Sobolev space $W^s(\disk)$ on the unit disk $\disk\subset E$ is the completion of the space $C^\infty(\disk)$ with respect to the norm $
\|u\|_{W^s}=\| (1+|\xi|^2)^{s/2} \cdot \Fourier u(\xi)\|_{L^2}$,
where $\Fourier:L^2(E)\to L^2(E^*)$ is the Fourier transform.

\begin{remark} \label{rm:Sobolev}The Sobolev space $W^s(M)$ for $|s|\le r$ on $M$  is defined from copies of $W^s(\disk)$ on the local charts in an obvious manner using a  partition of unity. Clearly we have $C^r(M)\subset C^s(M)\subset W^s(M)$ for $0\le s\le r$. 
\end{remark}

We will construct Hilbert spaces $\cB^\beta_\nu$ 
for $\beta>0$ and $\nu\ge 2d+2$, which satisfy $W^s(\disk)\subset \cB^\beta_\nu\subset W^{-s}(\disk)$ for $s>\beta$ and prove the following claims:
\begin{theorem}\label{th:mainlocal}
There exist positive constants $\lambda_*$ and $\Lambda_*$ so that the   operator $\cL(G,g)$ for any $G:V'\to V$ in $\cH(\lambda_*,\Lambda_*)$ and  $g\in \cC^r(V')$ extends to a bounded operator
$\cL(G,g):\cB^\beta_\nu\to \cB^\beta_{\nu'}$ for  
any $0<\beta<(r-1)/2$ and  $\nu,\nu'\ge 2\beta+2d+2$. 
Further, for any $\epsilon>0$ and $0<\beta<(r-1)/2$, 
there exist constants  $\nu_*\ge 2\beta+2d+2$, $C_*>0$ and 
a family of norms $\|\cdot\|^{(\lambda)}$ on $\cB^\beta_{\nu_*}$ for $\lambda>0$, which are all equivalent to the standard norm on $\cB^\beta_{\nu_*}$, such that, 
if $G:V'\to V$ belongs to $\cH(\lambda,\Lambda)$ for $\lambda\ge \lambda_*$ and  $\Lambda\ge \Lambda_*$ with $\Lambda\ge d\lambda$ and if $g\in \cC^r(V')$,
there exists a compact operator $\mathcal{K}(G,g):\cB^\beta_{\nu_*}\to \cB^\beta_{\nu_*}$ such that the operator norm of $\cL(G,g)-\mathcal{K}(G,g):\cB^\beta_{\nu_*}\to \cB^\beta_{\nu_*}$ with respect to the norm $\|\cdot\|^{(\lambda)}$ is bounded by $
C_* \|g\|_{*} 2^{-(1-\epsilon)\min\{\Lambda/2,\beta \lambda\}}$. 
\end{theorem}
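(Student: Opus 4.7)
The plan is to study $\cL(G,g)$ through the wave-packet decomposition constructed in Section~\ref{sec:Hil}: every $u$ on $\disk$ is written as $u=\sum_{\gamma'\in \Gamma} u_{\gamma'}$ with each $u_{\gamma'}$ localized both in real and frequency space, so that $\cL(G,g)$ is represented by the infinite matrix $(\cL_{\gamma\gamma'})_{\gamma,\gamma'\in\Gamma}$ in which $\cL_{\gamma\gamma'}(u_{\gamma'})$ is the $\gamma$-component of $\cL(G,g)u_{\gamma'}$. By the results of Section~\ref{sec:aux}, each block is a tame integral operator whose kernel can be written as an oscillatory integral with phase $\langle\xi,G(x)\rangle-\langle\xi',y\rangle$ and amplitude built from $g$, $p_\gamma$ and $p_{\gamma'}$, so that the $r_*$-many admissible integrations by parts furnish rapid-decay estimates on the kernel controlled by the separation between the supports of $p_\gamma\circ (DG)^\top$ and $p_{\gamma'}$.

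The boundedness claim I would establish first. Once $\lambda_*$ and $\Lambda_*$ are fixed, every $G\in\cH(\lambda_*,\Lambda_*)$ has uniform $C^r$ bounds, hence the block kernel estimates are uniform. A Schur test on $\Gamma$, summable thanks to the margin $\nu,\nu'\ge 2\beta+2d+2$, then yields boundedness of $\cL(G,g):\cB^\beta_\nu\to\cB^\beta_{\nu'}$ with operator norm $\lesssim \|g\|_*$.

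For the essential-norm estimate I would split $\Gamma=\Gamma_{\mathrm{low}}\sqcup\Gamma_{\mathrm{cen}}\sqcup\Gamma_{\mathrm{hyp}}$ following the plan in the introduction. Indices of bounded frequency scale form $\Gamma_{\mathrm{low}}$; the corresponding truncation is a smoothing operator whose range embeds compactly into $\cB^\beta_{\nu_*}$ by a Rellich-type argument, and this truncation defines the compact operator $\mathcal{K}(G,g)$. The remaining high-frequency indices are split according to whether the frequency direction of $\gamma'$ lies in a narrow cone around $E_0^*$ (central part $\Gamma_{\mathrm{cen}}$) or outside it (hyperbolic part $\Gamma_{\mathrm{hyp}}$). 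On $\Gamma_{\mathrm{hyp}}$ the cone and expansion conditions (H2), (H3), combined with the local geometric property of Section~\ref{sec:localG}, show that the pullback by $G$ of a $\gamma'$-wave packet is a wave packet whose frequency scale in the transverse direction has been dilated by at least $2^\lambda$; the $\beta$-th weighting in the definition of $\|\cdot\|_{\beta,\nu}$ converts this scale mismatch into a block gain $2^{-\beta\lambda}$. The role of the family $\|\cdot\|^{(\lambda)}$, modelled on the prototype in \cite{Tsujii2008}, is to reweight the components along the transverse directions so that the block estimates combine into a Schur bound with constant independent of $\lambda$; the equivalence to the standard norm holds because the extra weight is bounded on every fixed range of frequencies.

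The main obstacle is the central part $\Gamma_{\mathrm{cen}}$, where $\xi$ and $\xi'$ are both nearly parallel to $E_0^*$ so $(DG)^\top\xi-\xi'$ enjoys no hyperbolic gain and a naive oscillatory estimate yields nothing. Here I would exploit non-integrability of the contact form: because $G$ preserves $\alpha_0$ and the Reeb field $v_0$ by condition~(H1), the phase $\langle\xi,G(x)\rangle-\langle\xi',x\rangle$ restricted to the transverse slice $(x^+,x^-)$ has Hessian essentially equal to $\xi_0\cdot d\alpha_0|_{E_+\oplus E_-}$, which is non-degenerate as soon as $|\xi_0|$ is large. Stationary phase on this $2d$-dimensional symplectic slice then gives a factor $|\xi_0|^{-d}$, which via the determinant bound (H4) translates into the target gain $2^{-\Lambda/2}$, the exponent $1/2$ being the usual stationary-phase exponent. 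The $\epsilon$-loss absorbs both the finite-regularity error in stationary phase (controlled because $r_*\gg r$) and mild divergences in the Schur summation near the interface of $\Gamma_{\mathrm{cen}}$ and $\Gamma_{\mathrm{hyp}}$. Summing the three contributions and absorbing the low-frequency truncation into $\mathcal{K}(G,g)$ yields the claimed bound $C_*\|g\|_*\,2^{-(1-\epsilon)\min\{\Lambda/2,\beta\lambda\}}$.
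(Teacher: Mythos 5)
Your overall architecture — wave-packet decomposition, block integral operators with oscillatory kernels, a low/central/hyperbolic trichotomy, $\beta$-weighted norms for the hyperbolic gain, a $\lambda$-dependent reweighting — matches the paper's. But the mechanism you propose for the central part would not work, and it is precisely the central part that is the crux.

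You argue that on the central blocks the phase $\langle\xi, G(x)\rangle-\langle\xi',x\rangle$, restricted to the transverse variables $(x^+,x^-)$, has ``Hessian essentially equal to $\xi_0\cdot d\alpha_0|_{E_+\oplus E_-}$, which is non-degenerate,'' so a $2d$-dimensional stationary-phase yields $|\xi_0|^{-d}$ and hence $2^{-\Lambda/2}$ through~(H4). This misreads what contact preservation gives. Write $G(x_0,x^+,x^-)=(x_0+G_0(x^+,x^-),\,G_{+,-}(x^+,x^-))$ as in Section~\ref{sec:localG}. Lemma~\ref{lm:local} and Corollary~\ref{cor:local} show that if $G$ fixes a point $y$, then $DG_0(y)=D^2G_0(y)=0$; in particular, when $\xi$ is aligned with $\alpha_0(G(y))$ and $\xi_{+,-}=0$, the quantity $|\langle\xi,D^2G_{y'}(v,v')\rangle|$ is bounded by $C|\xi_0|\cdot\|y'-y\|\cdot\|v\|\|v'\|$ and therefore vanishes at $y'=y$. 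So at the putative critical point the transverse Hessian of the phase is \emph{degenerate}, not $\xi_0\cdot d\alpha_0$. Stationary phase in the transverse variables gives no gain from this quarter, and the exponent $d$ you introduce cannot be traded for $2^{-\Lambda/2}$. (The $d\alpha_0$-nondegeneracy that the paper exploits is a different, more global statement about how the frequency covector is twisted under a spatial shift; see the role of $w$ and $d\alpha_0(z,w)$ below.)

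What the paper actually does for the central part is structurally different. It factors the central block as $\cM_1^{(n,n')}=\cP^{(n')}\circ\cQ^{(n)}$, and the $2^{-\Lambda/2}$ does not come from an oscillatory-integral kernel bound at all but from an $L^2$-estimate (Lemma~\ref{lm:keypre}): Cauchy--Schwarz on $\cQ_{\gamma\sigma}$ plus a change of variables turns the transverse Jacobian bound (H4) into $\|\cQ_{\gamma\sigma}u\|_{L^2}^2\lesssim 2^{-\Lambda}\|d_\gamma^{\nu_*}u\|_{L^2}^2$, whence the factor $2^{-\Lambda/2}$. The non-integrability of the contact form is then used, in Lemma~\ref{lm:key} and Sublemma~\ref{subl:7}, to prove \emph{almost orthogonality} of the blocks $\{\cQ_{\gamma\sigma}\}_\gamma$ with the same $\sigma$ but different $\gamma$: one performs a single integration by parts in the phase $f(y,z,\xi,\xi')=\langle\xi',\hG(y+z)\rangle-\langle\xi,\hG(y)\rangle$ along a vector $w$ chosen so that $D_wf$ involves $d\alpha_0(z,w)$, which is bounded below. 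Without this orthogonality the naive sum over the $\sim\delta\lambda$ frequency bands $m$ along the flow direction would cost a factor $\sqrt{\delta\lambda}$ and ruin the estimate. Your proposal does not address this summation loss, and its stationary-phase substitute does not produce the $2^{-\Lambda/2}$ either. You would need to import both the $TT^*$-type $L^2$ argument for the Jacobian gain and the $d\alpha_0$-orthogonality estimate to close the central part.

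Two smaller points. First, the paper works not on $\cB^\beta_\nu$ directly but on the larger sequence space $\bB^\beta_\nu$, via the isometric embedding $\iota$ and the auxiliary operator $\cM(G,g)$; this is technical but it is what makes the Schur-type summation in Propositions~\ref{pp:m2}--\ref{pp:m4} clean, and you should note that the compact operator $\mathcal K(G,g)$ is defined there as a finite truncation $\cM_0$ rather than as a ``Rellich-type'' embedding. Second, the hyperbolic part is itself split into three pieces $\cM_2,\cM_3,\cM_4$ depending on whether $z(\gamma)$ and $G(z(\gamma'))$, or the cotangent supports after $DG^*$, are separated; your one-line description of the hyperbolic gain elides this and in particular elides Lemma~\ref{lm:division}, which is where Corollary~\ref{cor:local} enters the hyperbolic estimate as well.
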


We show that  Theorem \ref{mainth} follows from Theorem \ref{th:mainlocal}. 
Take  $C^r$ functions
$\rho_a:V_a\to [0,1]$ and $\widetilde{\rho}_a:V_a\to [0,1]$ for $a\in A$
so that  the family 
$\{\rho_a\circ \kappa_a\}_{a\in A}$ is a $C^r$ partition of unity on $M$ and that 
 $\widetilde{\rho}_a\equiv 1$ on $\supp\rho_a$ and $\supp \widetilde{\rho}_a\Subset V_a$. 
We may (and do) suppose that  $\rho_a$ and $\widetilde{\rho}_a$ belong to the  class $\cC^r(V_a)$, applying an appropriate $C^\infty$ mollifier along the coordinate $x_0$ simultaneously. 
More precisely, if  either of $\rho_a$  does not belong to $\cC^r(V_a)$, we replace $\rho_a$ by
\[
\rho'_a(x_0,x^+,x^-)=\int (1/\epsilon)\cdot p(s/\epsilon ) \cdot \rho_a(x_0+s,x^+,x^-) ds
\]
where $p:\real\to \real$ is a positive-valued $C^\infty$ function with compact support such that $\int p(s) ds=1$ and $\epsilon>0$  a small real number. 
Taking sufficiently small $\epsilon>0$, we may suppose that the support of $\rho'_a$ is contained in $V_a$ and that  $\rho'_a$ belongs to $\cC^r(V_a)$. 
Since we have  
\[
\rho'_a\circ \kappa_a(x)=\int (1/\epsilon)\cdot p(s/\epsilon ) \cdot \rho_a\circ \kappa_a(F^s(x)) ds
\]
from the relation $v_0=(\kappa_a)_* v$, we see  that  the family $\{\rho'_a\circ \kappa_a\}_{a\in A}$ is also a $C^r$ partition of unity on $M$.
We may apply the same modification to $\widetilde{\rho}_a$ if either of 
$\widetilde{\rho}_a$ does not belong to $\cC^r(V_a)$.

For $a,b\in A$, we define the transfer operator $\cL_{ab}^t:C^r(V_b)\to C^r(V_a)$  by
\[
\cL_{ab}^t u(x)=
\begin{cases}
g_{ab}^t(x) \cdot u(F^t_{ab}(x)),&\text{if $x\in V(a,b;t)$;}\\
 0,&\text{otherwise}
\end{cases}
\]
where $g^t_{ab}(x)=\rho_a(x)\cdot \widetilde{\rho}_b(F^t_{ab}(x))$ belongs to $\cC^r(V_a)$. 
Then we consider  the matrix of operators
\[
\bL^t: \oplus_{a\in A} C^r(V_a) \to  \oplus_{a\in A} C^r(V_a), \quad 
\bL^t((u_a)_{a\in A})=\left(
\sum_{b\in A} \cL^t_{ab}(u_b)
\right)_{a\in A}.
\]
Let  $\iota:C^r(M)\to \oplus_{a\in A} C^r(V_a)$ be the injection defined by  
\[
\iota(u)=(\rho_a\cdot (u\circ \kappa_a^{-1}))_{a\in A}.
\]
By the definition, we have the commutative diagram
\[
\begin{CD}
 \oplus_{a\in A} C^r(V_a) @>>{\bL^t}>  \oplus_{a\in A} C^r(V_a)\\
@AA{\iota}A @AA{\iota}A\\
C^r(M) @>>{\cL^t}> C^r(M)
\end{CD}
\]

Let $B^\beta_\nu$ be the completion of $C^r(M)$ with respect to the pull-back of the product norm on $\oplus_{a\in A} \cB^\beta_\nu\supset \oplus_{a\in A} C^r(V_a)$ by the injection $\iota$, so that the injection $\iota$ extends to the isometric embedding $\iota:B^\beta_\nu\to \oplus_{a\in A} \cB^\beta_\nu$ and that 
$
W^s(M)\subset B^\beta_\nu \subset W^{-s}(M)$ for $s>\beta$. 

Let $c_0$ be the constant in Proposition~\ref{th:Darboux},  and $\lambda_*$ and  $\Lambda_*$  those in the former statement of Theorem \ref{th:mainlocal}. 
Take $t_0>0$ so large that $\lambda_0 t_0 -c_0\ge \lambda_*$ and $\Lambda_0 t_0-c_0\ge \Lambda_*$. 
Applying the former statement of Theorem \ref{th:mainlocal} to each $\cL_{ab}^t$, we see that the commutative diagram above extends to  
\[
\begin{CD}
\oplus_{a\in A} \cB^\beta_\nu @>>{\bL^t}> \oplus_{a\in A} \cB^\beta_{\nu'}\\
@AA{\iota}A @AA{\iota}A\\
\B^\beta_\nu @>>{\cL^t}> \B^\beta_{\nu'}
\end{CD}
\]
for any  $t\ge t_0$, provided that $0<\beta<(r-1)/2$ and $\nu,\nu'\ge 2\beta+2d+2$. 

Suppose that  $\epsilon>0$ and $0<\beta<(r-1)/2$ are given arbitrarily and let $\nu_*$, $C_*$ and $\|\cdot\|^{(\lambda)}$ be those in the latter statement of Theorem \ref{th:mainlocal}.
Recall that the essential spectral radius of an operator on a Banach space coincides with the infimum of the spectral radii of its purturbations by compact operators. (See \cite{Nussbaum70}.)
Hence, applying the latter statement of Theorem \ref{th:mainlocal} to each~$\cL_{ab}^t$, we see that  the essential spectral radius of $\bL^t:\oplus_{a\in A} \cB^\beta_{\nu_*}\to \oplus_{a\in A} \cB^\beta_{\nu_*}$ is bounded by
\[
C_* \cdot  \# A\cdot \left( \max_{a,b\in A} \|g^t_{ab}\|_*\right)\cdot 
2^{-(1-\epsilon)\min\{ (\Lambda_0 t-c_0)/2, \beta (\lambda_0 t-c_0)\}}
\]
and so is that of $\cL^t:\B^\beta_{\nu_*}\to \B^\beta_{\nu_*}$ from the commutative diagram above. 
Note that the term $\max_{a,b\in A} \|g^t_{ab}\|_*$ is bounded by a constant independent of $t$,  because $F^t_{ab}$ preserves the standard vector field~$v_0$. 
From  the multiplicative property of  essential spectral radius, the essential spectral radius of  $\cL^t:\B^\beta_{\nu_*}\to \B^\beta_{\nu_*}$ is bounded by $2^{-(1-\epsilon)\min\{\Lambda_0 t/2, \beta \lambda_0 t\}}$. 
Fix some  $\nu\ge 2\beta+2d+2$ arbitrarily and decompose $\cL^t:\B^\beta_{\nu}\to \B^\beta_{\nu}$ for $t>3t_0$  as
\[
\begin{CD}
\B^\beta_{\nu} @>{\cL^{t_0}}>> \B^\beta_{\nu_*}
@>{\cL^{t-2t_0}}>>
\B^\beta_{\nu_*} @>{\cL^{t_0}}>> \B^\beta_{\nu}
\end{CD}
\]
Letting $t\to \infty$ and using the basic properties of essential spectral radius mentioned above, we see that the essential spectral radius of $\cL^t:\B^\beta_{\nu}\to \B^\beta_{\nu}$ is bounded by that of $\cL^t:\B^\beta_{\nu_*}\to \B^\beta_{\nu_*}$ and hence by $2^{-(1-\epsilon)\min\{\Lambda_0 t/2, \beta \lambda_0 t\}}$. Since $\epsilon>0$ is arbitrary, we obtain the main theorem, setting $\B^\beta=\B^\beta_{\nu}$.

\section{A local geometric property of the diffeomorphisms in $\cH$}\label{sec:localG}
In this section, we give a local geometric property of the diffeomorphisms in $\cH$. 
Let $G:V'\to V=G(V')$ be a $C^r$ diffeomorphism satisfying  the conditions
 (H0) and (H1) in the definition of $\cH(\lambda,\Lambda)$. 
Take a small disk $D\subset V'$ and set $D'=\pi_{+,-}(D)$. 
Since $G$ preserves the standard vector field~$v_0$, there exist a $C^r$ function $G_0:D' \to \real$
and a $C^r$ diffeomorphism
\[
G_{+,-}:D' \to G_{+,-}(D')\subset \real^{2d}, \;\; G_{+,-}(x^{+},x^{-})=(G_+(x^{+},x^{-}), G_-(x^{+},x^{-})),
\]
such that
\[
G(x_0,x^{+}, x^{-})=(x_0+G_0(x^{+},x^{-}),  G_{+}(x^{+},x^{-}), G_{-}(x^{+},x^{-}))\quad \text{on $D$.}
\]
\begin{lemma}\label{lm:local}
If $G(\zero)=\zero\in D$ in addition, we have $
DG_0(\zero)=D^2G_0(\zero)=0$.
\end{lemma}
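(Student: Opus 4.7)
The plan is to exploit preservation of the contact form (H1) and the Reeb field $v_0$ to reduce the lemma to an infinitesimal statement about a symplectic map in the transverse variables. Writing $\alpha_0 = dx_0 + \beta_0$ with $\beta_0 := x^{-} \cdot dx^{+} - x^{+} \cdot dx^{-}$ and $\tilde{G}(x^{+}, x^{-}) := (G_{+}(x^{+}, x^{-}), G_{-}(x^{+}, x^{-}))$, preservation of $v_0$ forces $G_0, G_{+}, G_{-}$ to depend only on $(x^{+}, x^{-})$, and a direct computation of $G^{*}\alpha_0 = \alpha_0$ yields the identity
\[
dG_0 \;=\; \beta_0 \,-\, \tilde{G}^{*}\beta_0 \qquad \text{on } D' \subset \real^{2d}.
\]
Taking the exterior derivative gives $\tilde{G}^{*}(d\beta_0) = d\beta_0$; since $d\beta_0 = 2\sum_i dx_i^{-} \wedge dx_i^{+} =: 2\omega_0$ is (twice) the standard symplectic form on $\real^{2d}$ and $\tilde{G}(\zero) = \zero$ follows from $G(\zero) = \zero$, the linearization $M := D\tilde{G}(\zero)$ is a linear symplectic map.

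The key ingredient is then the intrinsic identity $\beta_0|_y(v) = \omega_0(y, v)$, which holds because $\beta_0$ is the contraction of $\omega_0$ with the Euler radial field $x^{+}\partial_{x^{+}} + x^{-}\partial_{x^{-}}$. This identity implies at once that any linear symplectic $M$ preserves $\beta_0$, since $M^{*}\beta_0|_y(v) = \omega_0(My, Mv) = \omega_0(y, v) = \beta_0|_y(v)$. For the nonlinear $\tilde{G}$, one substitutes $\tilde{G}(y) = My + O(|y|^2)$ and $D\tilde{G}(y) = M + O(|y|)$ into the intrinsic formula $\tilde{G}^{*}\beta_0|_y(v) = \omega_0(\tilde{G}(y), D\tilde{G}(y) v)$ and expands: all perturbation terms are bilinear pairings of the expansions, each of size at most $O(|y|^2)\cdot |v|$. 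This gives $\tilde{G}^{*}\beta_0 - \beta_0 = O(|y|^2)$ as a 1-form, and hence $dG_0 = O(|y|^2)$ near the origin.

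Since $G_0(\zero) = 0$, integrating $dG_0 = O(|y|^2)$ along radial rays from the origin yields $G_0(y) = O(|y|^3)$, which is exactly the conclusion $DG_0(\zero) = 0$ and $D^{2}G_0(\zero) = 0$. The only point that requires some care is to verify that the $O(|y|^2)$ remainder in $\tilde{G}^{*}\beta_0 - \beta_0$ is uniform in the tangent direction $v$ (so that it upgrades to a pointwise cubic bound on $G_0$ after integration), but this is a routine consequence of Taylor expanding each of the bilinear pairings above; no regularity beyond $C^{3}$ is needed, and this is available since $r \ge 3$.
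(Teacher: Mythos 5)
Your argument is correct and takes a genuinely different route from the paper's. The paper simply compares coefficients of $dx^{+}$ and $dx^{-}$ in $G^{*}\alpha_0 = \alpha_0$ to get two explicit first-order identities for $G_0$, then evaluates them and their derivatives at $\zero$; the vanishing of the first derivative is immediate from $G_\pm(\zero)=0$, and the vanishing of the second follows from a short direct computation (which, if one writes it out, uses the symmetry of mixed partials to kill the remaining antisymmetric bilinear expressions in $DG_\pm(\zero)$). Your approach packages the same cancellation conceptually: you identify $\beta_0 = \iota_{\mathcal E}\omega_0$ as the Euler primitive of $\omega_0$, observe that $\tilde G$ is a symplectomorphism fixing $\zero$, and note that any linear symplectic map already preserves $\beta_0$ exactly, so the defect $\beta_0 - \tilde G^{*}\beta_0 = dG_0$ must vanish to second order because it is quadratic in the deviation of $\tilde G$ from its linearization $M = D\tilde G(\zero)$. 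The radial integration then upgrades $dG_0 = O(|y|^{2})$ to $G_0 = O(|y|^{3})$, from which both derivatives at $\zero$ vanish since $G_0\in C^{2}$ (here $r\ge 3$ is used, as you note). What your route buys is a cleaner explanation of \emph{why} the second derivative vanishes — it exposes the symplectic mechanism rather than leaving it as a computational coincidence — at the price of invoking Taylor estimates and a uniformity check on the $O(|y|^{2})$ bound; the paper's computation is shorter but more opaque. Both are correct and of comparable length once the details are spelled out.
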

\begin{proof} 
Comparing the coefficients of $dx^+$ and $dx^-$ in $G^*(\alpha_0)=\alpha_0$, we get
 \begin{align*}
\frac{\partial G_0}{\partial x^{+}}  &=- G_-\cdot \frac{\partial G_+}{\partial x^{+}} + G_+\cdot \frac{\partial G_-}{\partial x^{+}}  +x^-
\intertext{and}
\frac{\partial G_0}{\partial x^{-}}  &=- 
G_-\cdot \frac{\partial  G_+}{\partial x^{-}} 
+
G_+\cdot \frac{\partial G_-}{\partial x^{-}}  -x^+.
\end{align*}
This implies $
{\partial G_0}/{\partial x^{+}}(\zero)={\partial G_0}/{\partial x^{-}}(\zero)=0$.
Differentiating 
both sides with respect to $x^+$ and $x^-$ and using the assumption $G(\zero)=\zero$, we also obtain $
{\partial^2 G_0}/{\partial x^{+} \partial x^{+}}(\zero)={\partial^2 G_0}/{\partial x^{+} \partial x^{-}}(\zero)={\partial^2 G_0}/{\partial x^{-} \partial x^{-}}(\zero)=0$.
\end{proof}

For $y=(y_0,y^+,y^-)\in E$, the affine bijection $\Phi_{y}:E\to E$  defined by
\begin{equation}\label{eqn:Phi}
\Phi_{y}(x_0, x^+, x^-)=(y_0+x_0-(y^-\cdot x^+)+ (y^+\cdot x^-), \;y^++x^+, \;y^-+x^-)
\end{equation}
moves the origin $\zero$ to $y$,
preserving the contact form $\alpha_0$ and the vector field~$v_0$. 
So the assumption $G(\zero)=\zero$ in Lemma \ref{lm:local} is not essential. 
\begin{corollary}\label{cor:local}
For  any diffeomorphism $G:V'\to V$  in $\mathcal{H}$ and any compact subset
$K$  of\/ $V'$, there exists a constant $C=C(G)>0$ such that,
if $y,y'\in K$ and if $\xi\in E^*$ is written in the form $
\xi=\xi_0 \cdot \alpha_0(G(y)) +\xi_{+,-}$ with 
$\xi_0=\pi^*_0(\xi)$ and $\xi_{+,-}\in E^*_+\oplus E^*_-$, we have
\[
\|DG_{y'}^*(\xi)-DG_{y}^*(\xi)\|\le C\cdot( |\xi_0| \cdot \|y'-y\|^2+\|\xi_{+,-}\|\cdot \|y'-y\|)
\]
and
\[
|\langle \xi, D^2G_{y'}(v,v')\rangle| \le C\cdot (|\xi_0|\cdot \|y'-y\|+\|\xi_{+,-}\|)
\cdot \|v\|\cdot \|v'\|\quad \mbox{for $v,v'\in E$.}
\]
\end{corollary}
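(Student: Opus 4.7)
The plan is to reduce both inequalities to Lemma~\ref{lm:local} by conjugating $G$ with the affine, contact-preserving maps $\Phi_y$ and $\Phi_{G(y)}$ of~\eqref{eqn:Phi}, and then to propagate the resulting identity from $y$ to $y'$ by one-step Taylor expansion. The key intermediate fact I aim for is the pointwise vanishing
\begin{equation*}
\langle \alpha_0(G(y)),\, D^2G_y(v,v')\rangle = 0 \qquad\text{for all } y\in V', \ v,v'\in E. \tag{$\star$}
\end{equation*}
This is the contact-geometric input that distinguishes the $\xi_0$ and $\xi_{+,-}$ parts of $\xi$ and accounts for the different powers of $\|y'-y\|$ appearing in the two estimates.

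To derive~($\star$), fix $y\in V'$ and set $\tilde G := \Phi_{G(y)}^{-1}\circ G\circ \Phi_y$. Since every $\Phi_z$ preserves both $\alpha_0$ and $v_0$, so does~$\tilde G$; moreover $\tilde G(\zero)=\zero$ by construction. Lemma~\ref{lm:local} then applies to $\tilde G$ and yields that the $x_0$-component of $\tilde G$ has vanishing second differential at $\zero$. A direct expansion of $\Phi_{G(y)}^{-1}$ shows that this $x_0$-component equals
\[
\langle \theta,\, G(\Phi_y(w))\rangle - \langle \theta,\, G(y)\rangle, \qquad \theta := \alpha_0(G(y))\in E^*;
\]
since $\Phi_y$ is affine with invertible linear part, the chain rule then converts the vanishing of the second differential at $\zero$ into exactly~($\star$).

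With~($\star$) established, the second inequality follows in one step. Decompose $\xi = \xi_0\,\alpha_0(G(y)) + \xi_{+,-}$; then $\langle \xi, D^2G_y(v,v')\rangle$ has no $\xi_0$ contribution and is bounded by $C\|\xi_{+,-}\|\|v\|\|v'\|$. Smoothness of $D^2G$ (available since $r\ge 3$) yields $\|D^2G_{y'}-D^2G_y\|\le C\|y'-y\|$ uniformly on~$K$, and combining these two bounds while absorbing the cross term $\|\xi_{+,-}\|\|y'-y\|$ into $\|\xi_{+,-}\|$ (possible because $\|y'-y\|$ is bounded on $K$) gives the second inequality. The first inequality then follows by integrating along the segment $y_s:=y+s(y'-y)$: from
\[
\bigl[DG^*_{y'}(\xi) - DG^*_y(\xi)\bigr](v) = \int_0^1 \langle \xi,\, D^2 G_{y_s}(y'-y, v)\rangle\,ds
\]
and the second inequality applied with $y_s$ in place of $y'$, the integrand is bounded by $C(|\xi_0|\,s\|y'-y\|^2 + \|\xi_{+,-}\|\|y'-y\|)\|v\|$, and integrating in~$s$ produces the claimed right-hand side. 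The only genuinely non-routine step is~($\star$); the rest is Taylor bookkeeping.
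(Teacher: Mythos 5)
Your proof is correct and takes essentially the same route as the paper: conjugate by the affine contact maps $\Phi_y$ and $\Phi_{G(y)}$ so that Lemma~\ref{lm:local} applies, then Taylor-expand to compare $y'$ with $y$. Packaging the normalized fact as the pointwise identity~$(\star)$ and obtaining the first inequality by integrating the second along the segment $y_s$ is a clean way to carry out the bookkeeping the paper leaves implicit (it even avoids invoking $DG_0(\zero)=0$), with the routine proviso, which you leave unstated, that the segment must stay inside a compact subset of $V'$ -- handled as usual by splitting into $\|y'-y\|$ small and $\|y'-y\|$ bounded below.
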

\begin{proof} Changing coordinates by the affine bijections $\Phi_y$ and $\Phi_{G(y)}$, we may suppose $y=G(y)=\zero$. Then the claim follows from Lemma~\ref{lm:local}. 
\end{proof}



\section{Partitions of Unity}\label{sec:part}
In this section, we construct a partition of unity $\{p_\gamma\}_{\gamma\in \Gamma}$ on the cotangent bundle $T^*_{\disk}E=\disk\times E^*$ over the unit disk $\disk\subset E$. This will be used in the definition of the Hilbert spaces $\cB^\beta_\nu$ in the next section.

\subsection{Partitions of unity on $E$}\label{ss:poe}
Take a $C^\infty$ function  $\chi:\real\to [0,1]$   so that
\[
\chi(s)=\begin{cases}
1, &\text{ if $s\le 4/3$;}\\
0, &\text{ if $s\ge 5/3$,}
\end{cases}
\]
and define a $C^\infty$ function  $\rho:\real\to [0,1]$ by
\[
\rho(s)=\chi(s+1)-\chi(s+2),
\]
which is supported on the interval $[-2/3,2/3]$. 
For integers $n$ and~$k$, we define the $C^\infty$ function $\rho_{n,k}:\real\to [0,1]$ by
\[
\rho_{n,k}(s)=\rho(2^{n/2}s-k).
\]
Then, for each $n$, the family of functions $
\{ \rho_{n,k}(s)\mid k\in \integer\}$ is a $C^\infty$ partition of unity on the real line $\real$, such that $\supp\rho_{n,k}(\cdot)$ is contained in the interval 
\[
[2^{-n/2}(k-(2/3)), 2^{-n/2}(k+(2/3))]\subset [2^{-n/2}(k-1), 2^{-n/2}(k+1)].
\]

Similarly, for an integer  $n$ and $\bk=(k_0,k^+_1,\cdots, k^+_d, k^-_1,\cdots, k^-_d)\in \integer^{2d+1}$, we define the $C^\infty$ function $\rho_{n,\bk}:E\to [0,1]$ by
\[
\rho_{n,\bk}(x)=
\rho(2^{n/2} x_0-k_0) \prod_{\sigma=\pm}\prod_{i=1}^d\rho(2^{n/2}x_i^\sigma-k_i^\sigma).
\]
Again, for each $n$, the family of functions $ \{ \rho_{n,\bk}(s)\mid \bk\in \integer^{2d+1}\}$
are $C^\infty$ partition of unity on $E$, such that  
$\supp \rho_{n,\bk}$ is contained in the cube
\[
\Ze(n,\bk)=
[2^{-n/2}(k_0-1), 2^{-n/2}(k_0+1)]\times \prod_{\sigma=\pm}\prod_{i=1}^d [2^{-n/2}(k_i^\sigma-1), 2^{-n/2}(k_i^\sigma+1)],
\]
whose center is at the point 
\[
z(n, \bk):=2^{-{n/2}}\bk=2^{-{n/2}}(k_0,k^+_1,\dots, k^+_d, k_1^-,\dots, k^-_d).
\] 
Note that the functions  $\rho_{n,k}$ and $\rho_{n',k'}$ (resp. $\rho_{n,\bk}$ and $\rho_{n',\bk'}$) introduced above 
are related each other by translation if $n=n'$ and by translation and similitude otherwise. For any integer $\ell\ge 0$ (resp. for any multi-index $\alpha\in (\integer_+)^{2d+1}$), there exists  a constant $C_\ell>0$ (resp.~$C_\alpha>0$), which does not depend on $n$ and $k$ (resp.~$n$ and $\bk$), such that 
\[
\| D^\ell \rho_{n,k}\|_{L^\infty}<C_\ell \cdot 2^{\ell \cdot  n/2}\quad \mbox{(resp. } \| D^\alpha \rho_{n,\bk}\|_{L^\infty}<C_\alpha \cdot 2^{|\alpha|\cdot  n/2}\mbox{ )}.
\]

\subsection{Partitions of unity on $E^*$}\label{ss:PU}
We next introduce a few partitions of unity on the dual space $E^*$. 
For $n\ge 0$, we consider the $C^\infty$ function 
\[
\chi_n:\real\to [0,1],\qquad \chi_n(s)=\begin{cases}
\chi(2^{-n}|s|)-\chi(2^{-n+1}|s|), &\text{ if $n\ge 1$;}\\
\chi(|s|), &\text{ if $n=0$.}
\end{cases}
\]
The functions $\chi_n$ for $n\ge 0$ is a $C^\infty$ partition of unity on $\real$, which is sometimes called the Littlewood-Paley partition of unity. 
The function $\chi_n$ for $n\ge 1$ is related to $\chi_1$ by similitude. More precisely, we have 
\[
\chi_n(s)=\chi_1(2^{-n+1}s).
\] 
So, for an integer $\ell\ge 0$, there exists a constant $C_\ell>0$, which does not depend on $n$, such that
\[
\|D^\ell \chi_n\|_{L^\infty} <C_\ell\cdot 2^{-\ell n}.
\]

We also introduce  the $C^\infty$ functions
\[
\widetilde{\chi}_n:\real\to [0,1],\qquad 
\widetilde{\chi}_n(s)=\begin{cases}
\chi_{n-1}(s)+\chi_{n}(s)+\chi_{n+1}(s),
&\text{ if $n\ge 1$;}\\ 
\chi_{0}(s)+\chi_{1}(s), &\text{ if $n=0$.}
\end{cases}
\]
Note that  $\widetilde{\chi}_n\equiv 1$ on the support of $\chi_n$ and that the family $\widetilde{\chi}_n$ enjoys the same scaling property and derivative estimates as we stated   for $\chi_n$. 

Next, for $n\ge 0$ and $k \in \integer$, we consider the $C^\infty$ function
\begin{align*}
\chi_{n,k}:E^*\to [0,1],\quad  \chi_{n,k}(\xi)&=\rho_{(-n),k}(\xi_0)\cdot  \chi_n(\xi_0)\\
&=
\rho\big(2^{-n/2}\cdot \xi_0-k\big)\cdot  \chi_n(\xi_0)
\end{align*}
where $\xi_0=\pi^*_0(\xi)$. Notice that the size of the support of the function $\rho_{(-n),k}$ is proportional to $2^{n/2}$ while that of $\chi_n$ is proportional to $2^n$. 
Obviously the family of functions $
\{\chi_{n,k}\mid n\ge 0, k\in \integer\}$
is a $C^\infty$ partition of unity on $E$. 
In the same spirit as in  the definition of  $\widetilde{\chi}_n$ above, we also introduce the functions
\[
\widetilde{\chi}_{n,k}:E^*\to [0,1],\quad \widetilde{\chi}_{n,k}(\xi)=
 \rho_{(-n), k-1}(\xi_0)+\rho_{(-n), k}(\xi_0)+\rho_{(-n), k+1}(\xi_0),
\]
which satisfy   $\widetilde{\chi}_{n,k}\equiv 1$ on the support of $\chi_{n,k}$. From the estimates on the derivatives of 
$\rho_{n,k}$ and $\chi_n$, there exists a constant $C_\ell>0$ for each $\ell\ge 0$, which does not depend on $n$ nor $k$, such that
\[
\|D^\ell \chi_{n,k}\|_{L^\infty}<C_\ell \cdot 2^{-\ell  n/2}\quad \mbox{and}\quad 
\|D^\ell \widetilde{\chi}_{n,k}\|_{L^\infty}<C_\ell \cdot 2^{-\ell  n/2}.
\] 
\begin{remark}\label{rem:conv}
We will ignore the functions $\chi_{n,k}$ that vanish everywhere. Thus, for each $n\ge 0$, we consider the functions $\chi_{n,k}$ only for finitely many $k$'s. 
\end{remark}

For $\theta>0$, we consider the cones 
\begin{align*}
\cone_+^*(\theta)&=\{(0,\xi^+,\xi^-)\in E^*_+\oplus E^*_-\mid \|\xi^-\|\le \theta\|\xi^+\|\;\}
\intertext{and}
\cone_-^*(\theta)&=\{(0,\xi^+,\xi^-)\in E^*_+\oplus E^*_-\mid \|\xi^+\|\le \theta\|\xi^-\|\;\}
\end{align*}
in $E^*_+\oplus E^*_-\subset E^*$.
These cones for $\theta=1/10$ may be regarded as the duals of the cones $\cone_-$ and $\cone_+$ in the definition of $\cH$ respectively.  
Let $S^*$ be the unit sphere 
in $E^*_+\oplus E^*_-$. 
We henceforth fix  $C^\infty$ functions
$
\varphi_{\sigma}:S^*\to [0,1]$ and $ 
\widetilde{\varphi}_{\sigma}:S^*\to [0,1]$ for $\sigma\in \{+,-\}$
such that 
\begin{itemize}
\setlength{\itemsep}{4pt}
\item[(i)]
$\varphi_\sigma\equiv 1$ on a neighborhood  $S^*\cap C_\sigma^*(4/10)$  for $\sigma=\pm$, 
\item[(ii)] $\varphi_+(\xi)+\varphi_-(\xi)= 1$  for all  $\xi\in S^*$,
\item[(iii)]
$\widetilde{\varphi}_\sigma\equiv 1$ on $ C_\sigma^*(6/10)$ and  $\supp\widetilde{\varphi}_\sigma\subset C_\sigma^*(7/10)\cap S^*$  for $\sigma=\pm$.
\end{itemize}
Note that the conditions (i) and (ii) above imply that the support of $\varphi_\sigma$ is contained in $ C_\sigma^*(6/10)$ and, hence, the condition (iii) implies that $\widetilde{\varphi}_\sigma\equiv 1$ on the support of $\varphi_\sigma$. 

For an integer $m$, let $\psi_{m}:E^*_+\oplus E^*_-\to [0,1]$ and $\widetilde{\psi}_{m}:E^*_+\oplus E^*_-\to [0,1]$ be $C^\infty$ functions defined respectively by
\[
\psi_{m}(\xi)=
\begin{cases}
\chi_m(\|\xi\|)\cdot \varphi_+(\xi/\|\xi\|), &\text{ if $m\ge 1$;}\\
\chi_0(\|\xi\|), &\text{ if $m=0$;}\\
\chi_{|m|}(\|\xi\|)\cdot  \varphi_-(\xi/\|\xi\|), &\text{ if $m\le -1$}
\end{cases}
\] 
and
\[
\widetilde{\psi}_{m}(\xi)=
\begin{cases}
\widetilde{\chi}_m(\|\xi\|)\cdot \widetilde{\varphi}_+(\xi/\|\xi\|),& \text{if $m\ge 1$;}\\
\;\widetilde{\chi}_0(\|\xi\|),& \text{if $m=0$;}\\
\widetilde{\chi}_{|m|}(\|\xi\|)\cdot \widetilde{\varphi}_-(\xi/\|\xi\|),& \text{if $m\le -1$.}
\end{cases}
\]
The family $\{\psi_{m}\}_{m\in \integer}$ is a $C^\infty$ partition of unity on the subspace $E^*_+\oplus E^*_-$ and we have  $\widetilde{\psi}_m\equiv 1$ on $\supp \psi_m$. 
Note that the functions $\psi_m$ for $m>0$ (resp. $m<0$) are related each other by similitude. More precisely, we have
\[
\psi_{m'}(\xi)=\psi_m(2^{|m|-|m'|}\xi)\quad \mbox{if $m\cdot m'>0$.} 
\]
This scaling relation is true also for $\widetilde{\psi}_m$. 
Hence, for any $\alpha\in (\integer_+)^{2d}$, there exists a constant $C_\alpha>0$, which does not depend on $m$, such that 
\[
\|D^\alpha \psi_m\|_{L^\infty}<C_\alpha\cdot 2^{-|\alpha|\cdot m}\quad \mbox{and}\quad 
\|D^\alpha \widetilde{\psi}_m\|_{L^\infty}<C_\alpha\cdot 2^{-|\alpha| \cdot m}.
\]
Note that the support of the function $\psi_{m}$  (resp. $\widetilde{\psi}_{m}$) is contained in the disk on 
$E^*_+\oplus E^*_-$ with radius  $2^{m+1}$ (resp. $2^{m+2}$)  and center at the origin.

Next we define $C^\infty$ functions
$\psi_{n,k,m}:E^*\to [0,1]$ and $\widetilde{\psi}_{n,k,m}:E^*\to [0,1]$ 
for $n\ge 0$ and $k,m\in \integer$
respectively by
\[
\psi_{n,k,m}(\xi)= \chi_{n,k}(\xi)\cdot \psi_{m}(2^{-n/2}\xi^+,2^{-n/2}\xi^-)
\]
and
\[
\widetilde{\psi}_{n,k,m}(\xi)=\widetilde{\chi}_{n,k}(\xi)\cdot \widetilde{\psi}_{m}
(2^{-n/2}\xi^+, 2^{-n/2} \xi^-)
\]
where $\xi=(\xi_0,\xi^+,\xi^-)$. 
Then the family  $\{\psi_{n,k,m}\mid n\ge 0, m,k\in \integer\}$ is a $C^{\infty}$ partition of unity on $E^*$ and we have $\widetilde{\psi}_{n,k,m}\equiv 1$ on the support of ${\psi}_{n,k,m}$. From the estimates on the derivatives of $\chi_{n,k}$ and $\psi_m$ (resp. $\widetilde{\chi}_{n,k}$ and $\widetilde{\psi}_m$), we see that,  for any multi-index $\alpha\in (\integer_+)^{2d+1}$, there exists a constant $C_\alpha>0$, which does not depend on $n,k$ nor $m$, such that 
\begin{align*}
&\|D^\alpha \psi_{n,k,m}\|_{L^\infty}<C_\alpha\cdot 2^{-|\alpha|\cdot (n/2) -|\alpha|_\dag \cdot |m|}
\intertext{and}
&\|D^\alpha \widetilde{\psi}_{n,k,m}\|_{L^\infty}<C_\alpha\cdot 2^{-|\alpha|\cdot (n/2) -|\alpha|_\dag \cdot |m|},
\end{align*}
where we set 
\begin{equation}\label{eqn:sdag}
|\alpha|_\dag=|\alpha|-\alpha_0\quad \mbox{ for $\alpha=(\alpha_0, \alpha^+_1, \cdots, \alpha^+_d, \alpha^-_1,\cdots, \alpha^-_d)\in (\integer_+)^{2d+1}$.}
\end{equation}

\subsection{Partitions of unity on $T^*_\disk E=\disk\times E^*$}
As we noted in Remark \ref{rem:conv}, we consider   the set 
\[
\cN=\{(n,k)\in \integer_+\oplus \integer\mid \text{$\chi_{n,k}$ does not vanish completely.}\}
\]
as the index set of the partition of unity $\{\chi_{n,k}\}$. 
Below we introduce a $C^\infty$ partition of unity on $\disk\times E^*$ whose index set is 
\[
\Gamma=\bigl\{(n,k,m,\bk)
\in \cN \oplus \integer \oplus \integer^{2d+1} \mid\;\supp \rho_{n,\bk} \cap \disk \neq \emptyset\bigr\}.
\]
To refer the components of $\gamma=(n,k,m,\bk)\in \Gamma$, we set 
\[
n(\gamma)=n,\;\;k(\gamma)=k, \;\; m(\gamma)=m\quad \text{and}\quad \bk(\gamma)=\bk.
\]
For simplicity, we put
\[
\rho_{\gamma}=\rho_{n(\gamma), \bk(\gamma)}, \quad \Ze(\gamma)=\Ze(n(\gamma), \bk(\gamma))\;\;\mbox{ and }\;\;z(\gamma)=z(n(\gamma), \bk(\gamma)).
\] 
Note that, from the condition in the definition of $\Gamma$ above, 
$\|z(\gamma)\|$ for $\gamma\in \Gamma$ are uniformly bounded by some constant which depends only on $d$.  

Recall the diffeomorphism $\Phi_y:E\to E$  defined for $y\in E$ by (\ref{eqn:Phi}).
For each $\gamma\in \Gamma$, we consider the linear map
\[
\Phi_\gamma=((D\Phi_{z(\gamma)})_{\zero})^{*}:T_{z(\gamma)}E^*\to T_{\zero}E^*,
\]
which is characterized by the conditions
\[
\Phi_\gamma(\alpha_0(z(\gamma)))=\alpha_0(\zero)\quad \mbox{ and}\quad  \Phi_\gamma|_{E^*_+\oplus E^*_-}=id.
\] 
We then define the $C^\infty$ functions
$
\psi_\gamma:E^*\to [0,1]$ and $\widetilde{\psi}_\gamma:E^*\to [0,1]$ by
\[
\psi_{\gamma}= \psi_{n(\gamma), k(\gamma), m(\gamma)} \circ 
\Phi_\gamma
\quad \text{and}\quad
\widetilde{\psi}_{\gamma}= \widetilde{\psi}_{n(\gamma), k(\gamma), m(\gamma)}\circ 
\Phi_\gamma.
\]
Finally we define the family of $C^\infty$ functions $p_\gamma:T^*E\to [0,1]$ for $\gamma\in \Gamma$ by 
\[
p_\gamma(x,\xi)=\rho_\gamma(x)\cdot \psi_\gamma(\xi)\quad 
\text{for }(x,\xi)\in T^*E=E\times E^*. 
\]
This family of functions is a $C^\infty$ partition of unity on $\disk \times E^*$. In fact, for given $(n,k)\in \cN$ and $\bk\in \integer^{2d+1}$, we have
\[
\sum_{\gamma:n(\gamma)=n;k(\gamma)=k;\bk(\gamma)=\bk} p_{\gamma}(x,\xi)=
\rho_{n,\bk}(x)\cdot \chi_{n,k}(\xi)\quad\text{ for } (x,\xi)\in T^*E
\]
 and hence 
\[
\sum_{\gamma\in \Gamma} p_\gamma(x,\xi)\equiv 1 \quad \text{ for $(x,\xi)\in \disk\times E^*$.}
\]

\subsection{Boundedness of the families $\psi_\gamma$ and $\widetilde{\psi}_\gamma$} \label{ss:bdd}
One important property of the families $\psi_\gamma$ and $\widetilde{\psi}_\gamma$ is that they are bounded up to some scaling and translation in the following sense.
For integers $n\ge 0$ and $m$, we consider the linear map $J_{n,m}:E^*\to E^*$ defined by
\[
J_{n,m}(\xi_0,\xi^+,\xi^-)=(2^{n/2}\xi_0, 2^{n/2+|m|}\xi^+,2^{n/2+|m|}\xi^-).
\]
For $\gamma\in \Gamma$, let $A_\gamma:E^*\to E^*$ be the translation defined by
\[
A_\gamma(\xi)=\xi+k(\gamma)\cdot 2^{n(\gamma)/2}\cdot \alpha_0(z(\gamma)),
\]
which moves the origin to the center of the support of $\psi_\gamma$.
\begin{lemma}\label{lm:scaling}
The set of functions 
\[
\psi_\gamma\circ A_\gamma\circ J_{n(\gamma), m(\gamma)}\quad \mbox{ and }\quad \widetilde{\psi}_\gamma\circ A_\gamma\circ J_{n(\gamma), m(\gamma)}\quad \mbox{ for $\gamma\in \Gamma$}
\]
is bounded in $\mathcal{D}(E^*)$, that is, their supports are contained  in a bounded subset in~$E^*$ and   their $C^s$ norms are uniformly bounded for each $s\ge 0$. \end{lemma}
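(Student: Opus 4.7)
The plan is to compute $\psi_\gamma\circ A_\gamma\circ J_{n(\gamma),m(\gamma)}$ explicitly and show it is a fixed compactly supported bump perturbed by uniformly controlled lower-order terms, by exploiting the scaling relations $\chi_n(s)=\chi_1(2^{-n+1}s)$ and $\psi_m(\zeta)=\psi_{\mathrm{sgn}(m)}(2^{1-|m|}\zeta)$ together with the shear structure of $\Phi_\gamma$.

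First I would compute $\Phi_\gamma$ in coordinates. Writing $\alpha_0(z)=(1,z^-,-z^+)$ and using the definition $\Phi_\gamma=((D\Phi_{z(\gamma)})_{\zero})^{*}$, a direct calculation from \eqref{eqn:Phi} gives
\[
\Phi_\gamma(\xi_0,\xi^+,\xi^-)=(\xi_0,\;\xi^+-\xi_0\, z^-(\gamma),\;\xi^-+\xi_0\, z^+(\gamma));
\]
in particular $\{\Phi_\gamma\}$ is a uniformly bounded family of linear isomorphisms, because $\|z(\gamma)\|$ is uniformly bounded over $\gamma\in\Gamma$. Substituting $\eta:=\Phi_\gamma(A_\gamma(J_{n,m}(\xi)))$ (abbreviating $n=n(\gamma)$, $k=k(\gamma)$, $m=m(\gamma)$) I would verify that the translation $A_\gamma$ is arranged precisely so that the cross term $k\cdot 2^{n/2}z^\pm(\gamma)$ introduced in the $(+,-)$-coordinates by $A_\gamma$ is cancelled by the shear action of $\Phi_\gamma$, leaving
\[
\eta_0=2^{n/2}(\xi_0+k),\qquad 2^{-n/2}\eta^\pm=2^{|m|}\xi^\pm\mp\xi_0\, z^\mp(\gamma).
\]

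Next, I would inspect the two factors of $\psi_{n,k,m}(\eta)=\chi_{n,k}(\eta)\cdot\psi_m(2^{-n/2}\eta^+,2^{-n/2}\eta^-)$ separately. The $\chi_{n,k}$-factor reduces to $\rho(\xi_0)\cdot \chi_n(2^{n/2}(\xi_0+k))$. The first bump is fixed, and the derivative bound $\|D^\ell\chi_n\|_\infty\le C_\ell\, 2^{-\ell n}$ implies that every $\xi_0$-derivative of $\chi_n(2^{n/2}(\xi_0+k))$ is uniformly bounded by $C_\ell\, 2^{-\ell n/2}$, while the support in $\xi_0$ sits in $[-2/3,2/3]$ independently of $\gamma$. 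The $\psi_m$-factor becomes $\psi_m(2^{|m|}\xi^+-\xi_0 z^-(\gamma),\,2^{|m|}\xi^-+\xi_0 z^+(\gamma))$; applying the scaling relation rewrites this for $m\ne 0$ as $\psi_{\mathrm{sgn}(m)}\bigl(2\xi^+-2^{1-|m|}\xi_0 z^-(\gamma),\,2\xi^-+2^{1-|m|}\xi_0 z^+(\gamma)\bigr)$, and the error term $2^{1-|m|}\xi_0 z^\pm(\gamma)$ is uniformly small in every derivative. The case $m=0$ is an immediate variant, since $\psi_0$ is already compactly supported and the perturbation is again uniform. Combining the two factors gives a uniformly bounded, uniformly compactly supported family in $\mathcal{D}(E^*)$, and the identical argument handles $\widetilde{\psi}_\gamma$ (which uses $\widetilde{\chi}_n,\widetilde{\varphi}_\sigma$ with the same scaling behaviour).

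The only non-trivial point is the cancellation step: one must verify that translating along $\alpha_0(z(\gamma))$ (rather than some fixed reference direction such as $\alpha_0(\zero)$) is exactly what undoes the shear introduced by $\Phi_\gamma$ on the first coordinate and simultaneously removes the $k$-dependence from the $(+,-)$-coordinates. Once that bookkeeping is in place, the remaining bounds are routine consequences of the scaling relations and derivative estimates recorded in Subsections~\ref{ss:poe} and \ref{ss:PU}.
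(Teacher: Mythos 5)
Your proof is correct and follows essentially the same route as the paper: compute $\Phi_\gamma\circ A_\gamma\circ J_{n,m}$ explicitly in coordinates, observe the cancellation of the $k$-dependent terms, and reduce the composed function to $\rho(\xi_0)\,\chi_n(2^{n/2}(\xi_0+k))\,\psi_m(2^{|m|}\xi^\pm\mp\xi_0 z^\mp(\gamma))$, whose boundedness in $\mathcal D(E^*)$ then follows from the scaling relations and the uniform bound on $\|z(\gamma)\|$. You are in fact slightly more scrupulous than the paper, which silently drops the (harmless, since its $\xi_0$-derivatives are $O(2^{-\ell n/2})$) factor $\chi_n(2^{n/2}(\xi_0+k))$ from its displayed formula.
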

 \begin{proof}From the definitions, we have
 \begin{align*}
 \psi_\gamma\circ &A_\gamma\circ J_{n(\gamma),m(\gamma),k(\gamma)}(\xi_0,\xi^+,\xi^-)\\
 &=\rho(\xi_0)\cdot \psi_{m(\gamma)}(2^{|m(\gamma)|}\xi_+-\xi_0\cdot \pi_+^*\alpha(z(\gamma)), 
 2^{|m(\gamma)|}\xi_--\xi_0\cdot \pi_-^*\alpha(z(\gamma))).
 \end{align*}
 We also have the same formula with $\psi_\gamma$ and $\psi_{m(\gamma)}$ replaced by  $\widetilde{\psi}_\gamma$ and $\widetilde{\psi}_{m(\gamma)}$ respectively. 
 Therefore the claim of the lemma follows from the properties of the functions $\psi_m$ and $\widetilde{\psi}_m$ that is mentioned previously and the fact that $\|z(\gamma)\|$ for $\gamma\in \Gamma$ are bounded. 
 \end{proof}
For $n\in \integer_+$, $m\in \integer$ and $\mu>0$, 
we define the function $b_{n,m}^\mu:E\to \real_+$ by 
\begin{equation}\label{eqn:bnm}
b_{n,m}^\mu(x)=|\det J_{n,m}| \cdot \langle J_{n,m}(x)\rangle^{-\mu},
\end{equation}
where (and henceforth) we set
\[
\langle y\rangle =(1+\|y\|^2)^{1/2}.
\]
For brevity, we set $
b_{\gamma}^\mu=b_{n(\gamma),m(\gamma)}^\mu$ for $\gamma\in \Gamma$. Then the last lemma implies
\begin{corollary}
\label{cor:localized}For each $\mu>0$, there exists a constant $C_*>0$ such that 
\[
|\Fourier^{-1}\psi_\gamma(x)|\le C_*\cdot b_{\gamma}^\mu(x)\quad \mbox{ and }\quad 
|\Fourier^{-1}\widetilde{\psi}_\gamma(x)|\le C_*\cdot b_{\gamma}^\mu(x)
\]
for all $x\in E$ and $\gamma\in \Gamma$, where $\Fourier$ denotes the Fourier transform.
\end{corollary}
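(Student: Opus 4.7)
The plan is to reduce the estimate to a single uniform decay bound via the scaling and translation built into Lemma~\ref{lm:scaling}, and then to push the resulting estimate through the Fourier transform by an explicit change of variables. Write $n=n(\gamma)$ and $m=m(\gamma)$ for brevity, and set
\[
\phi_\gamma:=\psi_\gamma\circ A_\gamma\circ J_{n,m}, \qquad \widetilde\phi_\gamma:=\widetilde\psi_\gamma\circ A_\gamma\circ J_{n,m}.
\]
By Lemma~\ref{lm:scaling}, both families $\{\phi_\gamma\}$ and $\{\widetilde\phi_\gamma\}$ are bounded in $\mathcal{D}(E^*)$: their supports lie in a fixed compact subset of $E^*$ and their $C^s$-norms are uniformly bounded in $\gamma$ for every~$s$. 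A standard integration-by-parts argument then yields, for each $\mu>0$, a constant $C_\mu>0$ (independent of $\gamma$) with
\[
|\Fourier^{-1}\phi_\gamma(y)|\le C_\mu\langle y\rangle^{-\mu},\qquad
|\Fourier^{-1}\widetilde\phi_\gamma(y)|\le C_\mu\langle y\rangle^{-\mu}
\quad\text{for all }y\in E.
\]

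It remains to undo the affine transformation. Using $\psi_\gamma(\xi)=\phi_\gamma(J_{n,m}^{-1}(A_\gamma^{-1}\xi))$ and substituting $\eta=J_{n,m}^{-1}(A_\gamma^{-1}\xi)$, so that $d\xi=|\det J_{n,m}|\,d\eta$, I compute
\[
\Fourier^{-1}\psi_\gamma(x)=\int_{E^*} e^{i\langle \xi,x\rangle}\psi_\gamma(\xi)\,d\xi
= e^{i\theta_\gamma(x)}\,|\det J_{n,m}|\int_{E^*} e^{i\langle \eta,J_{n,m}(x)\rangle}\phi_\gamma(\eta)\,d\eta,
\]
where $\theta_\gamma(x)=\langle k(\gamma)\cdot 2^{n/2}\cdot \alpha_0(z(\gamma)),x\rangle$ comes from the translation $A_\gamma$ and where I used that $J_{n,m}$ is self-adjoint. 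The phase factor $e^{i\theta_\gamma(x)}$ has modulus~$1$, so
\[
|\Fourier^{-1}\psi_\gamma(x)|=|\det J_{n,m}|\cdot |\Fourier^{-1}\phi_\gamma(J_{n,m}(x))|\le C_\mu\,|\det J_{n,m}|\cdot \langle J_{n,m}(x)\rangle^{-\mu}=C_\mu\cdot b_\gamma^\mu(x),
\]
and exactly the same computation applies with $\psi_\gamma,\phi_\gamma$ replaced by $\widetilde\psi_\gamma,\widetilde\phi_\gamma$. Taking $C_*=C_\mu$ yields both bounds.

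There is no real obstacle here; the only point requiring care is the bookkeeping of the change of variables (verifying that the translation contributes only a unimodular phase and that $J_{n,m}^{T}=J_{n,m}$), after which the corollary is an immediate consequence of Lemma~\ref{lm:scaling} combined with the standard rapid-decay estimate for the Fourier transform of a bounded family of compactly supported smooth functions.
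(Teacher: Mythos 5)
Your proof is correct and follows essentially the same route as the paper's: invoke Lemma~\ref{lm:scaling} to get a bounded family in $\mathcal{D}(E^*)$, observe that Fourier inversion then gives uniform rapid decay, and undo the affine normalization, with the translation $A_\gamma$ contributing only a unimodular phase. Your version is slightly more careful than the paper's one-line proof — the paper writes $\Fourier^{-1}(\psi_\gamma\circ J_{n,m})$ and omits the translation $A_\gamma$, which strictly speaking is needed for the supports to stay bounded — but this is only a matter of explicitness, not of substance.
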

\begin{proof}
From Lemma \ref{lm:scaling}, we see that 
\[
\Fourier^{-1}(\psi_\gamma\circ J_{n,m})=|\det J_{n,m}|^{-1}\cdot (\Fourier^{-1}\psi_\gamma)\circ J_{n,m}^{-1}
\]
is bounded in the Schwartz space $\mathcal{S}(E)$. This implies the claim above.
\end{proof}

\section{The  Hilbert spaces $\cB^\beta_\nu$}\label{sec:Hil}
In this section, we define the  Hilbert spaces $\cB^\beta_\nu$ in Theorem \ref{th:mainlocal}.
\subsection{Decomposition of functions using pseudodifferential operators}
For a $C^\infty$ function $p:T^*E\to \real$ on the cotangent bundle $T^*E=E\times E^*$ with compact support, the adjoint of the pseudodifferential operator $p_{\gamma}(x,D)$ with symbol $p_\gamma$ is
the operator $
p(x,D)^*:L^2(E)\to L^2(E)$
given by
\[
p(x,D)^* u(x)=(2\pi)^{-(2d+1)} \int e^{i\langle \xi, x-y\rangle } p(y,\xi) u(y) dy d\xi.
\]
\begin{remark} The notation $p(x,D)^*$ should be read as a single symbol and the letter $x$ and $D$ in it have no meaning as variable. 
We refer to \cite{Hormander3, Taylor} for the general definition and properties of pseudodifferential operators. 
But we actually need no knowledge on pseudodifferential operators in the following argument, since we consider only simple cases as we will see below. 
\end{remark}

For $u\in L^2(\disk)$ and $\gamma\in \Gamma$, we set $
u_\gamma=p_\gamma(x,D)^* u$. 
From the definition of the function $p_\gamma$, we may write it in a simpler form as follows. For a $C^\infty$ function $\psi:E^*\to \real$ with compact support, let us consider the operator 
$\psi(D):L^2(E)\to L^2(E)$ defined by
\begin{align*}
\psi(D) u(x)&=(2\pi)^{-(2d+1)} \int e^{i\langle \xi, x-y\rangle } \psi(\xi) u(y) dy d\xi\\
&=\Fourier^{-1}(\psi\cdot \Fourier u)(x)=(\Fourier^{-1} \psi)* u(x).
\end{align*}
Then we may write $u_\gamma$ as 
\[
u_\gamma=p_\gamma(x,D)^* u
=\psi_{\gamma}(D) (\rho_\gamma\cdot u)=
(\Fourier^{-1} \psi_\gamma)*(\rho_\gamma\cdot u).
\]

Note that we have 
 $u=\sum_{\gamma\in \Gamma} u_\gamma$ for $u\in L^2(\disk)$
because $\{p_\gamma\}_{\gamma\in \Gamma}$ is a partition of unity on $T_{\disk}^*E$. 
Also observe that each $u_\gamma$ is localized near the support of $\rho_\gamma$ by Corollary \ref{cor:localized} and its 
Fourier transform is supported in  
that of $\psi_\gamma$ from the definition.

\subsection{The definition of the Hilbert space $\cB^\beta$} 
For $\beta>0$ and $\nu\ge 2d+2$, we set 
\[
\|u\|_{\beta,\nu}=\left(\sum_{\gamma\in \Gamma} 2^{2\beta m(\gamma)}\| d_\gamma^\nu \cdot u_\gamma\|_{L^2}^2\right)^{1/2}\quad \text{ for $u\in C^\infty(\disk)$,}
\]
where $\dt_\gamma^\nu=(\dt_\gamma)^\nu$ and $\dt_\gamma:E\to \real$ is the function defined by
\[
\dt_\gamma(x)=\langle 2^{n(\gamma)/2}(x-z(\gamma))\rangle =
\left(1+2^{n(\gamma)}\|x-z(\gamma)\|^2\right)^{1/2}.
\]
Then we have 
\begin{lemma}\label{lm:wd} For $0<\beta<s$ and $\nu\ge 2d+2$, there exists a constant $C>0$ such that  
$
 (1/C) \|u\|_{W^{-s}}\le \|u\|_{\beta,\nu}\le C \|u\|_{W^s}
$ for all $u\in C^{\infty}(\disk)$.
\end{lemma}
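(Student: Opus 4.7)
The plan is to establish both sandwich inequalities by exploiting the wavepacket-like structure of the decomposition $u = \sum_\gamma u_\gamma$: each $u_\gamma$ is concentrated near $z(\gamma)$ in physical space (at scale $2^{-n(\gamma)/2}$ in $x_0$ and at scale $2^{-n(\gamma)/2 - |m(\gamma)|}$ transversally), while its Fourier transform is supported in $\supp\psi_\gamma$. My first step is to show that the weight $d_\gamma^\nu$ is essentially harmless, namely $\|d_\gamma^\nu u_\gamma\|_{L^2} \le C\|\rho_\gamma u\|_{L^2}$ uniformly in $\gamma$, when $\nu \ge 2d+2$. Starting from $u_\gamma(x) = \int K_\gamma(x-y)\rho_\gamma(y)u(y)\,dy$ with $K_\gamma = \Fourier^{-1}\psi_\gamma$, the bound $|K_\gamma| \le C_\mu b_\gamma^\mu$ from Corollary~\ref{cor:localized} combined with Peetre's inequality $\langle 2^{n/2}(x-z(\gamma))\rangle \le \sqrt 2\, \langle 2^{n/2}(x-y)\rangle\,\langle 2^{n/2}(y-z(\gamma))\rangle$ transfers the weight $d_\gamma^\nu(x)$ onto $\langle 2^{n/2}(x-y)\rangle^\nu$, since $d_\gamma(y) = O(1)$ on $\supp\rho_\gamma$. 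Because $\|J_{n,m}z\| \ge \|2^{n/2} z\|$, choosing $\mu = \nu + 2d + 2$ leaves an $L^1$-bounded convolution kernel $F_\gamma(z) = |\det J_{n,m}|\,\langle J_{n,m} z\rangle^{-(2d+2)}$, and Young's inequality closes this step.

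For the upper bound $\|u\|_{\beta,\nu} \le C\|u\|_{W^s}$, Plancherel gives $\|u_\gamma\|_{L^2}^2 \le \int_{\supp\psi_\gamma}|\widehat{\rho_\gamma u}(\xi)|^2\,d\xi$. Writing $\widehat{\rho_\gamma u} = \hat\rho_\gamma \ast \hat u$ and applying Cauchy--Schwarz with weight $\langle\eta\rangle^{-2s}$ followed by Fubini, then summing over $\gamma$ with the weights $2^{2\beta m(\gamma)}$, I reduce to bounding the frequency kernel
\[
K(\eta) := \sum_{\gamma}\, 2^{2\beta m(\gamma)}\int_{\supp\psi_\gamma}|\hat\rho_\gamma(\xi-\eta)|\,\langle\xi\rangle^{-2s}\,d\xi
\]
by $C\langle\eta\rangle^{-2(s-\beta)}$. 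The key observation is that on $\supp\psi_\gamma$ with $\xi$ within scale $2^{n(\gamma)/2}$ of $\eta$ (the effective support of $\hat\rho_\gamma$), one has $2^{n(\gamma)/2}\sim\langle\eta_0\rangle^{1/2}$ and $2^{n(\gamma)/2 + |m(\gamma)|}\sim\langle\eta^{+,-}\rangle$, so $2^{2\beta m(\gamma)}\lesssim\langle\eta\rangle^{2\beta}$; together with $\langle\xi\rangle^{-2s}\sim\langle\eta\rangle^{-2s}$ and $s > \beta$, the sum is summable.

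For the lower bound $\|u\|_{W^{-s}} \le C\|u\|_{\beta,\nu}$, I would dualize: $\|u\|_{W^{-s}} = \sup_{\|\varphi\|_{W^s}\le 1}|\langle u,\varphi\rangle|$. Since $\{p_\gamma\}$ is a partition of unity on $\disk\times E^*$ we have $\sum_\gamma u_\gamma = u$, and because $u_\gamma = \widetilde\psi_\gamma(D) u_\gamma$ (as $\widetilde\psi_\gamma\equiv 1$ on $\supp\psi_\gamma$),
\[
\langle u,\varphi\rangle = \sum_\gamma \langle u_\gamma,\widetilde\psi_\gamma(D)\varphi\rangle.
\]
Weighted Cauchy--Schwarz with factors $2^{\pm 2\beta m(\gamma)}$ and $d_\gamma^{\pm\nu}$ gives
\[
|\langle u,\varphi\rangle| \le \|u\|_{\beta,\nu}\cdot\Big(\sum_\gamma 2^{-2\beta m(\gamma)}\|d_\gamma^{-\nu}\widetilde\psi_\gamma(D)\varphi\|_{L^2}^2\Big)^{1/2},
\]
and the second factor is bounded by $C\|\varphi\|_{W^s}$ by a dual variant of the upper-bound argument, applied to the family $\{\widetilde\psi_\gamma\}$ with exponent $-\beta$; this works because $-\beta > -s$.

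The main technical obstacle is the combinatorial bookkeeping in the upper-bound step. A naive count of indices diverges: the frequency-space centers of $\supp\psi_\gamma$ depend on $z(\gamma)$ through $\Phi_\gamma$, so a priori unboundedly many $\bk$'s contribute for each fixed $(n,k,m)$ near a given $\eta$. The rapid Schwartz decay of $\hat\rho_\gamma$ on scale $2^{n(\gamma)/2}$ must be used to collapse this $\bk$-sum effectively, and one then needs to verify that the resulting frequency sum over $(n,k,m)$ is dominated by $\langle\eta\rangle^{2\beta}$ uniformly in $\eta$, using the identifications $2^{n/2}\sim\langle\eta_0\rangle^{1/2}$ and $2^{|m|}\sim\langle\eta^{+,-}\rangle/\langle\eta_0\rangle^{1/2}$ on the effective support.
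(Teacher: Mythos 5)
Your high-level plan — reduce both directions to kernel/almost-orthogonality estimates exploiting the dual (space--frequency) localization of the $u_\gamma$ — is the right idea, and the Peetre/Young computation showing $\|d_\gamma^\nu u_\gamma\|_{L^2}\le C\|\rho_\gamma u\|_{L^2}$ is fine on its own. But the way you chain the steps has two genuine gaps.

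For the upper bound, the weight-reduction lemma as you state it \emph{destroys} the frequency localization, yet the very next step uses it: you pass from $\|d_\gamma^\nu u_\gamma\|_{L^2}$ to $\|u_\gamma\|_{L^2}^2\le\int_{\supp\psi_\gamma}|\widehat{\rho_\gamma u}|^2$, but these are different quantities, and the inequality $\|d_\gamma^\nu u_\gamma\|_{L^2}\le C\|u_\gamma\|_{L^2}$ that you implicitly need is false (multiplying by $d_\gamma^\nu$ amplifies the polynomial tails of $u_\gamma$). Conversely, $\|d_\gamma^\nu u_\gamma\|\le C\|\rho_\gamma u\|$ with no Fourier restriction makes $\sum_\gamma 2^{2\beta m(\gamma)}\|\rho_\gamma u\|^2$ diverge, since the $m$-sum has no decay. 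What is actually needed is a single estimate that retains both: the weight $d_\gamma^\nu$ \emph{and} the restriction to (a slight fattening of) $\supp\psi_\gamma$. This cannot be obtained by simply concatenating the two observations.

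Your acknowledged combinatorial obstacle — the $\bk$-sum — is a real one, and the proposed fix (``rapid Schwartz decay of $\hat\rho_\gamma$ collapses the $\bk$-sum'') does not work in the $x_0$-direction. The shear $\Phi_\gamma(\xi)=(\xi_0,\xi^+-\xi_0\,z(\gamma)^-,\xi^++\xi_0\,z(\gamma)^+)$ is \emph{independent of $z(\gamma)_0$}: all $\sim 2^{n/2}$ values of $k_0$ with $z(\gamma)\in\disk$ yield \emph{identical} $\supp\psi_\gamma$. No Fourier-side decay can distinguish them. The only mechanism that kills this $2^{n/2}$-fold overcounting is the spatial weight $d_\gamma^\nu$ — which is exactly why the norm carries it and why the hypothesis $\nu\ge 2d+2$ appears. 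Your lower-bound argument has the mirror-image defect: discarding $d_\gamma^{-\nu}\le 1$ before summing $\|\widetilde\psi_\gamma(D)\varphi\|_{L^2}^2$ over $\gamma$ leaves that same divergent $k_0$-multiplicity. In effect, your plan tries to push the argument through on the pure symbol side, which is precisely the route the paper's Remark~\ref{re:ori} says was abandoned because of these difficulties.

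The paper's appendix proof avoids both traps by working with the Littlewood--Paley blocks $u_n=p_n(D)u$ (so $\|u\|_{W^s}^2\sim\sum_n 2^{2sn}\|u_n\|_{L^2}^2$), pairing each $\gamma$ with the level $n'\approx\widetilde n(\gamma):=\max\{n(\gamma),\,m(\gamma)+n(\gamma)/2\}$, and treating $u\mapsto d_\gamma\, p_\gamma(x,D)^* p_{n'}(D)u$ as an integral operator. Its kernel $\kappa_{n',\gamma}$ is estimated (including the weight $d_\gamma$ explicitly) by $b$-type convolution kernels, with an extra $2^{-\mu\max\{n',\widetilde n(\gamma)\}/2}$ gain when $|n'-\widetilde n(\gamma)|>c$ obtained by integration by parts; Young's inequality plus the almost-orthogonality $|(u_\gamma,u_{\gamma'})|\le C\langle 2^{n/2}(z(\gamma)-z(\gamma'))\rangle^{-2\nu}\|d_\gamma^\nu u_\gamma\|\|d_{\gamma'}^\nu u_{\gamma'}\|$ (which keeps both weights) then closes the argument in both directions. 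The weight and the frequency localization never get decoupled, which is the point your proposal misses.
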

In particular, $\|\cdot \|_{\beta,\nu}$ is a norm on $C^\infty(\disk)$ associated to a unique inner product $( \cdot, \cdot )_{\beta,\nu}$. 
We give the proof of Lemma \ref{lm:wd} in the appendix at the end of this paper,  because it requires some estimates that will be given in the following sections. 
 Now we  define the Hilbert space $\cB^\beta_\nu$ as follows
\begin{definition}
For $0<\beta<(r-1)/2$ and $\nu\ge 2d+2$, the Hilbert space~$\cB^\beta_\nu$ is  the completion of the space $C^\infty(\disk)$ with respect to the norm $\|\cdot \|_{\beta,\nu}$, equipped with the extension of the inner product $( \cdot, \cdot )_{\beta,\nu}$.
\end{definition}
Clearly it follows from Lemma \ref{lm:wd} that  
\[
W^s(\disk)\subset \cB^\beta_\nu\subset W^{-s}(\disk)\quad \mbox{for $s>\beta$.}
\]

\begin{remark}\label{re:ori} If we take an appropriate  $C^\infty$ function $P_{\beta}:T^*E\to \real$ that approximates  $\sum_{\gamma} 2^{\beta m(\gamma)} p_{\gamma}$ and consider the norm $\|u\|_{\beta}=\| P_\beta(x,D)^* u\|_{L^2}$ in the place of the norm $\|\cdot \|_{\beta,\nu}$ in the definition above, we  get a Hilbert space similar to  $\cB^\beta_\nu$. 
Such definition of the Hilbert space should look much simpler and, actually, this is what we had in mind in the beginning for the definition of the Hilbert spaces in Theorem \ref{th:mainlocal}. But we adopt the rather involved definition of $\cB^\beta_\nu$ above because it fits directly to the argument in the proof  and 
because we like to avoid technical difficulties\footnote{The main difficulty is that the symbol $P_\beta$ that we want to consider does not belong to the standard classes of symbols  in the (classical) theory of pseudodifferential operators. It would be very interesting and preferable if our results is formulated, proved or improved in terms of pseudodifferential operators. }
 related to pseudodifferential operators. 
\end{remark}


\section{The auxiliary operator $\cM(G,g)$}\label{sec:aux}
In this section, we introduce the operator
$\cM(G,g):\bB^\beta_{\nu}\to \bB^\beta_{\nu'}$ between Hilbert spaces, which  is an  extension of  the operator $\cL(G,g):\cB^\beta_\nu\to \cB^\beta_{\nu'}$ in the sense that there exists an isometric embedding $\iota:\cB^\beta_\nu\to \bB^\beta_\nu$ and that the following diagram commutes:
\begin{equation}\label{cd:cMcLrel}
\begin{CD}
\bB^\beta_\nu @>{\cM(G,g)}>> \bB^\beta_{\nu'}\\
@AA{\iota}A @AA{\iota}A\\
\cB^\beta_\nu @>{\cL(G,g)}>> \cB^\beta_{\nu'}
\end{CD}
\end{equation}

\subsection{The definition of the operator $\cM$}
For $\beta>0$ and $\nu\ge  2d+2$, we consider the Hilbert space
$\bB^\beta_\nu\subset (L^2(E))^\Gamma$ defined by
\[
\bB^\beta_\nu =
\left\{\bu=(u_{\gamma})_{\gamma\in \Gamma}\;\left|\;\;  \widetilde{\psi}_{\gamma}(D) u_\gamma=u_\gamma, \;\; 
\sum_{\gamma\in \Gamma} 2^{2\beta m(\gamma)}\|d_\gamma^\nu \cdot u_\gamma\|^2_{L^2}<\infty \right.
\right\}
\] 
and equipped with the norm 
\[
\|\bu\|_{\beta,\nu}=\sum_{\gamma\in \Gamma} 2^{2\beta m(\gamma)}\|d_\gamma^\nu \cdot u_\gamma\|_{L^2}^2.
\] 
Then the injection $
\iota: \cB^\beta_\nu\to \bB^\beta_\nu$,  $\iota(u)=
\left(p_{\gamma}(x,D)^* u\right)_{\gamma\in \Gamma}$,
is an isometric embedding. (Notice that we have $\widetilde{\psi}_{\gamma}(D) \psi_\gamma(D) u=(\widetilde{\psi}_{\gamma}\cdot \psi_\gamma)(D) u={\psi}_{\gamma}(D) u$.)

Suppose that $v=\cL(G,g) u$ for $u\in L^2(\disk)$ and 
set $u_\gamma=p_{\gamma}(x,D)^* u$ and $v_\gamma=p_{\gamma}(x,D)^* v$ for $\gamma\in \Gamma$. Then we have
\begin{equation}\label{eqn:uvsum}
v_{\gamma'}=\sum_{\gamma\in \Gamma} \cL_{\gamma\gamma'}u_{\gamma},
\end{equation}
where the operator $\cL_{\gamma\gamma'}=\cL_{\gamma\gamma'}(G,g):L^2(E)\to L^2(E)$ is defined by
\begin{equation}\label{eqn:cLgg}
\cL_{\gamma\gamma'}w=p_{\gamma'}(x,D)^*( \cL(G,g)(\widetilde{\psi}_\gamma(D) w)).
\end{equation}
\begin{remark}
Since $\widetilde{\psi}_\gamma(D) u_\gamma=u_\gamma$ in the setting above, 
the operation $\widetilde{\psi}_\gamma(D)$ in (\ref{eqn:cLgg}) 
is not necessary for (\ref{eqn:uvsum}) to hold. But this operation makes sense when we regard $\cL_{\gamma\gamma'}$ as an operator on $L^2(E)$. 
\end{remark}

We define the operator  
$
\cM(G,g):\bB^\beta_\nu\to \bB^\beta_{\nu'}$
 formally by
\begin{equation}\label{eqn:defcM}
\cM(G,g)((u_{\gamma})_{\gamma\in \Gamma})=\left(\sum_{\gamma\in \Gamma} \cL_{\gamma\gamma'}(u_\gamma)\right)_{\gamma'\in \Gamma}.
\end{equation}
Then, by (\ref{eqn:uvsum}), the diagram (\ref{cd:cMcLrel}) commutes at the formal level at least. 
In the following sections, we will prove 
\begin{theorem}\label{th:reduced} 
There exist  constants $\lambda_*>0$ and $\Lambda_*>0$ such that, for $G:V'\to V$ in $\cH(\lambda_*,\Lambda_*)$ and $g\in\cC^r(V')$,  the formal definition of $\cM(G,g)$ above 
 gives a bounded operator $\cM(G,g):\bB^\beta_\nu\to \bB^\beta_{\nu'}$ for $0<\beta<(r-1)/2$ and $\nu,\nu'\ge 2\beta+2d+2$, and the  diagram (\ref{cd:cMcLrel}) commutes. 

Further, for any $\epsilon>0$ and $0<\beta<(r-1)/2$, there exist constants $\nu_*\ge 2\beta+2d+2$, $C_*>0$ and 
a family of norms $\|\cdot\|^{(\lambda)}$ on $\bB^\beta_{\nu_*}$ for $\lambda>0$, which are all equivalent to the norm $\|\cdot\|_{\beta,\nu_*}$, such that,  if $G:V'\to V$ belongs to $\cH(\lambda,\Lambda)$ for $\lambda\ge \lambda_*$ and  $\Lambda\ge \Lambda_*$ with $\Lambda\ge d\lambda$ and if $g\in \cC^r(V')$,
there exists a compact operator $\mathcal{K}(G,g):\bB^\beta_{\nu_*}\to \bB^\beta_{\nu_*}$ such that the operator norm of $\cM(G,g)-\mathcal{K}(G,g):\bB^\beta_{\nu_*}\to \bB^\beta_{\nu_*}$ with respect to the norm $\|\cdot\|^{(\lambda)}$ is bounded by $
C_* \cdot \|g\|_{*}\cdot  2^{-(1-\epsilon)\min\{\Lambda/2,\beta \lambda\}}$.

\end{theorem}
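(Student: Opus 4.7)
The plan is to treat $\cM(G,g)$ as the infinite matrix $(\cL_{\gamma\gamma'})_{\gamma,\gamma'\in\Gamma}$ defined in (\ref{eqn:cLgg}), acting on the weighted $\ell^2$-type space $\bB^\beta_\nu$, and prove a Schur-type estimate on this matrix. Unfolding the two pseudodifferential compositions, the Schwartz kernel of $\cL_{\gamma\gamma'}$ is the oscillatory integral
\[
K_{\gamma\gamma'}(x,y) \;\propto\; \iiint e^{i[\langle\xi,x-z\rangle+\langle\eta,G(z)-y\rangle]}\,\psi_{\gamma'}(\xi)\,\widetilde{\psi}_{\gamma}(\eta)\,\rho_{\gamma'}(z)\,g(z)\,dz\,d\xi\,d\eta,
\]
whose phase is stationary in $z$ precisely when $\xi = (DG_{z})^{*}\eta$. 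Integration by parts in $\xi$ and $\eta$, controlled by Corollary \ref{cor:localized} applied with exponent $\mu$ close to $\nu$, supplies the polynomial factors $d_{\gamma'}^{-\nu}(x)$ and $d_{\gamma}^{-\nu}(G^{-1}\text{-image of }y)$; integration by parts in $z$ makes $\|\cL_{\gamma\gamma'}\|_{L^2\to L^2}$ rapidly decreasing in $\gamma,\gamma'$ unless $(DG)^{*}(\supp\widetilde\psi_\gamma)$ overlaps $\supp\psi_{\gamma'}$. Boundedness of $\cM(G,g):\bB^\beta_\nu\to\bB^\beta_{\nu'}$ will then follow from a Schur test: the choice $\nu,\nu'\ge 2\beta+2d+2$ makes the tails summable against the $d_\gamma^\nu$ weights and against the weights $2^{2\beta m(\gamma)}$, and the commutativity of (\ref{cd:cMcLrel}) follows because both sides agree on $C^\infty(\disk)$ as absolutely summable double series in $\bB^\beta_{\nu'}$.

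\textbf{Three-way decomposition.} Following the outline in Sections \ref{sec:pre}--\ref{sec:center}, I will decompose $\cM(G,g) = \cK(G,g) + \cM_h + \cM_{ce}$ by partitioning $\Gamma\times\Gamma$ into three regions. The \emph{compact part} $\cK$ collects pairs with $\max(n(\gamma),n(\gamma'))\le N_0(\epsilon)$; this part factors through a bounded-frequency subspace on which the weight $d_\gamma^\nu$ makes the image precompact in each ball of $\bB^\beta_{\nu_*}$, so $\cK$ is a compact operator. The \emph{hyperbolic part} $\cM_h$ consists of pairs for which at least one of $|m(\gamma)|,|m(\gamma')|$ is comparable to $n(\gamma)/2$, so $\psi_\gamma$ or $\psi_{\gamma'}$ effectively sees the transverse (stable/unstable) frequency cones. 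The \emph{central part} $\cM_{ce}$ is the remainder, where both frequencies are concentrated along the Reeb direction $\alpha_0(z(\gamma))$.

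\textbf{Hyperbolic part.} For $(\gamma,\gamma')\in\Gamma_h$, conditions (H2)--(H3) imply that $(DG_z)^{*}$ expands the co-cone transverse to $\alpha_0$ by a factor at least $2^\lambda$ while preserving the $C_\pm^{*}$ cone structure. The stationary-phase condition $\xi=(DG_z)^{*}\eta$ then forces the ``active'' transitions to satisfy $m(\gamma')\approx m(\gamma)\pm\lambda$ (signs matched to $C_\pm^{*}$). Inserting this into the weighted Schur sum with weight $2^{2\beta m(\gamma)}$ produces the factor $2^{-\beta\lambda}$, with the $d_\gamma^\nu$ tails giving finite multiplicity (here $\nu\ge 2\beta+2d+2$ is sharp). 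The family $\|\cdot\|^{(\lambda)}$ is obtained from $\|\cdot\|_{\beta,\nu_*}$ by a modest $\lambda$-dependent reweighting of $2^{2\beta m}$ along near-diagonal transitions, chosen so that constants coming from geometric progressions in $|m|$ are absorbed. This part of the argument parallels the treatment of hyperbolic diffeomorphisms in \cite{BaladiTsujii07,BaladiTsujii08}.

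\textbf{Central part, the main obstacle.} For $(\gamma,\gamma')\in\Gamma_{ce}$, the frequency lives near the flow direction and there is no transverse expansion to exploit; the bound $2^{-\Lambda/2}$ must come from contact non-integrability. The key input is Corollary \ref{cor:local}: after normalization by the affine bijections $\Phi_y$, the flow component $G_0$ of $G$ vanishes to second order at the base-point, so the phase of $K_{\gamma\gamma'}$ restricted to the flow direction has vanishing Hessian but picks up nontrivial curvature from the transverse directions of $C_\pm$. The $(d+1)$-dimensional volume expansion $2^\Lambda$ in (H4) then enters a $TT^{*}$/stationary-phase bound: each ``central-scale'' $\gamma$-block spreads over $\sim 2^{\Lambda/2}$ nearly orthogonal $\gamma'$-targets, and an almost-orthogonality (Cotlar--Stein) argument across these targets yields $\|\cM_{ce}\|^{(\lambda)}=O(2^{-\Lambda/2})$. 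The $(1-\epsilon)$ loss in the exponent absorbs polynomial factors from this argument and fixes the threshold $N_0(\epsilon)$ used in defining $\cK$. The hardest technical point is to verify the phase-curvature estimate uniformly over $G\in\cH(\lambda,\Lambda)$ and over all $(\gamma,\gamma')\in\Gamma_{ce}$ with $\Lambda\ge d\lambda$, since the Hessian bound must be extracted from the contact form pointwise over $V'$; this is the novel ingredient that the hyperbolic-diffeomorphism framework does not need to supply.
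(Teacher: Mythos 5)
Your high-level strategy matches the paper's: decompose $\cM$ into a compact part, a hyperbolic part, and a central part; estimate each via Schur-type tests on the kernels $\kappa_{\gamma\gamma'}$; use Corollary \ref{cor:local} (the second-order vanishing of the flow component $G_0$) to extract the contact non-integrability in the central part; and use the determinant condition (H4) to produce the $2^{-\Lambda/2}$ gain. However, there are several places where the proposal, as written, has genuine gaps rather than merely omitted detail.

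First, the cut defining your compact part --- $\max(n(\gamma),n(\gamma'))\le N_0(\epsilon)$ --- is not sufficient: for a fixed $n(\gamma)$ the index $m(\gamma)$ still ranges over all of $\integer$, so the set of pairs so bounded is infinite and the resulting block is not obviously a compact operator. The paper's $\cR(0)$ cuts on $\max\{n(\gamma),|m(\gamma)|,n(\gamma'),|m(\gamma')|\}\le K$, which is finite by the definition of $\Gamma$ and makes Proposition \ref{lm:m0} immediate.

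Second, the paper does not treat the hyperbolic part as a single Schur sum; it splits it into $\cM_2,\cM_3,\cM_4$ corresponding to three genuinely different geometric situations: (a) the weights $w^{(\lambda)}(m')/w^{(\lambda)}(m)$ alone give $2^{-\beta\lambda}$; (b) $G(Z(\gamma'))$ is far from $z(\gamma)$, so the spatial weight $d_\gamma^\nu$ gives decay; and (c) $DG_y^*(\supp\widetilde\psi_\gamma)$ is far from $\supp\psi_{\gamma'}$, so repeated integration by parts in $y$ (hence the limited differentiability $r$) gives decay. Lemma \ref{lm:division} is the dichotomy that routes every remaining pair into case (b) or (c). Your sketch conflates all three into "$m'\approx m\pm\lambda$ plus tails," which loses the case distinction that determines which exponent ($\beta\lambda$ vs. $\nu\delta\lambda$ vs. $(r-1)D$) controls the sum; without it the constant $\nu_*$ cannot be pinned down.

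Third, and most substantially, the central part is not handled by a direct Cotlar--Stein argument on $\cM_1$ itself. The paper factors $\cM_1^{(n,n')}=\cP^{(n')}\circ\cQ^{(n)}$ through an auxiliary Hilbert space $\tB$ indexed by $\Sigma$, using the straightening $H_0$ to replace $\alpha_0$ by $\widetilde\alpha_0=dx_0+x^-\cdot dx^+$; the point is that $\Psi_\sigma$ is then independent of $\xi^+$, so $\Fourier^{-1}\Psi_\sigma$ is a measure supported on $E_0\oplus E_-$ and the $E_+$-integration in Lemma \ref{lm:keypre} picks up exactly the $(d+1)$-dimensional Jacobian $2^{-\Lambda}$ from (H4). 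The $\Lambda/2$ exponent comes not from "$\sim2^{\Lambda/2}$ nearly orthogonal targets" but from this single $L^2$ bound, and the almost-orthogonality Lemma \ref{lm:key} (proved in four cases, with case (iv) being the heart, using a cleverly chosen direction $w$ with $D\hG_{y(\gamma)}(w)\in E_0\oplus E_+$ and a lower bound on the phase derivative via $d\alpha_0$) is needed only to sum over the polynomially many $\gamma$ feeding into each $\sigma$ without the Schur constant blowing up like $\delta\lambda$. Your proposal omits the factorization entirely, and the pointwise "phase-curvature estimate" you flag as hardest is, in the paper, not a curvature estimate at all but a linear-in-$z$ lower bound on $D_w f$ obtained directly from the non-degeneracy of $d\alpha_0$; without the $\tB$-factorization it is not clear how to organize the estimate so that both the $2^{-\Lambda}$ factor and the almost-orthogonality can be exploited simultaneously.
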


Since the operator $\iota$ in (\ref{cd:cMcLrel}) is an isometric embedding,
Theorem \ref{th:mainlocal} follows from Theorem \ref{th:reduced} immediately. 

\subsection{The operator $\cL_{\gamma\gamma'}$} 
\label{sec:Ell}
The operator  $\cL_{\gamma\gamma'}:L^2(E)\to L^2(E)$ defined in the last subsection can be regarded as an integral operator 
\[
\cL_{\gamma\gamma'}u(x')=
\int \kappa_{\gamma\gamma'}(x',x) u(x) dx
\]
with the smooth kernel
\begin{align}
\kappa_{\gamma\gamma'}&(x',x)=\int \Fourier^{-1}\psi_{\gamma'}(x'-y) \cdot \rho_{\gamma'}(y) \cdot g(y)
\cdot \Fourier^{-1}\tilde{\psi}_{\gamma}(G(y)-x) dy \notag\\
&=(2\pi)^{-2(2d+1)}\int e^{i\langle\xi, x'-y\rangle+i\langle\eta,G(y)-x \rangle } \rho_{\gamma'}(y)  g(y) \psi_{\gamma'}(\xi)\tilde{\psi}_{\gamma}(\eta) d\xi d\eta dy.\label{eqn:kappa}
\end{align}
As a simple estimate on this kernel, we have
\begin{lemma}\label{lm:kest0} For each $\mu>0$, there exists a constant $C_*>0$ such that 
\[
|\kappa_{\gamma\gamma'}(x',x)|\le C_*\cdot \|g\|_{L^\infty}\cdot \int_{\Ze(\gamma')} b_{\gamma'}^\mu(x'-y)\cdot b_{\gamma}^\mu(G(y)-x) dy
\]
for $(x,x')\in E\times E$, $\gamma, \gamma'\in \Gamma$ and for $G:V'\to V$ in $\mathcal{H}$ and $g\in \cC^r(V')$.
\end{lemma}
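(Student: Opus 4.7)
The plan is to observe that this bound is essentially immediate from the first (non-oscillatory) expression for $\kappa_{\gamma\gamma'}$ in~(\ref{eqn:kappa}), combined with Corollary~\ref{cor:localized} applied to the two inverse-Fourier factors. I would not use the oscillatory integral form at this stage; the point of the lemma is exactly to record the crudest non-oscillatory pointwise bound on the kernel before any stationary-phase arguments are carried out later.

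Concretely, I would start from
\[
\kappa_{\gamma\gamma'}(x',x)=\int \Fourier^{-1}\psi_{\gamma'}(x'-y) \cdot \rho_{\gamma'}(y) \cdot g(y)
\cdot \Fourier^{-1}\widetilde{\psi}_{\gamma}(G(y)-x)\, dy,
\]
move the absolute value inside the integral by the triangle inequality, and then estimate the four factors separately. For $\rho_{\gamma'}$ I use $|\rho_{\gamma'}(y)|\le 1$ together with $\supp\rho_{\gamma'}\subset\Ze(\gamma')$, which restricts the $y$-integration to $\Ze(\gamma')$. For $g$ I use $|g(y)|\le \|g\|_{L^\infty}$, which may be pulled outside. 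For the two inverse Fourier factors I apply Corollary~\ref{cor:localized} with the prescribed decay rate $\mu$, obtaining $|\Fourier^{-1}\psi_{\gamma'}(x'-y)|\le C_*\, b_{\gamma'}^\mu(x'-y)$ and $|\Fourier^{-1}\widetilde{\psi}_{\gamma}(G(y)-x)|\le C_*\, b_{\gamma}^\mu(G(y)-x)$. Absorbing the product of the two $C_*$'s into a single constant yields the claimed bound.

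There is no real obstacle: the only thing to verify is that Corollary~\ref{cor:localized} applies uniformly in $\gamma,\gamma'\in\Gamma$ (which is the content of that corollary, stemming ultimately from the bounded-family property of Lemma~\ref{lm:scaling}), and that the same exponent $\mu$ can be used in both applications, which is fine because the corollary is valid for every $\mu>0$. The resulting constant $C_*$ depends only on $\mu$ and on the finite data defining the partition of unity, hence is uniform in $\gamma$, $\gamma'$, $G\in\mathcal{H}$ and $g\in\cC^r(V')$, as required.
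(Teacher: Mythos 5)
Your proof is correct and is essentially the paper's own argument made explicit: the paper's one-line proof ``apply Corollary~\ref{cor:localized} to the integration with respect to $\xi$ and $\eta$ in (\ref{eqn:kappa})'' amounts precisely to reading the kernel as the convolution-type expression you start from and bounding the two inverse Fourier factors by $b_{\gamma'}^\mu$ and $b_{\gamma}^\mu$. Nothing further is needed.
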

\begin{proof} 
The claim follows if we apply Corollary \ref{cor:localized} to the integration with respect to the variable $\xi$ and $\eta$ in (\ref{eqn:kappa}).
\end{proof}
This uniform estimate is quite useful. 
But we need to improve this estimate in some cases. 
In the case where  $DG^*_y(\supp \widetilde{\psi}_\gamma)$ for $y\in \supp \rho_{\gamma'}$ are apart from $\supp \psi_{\gamma'}$, it is natural to expect that the operator norm of $\cL_{\gamma\gamma'}$ is small. To justify this idea, we  use the fact that the term $
e^{i\langle\xi, x'-y\rangle+i\langle\eta,G(y)-x \rangle }$
in (\ref{eqn:kappa}) oscillates fast in such case and therefore the integration with respect to the variable $y$ in (\ref{eqn:kappa}) can be regarded as an oscillatory integral. 

Let us  recall a technique in estimating  oscillatory integrals. (See \cite[\S 7.7]{Hormander1} for more details.) Consider an integral of the form
\begin{equation}\label{eqn:oscl}
\int h(x) e^{i f(x)} dx
\end{equation}
where $h(x)$ is a continuous function supported on a compact subset in $E$ and $f(x)$ a real-valued continuous function defined on a neighborhood of the support of $h$. Take a few vectors $v_1,v_2,\dots,v_k$ in $E$ and regard them as constant vector fields on $E$. Assume that the functions $f$ and $h$ are so smooth that $v_i f$, $v_i v_j f$ and $v_i h$ for  $1\le i,j\le k$ exist and are continuous on a neighborhood of the support of $h$. Assume also that  
\[
v_1(f)^2+v_2(f)^2+\cdots +v_k(f)^2\neq 0\quad \text{on the support of $h$.}
\]
Then we can apply integration by parts to obtain
\[
\int h(x) e^{i f(x)} dx =
\int Lh(x) e^{i f(x)} dx
\]
where
\[
Lh=\sum_{j=1}^{k} 
v_j\left(\frac{i\cdot h\cdot v_j(f)}{\sum_{\ell=1}^{k}v_\ell(f)^2}\right).
\]
This formula tells that if the term $e^{i f(x)}$ oscillates fast in the directions spanned by the vectors $v_1,v_2,\cdots, v_k$, the term $Lh(x)$ will be small and so will be the oscillatory integral (\ref{eqn:oscl}). 

Assuming more smoothness of the functions $f$ and $h$, we may repeat the operation above and obtain the formula 
\begin{equation}\label{eqn:intbypart}
\int h(x) e^{i f(x)} dx =
\int L^\ell h(x) e^{i f(x)} dx.
\end{equation}
Basically we get better estimate if we exploit  this formula for larger $\ell$. This is the point where differentiablity of the flow gets into our argument. 

Below we give a simple estimate on the kernel $\kappa_{\gamma\gamma'}$ applying the formula (\ref{eqn:intbypart}). First we introduce some definitions. 
For integers $n,k,n',k'$ such that $(n,k), (n',k')\in \cN$, we set
\[
\Delta(n,k,n',k')=
\log_2^+ \left(2^{-n'/2}\cdot d(\supp \widetilde\chi_{n, k},\supp\chi_{n', k'})\right)
\]
where $\log_2^+ t=\max\{0, \log t/\log 2\}$. Also we put
\[
\widetilde{\Delta}(n,k,n',k')=
\begin{cases}
0,&\quad\text{if $|n-n'|\le 1$;}\\
\Delta(n,k,n',k'),&\quad\text{otherwise.} 
\end{cases}
\]
Since $\pi_0^*(\supp \chi_{n,k})\subset \pi_0^*(\supp \widetilde\chi_{n, k})\subset [-2^{n+2}, 2^{n+2}]$, we have that 
\begin{equation}\label{eqn:dnk0}
\widetilde{\Delta}(n,k,n',k')\le \Delta(n,k,n',k')\le \max\{n,n'\}-n'/2+2
\end{equation}
in general. If $|n-n'|\ge 2$ and $\max\{n,n'\}\ge 10$, we have also that
\begin{equation}\label{eqn:dnk}
\Delta(n,k,n',k')=\widetilde{\Delta}(n,k,n',k')\ge \max\{n,n'\}-n'/2-3.
 \end{equation}
Hence it holds, in general, that 
 \begin{equation}\label{eqn:nnd}
|n-n'|\le  2\Delta(n,k,n',k')+10.
 \end{equation}
\begin{remark}\label{rem:card}
For each $(n,k)\in \cN$, the cardinality of $(n',k')\in \cN$ such that $\Delta(n,k,n',k')=0$ (resp. $\Delta(n',k', n,k)=0$) is bounded by an absolute constant. 
\end{remark}
Looking into  the definition of $\Delta(n,k,n',k')$ more closely, we see that,  for each $s>1$,  there exists a constant $C_*=C_*(s)>0$ such that 
\begin{equation}\label{eqn:chisum}
\sum_{(n',k')\in \cN} 2^{-s\Delta(n,k,n',k')}<C_* \quad \text{for any $(n,k)\in \cN$}
\end{equation}
and that
\begin{equation}\label{eqn:chisum2}
\sum_{(n,k)\in \cN} 2^{-s\Delta(n,k,n',k')}<C_* \quad \text{for any $(n',k')\in \cN$.}
\end{equation}
For $(\gamma,\gamma')\in \Gamma\times \Gamma$, we will write $\Delta(\gamma,\gamma')$ and $\widetilde{\Delta}(\gamma,\gamma')$  respectively for
\[
\Delta(n(\gamma),k(\gamma), n(\gamma'), k(\gamma'))\quad \text{and}\quad
\widetilde{\Delta}(n(\gamma),k(\gamma), n(\gamma'), k(\gamma')).
\]

\begin{lemma}\label{lm:kest1}
For each $\mu>0$, there exists a constant $C_*>0$ such that 
\[
|\kappa_{\gamma\gamma'}(x',x)|\le C_*\cdot 2^{-r_*\cdot  \Delta(\gamma,\gamma')}\|g\|_{*}\cdot \int_{\Ze(\gamma')} b_{\gamma'}^\mu(x'-y)\cdot b_{\gamma}^\mu(G(y)-x) dy
\]
for any $(\gamma,\gamma')\in \Gamma\times \Gamma$ and any $(x,x')\in E\times E$. 
The constant $C_*$ does not depend on $G:V'\to V$ in $ \mathcal{H}$ nor on $g\in \cC^r(V')$. 
\end{lemma}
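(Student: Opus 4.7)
The case $\Delta(\gamma,\gamma')=0$ is immediate from Lemma~\ref{lm:kest0} since $\|g\|_{L^\infty}\le \|g\|_*$ and $2^{-r_*\cdot 0}=1$. The substance is the case $\Delta>0$, where the factor $2^{-r_*\Delta}$ must be extracted by oscillation in the $y_0$ direction of the triple-integral representation (\ref{eqn:kappa}). The key geometric ingredient is condition (H1): $DG_y(v_0)=v_0$ for all $y\in V'$, whence $\partial_{y_0}\langle\eta, G(y)\rangle=\eta\cdot v_0=\eta_0$. Consequently the phase $f(y,\xi,\eta):=\langle\xi,x'-y\rangle+\langle\eta,G(y)-x\rangle$ satisfies $\partial_{y_0} f=\eta_0-\xi_0$, a quantity independent of $y$ that is bounded below in modulus by $2^{n(\gamma')/2+\Delta}$ on $\supp\psi_{\gamma'}\otimes\widetilde{\psi}_\gamma$ by the definition of $\Delta$.

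I would then integrate by parts in $y_0$ a total of $r_*$ times using $e^{if}=(i(\eta_0-\xi_0))^{-1}\partial_{y_0} e^{if}$, obtaining
\[
\kappa_{\gamma\gamma'}(x',x) = \frac{(-1)^{r_*}}{(2\pi)^{2(2d+1)}}\int \partial_{y_0}^{r_*}(\rho_{\gamma'}g)(y)\, B(y)\, dy,\quad B(y):=\iint \frac{\psi_{\gamma'}(\xi)\widetilde{\psi}_\gamma(\eta)}{(i(\eta_0-\xi_0))^{r_*}}\, e^{if(y,\xi,\eta)}\, d\xi\, d\eta.
\]
The Leibniz rule combined with the derivative estimate $\|\partial_{y_0}^k\rho_{\gamma'}\|_\infty\le C_k 2^{kn(\gamma')/2}$ from Subsection~\ref{ss:poe} and $\|\partial_{y_0}^j g\|_\infty\le \|g\|_*$ for $j\le r_*$ yields $|\partial_{y_0}^{r_*}(\rho_{\gamma'}g)|\le C\|g\|_*\cdot 2^{r_*n(\gamma')/2}$. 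The remaining task is to show
\[
|B(y)|\le C\cdot 2^{-r_*(n(\gamma')/2+\Delta)}\, b_{\gamma'}^\mu(x'-y)\, b_\gamma^\mu(G(y)-x),
\]
so that the factors $2^{\pm r_*n(\gamma')/2}$ cancel and the desired $2^{-r_*\Delta}$ emerges after integrating over $y\in\supp\rho_{\gamma'}\subset \Ze(\gamma')$.

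To estimate $B(y)$, set $D_0:=2^{n(\gamma')/2+\Delta}$ and introduce a smooth cutoff $\omega(\xi_0,\eta_0):=\phi((\eta_0-\xi_0)/D_0)$ that equals $1$ on $\{|\eta_0-\xi_0|\ge D_0\}$ and vanishes on $\{|\eta_0-\xi_0|\le D_0/2\}$. On the relevant supports $\omega\equiv 1$, so $(i(\eta_0-\xi_0))^{-r_*}$ inside $B(y)$ may be replaced by the globally smooth function $Q(\xi_0,\eta_0):=\omega/(i(\eta_0-\xi_0))^{r_*}$. Viewing $B(y)$ as the $(\xi,\eta)$-Fourier inverse of $\psi_{\gamma'}(\xi)\widetilde{\psi}_\gamma(\eta)\cdot Q(\xi_0,\eta_0)$ evaluated at $(x'-y,G(y)-x)$, the convolution theorem represents it (up to constants) as $[\Fourier^{-1}\psi_{\gamma'}\otimes\Fourier^{-1}\widetilde{\psi}_\gamma]\ast \Fourier^{-1}Q$. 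Since $Q$ depends only on $(\xi_0,\eta_0)$, the distribution $\Fourier^{-1}Q$ is supported in $\{u^\pm=h^\pm=0\}$; an explicit computation via the change of variables $w=\eta_0-\xi_0$, $z=\eta_0+\xi_0$ gives $\Fourier^{-1}Q=c\,\delta(u_0+h_0)\,Q_2(h_0-u_0)$ with $|Q_2(s)|\le C_N D_0^{1-r_*}(1+D_0|s|)^{-N}$ for every $N$, by scaling and iterated IBP inside the one-dimensional integral defining $Q_2$.

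The main obstacle is the final step: recovering the factorized decay $b_{\gamma'}^\mu(x'-y)b_\gamma^\mu(G(y)-x)$ through this non-tensor convolution. Using Corollary~\ref{cor:localized} to dominate $|\Fourier^{-1}\psi_{\gamma'}|$ and $|\Fourier^{-1}\widetilde{\psi}_\gamma|$ by $b_{\gamma'}^\mu$ and $b_\gamma^\mu$, the convolution collapses to a one-dimensional integral in a parameter $s$ against $Q_2(-2s)$. Since $Q_2$ is essentially concentrated on $|s|\le 1/D_0$ with $L^1$-norm of order $D_0^{-r_*}$, the estimate $2^{n(\gamma')/2}|s|\le 2^{-\Delta}\le 1$ shows that shifting the argument of $b_{\gamma'}^\mu$ by $s\cdot v_0$ within this range does not disturb its size; the parallel statement for $b_\gamma^\mu$ requires $|n(\gamma)-n(\gamma')|\le 2\Delta+O(1)$ from (\ref{eqn:nnd}), which gives $2^{n(\gamma)/2}|s|\le C$ for $|s|\le 1/D_0$. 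The tail $|s|\ge 1/D_0$ is absorbed by the rapid decay of $Q_2$ obtained by taking $N$ large. This yields $|B(y)|\le C D_0^{-r_*} b_{\gamma'}^\mu(x'-y)b_\gamma^\mu(G(y)-x)$; the delicate alignment of the scale $D_0$ with the anisotropic scales of the $b^\mu$-weights, mediated by (\ref{eqn:nnd}), is the technical heart of the argument. Combined with the derivative bound and integration in $y$ over $\Ze(\gamma')$, this gives the stated inequality with a constant independent of $G\in\cH$ and $g\in\cC^r(V')$.
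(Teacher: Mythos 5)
Your opening matches the paper exactly: dispose of $\Delta=0$ via Lemma~\ref{lm:kest0}, and for $\Delta>0$ exploit condition (H1) to see that $\partial_{y_0}f=\eta_0-\xi_0$ is constant in $y$, then integrate by parts $r_*$ times along $v_0$ to land on the representation the paper records in (\ref{eqn:ker2})--(\ref{eqn:RR}). Where you genuinely diverge is in estimating the inner $(\xi,\eta)$-integral $B(y)$. The paper keeps the denominator $(\pi_0^*(\eta-\xi))^{-r_*}$ glued to $\psi_{\gamma'}(\xi)\widetilde{\psi}_\gamma(\eta)$ as one combined symbol $R$ and re-derives its spatial decay directly: multiply by $(x'-y)^\alpha(G(y)-x)^\beta$, transfer to $\partial_\xi^\alpha\partial_\eta^\beta R$ by further integration by parts, estimate uniformly via Lemma~\ref{lm:scaling}, and multiply by the support volumes. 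You instead peel the singular factor off as a separate symbol $Q$, smooth it with a cutoff $\omega$ that is harmless on the relevant supports, pass to $\Fourier^{-1}Q = c\,\delta(u_0+h_0)\,Q_2(h_0-u_0)$ by an explicit change of variables, and bound $Q_2$ by scaling and one-dimensional IBP; this lets you invoke Corollary~\ref{cor:localized} as a black box on $\Fourier^{-1}\psi_{\gamma'}$ and $\Fourier^{-1}\widetilde{\psi}_\gamma$ instead of re-deriving their decay inline. The price is a one-parameter translation $a\mapsto a\mp s v_0$ inside the weights $b_{\gamma'}^\mu$, $b_\gamma^\mu$; you correctly flag (\ref{eqn:nnd}) as the scale-matching input, but the tail $|s|\ge D_0^{-1}$ is only asserted. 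To close it, use the Peetre-type bound $b_\gamma^\mu(a-sv_0)\le 2^{\mu/2}\langle 2^{n(\gamma)/2}s\rangle^\mu b_\gamma^\mu(a)$ (and likewise for $\gamma'$), together with $\langle 2^{n(\gamma)/2}s\rangle\langle 2^{n(\gamma')/2}s\rangle\lesssim\langle D_0 s\rangle^2$ — again a consequence of (\ref{eqn:nnd}) — so that for $N>2\mu+1$ the inflation is absorbed by $|Q_2(-2s)|\lesssim D_0^{1-r_*}\langle D_0 s\rangle^{-N}$ and the $s$-integral is $\lesssim D_0^{-r_*}$ with no loss in $\Delta$. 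With that inserted your argument reproduces the stated bound; it is somewhat longer than the paper's, but isolates the technical work in a one-dimensional computation and reuses Corollary~\ref{cor:localized} cleanly rather than re-proving it.
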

\begin{proof}
We suppose $\Delta(\gamma,\gamma')>0$, since the conclusion is a consequence of Lemma \ref{lm:kest0} otherwise. 
By definition, the condition $\Delta(\gamma,\gamma')>0$ implies 
\[
|\pi^*_0(\eta-\xi)|\ge 2^{n(\gamma')/2+\Delta(\gamma,\gamma')}
\quad \mbox{whenever $\xi\in \supp \psi_{\gamma'}$ and $\eta\in \supp \widetilde{\psi}_\gamma$.}
\]
Apply the formula (\ref{eqn:intbypart}) to the integral  with respect to $y$ in (\ref{eqn:kappa}), setting  $\ell=r_*$, $k=1$ and $\{v_j\}_{j=1}^{k}=\{v_0\}$.  Then we obtain the expression
\begin{equation}\label{eqn:ker2}
\kappa_{\gamma\gamma'}(x',x)=\int 
\left(\int e^{i \langle \xi, x'-y\rangle-i\langle \eta, G(y)-x\rangle} R(y, \xi, \eta) d\eta d\xi \right)dy
\end{equation}
where
\begin{equation}\label{eqn:RR}
R(y, \xi, \eta)= 
\frac{ i^{r_*}\cdot
 v_0^{r_*}(\rho_{\gamma'}(y) g(y))\cdot  \psi_{\gamma'}(\xi) \cdot \widetilde{\psi}_{\gamma}(\eta)}{(2\pi)^{2(2d+1)}(\pi^*_0(\eta-\xi))^{r_*}}.
\end{equation}
From Lemma \ref{lm:scaling}, there exists a constant $C_\alpha>0$  for each  $\alpha\in \integer_+^{2d+1}$, which does not depend on $\gamma$, such that
\[
\|D^\alpha\psi_\gamma\|_{L^\infty}<C_\alpha 2^{-|\alpha| n(\gamma)/2-|\alpha|_\dag |m(\gamma)|},\;\;
\|D^\alpha\widetilde{\psi}_\gamma\|_{L^\infty}<C_\alpha 2^{-|\alpha| n(\gamma)/2-|\alpha|_\dag |m(\gamma)|}
\]
where $|\alpha|_\dag$ is that defined in (\ref{eqn:sdag}). 
This and the estimate on $|\pi^*_0(\eta-\xi)|$ above imply that, for any multi-indices $\alpha, \beta\in \integer_+^{2d+1}$, 
there exists a constant $C_{\alpha\beta}>0$, which does not depend on $G\in \mathcal{H}$, $g\in \cC^r(V')$ nor on $(\gamma,\gamma')\in \Gamma\times \Gamma$, such that
\[
\|\partial^\alpha_{\xi} \partial^\beta_{\eta} R\|_{L^\infty} \le C_{\alpha \beta}\cdot \|g\|_{*}
\cdot 2^{-r_*\cdot  \Delta(\gamma, \gamma')-|\alpha| n(\gamma)/2-
|\alpha|_\dag |m(\gamma)|- |\beta| n(\gamma')/2-|\beta|_\dag 
|m(\gamma')|}.
\]
Again from Lemma \ref{lm:scaling}, the $(2d+1)$-dimensional volumes of the supports of $\psi_\gamma$ and $\widetilde{\psi}_\gamma$ are bounded by
$C_*\cdot 2^{(2d+1) n(\gamma)/2+ 2d |m(\gamma)|}$
with $C_*$ a constant independent of $\gamma$. 
Therefore we have
\begin{align*}
&\left|(x'-y)^\alpha \cdot (G(y)-x)^\beta\cdot \int e^{i \langle \xi, x'-y\rangle-i\langle \eta, G(y)-x\rangle} R(y, \xi, \eta) d\eta d\xi \right|\\
&\qquad =\left| \int e^{i \langle \xi, x'-y\rangle-i\langle \eta, G(y)-x\rangle} \partial^\alpha_\xi \partial^\beta_\eta R(y, \xi, \eta) d\eta d\xi \right|\\
&\qquad \le C_{\alpha \beta}\cdot C_*\cdot  \|g\|_{*}\cdot  2^{(2d+1) n(\gamma)/2+ 2d |m(\gamma)|}\\
&\qquad \qquad \quad \cdot 2^{-r_*\cdot  \Delta(\gamma, \gamma')-|\alpha| n(\gamma)/2-
|\alpha|_\dag |m(\gamma)|- |\beta| n(\gamma')/2-|\beta|_\dag 
|m(\gamma')|}
\end{align*}
for any multi-indices $\alpha$ and $\beta$. This implies that
the integral with respect to $\xi$ and $\eta$ in the bracket $(\cdot)$ in (\ref{eqn:ker2})  is bounded  by
\[
 C_*
\cdot 2^{-r_*\cdot  \Delta(\gamma, \gamma')}\|g\|_{*}\cdot b_{\gamma'}^\mu(x'-y)\cdot b_{\gamma}^\mu(G(y)-x)
\]
in absolute value.
Since the integral  vanishes when $y\notin \supp \rho_{\gamma'}\subset Z(\gamma')$, we obtain the claim of the lemma. 
\end{proof}

\section{Preliminary discussion to the proof of Theorem  
\ref{th:reduced}}\label{sec:pre}

In this section, we give preliminary discussion to  the proof of Theorem~\ref{th:reduced}.
For brevity, we henceforth write $\cM$ and $\cL$ respectively for $\cM(G,g)$ and $\cL(G,g)$, though we keep in mind  dependence of $\cM$ and $\cL$ on $G$ and $g$.

\subsection{The compact, central and hyperbolic part of $\cM$}\label{ssec:decoM}
In the proof of Theorem \ref{th:reduced}, we divide  the operator $\cM$ into five parts and consider each parts separately. To this end,
we divide the product set $\Gamma\times \Gamma$ into five disjoint subsets $\cR(j)$ for $0\le j\le  4$ and define the corresponding part $\cM_j:\bB^\beta_{\nu}\to \bB^\beta_{\nu'}$ of $\cM$  formally by
\begin{equation}\label{eqn:defM}
\cM_j((u_{\gamma})_{\gamma\in \Gamma})=\left(\sum_{\gamma: (\gamma,\gamma')\in \cR(j)} \cL_{\gamma\gamma'}(u_\gamma)\right)_{\gamma'\in \Gamma}.
\end{equation}

The definition of the part $\cM_0$ is simple. 
Let $K\ge 0$ be a large constant, which will be determined in the course of the proof, and set 
\[
\cR(0)=\{(\gamma,\gamma')\in \Gamma\times \Gamma\mid \max\{n(\gamma), |m(\gamma)|, n(\gamma'), |m(\gamma')|\}\le K\}.
\]
The corresponding part $\cM_0$ defined by (\ref{eqn:defM}) for $j=0$ is called the compact part of $\cM(G,g)$. This is because  we have
\begin{proposition}\label{lm:m0}
The formal definition of the operator $\cM_{0}$ gives a compact operator $\cM_{0}:\bB^\beta_\nu\to \bB^\beta_{\nu'}$ for any $\nu, \nu'\ge 2d+2$. 
\end{proposition}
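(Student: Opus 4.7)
The plan is to exploit the finiteness of the index set $\cR(0)$ and then argue compactness of each matrix entry via Hilbert--Schmidt.

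First I would verify that $\cR(0)$ is actually a \emph{finite} subset of $\Gamma\times\Gamma$. The bound $n(\gamma)\le K$ restricts $k(\gamma)$ to finitely many values, because $\chi_{n,k}$ is nontrivial only when $\supp\rho_{(-n),k}$ meets $\supp\chi_n\subset\{|\xi_0|\le 2^{n+1}\}$, forcing $|k|\lesssim 2^{n/2+1}$. The bound $|m(\gamma)|\le K$ is explicit. The condition $\supp\rho_{n,\bk}\cap\disk\ne\emptyset$ combined with $n(\gamma)\le K$ pins the center $2^{-n/2}\bk$ inside a bounded neighborhood of $\disk$, so $\|\bk\|\lesssim 2^{K/2}$, giving finitely many $\bk$. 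The same applies to $\gamma'$. Hence the sum in (\ref{eqn:defM}) defining $\cM_0$ contains only finitely many terms $\cL_{\gamma\gamma'}$, and moreover the weights $2^{2\beta m(\gamma)}$ as well as the pointwise size of $d_\gamma^\nu(x)$ for $x$ in a bounded set are uniformly controlled by constants depending on $K$.

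It therefore suffices to show that each individual operator $\cL_{\gamma\gamma'}$, viewed between the corresponding weighted $L^2$-components $(L^2(E),\|d_\gamma^\nu\cdot\|_{L^2})$ and $(L^2(E),\|d_{\gamma'}^{\nu'}\cdot\|_{L^2})$, is Hilbert--Schmidt. Applying Lemma~\ref{lm:kest0} with a choice of $\mu$ large (to be fixed), the kernel $\kappa_{\gamma\gamma'}(x',x)$ is bounded by
\[
C_*\|g\|_{L^\infty}\int_{Z(\gamma')} b_{\gamma'}^\mu(x'-y)\,b_\gamma^\mu(G(y)-x)\,dy.
\]
Since $n(\gamma),n(\gamma'),|m(\gamma)|,|m(\gamma')|$ are all bounded by $K$ on $\cR(0)$, the scaling maps $J_{n,m}$ are uniformly comparable to the identity, so each $b_\bullet^\mu(z)\lesssim (1+\|z\|)^{-\mu}$ up to a constant depending only on $K$. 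The integration domain $Z(\gamma')$ is contained in a fixed bounded set, so for $\|x'\|,\|x\|$ large the kernel decays like $(1+\|x'\|)^{-\mu}(1+\|x\|)^{-\mu}$. The weights satisfy $d_{\gamma'}^{\nu'}(x')\lesssim (1+\|x'\|)^{\nu'}$ and $d_\gamma^{-\nu}(x)\lesssim 1$. Taking $\mu$ larger than $\nu+\nu'+2d+2$ makes the weighted kernel $d_{\gamma'}^{\nu'}(x')\,\kappa_{\gamma\gamma'}(x',x)\,d_\gamma^{-\nu}(x)$ square-integrable on $E\times E$, which exhibits the weighted operator as Hilbert--Schmidt, hence compact.

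Finally, the Hilbert space $\bB^\beta_\nu$ is an orthogonal sum of its $\gamma$-components (subject to the compatibility constraint $\widetilde\psi_\gamma(D)u_\gamma=u_\gamma$, which is preserved because $\widetilde\psi_{\gamma'}\equiv 1$ on $\supp\psi_{\gamma'}$), and $\cM_0$ factors through the projections onto the finitely many components indexed by the $\gamma,\gamma'$ appearing in $\cR(0)$. A finite sum of compact operators between the relevant component Hilbert spaces is compact, and this transports back to compactness of $\cM_0:\bB^\beta_\nu\to\bB^\beta_{\nu'}$. The only real obstacle is the bookkeeping for the finiteness of $\cR(0)$ and the choice of $\mu$ so the weighted kernel is in $L^2(E\times E)$; everything else is routine.
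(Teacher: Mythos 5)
Your proof is correct and follows the same route as the paper's: both observe that $\cR(0)$ is finite and reduce the claim to compactness of each individual $\cL_{\gamma\gamma'}$ between the weighted $L^2$ spaces, using the kernel bound of Lemma~\ref{lm:kest0}. The paper simply asserts compactness from the smooth, rapidly decaying kernel, whereas you spell this out via the Hilbert--Schmidt criterion on the conjugated kernel $d_{\gamma'}^{\nu'}\kappa_{\gamma\gamma'}d_\gamma^{-\nu}$ — a more explicit but conceptually identical argument.
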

\begin{proof}
For $\gamma\in \Gamma$, let $L^2(E; d_\gamma^\nu)$ be the Hilbert space of  functions $u\in L^2(E)$ such that $\|d_\gamma^\nu\cdot u_\gamma\|_{L^2}<\infty$, equipped with the obvious norm.  
Then  
\begin{equation}\label{eqn:Lgg}
\cL_{\gamma\gamma'}:L^2(E; d_\gamma^\nu) \to L^2(E;d_{\gamma'}^{\nu'})
\end{equation}
 is a compact operator, because its kernel (\ref{eqn:kappa}) is  smooth and decays rapidly as we saw in Lemma \ref{lm:kest0}. Since $\cR(0)$ contains only finitely many elements by the definition of $\Gamma$, the statement follows immediately. 
\end{proof}

The part $\cM_0$ will turn out to be the compact operator $\mathcal{K}(G,g)$ in the latter statement of Theorem~\ref{th:reduced}.

The definition of the part $\cM_1$ is also simple. 
Let $0<\delta<1/10$ be a constant that we will fix soon below. For given $\lambda>0$, we set
\begin{align*}
&\cR(1)=\cR(1;\lambda)\\
&=\{(\gamma,\gamma')\in \Gamma\times \Gamma\setminus \cR(0)\mid \max\{|m(\gamma)|, |m(\gamma')|\}\le \delta \lambda,\, |n(\gamma)-n(\gamma')|\le 1\}.
\end{align*}
The corresponding part $\cM_1$ is called the central part  of $\cM$.
The remaining part is called hyperbolic part and will be divided into three parts.

\subsection{Setting of constants}\label{ssec:remarks}
In the proof, we set up constants as follows.
We henceforth suppose that $0<\beta<(r-1)/2$ and $\epsilon>0$  in the statement of Theorem \ref{th:reduced} are fixed.  
We first choose $0<\delta<1/10$ so small that
\[
(2\beta +5d+2)\delta <\epsilon.
\]
Then we choose $\nu_*$, $\lambda_*$ and $\Lambda_*$ in the conclusion of Theorem \ref{th:reduced} so large that 
\[
\nu_*\ge 6(\beta/\delta+d+1)
\]
and that
\[
\lambda_*>40, \quad 2^{\delta \lambda_*-10}\ge 10^2\sqrt{2d+1}, \quad \Lambda_*\ge d\lambda_*.
\]
The conditions in the choice above are  technical ones.  The readers should not care about them too much at this stage. We present them only to emphasize that the choices are explicit.

Once we set up the  constants $\delta$, $\nu_*$, $\lambda_*$ and $\Lambda_*$ as above, we take $\lambda\ge \lambda_*$ and $\Lambda\ge \Lambda_*$ such that $\Lambda\ge d\lambda$ and then take an arbitrary diffeomorphism \hbox{$G:V'\to V$} in $\cH(\lambda, \Lambda)$ and an arbitrary function $g$ in $\cC^r(V')$. This is the setting in which most of the argument in the following sections is developed. 

The readers should be aware that the choice of the constant $K>0$ in the definition of $\cR(0)$ is not mentioned above. 
We will choose the constant $K$ in the course of the proof and the choice will depend on the diffeomorphism $G$ and the function~$g$ besides $\lambda$ and~$\Lambda$. 
This does not cause any problem because Proposition~\ref{lm:m0} holds regardless of the choice of $K$. 
In the proof, we understand that the constant $K$ is taken so large that the argument holds true and will not mention the choice of $K$ too often.

In the proof, it is important to distinguish the class of constants that are independent of the diffeomorphism $G:V'\to V$ in $\cH(\lambda,\Lambda)$, the function $g:V'\to \real$ in $\cC^r(V')$ and the choice of $\lambda$ and $\Lambda$.
To this end, we  use a generic symbol $C_*$ for such class of constants.  On the contrary, we  use a generic symbol $C(G,g)$ (resp. $C(G)$) for   
constants that may depend on $G$ and $g$ (resp. on $G$) and also on  $\lambda$ and $\Lambda$ (resp. on $\lambda$). 
Notice that the real value of constants denoted by $C_*$, $C(G,g)$ and $C(G)$ may change from places to places in the argument. 

\subsection{Norms on $\bB^\beta_\nu$}
In the proof, we consider the following family of norms on $\bB^\beta_\nu$ for $\lambda>0$,  rather than the original norm $\|\cdot \|_{\beta,\nu}$ in the definition:
\[
\|\bu\|_{\beta,\nu}^{(\lambda)}=\left(\sum_\gamma 
\wt(m(\gamma))^2\cdot  \|d_\gamma^\nu \cdot u_\gamma\|_{L^2}^2\right)^{1/2}
\quad \text{ for $\bu=(u_\gamma)_{\gamma\in \Gamma}\in \bB^\beta_\nu$,}
\]
where
\begin{equation}\label{eqn:wl}
\wt(m)=\begin{cases}
2^{\beta (m+2\lambda)},&\text{if $m> \delta \lambda$};\\
1 ,&\text{if $|m|\le \delta \lambda$};\\
2^{\beta (m-2\lambda)},&\text{if $m< -\delta \lambda$}.\\
\end{cases}
\end{equation}
This family of norms are all equivalent to the original norm $\|\cdot\|_{\beta,\nu}$ because
\begin{equation}\label{eqn:wt}
2^{\beta (m-2\lambda)}\le \wt(m)\le 2^{\beta (m+2\lambda)}.
\end{equation}
The family of norms  $\|\cdot \|_{\beta,\nu_*}^{(\lambda)}$ will turn out to be the norms $\|\cdot\|^{(\lambda)}$ in the latter statement of Theorem \ref{th:reduced}. 

\begin{remark}In reading the proof in the following sections, it is a good idea  to ignore the pairs $(\gamma,\gamma')$ with $|n(\gamma)-n(\gamma')|\ge 2$ provisionally. Lemma~\ref{lm:kest1} tells that the operators $\cL_{\gamma\gamma'}$ for such pairs are very small and basically negligible. Also it may be helpful to consider the case where $G$ is a linear map and $g$ is a constant function first. Then the reader will find that a good part of the argument is rather obvious or  simple in such case.  
\end{remark}

\section{The hyperbolic parts of the operator $\cM$ (I)}\label{sec:body}
In this section and the following two sections, we consider the hyperbolic part of the operator $\cM$. 
We divide it into three parts, namely, $\cM_2$, $\cM_3$ and $\cM_4$, and estimate the operator norms of each part separately. The rough idea in this division is as follows. 
From the definition of the operator $\cL_{\gamma\gamma'}$, we naturally expect that the operator norm of  $\cL_{\gamma\gamma'}$ should be small if either
\begin{itemize}
\item[(A)]  $G(z(\gamma'))$ is apart from  $z(\gamma)$, or 
\item[(B)]  $DG^*_y(\supp \widetilde{\psi}_\gamma)$ for  $y\in \supp \rho_{\gamma'}$ are apart from $\supp \psi_{\gamma'}$. 
\end{itemize}
Roughly, $\cM_3$ and $\cM_4$ consist of components $\cL_{\gamma\gamma'}$ for pairs $(\gamma, \gamma')$ in the case  (A) and (B) respectively. 
We will in fact  prove that the operator norms of $\cM_3$ and $\cM_4$ are small in Section \ref{sec:tail1} and \ref{sec:tail2}. 
The remaining components $\cL_{\gamma\gamma'}$ are assigned to
the part $\cM_2$. The operator $\cM_2$ gives raise to the factor $2^{-\beta \lambda}$ in the claim of Theorem~\ref{th:reduced}.  

\subsection{The operator $\cM_2$}
We  first define the operator $\cM_2$ as follows. 
\begin{definition}Let $\cR(2)$ be the set of pairs $(\gamma, \gamma')\in \Gamma\times \Gamma\setminus (\cR(0)\cup \cR(1))$ such that
$n=n(\gamma)$, $k=k(\gamma)$, $m=m(\gamma)$, $n'=n(\gamma')$, $k'=k(\gamma')$ and  $m'=m(\gamma')$ satisfy at least one of  the following two conditions:
\begin{itemize}
\setlength{\itemsep}{4pt}
\item[(a)] 
$m'<m-\lambda+ 10 \widetilde{\Delta}(n,k,n', k')+20$, and
\item[(b)] $|n-n'|\le 1$ and either $m'< -\delta \lambda\le m$ or $m'\le \delta \lambda<m$.
\end{itemize}
Let $\cM_2$ be the part defined  formally by (\ref{eqn:defM}) with $j=2$.
\end{definition}
For the operator $\cM_2$, we have
\begin{proposition}\label{pp:m2} 
The formal definition of the operator $\cM_2$ in fact gives a bounded operator $\cM_2:\bB^\beta_\nu\to \bB^\beta_{\nu'}$ for any $\nu,\nu'\ge 2d+2$. Further, for any $\nu,\nu'\ge 2d+2$, there is a constant $C_*>0$  such that we have
\[
\left\|\cM_2(\bu)
\right\|_{\beta,\nu'}^{(\lambda)}
\le C_*\cdot \|g\|_{*}\cdot   2^{-\beta \lambda}\cdot 
\left\|\bu\right\|_{\beta,\nu}^{(\lambda)}\quad \text{for  $\bu\in \bB^\beta_\nu$,}
\]
for $G:V'\to V$ in $\cH(\lambda,\Lambda)$ and $g\in \cC^r(V')$ provided  $\lambda\ge \lambda_*$ and $\Lambda\ge \Lambda_*$.
\end{proposition}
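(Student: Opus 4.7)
The plan is to treat $\cM_2$ as an operator acting on a weighted $\ell^2(\Gamma; L^2)$ and apply Schur's test to the matrix of operators $\{\cL_{\gamma\gamma'}\}_{(\gamma,\gamma')\in\cR(2)}$. The first task is to derive, for each pair in $\cR(2)$, a bound of the form
\begin{equation*}
\|\cL_{\gamma\gamma'}\|_{L^2(E;d_\gamma^\nu)\to L^2(E;d_{\gamma'}^{\nu'})} \;\le\; C_*\,\|g\|_*\cdot 2^{-r_*\Delta(\gamma,\gamma')}\cdot Q(\gamma,\gamma'),
\end{equation*}
with $Q(\gamma,\gamma')$ growing at most polynomially in $|n(\gamma)-n(\gamma')|$ and $|m(\gamma)-m(\gamma')|$. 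I would obtain this by applying Schur's test to the kernel formula (\ref{eqn:kappa}): Lemma~\ref{lm:kest1} yields the factor $2^{-r_*\Delta(\gamma,\gamma')}$, Corollary~\ref{cor:localized} controls the $L^1$-normalization of the wave-packet profiles $b_\gamma^\mu$, the substitution $y\mapsto G(y)$ combined with (H4) handles the change of volume, and the localization of $b_\gamma^\mu$ makes $d_\gamma^\nu(G(y))$ comparable to $d_{\gamma'}^{\nu'}(y)$ up to $Q$. Since $r_*\ge 20(r+1)\gg \beta$, the factor $Q$ is harmlessly absorbed by $2^{-r_*\Delta}$.

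The second ingredient is a case analysis of the weight ratio $\wt(m(\gamma'))/\wt(m(\gamma))$ on $\cR(2)$, which produces the decisive factor $2^{-\beta\lambda}$. In case~(a), when $m(\gamma)$ and $m(\gamma')$ lie in the same regime of the piecewise-defined $\wt$, the ratio equals $2^{\beta(m(\gamma')-m(\gamma))}\le 2^{-\beta\lambda}\cdot 2^{10\beta\widetilde{\Delta}(\gamma,\gamma')+20\beta}$, and the excess $2^{10\beta\widetilde{\Delta}}$ is dominated by $2^{-r_*\Delta}$ since $r_*\gg 10\beta$. When both $|m(\gamma)|,|m(\gamma')|\le\delta\lambda$, the weight ratio equals $1$, but condition~(a) then forces $\widetilde{\Delta}(\gamma,\gamma')\ge (1-2\delta)\lambda/10 - O(1)$, so the kernel decay $2^{-r_*\Delta}$ alone supplies a factor much smaller than $2^{-\beta\lambda}$. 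In case~(b), any regime transition across $m=\pm\delta\lambda$ drops the weight ratio by at least $2^{-\beta(\delta+2)\lambda}$, again beating $2^{-\beta\lambda}$.

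Setting $A_{\gamma'\gamma}=\bigl(\wt(m(\gamma'))/\wt(m(\gamma))\bigr)\cdot \|\cL_{\gamma\gamma'}\|_{L^2(E;d_\gamma^\nu)\to L^2(E;d_{\gamma'}^{\nu'})}$, the Schur test reduces the claimed bound on $\|\cM_2\|^{(\lambda)}$ to estimating $\sup_\gamma \sum_{\gamma'} A_{\gamma'\gamma}$ and $\sup_{\gamma'} \sum_\gamma A_{\gamma'\gamma}$ by $C_*\|g\|_* 2^{-\beta\lambda}$, which follows from the summability estimates (\ref{eqn:chisum})--(\ref{eqn:chisum2}) and (\ref{eqn:nnd}) in the $(n,k)$ directions, Remark~\ref{rem:card}, and the geometric decay in $|m(\gamma)-m(\gamma')|$ supplied by condition~(a). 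The main obstacle will be the first step: establishing the clean weighted operator-norm bound for $\cL_{\gamma\gamma'}$, in particular verifying the comparability of $d_\gamma^\nu\circ G$ with $d_{\gamma'}^{\nu'}$ on the effective support of the kernel, and correctly tracking how the two scales $2^{n(\cdot)/2}$ and $2^{|m(\cdot)|}$ interact with the nonlinearity of $G$.
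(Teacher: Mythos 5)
Your overall architecture — treat $\cM_2$ as a matrix of operators $\{\cL_{\gamma\gamma'}\}$, bound the entries with a weight ratio, and close by a Schur test — is in the right spirit, and your case analysis of $\wt(m')/\wt(m)$ on $\cR(2)$ does track the combinatorics of Sublemma~\ref{subl:1} correctly (case~(a) same-regime gives $2^{\beta(m'-m)}$ absorbed by $2^{-r_*\Delta}$ via $r_*\gg\beta$; the all-central case forces $\widetilde{\Delta}\ge(1-2\delta)\lambda/10-O(1)$; case~(b) drops the weight ratio by at least $2^{-2\beta\lambda}$). But there is a genuine gap in the very place you flag as ``the main obstacle,'' and it is not merely a missing computation.

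You propose to bound $\|\cL_{\gamma\gamma'}\|_{L^2(d_\gamma^\nu)\to L^2(d_{\gamma'}^{\nu'})}$ by $C_*\|g\|_*2^{-r_*\Delta(\gamma,\gamma')}Q(\gamma,\gamma')$ with $Q$ depending only on $|n(\gamma)-n(\gamma')|$ and $|m(\gamma)-m(\gamma')|$, and then run Schur on $\Gamma\times\Gamma$. But $\Gamma$ carries the position index $\bk$, and for fixed $(n',k',m')$ the number of admissible $\bk'$ is of order $2^{(2d+1)n'/2}$. If your $A_{\gamma'\gamma}$ is uniform in $\bk,\bk'$ (as a $Q$ depending only on $(n,m,n',m')$ forces it to be), the Schur row-sum $\sum_{\gamma'}A_{\gamma'\gamma}$ diverges: nothing in $2^{-r_*\Delta}$ or the weight ratio controls the $\bk'$-cardinality, since $\Delta$ and $\wt$ are both $\bk$-blind. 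The ``comparability of $d_\gamma^\nu\circ G$ with $d_{\gamma'}^{\nu'}$'' that you invoke to get the clean $Q$-bound is exactly the thing that fails for off-diagonal $\bk$'s (it holds only when $G(Z(\gamma'))$ meets a bounded neighborhood of $z(\gamma)$), and it is precisely this failure — the decay of the weighted norm in $d(G(z(\gamma')),z(\gamma))$ — that must make the $\bk'$-sum finite. So either your stated $Q$-bound is wrong, or your Schur sum diverges; you cannot have both a $\bk$-independent $Q$ and a convergent Schur test.

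The paper avoids this by never estimating individual weighted operator norms $\|\cL_{\gamma\gamma'}\|$. It groups all $\gamma$ sharing a given triple $(n,k,m)$, replaces their $L^2$-weights by the aggregate density $v_{n,k,m}=(\sum_{\gamma:n,k,m}d_\gamma^{2\nu}|u_\gamma|^2)^{1/2}$, applies Young's inequality to the kernel bound from Lemma~\ref{lm:kest1} to pass to $b_{n,m}^\mu*v_{n,k,m}$ restricted to $G(Z(\gamma'))$, and then sums over $\gamma'$ with fixed $(n',k',m')$ using the \emph{bounded intersection multiplicity} of the cubes $Z(\gamma')$ — this is the step that tames the $\bk'$-sum, and it is an $L^2$ almost-orthogonality argument, not a pointwise Schur estimate. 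The Schur-type combinatorics (your weight-ratio discussion, and the paper's Sublemma~\ref{subl:1}) is then run on the much coarser index set $\cN\oplus\integer$, not on $\Gamma\times\Gamma$. To repair your proposal you would either need to carry an explicit decay factor in $\bk,\bk'$ (roughly $\langle 2^{n/2}\,d(G(z(\gamma')),z(\gamma))\rangle^{-(\nu-2d-2)}$) inside $Q$ and sum it by hand, or else adopt the paper's block aggregation.
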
 

\noindent{\it Proof.}\;
For a combination $(n,k,m,n',k',m')\in (\cN\oplus \integer)^2$, we set 
\begin{equation}\label{eqn:K}
K_{n,k,m,n',k',m'}=2^{-r_*\cdot  \Delta(n,k,n',k')}\cdot  \|g\|_*\cdot 
\frac{\wt(m')}{\wt(m)}.
\end{equation}
We need the following sublemma of combinatorial nature, whose proof is postponed for a while. 
\begin{sublemma}\label{subl:1} There exists a constant $C_*>0$ such that
\begin{align}\label{eqn:ksumest1}
&\sup_{(n',k',m')\in \cN\oplus \integer}\left(\sum_{n,k,m:n',k',m'}K_{n,k,m,n',k',m'} \right)<C_*\|g\|_*
\cdot  2^{-\beta\lambda}
\intertext{and}
&\sup_{(n,k,m)\in \cN\oplus \integer}\left(\sum_{n',k',m':n,k,m}K_{n,k,m,n',k',m'} \right)<C_*\|g\|_*\cdot  2^{-\beta\lambda}\label{eqn:ksumest2}
\end{align}
where $\sum_{n',k',m':n,k,m}$ {\rm (resp. $\sum_{n,k,m:n',k',m'}$)} denotes the sum over $(n',k',m')$ 
{\rm (resp. $(n,k,m)$)} in $\cN\oplus \integer$ such that the combination $(n,k,m,n',k',m')$ satisfies 
\begin{equation}\label{eqn:max}
\max\{n, |m|, n', |m'|\}> K,
\end{equation}
and at least one  of the conditions (a) and (b)  in the definition of $\cR(2)$. 
\end{sublemma}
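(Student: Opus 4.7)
The plan is to split the sum according to whether condition (a) or (b) in the definition of $\cR(2)$ is satisfied, and for each part to perform a case analysis on the positions of $m=m(\gamma)$ and $m'=m(\gamma')$ relative to the thresholds $\pm\delta\lambda$ at which the weight $\wt$ of (\ref{eqn:wl}) changes form. The goal is to show that, after summing one of $m$ or $m'$, the ratio $\wt(m')/\wt(m)$ is controlled by $C_*\cdot 2^{-\beta\lambda}\cdot 2^{10\beta\widetilde{\Delta}}$. The kernel factor $2^{-r_*\Delta}$ in (\ref{eqn:K}) then absorbs the $\widetilde{\Delta}$-growth, since $r_*\ge 20(r+1)>10\beta$; and (\ref{eqn:chisum})--(\ref{eqn:chisum2}) applied with $s=r_*-10\beta>1$ dispose of the remaining $(n,k)$- or $(n',k')$-sum.

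For condition (a), I would fix $(n,k,n',k')$ and analyse the sum over the free weight index. Write $m_0=m'+\lambda-10\widetilde{\Delta}-20$ (for the first estimate) or $m_0'=m-\lambda+10\widetilde{\Delta}+20$ (for the second). If $m$ and $m'$ lie in the same regime of $\wt$, then $\wt(m')/\wt(m)=2^{\beta(m'-m)}$, and geometric summation of this over $m>m_0$ (or of $\wt(m')$ over $m'<m_0'$) yields $C_*\cdot 2^{-\beta(\lambda-10\widetilde{\Delta}-20)}$, the required form. If $m$ and $m'$ lie in different regimes with $m>m'$, the jump of $\wt$ across $\pm\delta\lambda$ dominates and gives the stronger bound $\wt(m')/\wt(m)\le 2^{-\beta\lambda(2+\delta)}$. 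The delicate sub-case is $|m|,|m'|\le \delta\lambda$, where $\wt(m)=\wt(m')=1$ and the weight provides no decay at all; here, however, the constraints $m-m'>\lambda-10\widetilde{\Delta}-20$ and $|m-m'|\le 2\delta\lambda$ force
\[
\widetilde{\Delta}\ge \frac{(1-2\delta)\lambda}{10}-2,
\]
so that $2^{-r_*\Delta}\le 2^{-r_*((1-2\delta)\lambda/10-2)}$ alone gives exponential decay far in excess of $2^{-\beta\lambda}$. Combining the regimes one sees that the $m$-sum (resp.\ $m'$-sum) is majorised by $C_*\cdot 2^{-\beta\lambda}\cdot 2^{10\beta\widetilde{\Delta}}$, and (\ref{eqn:chisum2}) (resp.\ (\ref{eqn:chisum})) then handles the $(n,k)$-sum (resp.\ $(n',k')$-sum).

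For condition (b), $|n-n'|\le 1$ forces $\widetilde{\Delta}=0$, and the assumption that $m,m'$ lie on opposite sides of $\delta\lambda$ or $-\delta\lambda$ means at least one jump of $\wt$ is crossed. A direct check in the three admissible sub-cases gives $\wt(m')/\wt(m)\le 2^{-\beta\lambda(2+\delta)}$ uniformly. Summing the free index $m$ or $m'$ contributes at most a factor linear in $\lambda$ (the middle regime contains $O(\delta\lambda)$ integers and the outer regimes contribute convergent geometric series), and this factor is absorbed into $2^{-\beta\lambda}$ by the surplus $2^{-\beta(1+\delta)\lambda}$ provided $\lambda\ge \lambda_*$. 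The at most three admissible values of $n$ together with the bound $\sum_k 2^{-r_*\Delta(n,k,n',k')}\le C_*$ (from (\ref{eqn:chisum2}) or (\ref{eqn:chisum})) then finish case (b).

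The main bookkeeping challenge is the sub-case $|m|,|m'|\le \delta\lambda$ of condition (a): the weight gives no decay at all, and one must translate condition (a) itself into the lower bound on $\widetilde{\Delta}$ that in turn powers the exponential decay coming from $2^{-r_*\Delta}$. Everywhere else the argument reduces to a routine comparison of geometric series with piecewise exponents and to the summability estimates (\ref{eqn:chisum})--(\ref{eqn:chisum2}), so once this interface between the cutoff $\delta\lambda$ and the hyperbolic gap $m-m'\gtrsim \lambda-10\widetilde{\Delta}$ is handled, both supremum bounds in the sublemma follow.
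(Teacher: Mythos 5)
Your overall strategy is sound and you correctly isolate the one genuinely delicate mechanism: when $|m|,|m'|\le\delta\lambda$ so the weight $\wt$ gives no decay, condition~(a) together with $|m-m'|\le 2\delta\lambda$ forces $\widetilde{\Delta}\gtrsim\lambda/10$, and then $2^{-r_*\Delta}$ supplies the exponential decay. Your treatment of condition~(b) (jump of $\wt$ across a threshold, with the free index contributing at most a linear-in-$\lambda$ factor) is also correct. This is genuinely a different organization from the paper: the paper splits first by $|n-n'|\le 1$ versus $|n-n'|\ge 2$, and in the latter case (its case~(II)) further splits by whether $\max\{n,n'\}$ exceeds $K/100$; it never explicitly looks at the regime of $m$ and $m'$ beyond \emph{same side / not same side}. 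Your regime-by-regime analysis is more aligned with the structure of $\wt$; the paper's split is more aligned with the structure of the constraint (\ref{eqn:max}) and exploits that $K$ can be taken as large as needed.

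There is, however, a gap in your enumeration of sub-cases under condition~(a). You list: (1) same regime (both outer), for which $\wt(m')/\wt(m)=2^{\beta(m'-m)}$ and the geometric sum works; (2) different regimes with $m>m'$, for which the jump gives $\wt(m')/\wt(m)\le 2^{-\beta\lambda(2+\delta)}$; (3) both in the middle, the delicate case. You are missing \emph{different regimes with $m'>m$} — e.g.\ $|m|\le\delta\lambda$ and $m'>\delta\lambda$, or $m<-\delta\lambda$ and $m'\ge-\delta\lambda$. Condition~(a) ($m'<m-\lambda+10\widetilde{\Delta}+20$) does permit $m'>m$ as soon as $\widetilde{\Delta}>(\lambda-20)/10$, which is possible when $|n-n'|\ge 2$. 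In these sub-cases the ratio $\wt(m')/\wt(m)$ is \emph{large} (up to $2^{\beta(m'-m)+4\beta\lambda}$ by (\ref{eqn:wt})), so neither geometric summation nor jump dominance applies, and the stated intermediate goal ``$m$-sum controlled by $C_*\cdot 2^{-\beta\lambda}\cdot 2^{10\beta\widetilde{\Delta}}$'' fails as well. The fix is the same mechanism as in your delicate case but with an extra twist: condition~(a) forces $\widetilde{\Delta}\gtrsim\lambda/10$ (or, using (\ref{eqn:max}) and (\ref{eqn:dnk}) as the paper does, $\widetilde{\Delta}\gtrsim K$), and one must check that $2^{-r_*\Delta}$ beats not only $2^{\beta\lambda}$ but also the \emph{large} weight ratio $\approx 2^{3\beta\lambda+10\beta\widetilde{\Delta}}$. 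Since $\widetilde\Delta\le\Delta$ this reduces to $\sum 2^{-(r_*-10\beta)\Delta}$ restricted to $\Delta\gtrsim K$, which the paper handles in its subcase~(iii); you should explicitly invoke the extra lower bound $\widetilde\Delta\gtrsim K$ (coming from $\max\{n,n'\}>K$ forced by (\ref{eqn:max}) when $|m|,|m'|$ are not large) rather than relying only on the $\lambda$-based bound, otherwise the exponent bookkeeping becomes borderline for large $\beta$.
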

For $(n,k,m)\in \cN \oplus \integer$, we set 
\begin{equation}\label{eqn:v}
v_{n,k,m}(x)=\left(\sum_{\gamma:n,k,m}d_\gamma^{2\nu}(x)\cdot |u_\gamma (x)| ^2\right)^{1/2}
\end{equation}
where $\sum_{\gamma:n,k,m}$ denotes the sum over $\gamma\in \Gamma$ such that $n(\gamma)=n$, $k(\gamma)=k$ and $m(\gamma)=m$. 
Then we have, by Schwarz inequality, that
\begin{equation}\label{eqn:sch}
\sum_{\gamma:n,k,m}|u_\gamma (x)|
 \le \left(\sum_{\gamma:n,k,m} d_{\gamma}^{-2\nu}(x)\right)^{1/2}
 \cdot v_{n,k,m}(x)\le  C_* \cdot v_{n,k,m}(x).
\end{equation}
From  Lemma \ref{lm:kest1} for $\mu=\nu'+2d+2$, we have the following estimate on the kernel $\kappa_{\gamma\gamma'}$ of the operator $\cL_{\gamma\gamma'}$:
\[
|d_{\gamma'}^{\nu'}(x')\cdot\kappa_{\gamma\gamma'}(x',x)|\le C_*\cdot 2^{-r_*\cdot  \Delta(\gamma,\gamma')}\|g\|_{*}\cdot \int_{\Ze(\gamma')} b_{\gamma'}^{2d+2}(x'-y)\cdot b_{\gamma}^\mu(G(y)-x) dy.
\]
Hence, by Young inequality, we obtain 
\[
\left\|\sum_{\gamma:n,k,m} d_{\gamma'}^{\nu'} \cL_{\gamma\gamma'} u_\gamma\right\|_{L^2} \!\! \le C_* \left|K_{n,k,m,n',k',m'} \frac{\wt(m)}{\wt(m')}\right|
\left\| b^\mu_{n,m}* v_{n,k,m}|_{G(Z(\gamma'))} \right\|_{L^2}
\]
for  $\gamma'\in \Gamma$ such that $n(\gamma')=n'$, $k(\gamma')=k'$ and $m(\gamma')=m'$. 
Since the intersection multiplicity of  $Z(\gamma')$
for $\gamma'\in \Gamma$ such that $n(\gamma')=n'$, $k(\gamma')=k'$ and $m(\gamma')=m'$ is bounded by some constant depending only on~$d$, it follows
\begin{equation}\label{eqn:ck}
\sum_{\gamma':n',k',m'}\left\|\sum_{\gamma:n,k,m} d_{\gamma'}^{\nu'} \cL_{\gamma\gamma'} u_\gamma\right\|_{L^2}^2  \le C_* \left|K_{n,k,m,n',k',m'} \frac{\wt(m)}{\wt(m')}\right|^2
\left\|  v_{n,k,m} \right\|_{L^2}^2.
\end{equation}
For $\bu=(u_\gamma)_{\gamma\in \Gamma}\in \bB^\beta_\nu$, 
 we have  by definition that 
\begin{align*}
&\left(\| \cM_2(\bu)
\|_{\beta,\nu'}^{(\lambda)}\right)^2=\sum_{\gamma'}   \wt(m')^2
\cdot \| d_{\gamma'}^{\nu'}\cdot \cM_2(\bu)_{\gamma'}\|_{L^2}^2\\
&\qquad=\sum_{n',k',m'}\;\;\sum_{\gamma':n',k',m'} \wt(m')^2
\left\|\,d_{\gamma'}^{\nu'}\cdot  \left(\sum_{n,k,m:n',k',m'}\;\sum_{\gamma:n,k,m} 
\cL_{\gamma\gamma'} u_\gamma\right)\right\|^2_{L^2}.
\end{align*}
From (\ref{eqn:ksumest1}) and Schwarz inequality, this is bounded by
\[
\sum_{n',k',m'}\;\sum_{\gamma':n',k',m'}
 \wt(m')^2
\sum_{n,k,m:n',k',m'}\frac{C_*\|g\|_*\cdot  2^{-\beta\lambda}}{K_{n,k,m,n',k',m'}}\left\|\sum_{\gamma:n,k,m} 
d_{\gamma'}^{\nu'} \cL_{\gamma\gamma'} u_\gamma\right\|^2_{L^2}
\]
and hence by 
\begin{align*}
&\sum_{n',k',m'}\;
\sum_{n,k,m:n',k',m'}
\frac{C_*\|g\|_*\cdot  2^{-\beta\lambda}\cdot \wt(m')^2}{K_{n,k,m,n',k',m'}}
\sum_{\gamma':n',k',m'}\left\|\sum_{\gamma:n,k,m} 
d_{\gamma'}^{\nu'} \cL_{\gamma\gamma'} u_\gamma\right\|^2_{L^2}\\
&\le 
\sum_{n,k,m}\;\;\sum_{n',k',m':n,k,m}
C_*\|g\|_*\cdot 2^{-\beta\lambda}\cdot  K_{n,k,m,n',k',m'}\cdot 
 \wt(m)^2\cdot
\|v_{n,k,m}\|^2_{L^2}\\
&\le 
C_*\|g\|_*^2\cdot  2^{-2\beta\lambda}\cdot
\sum_{n,k,m} 
 \wt(m)^2\cdot
\|v_{n,k,m}\|^2_{L^2}= C_*\|g\|_*^2\cdot  2^{-2\beta\lambda}\cdot(\|\bu\|_{\beta,\nu}^{(\lambda)})^2,
\end{align*}
where the first inequality follows from (\ref{eqn:ck}) and the second from  (\ref{eqn:ksumest2}). Thus the conclusion of Proposition \ref{pp:m2} holds. 

We now complete the proof by proving  Sublemma \ref{subl:1}.
\begin{proof}[Proof of Sublemma \ref{subl:1}] 
In the argument below, we consider combinations $(n,k,m,n',k',m')\in (\cN\times \integer)^2$ satisfying (\ref{eqn:max}) and at least one of the conditions (a) and (b) in the definition of $\cR(2)$. 
And we will further restrict ourselves to the cases (I)  $|n-n'|\le 1$ and (II) $|n-n'|\ge 2$ in turn and prove the claims (\ref{eqn:ksumest1}) and (\ref{eqn:ksumest2}) with the sums replaced by the partial sums restricted to such cases.
This is of course enough for the proof of the sublemma. 

Let us first consider the case (I). 
Suppose that the condition (a) in the definition of $\cR(2)$ holds in addition.
Since $\widetilde{\Delta}(n,k,n',k')=0$ in the case (I) by definition, we have $m'<m-\lambda+20<m-2\delta\lambda$ from the choice of $\delta$ and $\lambda_*$. Hence, recalling the definition of $w^{(\lambda)}(m)$ in (\ref{eqn:wl}),  we have 
\begin{equation}\label{eqn:genmmb}
K_{n,k,m,n',k',m'}\le 2^{\beta(m'-m)-r_* \Delta(n,k,n',k')}\|g\|_*.
\end{equation}
Next, suppose that  the condition (b) in the definition of $\cR(2)$ holds.
 Then we have  $m'<m$ and hence 
\begin{equation}
K_{n,k,m,n',k',m'}\le 2^{-2\beta\lambda +\beta(m'-m-2\delta \lambda)-r_* \Delta(n,k,n',k')}\|g\|_*. \label{eqn:genmmc}
\end{equation} 
Therefore, considering each of these two subcases (a) and (b) separately and using (\ref{eqn:chisum}) and (\ref{eqn:chisum2}), we obtain 
the required inequalities for the partial sums. 

Let us consider the case  (II).  
Note that  the condition (a) in the definition of $\cR(2)$ holds for combinations $(n,k,m,n',k',m')$ in this case and we have  $\widetilde{\Delta}(n,k,n',k')={\Delta}(n,k,n',k')$ from the definition.  Let us consider three subcases:
\begin{itemize}
\item[(i)] 
The subcase where  $\max\{n,n'\}\le   K/100$
 and   $m$ and $m'$ are on the same side of the interval $[-\delta\lambda, \delta\lambda]$. In this subcase,  we have 
 (\ref{eqn:genmmb}), which can be written as
\[
K_{n,k,m,n',k',m'}\le 2^{\beta(m'-m-10\Delta(n,k,n',k') )-
(r_*-10\beta )  \Delta(n,k,n',k')}\|g\|_*.
\]
\item[(ii)]
The subcase where   
$\max\{n,n'\}\le   K/100$
 and  $m$ and $m'$ are {\em not} on the same side of the interval  $[-\delta\lambda, \delta\lambda]$. In this subcase, we have  $
m'-m\le -K/2$, because $\max\{|m|, |m'|\}\ge K$ from  (\ref{eqn:max}) and because $\widetilde{\Delta}(n,k,n',k')\le K/50$ from (\ref{eqn:dnk0}).
\item[(iii)]
The subcase where $\max\{n,n'\}>   K/100$. In this subcase, we have
 $\widetilde{\Delta}(n,k,n',k')\ge K/200-3$
from (\ref{eqn:dnk}).
\end{itemize}
Note that we have 
\begin{equation*}
K_{n,k,m,n',k',m'}\le 
2^{\beta(m'-m)- r_* \Delta(n,k,n',k')+4\beta\lambda}\|g\|_*
\end{equation*}
(that holds in general) 
in the latter two subcases. 
Consider each of the three subcases above separately.
Then, by  using (\ref{eqn:chisum}), (\ref{eqn:chisum2}) and the condition (a) in the definition of $\cR(2)$, it is easy to obtain 
the required inequalities for the partial sums, provided that we take sufficiently large constant $K$. 
\end{proof}

\subsection{A dichotomy in the remaining case}
In this subsection, we prove a lemma which tells roughly that each pair $(\gamma, \gamma')$ that belongs to neither of $\cR(j)$ for $j=0,1,2$ falls into either of the situation (A) or (B) mentioned in the beginning of this section.  
First of all, we note that a pair $(\gamma,\gamma')\in \Gamma\times \Gamma$ belongs to neither of  $\cR(0)$, $\cR(1)$ or $\cR(2)$
  if and only if 
 $n=n(\gamma)$, $k=k(\gamma)$, $m=m(\gamma)$, $n'=n(\gamma')$, $k'=k(\gamma')$ and $m'=m(\gamma')$ satisfy the conditions
\begin{itemize}
\item[(R1)]  $\max\{n,n', |m|, |m'|\} > K$, 
\item[(R2)] $\max\{|m|, |m'|\}>\delta \lambda$ if $|n-n'|\le 1$, 
\item[(R3)] $m'\ge m-\lambda+10 \widetilde{\Delta}(n,k,n',k')+20$, and
\item[(R4)] neither $m'< -\delta \lambda\le m$ nor $m'\le \delta \lambda<m$ if $|n-n'|\le 1$.
\end{itemize}
For convenience in the later argument, we list the following immediate consequences of (R1)-(R4):
\begin{itemize}
\item[(R5)] $m'\ge m-\lambda+20$, 
\item[(R6)] either $m<0$ or $m'> 0$, 
\item[(R7)] if $|n-n'|\le 1$, we have $\max\{-m,m'\}\ge \delta \lambda$, 
\item[(R8)] if $|n-n'|\ge 2$, we have
\[
\max\{-m,m'\}\ge 2 \max\{n,n'\}\;\;\text{and}\;\;
\max\{-m,m'\}\ge K/100.
\]
\end{itemize}
\begin{proof}[Proof of (R5)--(R8)]
 (R5) follows from (R3), and (R6) follows from (R7) and (R8). 
(R7) follows from (R2) and (R4).  
If $\max\{n,n'\}\ge K/100$,  (R8) follows from (R3) and (\ref{eqn:dnk}). Otherwise we have
$\max\{|m|, |m'|\}\ge K$ from (R1) and hence $\max\{-m,m'\}\ge K/2$ from (R5), which implies (R8).
\end{proof}
Next we give a few definitions in order to state the next lemma. 
For a pair $(\gamma, \gamma')\in \Gamma\times \Gamma$ that belongs to neither of $\cR(j)$ for $j=0,1,2$, we set 
\[
D(\gamma,\gamma')=D(n,m,n',m')\quad \text{and}\quad
 \widetilde{D}(\gamma,\gamma')=\widetilde{D}(n,m,n',m')
\]
where $n=n(\gamma)$, $m=m(\gamma)$, $n'=n(\gamma')$ and $m'=m(\gamma')$ and\footnote{Because of (R6), we do not consider the case ($m\ge0$ and $m'\le 0$). } 
\[
D(n,m,n',m')=
\begin{cases}
m'+n'/2,&\text{if $m\ge 0$, $m'> 0$;}\\
-m+n/2+\lambda,&\text{if $m< 0$, $m'< 0$;}\\
\max\{-m+n/2+\lambda,m'+n'/2\},&\text{if $m< 0$, $m'\ge 0$,}
\end{cases}
\]
and
\[
\widetilde{D}(n,m,n',m')\!=\!
\begin{cases}
m'+n'/2-n+\lambda,&\text{if $m\ge 0$, $m'> 0$;}\\
-m-n/2,&\text{if $m< 0$, $m'< 0$;}\\
\max\{-m-n/2,m'+n'/2-n+\lambda\},&\text{if $m< 0$, $m'\ge 0$.}
\end{cases}
\]
Let $\Pi_{z}:E^*\to E^*_+\oplus E^*_-$ 
 be the projection along  the line $\langle \alpha_0(z)\rangle$ spanned by $\alpha_0(z)$. Then we have, from the definition of $\alpha_0$, that
\begin{equation}\label{eqn:Pi}
\|\Pi_{z}(\xi)-\Pi_{z'}(\xi)\|\le |\pi_0^*(\xi)|\cdot  \|z-z'\|\quad \text{ for $\xi\in E^*$ and $z,z'\in E$.}
\end{equation}
Recall $z(\gamma)$ and $Z(\gamma)$ defined in Subsection \ref{ss:poe}. We show
\begin{lemma}\label{lm:division}
If $
d(G(\Ze(\gamma')), z(\gamma))\le 2^{\widetilde{D}(\gamma,\gamma')-10}$ 
for a pair $(\gamma, \gamma')\in \Gamma\times \Gamma$ that belongs to neither of $\cR(j)$ for $j=0,1,2$,
we have
\begin{equation}\label{eqn:cldiv}
d(\Pi_{z(\gamma')}(\supp{\psi}_{\gamma'}),\Pi_{z(\gamma')}(DG^{*}_{y}(\supp \widetilde{\psi}_{\gamma})))\ge 
2^{D(\gamma,\gamma')-10}
\end{equation}
for all $ y\in \Ze(\gamma')$. 
Further, if  
\begin{equation}\label{eqn:quarter}
\max\{|m(\gamma)|, |m(\gamma')|\}\le \max\{n(\gamma), n(\gamma')\}/4
\end{equation}
in addition, we have (\ref{eqn:cldiv}) for all $y\in E$ such that $\|y-z(\gamma')\|< 2^{-n(\gamma)/3}$. 
\end{lemma}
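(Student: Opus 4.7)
The plan is to derive a clean formula for $\Pi_{z(\gamma')}(DG^*_y(\xi))$ from the contact-preservation identity $G^*\alpha_0 = \alpha_0$, and then carry out a case analysis based on the signs of $m(\gamma), m(\gamma')$ using the hyperbolic action of $DG^*_y$ on $E^*_+\oplus E^*_-$.

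I first decompose each $\xi \in \supp \widetilde{\psi}_\gamma$ as $\xi = \xi_0\alpha_0(z(\gamma)) + \xi_{+,-}$ with $\xi_{+,-} \in E^*_+\oplus E^*_-$. From the structure of the supports in Section~\ref{ss:PU}, $|\xi_0| \lesssim 2^{n(\gamma)}$, $\|\xi_{+,-}\| \lesssim 2^{n(\gamma)/2+|m(\gamma)|}$, and $\xi_{+,-}$ is aligned with $\cone^*_{\mathrm{sign}(m(\gamma))}(7/10)$ when $m(\gamma) \ne 0$. An analogous decomposition for $\eta \in \supp \psi_{\gamma'}$ gives $\Pi_{z(\gamma')}(\eta) = \eta_{+,-}$. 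Using $DG^*_y(\alpha_0(G(y))) = \alpha_0(y)$ together with the fact that $DG^*_y$ preserves $E^*_+\oplus E^*_-$ (a consequence of $G_*v_0 = v_0$), I obtain the key identity
\[
\Pi_{z(\gamma')}(DG^*_y(\xi)) \;=\; \xi_0\bigl(\alpha_0(y)-\alpha_0(z(\gamma'))\bigr) \;+\; DG^*_y\bigl(\Pi_{G(y)}(\xi)\bigr),
\]
where $\Pi_{G(y)}(\xi) = \xi_0\,w(y) + \xi_{+,-}$ with $w(y) := \alpha_0(z(\gamma)) - \alpha_0(G(y))$. Under the hypothesis $d(G(Z(\gamma')), z(\gamma)) \le 2^{\widetilde D - 10}$, an application of Corollary~\ref{cor:local} to propagate $\|G(y^*)-z(\gamma)\| \le 2^{\widetilde D-10}$ from some $y^* \in Z(\gamma')$ to the rest of $Z(\gamma')$ gives $\|w(y)\| \lesssim 2^{\widetilde D - 10}$ uniformly.

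Via $d\alpha_0$-symplectic duality identifying $\cone^*_\pm$ with $\cone_\mp$, the conditions (H2)--(H4) give that $DG^*_y$ preserves $\cone^*_-$ with expansion at least $2^\lambda$, sends $\cone^*_+$ into $E^*\setminus\cone^*_-$ with norm at most $2^{-\lambda}$, and has operator norm on $E^*_+\oplus E^*_-$ at most $2^{\Lambda-(d-1)\lambda}$. Decomposing $w(y) = w_+ + w_-$ and $\xi_{+,-} = \xi_{+,-}^+ + \xi_{+,-}^-$ into $E^*_+$- and $E^*_-$-parts, the image in the identity splits into four pieces under $DG^*_y$ plus the first summand: the $E^*_+$-input pieces lie in the complement of $\cone^*_-$ with small magnitude, while the $E^*_-$-input pieces lie in $\cone^*_-$ (possibly with large magnitude). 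In the case $m \ge 0, m' > 0$, the choice $\widetilde D = D - n + \lambda$ makes all the small terms $\lesssim 2^{D-10}$ (via (R3), (R7) and the cancellation in the exponent), while $\eta_{+,-} \in \cone^*_+(6/10)$ is angularly separated from the large $\cone^*_-$-contributions by a universal positive angle; the distance is then $\ge 2^{D-4}$. In the case $m < 0, m' < 0$, the choice $\widetilde D = -m - n/2$ makes $\|\xi_0 w(y)\|$ smaller than $\|\xi_{+,-}^-\| \asymp 2^{n/2-m}$ by a factor $2^{-10}$, so $DG^*_y(\Pi_{G(y)}(\xi))$ is dominated by $DG^*_y(\xi_{+,-}^-) \in \cone^*_-$ with magnitude $\ge 2^{D-1}$, whereas (R5) forces $\|\eta_{+,-}\| \lesssim 2^{D-18}$; the triangle inequality then concludes. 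The mixed case $m < 0, m' \ge 0$ combines the two arguments (image in $\cone^*_-$, $\eta_{+,-}$ in $\cone^*_+$ or of magnitude $\lesssim 2^{n'/2}$). The subcase $|n-n'| \ge 2$ replaces (R7) by (R8) throughout. For the final assertion, only the first summand of the key identity has explicit $y$-dependence; its magnitude for $\|y-z(\gamma')\| < 2^{-n(\gamma)/3}$ is $\lesssim 2^{2n/3}$, which under $\max\{|m|,|m'|\} \le \max\{n,n'\}/4$ remains below $2^{D-10}$, and the rest of the estimates carry through unchanged.

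The main obstacle is the case-by-case bookkeeping: the definitions of $D$ and $\widetilde D$ are calibrated so that the spatial closeness $\|G(y)-z(\gamma)\| \lesssim 2^{\widetilde D-10}$ combines precisely with the hyperbolic contraction factor $2^{-\lambda}$ (on $\cone_+^*$) to render the ``well-behaved'' pieces below $2^{D-10}$, while the ``large'' pieces, which lie in $\cone_-^*$, are automatically separated from $\eta_{+,-}$ either by magnitude (same-cone case) or by angle (opposite-cone case). Verifying this calibration against (R1)–(R8) with the constants $\delta, \lambda_*, \Lambda_*, K$ chosen in Subsection~\ref{ssec:remarks} is the most delicate part.
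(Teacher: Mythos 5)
Your ``key identity''
\[
\Pi_{z(\gamma')}(DG^*_y(\xi)) = \xi_0\bigl(\alpha_0(y)-\alpha_0(z(\gamma'))\bigr) + DG^*_y\bigl(\Pi_{G(y)}(\xi)\bigr)
\]
is correct, and the sign-of-$m$ case analysis for the first claim is in the same spirit as the paper's Sublemma. But there are two problems, the second of which is a genuine gap.

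First, a misattribution: Corollary~\ref{cor:local} controls variation of $DG^*_y$ and $D^2G_y$, not the displacement $\|G(y)-z(\gamma)\|$. Propagating the hypothesis $d(G(Z(\gamma')),z(\gamma))\le 2^{\widetilde D-10}$ from one $y^*\in Z(\gamma')$ to all of $Z(\gamma')$ requires a bound on $\mathrm{diam}\,G(Z(\gamma'))$ (which the paper gets from Lipschitz continuity of $G$ plus (R1), (R5), (R8)); Corollary~\ref{cor:local} does not yield this. This is fixable, but your appeal to it is wrong.

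Second, and fatally, your treatment of the final assertion does not work. You assert that ``only the first summand of the key identity has explicit $y$-dependence'' — but $DG^*_y(\Pi_{G(y)}\xi)$ depends on $y$ through both $DG^*_y$ and $\Pi_{G(y)}$. More importantly, even if you bound the first summand alone by $|\xi_0|\cdot\|y-z(\gamma')\|\lesssim 2^{n}\cdot 2^{-n/3} = 2^{2n/3}$, this is \emph{not} $\lesssim 2^{D-10}$: under (\ref{eqn:quarter}) one has $|n-n'|\le 1$ and, e.g., with $m\ge 0$, $m'>0$, (R7) gives only $m'\ge\delta\lambda$, so $D = m'+n'/2$ may be as small as $n/2+\delta\lambda$. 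For $n\ge K$ with $K$ large, $2^{2n/3}\gg 2^{n/2+\delta\lambda-10}$, so your bound fails. What you are missing is precisely the role of Lemma~\ref{lm:local}/Corollary~\ref{cor:local}: because the contact structure forces $DG_0(\zero)=D^2G_0(\zero)=0$, the variation $DG^*_y(\xi)-DG^*_{z(\gamma')}(\xi)$ carries the factor $|\xi_0|\cdot\|y-z(\gamma')\|^2 + \|\xi_{+,-}\|\cdot\|y-z(\gamma')\|$, i.e.\ the $\xi_0$-contribution is \emph{quadratic} in $\|y-z(\gamma')\|$, yielding $2^n\cdot 2^{-2n/3}=2^{n/3}\ll 2^{n/2}$. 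In your decomposition the first-order $\xi_0$-term is exposed explicitly as $\xi_0(\alpha_0(y)-\alpha_0(z(\gamma')))$, and it must be cancelled to first order by the variation of the second summand; treating the second summand as $y$-independent destroys exactly the cancellation that the contact structure provides. The paper sidesteps this by applying Corollary~\ref{cor:local} directly to $DG^*_y(\xi)-DG^*_{z(\gamma')}(\xi)$ rather than to your two pieces separately. As written, your argument for the last sentence of the lemma would give a bound that grows like $2^{2n/3}$ while the required threshold can be as low as $2^{n/2+\delta\lambda-10}$, so the proof does not close.
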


\begin{proof}[Proof of Lemma \ref{lm:division}]
Take $(\gamma, \gamma')\in (\Gamma\times \Gamma)\setminus \cup_{j=0}^2 \cR(j)$ and set
 $n=n(\gamma)$, $k=k(\gamma)$, $m=m(\gamma)$, $n'=n(\gamma')$, $k'=k(\gamma')$ and $m'=m(\gamma')$. We first prove the following claim.
\begin{sublemma}\label{subl:a} If $w\in \Ze(\gamma')$ satisfies $
d(G(w), z(\gamma))\le 2^{\widetilde{D}(n,m,n',m')-8}$,
we have that $
d(\Pi_{z(\gamma')}(\supp{\psi}_{\gamma'}),\Pi_{z(\gamma')}(DG^{*}_{w}(\supp \widetilde{\psi}_{\gamma})))\ge 
2^{D(n,m,n',m')-8}$. 
\end{sublemma}
\begin{proof} We prove the claim only in the case $m\ge 0$ and $m'> 0$. The proofs in the other cases are similar and left to the readers. 
Note that we have
\[
\Pi_{z(\gamma)}(\supp \widetilde{\psi}_\gamma)
=\Pi^*_{+,-}\left(\supp \widetilde{\psi}_{n,k,m}\right)
\]
from the relation $\Phi_z(\alpha_0(z))=\alpha_0(0)$. (Recall the definition of $\widetilde{\psi}_\gamma$.) Thus $\Pi_{z(\gamma)}(\supp \widetilde{\psi}_\gamma)$ is contained in the disk in $E_+^*\oplus E_-^*$ with center at the origin and radius $2^{m+n/2+2}$.
By (\ref{eqn:Pi}), 
the Hausdorff distance between the subsets
$\Pi_{G(w)}(\supp \widetilde{\psi}_\gamma)$ and 
$\Pi_{z(\gamma)}(\supp \widetilde{\psi}_\gamma)$ is bounded by 
\[
2^{n+2}\cdot d(G(w),z(\gamma))\le 2^{n+2+\widetilde{D}(n,m,n',m')-8}= 
2^{m'+n'/2+\lambda-6}.
\]
Hence the subset $\Pi_{G(w)}(\supp\widetilde{\psi}_\gamma)$ is contained in the disk $\disk_{+,-}^*(R)$ in the subspace $E^*_+\oplus E^*_-$ with center at the origin and radius 
\[
R=2^{m'+n'/2+\lambda-5}\ge 2^{m+n/2+2}+2^{m'+n'/2+\lambda-6}
\]
where the inequality is a consequence of the condition (R3) and (\ref{eqn:nnd}). 

Since $G$ preserves the contact form $\alpha_0$, we have that
\[
\Pi_{w}(DG^*_w(\supp\widetilde{\psi}_{\gamma})) =DG^*_w(\Pi_{G(w)}(\supp\widetilde{\psi}_{\gamma})). 
\]
Note that the condition (H2) and (H3) in the definition of $\cH(\lambda,\Lambda)$ implies that $\|(DG^*_z)(\xi)\|\ge 2^{\lambda}
\|\xi\|$ for $\xi\in (E_+^*\oplus E_-^*)\setminus \cone^*_-(1/10)$.
Therefore the  subset $\Pi_{w}(DG^*_w(\supp\widetilde{\psi}_{\gamma}))$ is contained in $\disk_{+,-}^*
(2^{-\lambda} R)\cup \cone^*_-(1/10)$.

Again by (\ref{eqn:Pi}),  the Hausdorff distance between 
 $\Pi_{z(\gamma')}(DG^*_w(\supp\widetilde{\psi}_{\gamma}))$  and  $\Pi_{w}(DG^*_w(\supp\widetilde{\psi}_{\gamma}))$ is bounded by 
 \[
2^{n+2} \cdot d(w,z(\gamma'))\le \sqrt{2d+1}\cdot 2^{n-n'/2+3}.
\] 
If we set 
\begin{align*}
R'&=2^{-\lambda} R+ 10^2 \sqrt{2d+1}\cdot 2^{n-n'/2+3},
\end{align*}
we find that  $\disk_{+,-}^*
(R')\cup \cone^*_-(2/10)$ contains the $\sqrt{2d+1}\cdot 2^{n-n'/2+3}$ neighborhood of 
$\disk_{+,-}^*(2^{-\lambda} R)\cup \cone^*_-(1/10)$ by  elementary geometric consideration. 
Therefore 
$\Pi_{z(\gamma')}(DG^*_w(\supp\widetilde{\psi}_{\gamma}))$ is contained in $\disk_{+,-}^*
(R')\cup \cone^*_-(2/10)$.

On the other hand, the subset 
$\Pi_{z(\gamma')}(\supp{\psi}_{\gamma'})=\Pi^*_{+,-}(\supp \psi_{n',k',m'})$ is contained in $\cone^*_+(6/10)$ and bounded away from the disk $\disk_{+,-}^*(2^{m'+n'/2-1})=\disk_{+,-}^*(2^{-\lambda+4}R)$ by definition. Thus the claim follows if we prove
\begin{equation*}
10^2 \sqrt{2d+1}\cdot 2^{n-n'/2+3} \le 2^{-\lambda}R=2^{m'+n'/2-5}.
\end{equation*}
If $|n-n'|\le 1$, this follows from (R7) and the choice of $\lambda_*$. Otherwise this  follows from (R8), provided that $K$ is sufficiently large. 
\end{proof}

Now we prove Lemma \ref{lm:division} by using the sublemma above. 
Let us first consider the case where 
(\ref{eqn:quarter}) holds.
Note that  we have $\max\{n, n'\}\ge K$ from (R1) and $|n'-n|\le 1$ from (R8). Corollary~\ref{cor:local} tells that
\begin{align*}
\|DG_y^*(\xi)-DG_{z(\gamma')}^*(\xi)\|&<
C(G,g) (2^{n+2}\cdot (2^{-n/3})^2+2^{n/2+|m|}\cdot 2^{n/3})\\
&\le 
C(G,g)\cdot  2^{(5/12)\max\{n,n'\}}< 2^{D(n,m,n',m')-10}
\end{align*}
for $\xi\in \supp \widetilde{\psi}_\gamma $ and  $y\in E$ such that $d(y, z(\gamma'))<2^{-n/3}$. (Note that the last condition on $y$ holds if  $y\in Z(\gamma')$.)
Clearly the claim of the lemma follows from this and the sublemma. 

\begin{remark} 
The argument above is  one of the  key steps in our argument, in which we used   a consequence, Corollary~\ref{cor:local},  of the fact that the flow preserves a contact structure. 
\end{remark}

Next we consider the case where 
(\ref{eqn:quarter}) does {\em not} hold. By virtue of the sublemma, it is enough to show 
\begin{equation}\label{eqn:diam}
\mathrm{diam}\, G(\Ze(\gamma'))\le 2^{\widetilde{D}(n,m,n',m')-10},
\end{equation}
since this and the assumption of the lemma imply that all $w\in Z(\gamma')$ satisfy the condition $d(G(w), z(\gamma))\le 2^{\widetilde{D}(n,m,n',m')-8}$. Note that we have $\mathrm{diam}\, G(\Ze(\gamma'))\le C(G)\cdot 2^{-n'/2}$. If $|n-n'|\le 1$, we have $\max\{-m,m'\} \ge K/5$ from (R1) and (R5), and hence (\ref{eqn:diam}) holds provided that we take large $K$ according to $G$. 
Otherwise  (\ref{eqn:diam}) follows from (R8)  immediately. 
\end{proof}


\section{The hyperbolic parts of the operator $\cM$ (II)}
\label{sec:tail1}
Let $\cR(3)$ be the set of pairs $(\gamma, \gamma')\in\Gamma\times \Gamma\setminus \cup_{i=0}^2 \cR(i)$ such that 
\begin{equation}\label{eqn:r3}
d(G(\Ze(\gamma')),z(\gamma)) > 2^{\widetilde{D}(\gamma,\gamma')-10}.
\end{equation} 
We consider the part $\cM_3$ defined formally by (\ref{eqn:defM}) for  $j=3$. This part corresponds to the case (A)  mentioned in the beginning of Section \ref{sec:body}. Below we prove
\begin{proposition}\label{pp:m3} 
The formal definition of the operator $\cM_3$ in fact gives a bounded operator $\cM_3:\bB^\beta_\nu\to \bB^\beta_{\nu'}$ for any $\nu,\nu'\ge 2\beta+2d+2$. Further  there is a constant $C_*>0$ such that we have
\[
\left\| \cM_3(\bu)
\right\|_{\beta,\nu_*}^{(\lambda)}
\le C_*\|g\|_{L^\infty}  \cdot   2^{-\beta\lambda} \cdot 
\left\|\bu\right\|_{\beta,\nu_*}^{(\lambda)}\quad \text{ for $\bu\in \bB^\beta_{\nu_*}$}
\]
for  $G:V'\to V$ in $\cH(\lambda,\Lambda)$ and  $g\in \cC^r(V')$ provided  $\lambda\ge \lambda_*$ and $\Lambda\ge \Lambda_*$.
\end{proposition}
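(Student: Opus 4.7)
I would follow the architecture of the proof of Proposition~\ref{pp:m2}: a Schur/Young-type reduction to a combinatorial sum, controlled by a quantity $\widetilde{K}_{n,k,m,n',k',m'}$ that replaces the $K_{n,k,m,n',k',m'}$ of (\ref{eqn:K}). The essential novelty is that the small factor $2^{-\beta\lambda}$ in the target bound will now be produced by the geometric separation condition (\ref{eqn:r3}) rather than by ratios of the weights~$w^{(\lambda)}$.

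The first step is to sharpen Lemma~\ref{lm:kest1} for $(\gamma,\gamma')\in\cR(3)$: for any $s>0$, provided $\mu$ is taken sufficiently large (depending on $s$),
\[
|\kappa_{\gamma\gamma'}(x',x)|\le C_*\,2^{-r_*\Delta(\gamma,\gamma')}\,2^{-s\widetilde{D}(\gamma,\gamma')}\,\|g\|_*\int_{Z(\gamma')}b_{\gamma'}^{\mu}(x'-y)\,b_\gamma^{\mu}(G(y)-x)\,dy,
\]
since every $y\in Z(\gamma')$ satisfies $\|G(y)-z(\gamma)\|>2^{\widetilde{D}(\gamma,\gamma')-10}$ and hence the polynomial decay of $b_\gamma^{\mu}$ in Corollary~\ref{cor:localized}, which controls $\|G(y)-x\|$ at scale $2^{-n/2-|m|}$, can be split into the prefactor $2^{-s\widetilde{D}(\gamma,\gamma')}$ times a residual integrand of the same form with a slightly smaller exponent. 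Plugging this into the Young-inequality step that produces (\ref{eqn:ck}) replaces $K_{n,k,m,n',k',m'}$ by
\[
\widetilde{K}_{n,k,m,n',k',m'}:=2^{-s\widetilde{D}(n,m,n',m')}\cdot 2^{-r_*\Delta(n,k,n',k')}\cdot \|g\|_{L^\infty}\cdot \frac{w^{(\lambda)}(m')}{w^{(\lambda)}(m)},
\]
reducing the proposition to Schur-type sum bounds on $\widetilde{K}$ over combinations $(n,k,m,n',k',m')$ satisfying (R1)--(R4), with target bound $C_*\|g\|_{L^\infty}\cdot 2^{-\beta\lambda}$.

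The final step is purely combinatorial. Case by case in the three regimes entering the definition of $\widetilde{D}$, one shows
\[
2^{-s\widetilde{D}(n,m,n',m')}\cdot\frac{w^{(\lambda)}(m')}{w^{(\lambda)}(m)}\le C_*\cdot 2^{-\beta\lambda}\cdot 2^{-(s-C_0\beta)\widetilde{D}(n,m,n',m')}
\]
for some absolute $C_0$, once $s$ is chosen large enough. Summability in $(n,k)$ and $(n',k')$ then follows from (\ref{eqn:chisum})--(\ref{eqn:chisum2}) together with (\ref{eqn:nnd}), while summability in $m,m'$ is furnished by the remaining geometric-series margin $2^{-(s-C_0\beta)\widetilde{D}}$.

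\textbf{Main obstacle.} The delicate point is the combinatorial estimate in the mixed-sign regime $m<0,\,m'\ge 0$, where $\widetilde{D}$ is defined as a maximum of two expressions and the weight ratio can be as large as $2^{\beta(m'-m+4\lambda)}$. One has to split according to which argument of the max dominates and invoke (R7) or (R8) accordingly, using that $\widetilde{D}$ always grows at least like $\max\{-m,m'\}$ up to an additive $O(\max\{n,n'\}+\lambda)$, which is in turn absorbed by (R8) and (\ref{eqn:nnd}). A secondary but related point is verifying the improved kernel estimate in Step~1 uniformly in the relative scale $n$ versus $|m|$: because $d_\gamma^\nu$ only scales by $2^{n/2}$ while $b_\gamma^\mu$ scales by the larger $2^{n/2+|m|}$ in the transverse directions, one has slack to extract $2^{-s\widetilde{D}}$ without disturbing the weight-absorption bookkeeping.
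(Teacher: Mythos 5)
Your Step~1 is where the argument breaks down, and the error is structural rather than cosmetic. You claim that because $\|G(y)-z(\gamma)\|>2^{\widetilde D(\gamma,\gamma')-10}$ for all $y\in Z(\gamma')$, one can extract a prefactor $2^{-s\widetilde D(\gamma,\gamma')}$ from the pointwise kernel bound of Lemma~\ref{lm:kest1}, writing
\[
|\kappa_{\gamma\gamma'}(x',x)|\le C_*\,2^{-r_*\Delta(\gamma,\gamma')}\,2^{-s\widetilde D(\gamma,\gamma')}\,\|g\|_*\int_{Z(\gamma')}b_{\gamma'}^{\mu}(x'-y)\,b_\gamma^{\mu}(G(y)-x)\,dy.
\]
This is false: the factor $b_\gamma^{\mu}(G(y)-x)$ decays in $\|G(y)-x\|$, not in $\|G(y)-z(\gamma)\|$, and nothing prevents $x$ from being close to $G(y)$. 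When $x'\in Z(\gamma')$ and $x$ is near $G(Z(\gamma'))$, the kernel $\kappa_{\gamma\gamma'}(x',x)$ is genuinely of size $\sim 2^{-r_*\Delta(\gamma,\gamma')}|\det J_{n,m}|\cdot|\det J_{n',m'}|\cdot\mathrm{vol}(Z(\gamma'))$ irrespective of $\widetilde D$, so Lemma~\ref{lm:kest1} is already essentially sharp in $L^2\to L^2$, and no pointwise improvement of the form you assert is available. In short, the operator $\cL_{\gamma\gamma'}$ on unweighted $L^2$ is \emph{not} small for $(\gamma,\gamma')\in\cR(3)$.

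What the separation condition (\ref{eqn:r3}) actually buys you is smallness of the \emph{weighted} operator norm. Since $\|G(y)-z(\gamma)\|>2^{\widetilde D(\gamma,\gamma')-10}$, either $x$ stays away from $z(\gamma)$ (so $d_\gamma(x)$ is large) or $x$ is far from $G(y)$ (so $b_\gamma^{\mu}(G(y)-x)$ is small). This dichotomy is packaged as a single inequality,
\[
d_\gamma(x)^{-1}\le 2^{-\widetilde D(\gamma,\gamma')-n(\gamma)/2+11}\cdot\langle 2^{n(\gamma)/2}(G(y)-x)\rangle\qquad\text{for }y\in Z(\gamma'),\ x\in E,
\]
and it is from this (raised to the power $\nu-2d-2$) that the factor $2^{-(\nu-2d-2)(\widetilde D+n/2)}$ emerges in the paper's Schur coefficient. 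Note that this also explains the paper's extra $2^{-(\nu-2d-2)n/2}$ which your $\widetilde K_{n,k,m,n',k',m'}$ is missing, and it is what makes the hypothesis $\nu\ge 2\beta+2d+2$ (rather than merely $\nu\ge 2d+2$) do real work. Your closing remark about ``$d_\gamma^\nu$ only scales by $2^{n/2}$ while $b_\gamma^\mu$ scales by the larger $2^{n/2+|m|}$'' gestures at the right tension but is applied in the wrong place: that slack is used to absorb $d_{\gamma'}^{\nu'}(x')$ into $b_{\gamma'}^{\mu}$, not to conjure a $\widetilde D$-gain out of the unweighted kernel. To repair the proof, replace your Step~1 with the weighted estimate on $d_{\gamma'}^{\nu'}(x')\,\kappa_{\gamma\gamma'}(x',x)\,d_\gamma^{-(\nu-2d-2)}(x)$, then run the Young/Schwarz machinery against $v_{n,k,m}$; the combinatorial sublemma you sketch in Step~3 then becomes a statement about $2^{-(\nu-2\beta-2d-2)(\widetilde D+n/2)}$ rather than $2^{-s\widetilde D}$, with the gain $2^{-\beta\lambda}$ ultimately traced back to the fact that $\widetilde D+n/2\gtrsim\max\{-m,m'\}\ge\delta\lambda$ under (R1)--(R4).
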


\noindent{\it Proof.}\;
The structure of the proof is similar to that of Proposition~\ref{pp:m2}, though we consider combinations $(n,k,m,n',k',m')$ in $(\cN\oplus \integer)^2$ that satisfy the conditions (R1)-(R4) for this time. 
We set
\[
K_{n,k,m,n',k',m'}=2^{-(\nu-2d-2)\cdot (\widetilde{D}(n,m,n',m')+n/2)-r_* \Delta(n,k,n',k')}\cdot\|g\|_*\cdot  \frac{\wt(m')}{\wt(m)}.
\]
And we use the following sublemma of combinatorial nature in the place of Sublemma \ref{subl:1}, whose proof is postponed for a while.
\begin{sublemma}\label{subl:2} There exists a constant $C_*>0$ such that
\begin{align*}
&\sup_{(n',k',m')\in \cN\oplus \integer}\left(\sum_{n,k,m\mid n',k',m'}\!\!\!\!\!\!\!K_{n,k,m,n',k',m'} \right)<
C_*\|g\|_* \cdot   2^{-(\nu-2\beta-2d-2)\delta \lambda+4\beta  \lambda}
\intertext{and}
&\sup_{(n,k,m)\in \cN\oplus \integer}\left(\sum_{n',k',m'\mid n,k,m}\!\!\!\!\!\!\!K_{n,k,m,n',k',m'} \right)<
C_*\|g\|_* \cdot 2^{-(\nu-2\beta-2d-2)\delta \lambda+4\beta  \lambda}
\end{align*}
where $\sum_{n',k',m'|n,k,m}$ {\rm(resp. $\sum_{n,k,m|n',k',m'}$)} denotes the sum over $(n',k',m')$ 
{\rm (resp. $(n,k,m)$)} in $\cN\oplus \integer$ such that $(n,k,m,n',k',m')$ satisfies  (R1)-(R4). 
\end{sublemma}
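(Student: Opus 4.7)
The plan is to mimic the combinatorial scheme used in the proof of Sublemma~\ref{subl:1}, with the new input being the localization factor $2^{-(\nu-2d-2)(\widetilde{D}(n,m,n',m')+n/2)}$ appearing in $K_{n,k,m,n',k',m'}$. First I would apply inequality~(\ref{eqn:wt}) to obtain $\wt(m')/\wt(m)\le 2^{\beta(m'-m)+4\beta\lambda}$; this accounts for the $2^{4\beta\lambda}$ factor in the stated bound, reducing the task to showing
\[
\sum 2^{-(\nu-2d-2)(\widetilde{D}+n/2)+\beta(m'-m)-r_*\Delta}\le C_*\, 2^{-(\nu-2\beta-2d-2)\delta\lambda}.
\]

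Next, I would split $\nu-2d-2=2\beta+(\nu-2\beta-2d-2)$. The $2\beta$ piece pairs with $\beta(m'-m)$ and must be shown to be summable through case-by-case lower bounds on $\widetilde{D}+n/2$ coming from (R5)--(R8); the residual $(\nu-2\beta-2d-2)$ piece then supplies the decisive factor $2^{-(\nu-2\beta-2d-2)\delta\lambda}$, provided $\widetilde{D}+n/2\ge \delta\lambda$ holds uniformly modulo an error that the $r_*\Delta$ term absorbs via (\ref{eqn:chisum})--(\ref{eqn:chisum2}).

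The heart of the argument is the sign analysis on $(m,m')$ forced by (R6). In the case $m\ge 0$, $m'>0$, one has $m'\ge \delta\lambda$ by (R7)/(R8), and $\widetilde{D}+n/2=m'+(n'-n)/2+\lambda\ge \delta\lambda+\lambda-\Delta-5$ via (\ref{eqn:nnd}); the exponent $-2\beta(\widetilde{D}+n/2)+\beta(m'-m)$ simplifies to $-\beta(m+m')-\beta(n'-n)-2\beta\lambda$, geometric sums over $m\ge 0$ and $(n,k)$ close the estimate as long as $r_*-2\beta>1$. The case $m<0$, $m'<0$ is cleaner: $\widetilde{D}+n/2=-m\ge \delta\lambda$, the $m$-dependent coefficient reduces to $\nu-\beta-2d-2$, and a geometric sum produces $2^{-(\nu-\beta-2d-2)\delta\lambda}$, which is stronger than needed.

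The main obstacle is the mixed-sign case $m<0$, $m'\ge 0$, because individual bounds on $-m$ or $m'$ can be weak while $\wt(m')/\wt(m)$ grows. Here I would use the arithmetic-mean bound $\widetilde{D}+n/2\ge \tfrac{1}{2}(-m+m'+(n'-n)/2+\lambda)$ for the $\max$; after substitution the $m$-dependent coefficient becomes $((\nu-2d-2)/2-\beta)(m-m')$, which is $\le 0$ since $\nu\ge 2\beta+2d+2$ and $m-m'<0$, so the sum over $m<0$ is geometric, while the residual $\lambda/2$ yields a bound of order $2^{-(\nu-2d-2)\lambda/2}$, which dominates the target since $\lambda/2\ge \delta\lambda$. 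The two stated inequalities are symmetric under interchange of primed and unprimed coordinates, so the same case analysis applies to both, using (\ref{eqn:chisum}) in place of (\ref{eqn:chisum2}) when appropriate; the principal difficulty is solely the bookkeeping in the mixed-sign case, where one must damp the growth in $-m$ and $m'$ simultaneously by exploiting both arguments of the $\max$ in $\widetilde{D}$.
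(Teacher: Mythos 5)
Your opening moves coincide with the paper's: the reduction $\wt(m')/\wt(m)\le 2^{\beta(m'-m)+4\beta\lambda}$ via (\ref{eqn:wt}) and the split $\nu-2d-2=2\beta+(\nu-2\beta-2d-2)$ reproduce exactly the bound (\ref{eqn:bK}). From that point, though, your route diverges: you organize the case analysis by the signs of $(m,m')$ (with the arithmetic-mean trick $\max\{a,b\}\ge(a+b)/2$ in the mixed case), whereas the paper splits by $|n-n'|\le 1$ versus $|n-n'|\ge 2$ and, in the second regime, replaces $\widetilde{D}+n/2$ outright by the uniform bound $\widetilde{D}+n/2\ge\max\{-m,m'\}-\max\{n,n'\}\ge\max\{-m,m'\}/2$, which follows from (R8). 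That substitution is the crucial step you are missing, and it is where your sketch has a genuine gap.

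Concretely, in your cases $m\ge 0,m'>0$ and $m<0,m'\ge0$, the quantity $\widetilde{D}+n/2$ contains the additive term $(n'-n)/2$. When $|n-n'|\ge 2$ and $n>n'$, this term is negative and can be as large as $-\Delta(n,k,n',k')-5$ in absolute value, by (\ref{eqn:nnd}). In the piece $-(\nu-2\beta-2d-2)(\widetilde{D}+n/2)$ that you want to estimate below by $(\nu-2\beta-2d-2)\delta\lambda$, this produces a positive contribution of size up to $(\nu-2\beta-2d-2)(\Delta+5)/2$. Your proposal then says this error is ``absorbed by the $r_*\Delta$ term,'' but $r_*$ is fixed by the regularity of the flow while $\nu$ is unrestricted above; once $\nu-2\beta-2d-2>2r_*$, the combination $(\nu-2\beta-2d-2)\Delta/2-r_*\Delta$ is positive and grows without bound in $\Delta$, so (\ref{eqn:chisum})--(\ref{eqn:chisum2}) cannot rescue the sum. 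The way to close this is not the $r_*\Delta$ term at all: one must invoke (R8), $\max\{-m,m'\}\ge 2\max\{n,n'\}$, which makes $(n'-n)/2$ dominated by $\max\{-m,m'\}/2$ and yields the paper's clean inequality $\widetilde{D}+n/2\ge\max\{-m,m'\}/2$ with no $n,n'$ dependence left over. Your AM bound $\widetilde{D}+n/2\ge\tfrac12(-m+m'+(n'-n)/2+\lambda)$ still carries the dangerous $(n'-n)/2$ and therefore does not by itself give $\widetilde{D}+n/2\ge\delta\lambda$ uniformly; you would need to re-import (R8) to salvage it, at which point you have essentially rederived the paper's case-(II) inequality. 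The constant-sign cases are fine as you present them (for $m<0,m'<0$ the identity $\widetilde{D}+n/2=-m$ has no $n,n'$ dependence, and for $m\ge0,m'>0$ the (R8)-implied lower bound $m'\ge 2\max\{n,n'\}$ takes care of the sign of $(n'-n)/2$ as you implicitly use), but you should state explicitly that it is (R8), not $r_*\Delta$, that controls the $(n'-n)$ contribution.
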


We continue with the proof of Proposition \ref{pp:m3}.
We first see that it holds
\begin{equation}\label{eqn:dgw}
d_\gamma(x)^{-1}
 \le  2^{-\widetilde{D}(\gamma,\gamma')-n(\gamma)/2+11} \cdot \langle 2^{n(\gamma)/2}( G(y) -x) \rangle
\end{equation}
for $(\gamma,\gamma')\in \cR(3)$, $y\in \Ze(\gamma')$ and $x\in E$.
If  $\|x-z(\gamma)\|\ge 2^{\widetilde{D}(\gamma,\gamma')-11}$, the claim is trivial. Otherwise we have, from the definition of $\cR(3)$, that 
\[
\|G(y) -x\| \ge \|G(y)-z(\gamma)\|-\|x-z(\gamma)\|\ge  2^{\widetilde{D}(\gamma,\gamma')-11}
\]
and hence the right hand side of (\ref{eqn:dgw}) is not smaller than $1\ge d_\gamma(x)^{-1}$. 

From the inquality (\ref{eqn:dgw})  and  Lemma \ref{lm:kest1} for  $\mu=
\max\{\nu,\nu'\}+4d+4$, we obtain the following estimate on the kernel $\kappa_{\gamma\gamma'}(x',x)$ of the operator $\cL_{\gamma\gamma'}$:
\begin{align*}
|d_{\gamma'}^{\nu'}(x') \kappa_{\gamma\gamma'}(x',x)& d_{\gamma}^{-\nu+2d+2}(x)|\\
&\le
C_*\|g\|_*\cdot  2^{-(\nu-2d-2)(\widetilde{D}(n,m,n',m')+n/2)-r_* \Delta(n,k,n',k')}\\
&\qquad \cdot 
\int_{Z(\gamma')} b_{\gamma'}^{2d+2}(x'-y)\cdot b_\gamma^{\mu-\nu+2d+2}(G(y)-x) dy
\end{align*}
for $(\gamma,\gamma')\in \cR(3)$, where  $n=n(\gamma)$, $k=k(\gamma)$, $m=m(\gamma)$, $n'=n(\gamma')$, $k'=k(\gamma')$ and $m'=m(\gamma')$.

Since the intersection multiplicity of  $Z(\gamma')$
for $\gamma'\in \Gamma$ such that $n(\gamma')=n'$, $k(\gamma')=k'$ and $m(\gamma')=m'$ is bounded by some constant depending only on~$d$, we have, from the estimate  above and  Young inequality,  that
\begin{align*}
&\sum_{\gamma':n',k',m'}
\biggl\|
\mathop{\sum{}^{\dag}}_{\gamma:n,k,m;\gamma'} d_{\gamma'}^{\nu'}\cL_{\gamma\gamma'} u_\gamma\biggr\|^2_{L^2}\\
&\qquad\qquad \le C_* \cdot \biggl|  K_{n,k,m, n',k',m'}\cdot  \frac{\wt(m)}{\wt(m')}\biggr|^2   \cdot 
\biggl\| 
\sum_{\gamma:n,k,m}d_\gamma^{\nu-2d-2} |u_\gamma|  \biggr\|_{L^2}^2
\end{align*}
for $\bu=(u_\gamma)_{\gamma\in \Gamma}\in \bB^\beta_\nu$, where $\sum_{\gamma:n,k,m;\gamma'}^\dag$ denotes the sum over $\gamma\in \Gamma$ such that $n(\gamma)=n$, $k(\gamma)=k$ and $m(\gamma)=m$ and that $(\gamma,\gamma')\in \cR(3)$, while $\sum_{\gamma:n,k,m}$ denotes the sum over $\gamma\in \Gamma$ such that $n(\gamma)=n$, $k(\gamma)=k$ and $m(\gamma)=m$. 
Applying Schwarz inequality as in (\ref{eqn:sch}), we get
\begin{align*}
\sum_{\gamma':n',k',m'}&\left\|\sum_{\gamma:n,k,m:\gamma'}\!\!\!\!\!{}^\dag\;\; d_{\gamma'}^{\nu'}\cL_{\gamma\gamma'} u_\gamma\right\|_{L^2}^2  \le C_* 
\left| K_{n,k,m, n',k',m'}  \frac{\wt(m)}{\wt(m')}\right|^2
\left\|  v_{n,k,m} \right\|_{L^2}^2
\end{align*}
where $v_{n,k,m}$ is defined by (\ref{eqn:v}).
Once we have this estimate, we can proceed just as in the last part of the proof of Proposition~\ref{pp:m2}, using Sublemma \ref{subl:2} in the place of Sublemma \ref{subl:1}, and conclude  that
\[
\left\| \cM_3(\bu)
\right\|_{\beta,\nu'}^{(\lambda)}
\le C_*\|g\|_{*}  \cdot  2^{-(\nu-2\beta-2d-2)\delta \lambda+4\beta\lambda} \cdot 
\left\|\bu\right\|_{\beta,\nu}^{(\lambda)}\quad \text{ for $\bu\in \bB^\beta_{\nu}$.}
\]
This  implies not only that $\cM_3:\bB^\beta_{\nu}\to \bB^\beta_{\nu'}$ is bounded but also the latter claim of the proposition because $-(\nu_*-2\beta-2d-2)\delta \lambda+4\beta\lambda<-\beta\lambda$ from the choice of $\nu_*$. We finish the proof by proving  Sublemma \ref{subl:2}.
\begin{proof}[Proof of Sublemma \ref{subl:2}]
In the argument below, we consider combinations $(n,k,m,n',k',m')$ in $(\cN\times \integer)^2$ that satisfy the conditions (R1)-(R4).
From the definition of $\widetilde{D}(n,m,n',m')$, we have that
\begin{align*}
\widetilde{D}(n,m,n',m') +n/2+|n-n'|/2 \ge \max\{-m, m'\}
\end{align*} 
and hence, by (\ref{eqn:nnd}),  that
\begin{align*}
2\widetilde{D}(n,m,n',m') +n+2\Delta(n,k,n',k')+10 \ge 2\max\{-m, m'\}\ge m'-m.
\end{align*} 
Using this and (\ref{eqn:wt}), we see that $K_{n,k,m,n',k',m'}$ is bounded by 
\begin{equation}\label{eqn:bK}
C_* \|g\|_*\cdot 2^{-(\nu-2\beta-2d-2)
(\widetilde{D}(n,m,n',m')+n/2)-(r_*-2\beta) \Delta(n,k,n',k')+4\beta \lambda}
\end{equation}
Below we proceed as in the proof of Sublemma \ref{subl:1}: We restrict our attention  to the  cases (I) $ |n'-n|\le 1$  and (II) $|n'-n|\ge 2$ in turn, and prove the claims with the sums replaced by the partial sums restricted to such cases. 

Let us first consider the case (I).  In this case, we have, from  (\ref{eqn:bK}), that 
\[
K_{n,k,m,n',k',m'}
\le C_* \|g\|_*\cdot 2^{-(\nu-2\beta-2d-2)
\max\{-m,m'\}-(r_*-2\beta) \Delta(n,k,n',k')+4\beta \lambda}.
\]
Note that there exists a constant $C_*>0$ such that 
\begin{align}
&\sum_{m:\mbox{\tiny (R5), (R7)}}\;\; 2^{-(\nu-2\beta-2d-2)
\max\{-m,m'\}}<C_*\qquad \mbox{ for any $m'$}\label{eqn:r571}
\intertext{
and }
&\sum_{m':\mbox{\tiny (R5), (R7)}} 2^{-(\nu-2\beta-2d-2)
\max\{-m,m'\}}<C_*\qquad \mbox{ for any $m$}
\label{eqn:r572}
\end{align}
where $\sum_{m:\mbox{\tiny (R5), (R7)}}$ (resp. $\sum_{m':\mbox{\tiny (R5), (R7)}}$ ) denotes the sum over $m$ (resp. $m'$) satisfying both of the  conditions $\max\{-m, m'\}\ge \delta\lambda $ and $ m'\ge m-\lambda+20$ that come from (R5) and (R7) respectively. Therefore, taking (\ref{eqn:chisum}) and (\ref{eqn:chisum2}) into account, we obtain the required  estimates for the partial sums.

Let us consider the case (II). In this case, we have
\begin{align*}
\widetilde{D}(n,m,n',m')+n/2&\ge 
\max\{-m,m'\}-\max\{n,n'\}\ge  \max\{-m,m'\}/2
\end{align*}
from (R8). Hence it follows from the bound (\ref{eqn:bK}) above that
\[
K_{n,k,m,n',k',m'}
\le C_* \|g\|_*\cdot 2^{-(\nu-2\beta-2d-2)
\max\{-m,m'\}/2-(r_*-2\beta) \Delta(n,k,n',k')+4\beta \lambda}.
\]
Using this estimate and taking (R5) and (R8) 
and also  (\ref{eqn:chisum}) and (\ref{eqn:chisum2}) into account, we obtain the  required inequalities for the partial sums. (In this case, we can make the constant $C_*$ in the required inequality arbitrarily small by taking large $K$.)
\end{proof}


\section{The hyperbolic parts of the operator $\cM$ (III)}
\label{sec:tail2}
In this section we consider the remainder  of the hyperbolic part.
We set $\cR(4)=\Gamma\times \Gamma\setminus (\cup_{i=0}^3 \cR(i))$ and let $\cM_4$ be the part defined  formally by  (\ref{eqn:defM}) for $j=4$.  This part corresponds to the case (B)  mentioned in the beginning of Section \ref{sec:body}. Below we prove
\begin{proposition}\label{pp:m4} 
The formal definition of the operator $\cM_4$ in fact gives a bounded operator $\cM_4:\bB^\beta_\nu\to \bB^\beta_{\nu'}$ for $\nu,\nu'\ge 2d+2$.
Further, there exists a constant $C_*>0$ such that we have 
\[
\left\| \cM_4(\bu)
\right\|_{\beta,\nu_*}^{(\lambda)}
\le C_* \cdot \|g\|_*\cdot 2^{-\beta \lambda} 
\left\|\bu\right\|_{\beta,\nu_*}^{(\lambda)}\quad \text{for $\bu\in \bB^\beta_\nu$}
\]
for $G:V'\to V$ in $\cH(\lambda,\Lambda)$ and $g\in \cC^r(V')$ provided  $\lambda\ge \lambda_*$ and $\Lambda\ge \Lambda_*$.
\end{proposition}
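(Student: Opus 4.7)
The plan follows the template of Propositions \ref{pp:m2} and \ref{pp:m3}: establish a pointwise kernel bound for $\cL_{\gamma\gamma'}$ with $(\gamma,\gamma')\in \cR(4)$, then feed it into the same Schur-type argument (Young's inequality plus two Schwarz-inequality reorderings) to reduce the operator-norm estimate to a combinatorial sublemma on sums of weights $K_{n,k,m,n',k',m'}$. The crucial new geometric input is Lemma \ref{lm:division}: since $(\gamma,\gamma')\notin\cR(3)$ we have $d(G(\Ze(\gamma')),z(\gamma))\le 2^{\widetilde{D}(\gamma,\gamma')-10}$, so the cotangent separation (\ref{eqn:cldiv}) holds on $\Ze(\gamma')$ (or on the ball of radius $2^{-n(\gamma)/3}$ around $z(\gamma')$ in the regime where (\ref{eqn:quarter}) is in force; in that regime $\Ze(\gamma')$ is covered by a uniformly bounded number of such balls, so no essential difference arises).

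\emph{Kernel estimate via transversal oscillatory integration.} The phase in (\ref{eqn:kappa}) is $\Phi(y,\xi,\eta)=\langle\xi,x'-y\rangle+\langle\eta,G(y)-x\rangle$ with $y$-gradient $-\xi+DG^{*}_{y}(\eta)$. Fix an orthonormal basis $e_{1},\dots,e_{2d}$ of the null space of $\alpha_0(z(\gamma'))\subset E$ and regard them as constant vector fields on $E$. Since each $e_j$ is annihilated by $\alpha_0(z(\gamma'))$, one has $\sum_{j=1}^{2d}\bigl(e_j(\Phi)\bigr)^{2}=\|\Pi_{z(\gamma')}(-\xi+DG^{*}_{y}(\eta))\|^{2}\ge 2^{2D(\gamma,\gamma')-20}$ on the support of the integrand by Lemma \ref{lm:division}. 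Applying (\ref{eqn:intbypart}) to the $y$-integral in (\ref{eqn:kappa}) with these vector fields and $\ell=r-1$, while simultaneously keeping the $v_{0}$-directional integration by parts of Lemma \ref{lm:kest1} (the two commute because $G_{*}(v_{0})=v_{0}$), one obtains
\[|\kappa_{\gamma\gamma'}(x',x)|\le C_{*}\|g\|_{*}\cdot 2^{-(r-1)D(\gamma,\gamma')-r_{*}\Delta(\gamma,\gamma')}\int_{\Ze(\gamma')}b_{\gamma'}^{\mu}(x'-y)\cdot b_{\gamma}^{\mu}(G(y)-x)\,dy.\]
Derivatives of the denominator $\sum_\ell e_\ell(\Phi)^{2}$ arising during the iteration are controlled by the second-order bound on $G$ supplied by Corollary \ref{cor:local}, and do not dominate the gain $2^{-D}$ per integration by parts.

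\emph{Schur bound and combinatorial step.} Setting
\[K_{n,k,m,n',k',m'}=2^{-(r-1)D(n,m,n',m')-r_{*}\Delta(n,k,n',k')}\cdot\|g\|_{*}\cdot\frac{\wt(m')}{\wt(m)},\]
the Young/Schwarz argument from the proof of Proposition \ref{pp:m3} reduces the proposition to
\[\max\Bigl\{\sup_{(n',k',m')}\sum_{n,k,m}K_{n,k,m,n',k',m'},\;\sup_{(n,k,m)}\sum_{n',k',m'}K_{n,k,m,n',k',m'}\Bigr\}\le C_{*}\|g\|_{*}\cdot 2^{-\beta\lambda},\]
with the sums over tuples satisfying (R1)--(R4). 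Using (\ref{eqn:wt}) and the fact that in each of the three cases of the definition of $D(n,m,n',m')$ one has $D\ge (m'-m)/2$ plus nonnegative terms in $n/2$, $n'/2$ or $\lambda$, then splitting into $|n-n'|\le 1$ and $|n-n'|\ge 2$ exactly as in the proofs of Sublemmas \ref{subl:1} and \ref{subl:2} and summing over $(k,k')$ via (\ref{eqn:chisum})--(\ref{eqn:chisum2}), the claim reduces to an elementary numerical inequality that holds precisely because $\beta<(r-1)/2$.

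\emph{Main obstacle.} The delicate step is the transversal integration by parts. Unlike the $v_{0}$-directional one of Lemma \ref{lm:kest1}, it feeds genuine derivatives of $G$ into the operator $L^{\ell}$, so the iteration caps at $\ell=r-1$; this cap is exactly what forces the hypothesis $\beta<(r-1)/2$ in the final combinatorial step. The one nontrivial bookkeeping point is that Corollary \ref{cor:local} bounds $|\langle\eta,D^{2}G_{y}(e_i,e_j)\rangle|$ by $\|\eta_{+,-}\|$ alone (with the potentially huge factor $|\eta_{0}|$ always paired with $\|y-z(\gamma')\|$, hence at most $2^{-n'/2}$ or $2^{-n(\gamma)/3}$ according to the regime of Lemma \ref{lm:division}), which is exactly what is needed to keep the gain $2^{-(r-1)D}$ intact.
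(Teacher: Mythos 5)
Your overall strategy — kernel estimate via transversal integration by parts followed by a Schur-type combinatorial sublemma — is the right one, and your combinatorial observations about $D(n,m,n',m')$ are sound. However, your claimed kernel estimate
\[
|\kappa_{\gamma\gamma'}(x',x)|\le C_{*}\|g\|_{*}\, 2^{-(r-1)D(\gamma,\gamma')-r_{*}\Delta(\gamma,\gamma')}\int_{\Ze(\gamma')}b_{\gamma'}^{\mu}(x'-y)\, b_{\gamma}^{\mu}(G(y)-x)\,dy
\]
cannot be obtained by directly applying the integration-by-parts formula (\ref{eqn:intbypart}) to the $y$-integral in (\ref{eqn:kappa}), and this is where the proof breaks down. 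Each application of $L$ produces a derivative either of $\rho_{\gamma'}(y)g(y)$ or of the ratio $v_j(f)/\sum_\ell v_\ell(f)^2$. Since $\rho_{\gamma'}$ is supported on a cube of side $\sim 2^{-n'/2}$, each directional derivative of $\rho_{\gamma'}$ costs a factor $2^{n'/2}$, and derivatives of the ratio drag in higher derivatives of $G$. After $r-1$ iterations you actually get at best $C(G,g)\,2^{-(r-1)(D(\gamma,\gamma')-n'/2)}$, with a constant depending on $G$ and $g$, not $C_{*}2^{-(r-1)D}$. The discrepancy of $2^{(r-1)n'/2}$ is exactly the cumulative derivative cost of the spatial cutoff, and the $G$-dependence of the constant is unavoidable once the transversal vectors $e_j$ hit $D^2G$.

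This matters in the regime singled out in Lemma \ref{lm:division} and Lemma \ref{lm:kest2}, namely $|n-n'|\le 1$ and $\max\{|m|,|m'|\}\le\max\{n,n'\}/4$ (so $n'\ge K-1$). There $D-n'/2\ge\max\{-m,m'\}$ with $\max\{-m,m'\}\ge\delta\lambda$ coming from (R7), so after summing one is left with roughly $C(G,g)\,2^{-(r-1-2\beta)\delta\lambda+4\beta\lambda}$. The constant $C(G,g)$ cannot be absorbed by taking $K$ large (the exponent $\delta\lambda$ does not grow with $K$), and since $\delta$ was deliberately chosen so small that $(2\beta+5d+2)\delta<\epsilon$, the inequality $(r-1-2\beta)\delta\ge 5\beta$ that would be needed to get $\le C_{*}2^{-\beta\lambda}$ generally fails when $\beta$ is close to $(r-1)/2$. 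Thus with the honest bound the combinatorial step does not close. The paper circumvents exactly this with the auxiliary frequency partition $\{\phi^{(\ell)}_{\gamma\gamma'}\}$ and a delta-function trick that introduces a second spatial variable $y'$ carrying $\rho_{\gamma'}(y')$, while the $G$-dependent variable $y$ now carries a wider cutoff $\widetilde{\rho}_{\gamma'}$ of radius $2^{-n'/3}$. For $\ell\ge 1$, (P2) gives $|\xi'-\xi|\gtrsim 2^{D+\ell}$ and the $y'$-phase is linear (no $G$), so one may integrate by parts arbitrarily many ($\mu'$) times with $G$-independent constants, producing the term $C_{*}\|g\|_{*}\,2^{-\mu'(D-n'/2)}$ in (\ref{eqn:other0}); for $\ell=0$ the $r-1$ transversal integrations by parts cost only $2^{(r-1)n'/3}$ per the wider cutoff, giving $C(G,g)\,2^{-(r-1)(D-n'/3)}$ with the extra $2^{-(r-1)n'/6}\le 2^{-(r-1)K/6}$ available to absorb $C(G,g)$. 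Both mechanisms are essential and absent from your argument, so the proof as written has a genuine gap.
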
 
In the proof, we need the following lemma which gives a delicate estimate on the kernel $\kappa_{\gamma\gamma'}$ of $\cL_{\gamma\gamma'}$ that results from  applications of integration by parts.  
We will prove it after finishing the proof of  Proposition \ref{pp:m4}. 
\begin{lemma}\label{lm:kest2} 
For  $\mu\ge 2d+2$ and $\mu'>0$, there exist a constant $C_*>0$, and another constant $C(G,g)$ that may depend on $G$ and $g$, such that 
\begin{align*}
|\kappa_{\gamma\gamma'}(x',x)|\le C(n&(\gamma),m(\gamma),n(\gamma'),m(\gamma')) \\
& 
\cdot 2^{-r_*\cdot\Delta(\gamma,\gamma')}  \int b^\mu_{\gamma'}(x'-y)
\cdot d_{\gamma'}^{-2d-2}(y)  \cdot b^{\mu}_{\gamma}(G(y)-x) dy
\end{align*}
for $(\gamma, \gamma')\in \cR(4)$ and $x,x'\in E$, where we set
\begin{align}\label{eqn:other0}
C(n,m,n',m')=&C_*   \|g\|_* 2^{-\mu' (D(n,m,n',m')-n'/2)}\\
&\qquad 
+C(G,g)   2^{-(r-1) (D(n,m,n',m')-n'/3)}\notag
\end{align}
in the case  $\max\{|m|,|m'|\}\le \max\{n,n'\}/4$ and set
\begin{equation}\label{eqn:other}
C(n,m,n',m')=C(G,g)    2^{-(r-1)(D(n,m,n',m')-n'/2)}
\end{equation}
 otherwise. 
\end{lemma}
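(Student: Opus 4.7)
The plan is to refine the IBP argument of Lemma~\ref{lm:kest1} with further integrations by parts in the $y$-variable along directions in $\ker\alpha_0(z(\gamma'))\subset E$, exploiting the phase-gradient lower bound supplied by Lemma~\ref{lm:division}. Fix a basis $v_1,\dots,v_{2d}$ of $\ker\alpha_0(z(\gamma'))$ with uniformly bounded norms and uniformly non-degenerate Gram matrix. Because $\zeta-\Pi_{z(\gamma')}\zeta$ is a multiple of $\alpha_0(z(\gamma'))$ and thus annihilates each $v_j$, we have $\langle v_j,\zeta\rangle=\langle v_j,\Pi_{z(\gamma')}\zeta\rangle$ for all $\zeta\in E^*$. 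Writing $\Phi(y,\xi,\eta)=\langle\xi,x'-y\rangle+\langle\eta,G(y)-x\rangle$ for the phase in the oscillatory representation (\ref{eqn:kappa}), so that $\nabla_y\Phi=-\xi+DG^*_y(\eta)$, the condition $(\gamma,\gamma')\notin\cR(3)$ reads $d(G(\Ze(\gamma')),z(\gamma))\le 2^{\widetilde D(\gamma,\gamma')-10}$, whence Lemma~\ref{lm:division} delivers
\[
\sum_{j=1}^{2d}|v_j\Phi(y,\xi,\eta)|^2\ge c\cdot 2^{2(D(\gamma,\gamma')-10)}
\]
uniformly over $(\xi,\eta)\in\supp\psi_{\gamma'}\times\supp\widetilde\psi_\gamma$, first for $y\in\Ze(\gamma')$ in general, and moreover for all $y$ with $\|y-z(\gamma')\|<2^{-n(\gamma)/3}$ under the hypothesis (\ref{eqn:quarter}).

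For the general bound (\ref{eqn:other}), apply the IBP identity (\ref{eqn:intbypart}) with the vectors $v_1,\dots,v_{2d}$ and iterate $r-1$ times, on top of the $r_*$ iterations in $v_0$ already provided by Lemma~\ref{lm:kest1}. Each iteration produces a factor of order $2^{-D(\gamma,\gamma')}$ from the phase-gradient bound and differentiates either $\rho_{\gamma'}$---costing $2^{n(\gamma')/2}$ per derivative---or the factor $v_j\Phi=\langle v_j,-\xi+DG^*_y(\eta)\rangle$, the latter consuming one derivative of $G$. Pulling one factor $d_{\gamma'}^{-2d-2}(y)$ out of the amplitude to secure integrability in $y$, bounding the $\xi$- and $\eta$-integrations via Corollary~\ref{cor:localized}, and absorbing all $G$-dependent constants into $C(G,g)$ yields (\ref{eqn:other}).

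For the refined bound (\ref{eqn:other0}), the enlarged region on which the phase gradient is bounded below permits working at scale $2^{-n(\gamma)/3}$ rather than $2^{-n(\gamma')/2}$, and in addition allows decoupling part of the amplitude from the finite smoothness of $G$. Decompose $\rho_{\gamma'}(y)g(y)=a_1(y)+a_2(y)$, where $a_1$ is a convolution of $\rho_{\gamma'}\cdot g$ with a $C^\infty$ mollifier of width $2^{-n(\gamma')/2}$ tuned to satisfy uniform estimates $\|\partial^\alpha a_1\|_{L^\infty}\le C_\alpha\|g\|_*\cdot 2^{|\alpha|n(\gamma')/2}$ independent of $G$, and $a_2$ is the $C^{r-1}$ remainder supported in a ball of radius $2^{-n(\gamma)/3}$ around $z(\gamma')$. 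For the $a_1$-part, iterate IBP $\mu'$ times while freezing $DG^*_y$ at its value at $y=z(\gamma')$, so the amplitude remains uniformly $C^\infty$ at scale $2^{n(\gamma')/2}$ and the IBP does not see any higher derivatives of $G$; the error from freezing $DG^*_y$ is controlled by Corollary~\ref{cor:local} under (\ref{eqn:quarter}), yielding the first term $C_*\|g\|_*\cdot 2^{-\mu'(D-n'/2)}$. For the $a_2$-part, iterate IBP $r-1$ times at the coarser scale $2^{n(\gamma)/3}$, producing the second term $C(G,g)\cdot 2^{-(r-1)(D-n'/3)}$.

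The principal obstacle lies in a clean execution of the amplitude decomposition in the previous paragraph: one must verify carefully that the $\mu'$ iterations of IBP on $a_1$ never accumulate more than a single derivative of $G$---essentially because differentiating the IBP coefficient $v_j\Phi/\sum_k|v_k\Phi|^2$ in $y$ only invokes $DG^*_y$, whose variation over the ball of radius $2^{-n(\gamma)/3}$ is precisely what Corollary~\ref{cor:local} controls under hypothesis (\ref{eqn:quarter}), and this is what isolates the $C_*$-constant from any $G$-dependence.
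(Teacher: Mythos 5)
Your treatment of the coarse bound \eqref{eqn:other} is essentially the paper's: the phase-gradient lower bound from Lemma~\ref{lm:division}, iterated IBP in $y$ to exhaust the $C^r$ smoothness of $G$, with $G$-dependent constants harmless. (The paper actually uses the full orthonormal frame $\{v_j\}_{j=0}^{2d}$ rather than a basis of $\ker\alpha_0(z(\gamma'))$, but the lower bound (\ref{eqn:vfest}) makes either choice workable.) The gap is in your argument for the refined bound \eqref{eqn:other0}, and you half-acknowledge it yourself: ``freezing $DG^*_y$'' during the IBP iteration is not a well-defined operation on the oscillatory integral, and if you implement it by replacing the true phase with a linearized one, the correction $e^{i(f-\tilde f)}$ must be carried in the amplitude, where its $y$-derivatives are governed by $D^2G$ (and higher), re-introducing $C(G)$-dependence on each iteration. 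Your closing paragraph asserts that differentiating $v_j\Phi/\sum_k|v_k\Phi|^2$ in $y$ ``only invokes $DG^*_y$,'' but that is false: $\partial_y(v_j\Phi)=\langle v_j, D^2G^*_y(\eta)(\cdot)\rangle$, so each IBP step consumes one more derivative of $G$, and after $\mu'$ steps (with $\mu'$ potentially much larger than $r-1$) the amplitude estimates break down.

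The paper's device for isolating a $G$-independent constant is different and essential: it inserts an additional integration variable $y'$ via $\int e^{i\langle\xi',y'-y\rangle}\phi^{(\ell)}_{\gamma\gamma'}(\xi')\,d\xi'$, using a Littlewood--Paley-type partition of unity $\{\phi^{(\ell)}_{\gamma\gamma'}\}_{\ell\ge 0}$ adapted to $\supp\psi_{\gamma'}$ at dyadic distances $\sim 2^{D(\gamma,\gamma')+\ell}$. The relevant part of the phase in the $y'$-variable is $\langle\xi,x'-y'\rangle+\langle\xi',y'-y\rangle$, which is \emph{linear} and entirely $G$-free; so for $\ell\ge 1$ one integrates by parts arbitrarily many ($\mu'$) times in $y'$ against a denominator $\gtrsim 2^{D(\gamma,\gamma')+\ell}$ coming from property (P2), never touching $G$, and that is where the $C_*\|g\|_*\,2^{-\mu'(D-n'/2)}$ term comes from. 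The $\ell=0$ piece, on which the phase gradient lower bound of Lemma~\ref{lm:division} applies at scale $2^{-n/3}$ via the auxiliary cutoff $\widetilde{\rho}_{\gamma'}$, is handled by $r-1$ iterations of IBP in $y$ and gives the $C(G,g)\,2^{-(r-1)(D-n'/3)}$ term. Your amplitude mollification $\rho_{\gamma'}g=a_1+a_2$ is a physical-space split; it does not recreate this separation of the phase into a $G$-free piece and a $G$-dependent piece, which is what the extra variable $y'$ accomplishes and what makes the $C_*$-constant genuinely uniform in $G$.
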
 
\begin{proof}[Proof of Proposition \ref{pp:m4}]\;
The structure of the proof is again similar to that of Proposition \ref{pp:m2} though we consider  combinations $(n,k,m,n',k',m')$  in $(\cN\oplus \integer)^2$ that satisfy (R1)-(R4), as in the proof of Proposition \ref{pp:m3}.  
We fix  $\mu\ge \nu'+2d+2$ and $\mu'> 6\beta/\delta+2\beta$ and let  $C(n,m,n',m')$ be that in Lemma \ref{lm:kest2} for such $\mu$ and $\mu'$. For this time, we set
\[
K_{n,k,m,n',k',m'}=  C(n,m,n',m')\cdot 2^{-r_*\Delta(n, k, n', k')} \cdot 
\frac{\wt(m')}
{\wt(m)}.
\]
We use the following sublemma, whose proof is postponed for a while. 
\begin{sublemma}\label{subl:3}
There exists a constant $C_*>0$ such that  
\[
\sup_{(n,k,m)\in \cN\oplus \integer}\left(\sum_{n',k',m'|n,k,m} K_{n,k,m,n',k',m'}\right)
< C_* \|g\|_* \cdot  2^{-(\mu'-2\beta) \delta \lambda +4\beta\lambda}  
\]
and 
\[
\sup_{(n',k',m')\in \cN\oplus \integer}\left(\sum_{n,k,m|n',k',m'} K_{n,k,m,n',k',m'}\right)
< C_* \|g\|_*\cdot  2^{-(\mu'-2\beta) \delta \lambda +4\beta\lambda}
\]
where $\sum_{n',k',m'|n,k,m}$ {\rm (resp. $\sum_{n,k,m|n',k',m'}$)} denotes the sum over $(n',k',m')$ {\rm (resp. $(n,k,m)$)} in $\cN\oplus \integer$ such that $(n,k,m,n',k',m')$ satisfies  (R1)-(R4). 
\end{sublemma}

By the same argument that we deduce (\ref{eqn:ck}) from Lemma \ref{lm:kest1} 
in the proof of Proposition \ref{pp:m2}, we can deduce the following estimate from Lemma \ref{lm:kest2}:
\begin{align*}
&\sum_{\gamma':n',k',m'}\left\|\sum_{\gamma:n,k,m;\gamma'}\!\!\!\!\!\!\!{}^{\dag\dag}\;\; d_{\gamma'}^{\nu'}\cL_{\gamma\gamma'} u_\gamma\right\|^2_{L^2}\le C_*  
\left| K_{n,k,m, n',k',m'} \frac{\wt(m)}{\wt(m')}\right|^2   
\left\| 
v_{n,k,m}   \right\|_{L^2}^2
\end{align*}
for $\bu=(u_\gamma)_{\gamma\in \Gamma}\in \bB^\beta_\nu$, where $v_{n,k,m}$ is defined by (\ref{eqn:v}) and $\sum_{\gamma:n,k,m;\gamma'}^{\dag\dag}$ denotes the sum over $\gamma\in \Gamma$ such that $n(\gamma)=n$, $k(\gamma)=k$ and $m(\gamma)=m$ and that $(\gamma,\gamma')\in \cR(4)$.
But, once we have this estimate, we can proceed just as in the last part of the proof of Proposition \ref{pp:m2}, using Sublemma \ref{subl:3} instead of Sublemma \ref{subl:1}, and  conclude that
\[
\left\| \cM_4(\bu)
\right\|_{\beta,\nu'}^{(\lambda)}
\le C_*\|g\|_{*}  \cdot   2^{-(\mu'-2\beta)\delta \lambda+4\beta\lambda} \cdot 
\left\|\bu\right\|_{\beta,\nu}^{(\lambda)}\quad \text{ for $\bu\in \bB^\beta_{\nu}$.}
\]
Since we have   $
-(\mu'-2\beta) \delta \lambda +4\beta\lambda 
<-\beta\lambda$
from the choice of $\mu'$, this implies the conclusion of the proposition. 
\end{proof}

Below we prove Sublemma \ref{subl:3} and Lemma \ref{lm:kest2}.

\begin{proof}[Proof of Sublemma \ref{subl:3}]
In the argument below, we consider combinations 
$(n,k,m,n',k',m')$ in $(\cN\oplus \integer)^2$ that satisfy  (R1)-(R4). 
First we  restrict our attention to the case where $\max\{|m|,|m'|\}> \max\{n,n'\}/4$. Note that $C(n,m,n',m')$ in the definition of $K_{n,k,m,n',k',m'}$ is given by (\ref{eqn:other}) in this case. From (R1) and (R5), we have 
$\max\{-m,m'\}\ge K/5$. 
Since
\[
{D}(n,m,n',m')-n'/2\ge \max\{-m,m'\}-|n'-n|/2-\lambda,
\]
we obtain
\begin{align*}
K_{n,k,m,n',k',m'}&\le C(G,g) 
2^{-(r-1)(\max\{-m,m'\}-|n'-n|/2)+\beta(m'-m)-r*\Delta(n,k,n',k')}
\\
&\le C(G,g) 2^{-(r-1-2\beta)\max\{-m,m'\}-(r*-(r-1))\Delta(n,k,n',k')}
\end{align*}
by using (\ref{eqn:nnd}) and (\ref{eqn:wt}).
Therefore, by using variants\footnote{In (\ref{eqn:r571}) and (\ref{eqn:r572}), we replace the exponent
$(\nu-2\beta-2d-2)$ by $(r-1-2\beta)$ and the condition $\max\{-m, m'\}\ge \delta\lambda $ by $\max\{-m, m'\}\ge K/5$. As the result, the right hand sides should be $C_*\lambda \cdot 2^{-K/5}$. } of the inequalities (\ref{eqn:r571}) and (\ref{eqn:r572}), and also by using (\ref{eqn:chisum}) and (\ref{eqn:chisum2}), we obtain the inequalities in Sublemma \ref{subl:3} with the sum restricted to this case, provided that we take sufficiently large $K$ according to $\lambda$ and $G$.

We next restrict  our attention  to  the case $\max\{|m|,|m'|\}\le \max\{n,n'\}/4$. 
In this case, we have  $\max\{n,n'\}\ge K$ from (R1) and $|n-n'|\le 1$ from (R8). Since we have
\[
C(n,m,n',m')\le C_*  \|g\|_*  2^{-\mu'\max\{-m,m'\}}
+C(G,g)  2^{-(r-1) (\max\{-m,m'\}+n/6)},
\]
we see that $K_{n,k,m,n',k',m'}$ is bounded by
\begin{align*}
& 2^{4\beta\lambda-r_*\cdot \Delta(n,k,n',k')}\cdot  \\
& \left(C_*  \|g\|_*  2^{-(\mu'-2\beta)\max\{-m,m'\}}
+C(G,g) 2^{-(r-1)K/6-(r-1-2\beta) \max\{-m,m'\}}\right).
\end{align*}  
Therefore, again by using variants\footnote{Here we just change the exponent 
$(\nu-2\beta-2d-2)$ in (\ref{eqn:r571}) and (\ref{eqn:r572}) appropriately and the right hand sides should be the same as those in  (\ref{eqn:r571}) and (\ref{eqn:r572}). }
 of the inequalities (\ref{eqn:r571}) and (\ref{eqn:r572}), and also by using (\ref{eqn:chisum}) and (\ref{eqn:chisum2}), we obtain the  required  inequalities with the sum restricted to this case, provided that we take sufficiently large $K$ according to $\lambda$ and $G$.
\end{proof}

\begin{proof}[Proof of Lemma \ref{lm:kest2}]
Recall that we set $v_0=\partial/\partial x_0$ and 
take unit vectors  $v_1,v_2,\dots ,v_{2d}$ in $E$ so that  $\{v_j\}_{j=0}^{2d}$ is an orthonormal basis of $E$.
We first prove the lemma in  the case where $\max\{|m|,|m'|\}> \max\{n,n'\}/4$.
Let us write the integration with respect to the variable $y$ in (\ref{eqn:ker2}) as 
\begin{equation}\label{eqn:ker2ab}
\kappa_{\gamma\gamma'}(x',x;\xi,\eta)=\int 
e^{i f(y;x,x';\xi,\eta)} R(y, \xi, \eta) dy
\end{equation}
where $R(y, \xi, \eta)$ is that given in (\ref{eqn:RR}) and we set
\[
f(y;x,x';\xi,\eta)=\langle \xi, x'-y\rangle-i\langle \eta, G(y)-x\rangle.
\]
If we apply the formula (\ref{eqn:intbypart}) of integration by parts along the set of vectors $\{v_j\}_{j=0}^{2d}$ for $k$ times ($0\le k\le r-1$), we will get the expression
\[
\kappa_{\gamma\gamma'}(x',x;\xi,\eta)=\int 
e^{i f(y;x,x';\xi,\eta)} R_k(y, \xi, \eta) dy
\]
where $R_k(\cdot)$ should be defined inductively by
\begin{equation}\label{eqn:indR}
R_0(y,\xi,\eta)=R(y,\xi,\eta),\qquad 
R_{k}=\sum_{j=0}^{2d} 
v_j\left(\frac{i\cdot R_{k-1}\cdot v_j(f)}{\sum_{\ell=0}^{2d}v_\ell(f)^2}\right).
\end{equation}
By induction on $0\le k\le r-1$, we  show the following claim.
\begin{sublemma} \label{sublm:ddd}
For any multi-indices $\alpha, \beta\in (\integer_+)^{2d+1}$ and $\alpha'\in (\integer_+)^{2d+1}$ such that 
$|\alpha'|\le (r-1)-k$, we have
\begin{align*}
\|D^\alpha_\xi D^\beta_\eta D^{\alpha'}_y R_{k}\|_{L^\infty}\le& C(G,g,\alpha,\beta,\alpha')\cdot 2^{-k\cdot  (D(\gamma,\gamma')-n(\gamma')/2)-r_*\cdot\Delta(\gamma,\gamma')}\\
&\quad \cdot 2^{-|\alpha|n(\gamma')/2-|\alpha|_\dag |m(\gamma')|-|\beta|n(\gamma)/2-|\beta|_\dag|m(\gamma)|+|\alpha'|n(\gamma')/2}.
\end{align*}
\end{sublemma}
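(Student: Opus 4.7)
The plan is to prove the sublemma by induction on $k$, with the key ingredients being Lemma~\ref{lm:division} for the lower bound on $\|\xi-DG^*_y\eta\|$, Corollary~\ref{cor:local} for the upper bound on the $y$-Hessian of $f$, and the identity $D^2G_y(v_0,\cdot)\equiv 0$ coming from $DG_y(v_0)=v_0$. The constant $C(G,g,\alpha,\beta,\alpha')$ is allowed to absorb factors of order $2^{\mathcal{O}(k\lambda)}$, which causes no trouble because the sublemma is only used inside Sublemma~\ref{subl:3}, where $G$-dependent constants are tolerated.

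For the base case $k=0$, the estimate on $R_0=R$ given in (\ref{eqn:RR}) follows from the same tools already used in the proof of Lemma~\ref{lm:kest1}: Lemma~\ref{lm:scaling} controls the $\xi$- and $\eta$-derivatives of $\psi_{\gamma'}$ and $\widetilde\psi_\gamma$; the factor $|\pi_0^*(\eta-\xi)|^{-r_*}\le 2^{-r_*(n(\gamma')/2+\Delta(\gamma,\gamma'))}$ cancels the $2^{r_*n(\gamma')/2}$ arising from $v_0^{r_*}(\rho_{\gamma'}g)$; and the hypothesis $g\in\cC^r(V')$ ensures that $\partial^{\alpha'}_y v_0^{r_*}(\rho_{\gamma'}g)$ is in $L^\infty$ whenever $|\alpha'|\le r-1$, with norm $\le C(G,g,\alpha')\cdot 2^{|\alpha'|n(\gamma')/2}$ via the scaling of $\rho_{\gamma'}$.

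For the inductive step, applying the product rule to (\ref{eqn:indR}) expresses $R_k$ as a sum over $j\in\{0,1,\ldots,2d\}$ of three terms: $v_j(R_{k-1})\,v_j(f)/\|\xi-DG^*_y\eta\|^2$, $R_{k-1}\,v_jv_j(f)/\|\xi-DG^*_y\eta\|^2$, and $-2R_{k-1}\,v_j(f)\sum_\ell v_\ell(f)\,v_jv_\ell(f)/\|\xi-DG^*_y\eta\|^4$. For $j=0$ the last two summands vanish since $v_0v_\ell(f)=\langle\eta,D^2G_y(v_0,v_\ell)\rangle=0$, and only the first contributes. For $j\ge 1$, all three contribute and are controlled using $|v_j(f)|\le\|\xi-DG^*_y\eta\|$, the lower bound $\|\xi-DG^*_y\eta\|\gtrsim 2^{D(\gamma,\gamma')}$ from Lemma~\ref{lm:division} (whose hypothesis is exactly the negation of the defining condition (\ref{eqn:r3}) of $\cR(3)$), and the bound $|v_jv_\ell(f)|\le C(G)(|\pi_0^*\eta|\,\|y-z(\gamma')\|+\|\eta_{+,-}^{G(z(\gamma'))}\|)$ from Corollary~\ref{cor:local}. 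Applying the inductive hypothesis to $R_{k-1}$ and to $v_jR_{k-1}$ produces the required bound on $R_k$; the further derivatives $D^\alpha_\xi D^\beta_\eta D^{\alpha'}_y$ are then distributed by the product rule, each $\xi$- or $\eta$-derivative introducing the correct scale via Lemma~\ref{lm:scaling} and each $y$-derivative producing one more derivative of $G$. This raises the total order of $G$-derivatives appearing in $R_k$ to $k+1+|\alpha'|$, which is why we require $|\alpha'|\le (r-1)-k$.

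The main obstacle is verifying that the iteration factor $2^{-(D(\gamma,\gamma')-n(\gamma')/2)}$ really applies to the Hessian contribution $|R_{k-1}|\,|v_jv_\ell(f)|/\|\xi-DG^*_y\eta\|^2$. Writing $n,m,n',m'$ for $n(\gamma),m(\gamma),n(\gamma'),m(\gamma')$, combining Corollary~\ref{cor:local} with the support bounds on $\widetilde\psi_\gamma$ and the estimate $\|z(\gamma)-G(z(\gamma'))\|\le 2^{\widetilde D-10}+\mathcal{O}(2^{-n'/2})$ valid because $(\gamma,\gamma')\notin\cR(3)$, this term is dominated by $|R_{k-1}|\,C(G)(2^{n-n'/2}+2^{n/2+|m|}+2^{n+\widetilde D})\cdot 2^{-2D}$. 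Its reduction to $|R_{k-1}|\cdot 2^{-(D-n'/2)}$ rests on three numerical inequalities of the form $D\ge n-n'$, $D+n'/2\ge n/2+|m|$, $D-\widetilde D\ge n-n'/2$, each to be read modulo an additive $\mathcal{O}(\lambda)$ error absorbed into the constant. These are established by a case analysis built on the branches of the definitions of $D$ and $\widetilde D$ and on the consequences (R1)--(R8) of $(\gamma,\gamma')\in\cR(4)$: (R3) and (R5) control $m-m'$ by $\mathcal{O}(\lambda)$; (R7) gives $\max\{-m,m'\}\ge\delta\lambda$ when $|n-n'|\le 1$; and (R8) gives $\max\{-m,m'\}\ge 2\max\{n,n'\}$ when $|n-n'|\ge 2$. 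Together with the standing hypothesis $\max\{|m|,|m'|\}>\max\{n,n'\}/4$ of the present case of Lemma~\ref{lm:kest2}, they yield the required inequalities.
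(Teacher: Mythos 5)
Your proposal is correct and follows the same overall structure as the paper's proof: induction on $k$, the lower bound $\|\xi-DG^*_y\eta\|\gtrsim 2^{D(\gamma,\gamma')}$ from Lemma~\ref{lm:division} (applicable precisely because $(\gamma,\gamma')\notin\cR(3)$), the Hessian bound from Corollary~\ref{cor:local}, and a case analysis via (R1)--(R8). The two main points of divergence are worth recording. First, you exploit $v_0v_\ell(f)=\langle\eta,D^2G_y(v_0,v_\ell)\rangle=0$ (a consequence of $G_*v_0=v_0$) to kill the Hessian contributions in the $j=0$ slot; the paper does not use this, and it is a clean but inessential simplification, since the $j\ge 1$ Hessian terms must be handled anyway. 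Second, the paper compresses the Hessian estimate into the single inequality (\ref{eqn:mnc1}) and the claim $|D^2_yf|\le C(G)2^{n/2+|m|}\le C(G)2^{D}$, whereas you carry all three contributions $2^{n-n'/2}$, $2^{n/2+|m|}$, $2^{n+\widetilde D}$ explicitly and reduce to the three numerical inequalities $D\ge n-n'$, $D+n'/2\ge n/2+|m|$, $D-\widetilde D\ge n-n'/2$ modulo $\mathcal{O}(\lambda)$. This is actually a modest improvement in rigor: the paper's intermediate bound by $2^{n/2+|m|}$ is not literally true (e.g.\ $m=0$, $m'=n$, $n=n'$ makes the $2^{n+\widetilde D}$ term dominate), although the final bound $\le C(G)2^{D}$ survives; your formulation avoids that hiccup, and the three inequalities do hold modulo $\mathcal{O}(\lambda)$, with the $10\widetilde\Delta$ term in (R3) supplying the needed slack when $|n-n'|\ge 2$. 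The only caveat is that when you say "each $\xi$- or $\eta$-derivative introducing the correct scale via Lemma~\ref{lm:scaling}," Lemma~\ref{lm:scaling} controls only the cutoff factors $\psi_{\gamma'},\widetilde\psi_\gamma$; the $\xi$- and $\eta$-derivatives also land on $v_j(f)/\sum v_\ell(f)^2$, and one needs $D\ge n'/2+|m'|-\mathcal{O}(\lambda)$ and $D\ge n/2+|m|-\mathcal{O}(\lambda)$ (which follow from the same case analysis) to see that those contributions are subordinate; the paper elides this step too.
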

\begin{proof}
By using the estimates on the derivatives of $\rho_{\gamma'}$, $\psi_{\gamma'}$ and $\widetilde{\psi}_\gamma$, we can check the claim for $k=0$. To continue,  let us first check the inequality 
\begin{equation}\label{eqn:mnc1}
\max\{n(\gamma)/2+|m(\gamma)|/2, n(\gamma')/2+|m(\gamma')|/2\}
\le D(\gamma, \gamma')+C(G).
\end{equation}
If $|n(\gamma)-n(\gamma')|\le 1$, we can check this inequality by using (R5) in the definition of $D(\gamma, \gamma')$. Otherwise we have, from (R3) and (\ref{eqn:dnk}), that
\[
m(\gamma')\ge m(\gamma)+5\max\{n(\gamma), n(\gamma')\}-C(G)
\]
and  hence we obtain the same inequality by a crude estimate. 

Note that we have (\ref{eqn:cldiv})  for all $y\in \supp \rho_{\gamma'}\subset Z(\gamma')$ from  Lemma \ref{lm:division}. This implies that  we have 
\begin{equation}\label{eqn:vfest}
\sum_{\ell=0}^{2d} v_\ell(f)^2 =\sum_{\ell=0}^{2d} |\langle DG^*_y(\eta)-\xi, v_\ell\rangle|^2=\|DG_y^*(\eta)-\xi\|^2\ge 2^{2(D(\gamma,\gamma')-10)}
\end{equation}
if  $R(y, \xi, \eta)\neq 0$. For $\alpha\in (\integer_+)^{2d+1}$ with $|\alpha|\ge 2$, we have a simple estimate
\begin{equation}\label{eqn:N1}
|D_y^\alpha f|\le C(G)\cdot \|\eta\|\le C(G)\cdot  2^{n(\gamma)}\quad \mbox{if  $R(y, \xi, \eta)\neq 0$.}
\end{equation}
In the case $|\alpha|=2$, we have, from Corollary \ref{cor:local} and (\ref{eqn:mnc1}), that
\begin{equation}\label{eqn:N2}
 |D_y^\alpha f|\le C(G)\cdot
  2^{n(\gamma)/2+ |m(\gamma)|}\le C(G)\cdot 2^{D(\gamma,\gamma')}\quad \mbox{if $R(y, \xi, \eta)\neq 0$. }
\end{equation}
Using these estimates in the inductive definition (\ref{eqn:indR})  of $R_k(\cdot)$, we obtain the claim of the sublemma by induction on $k$.  
\end{proof}

By the same argument as in the last part of the proof of Lemma~\ref{lm:kest1}, we see that the claim of the sublemma above for $k=r-1$  implies 
\begin{align*}
|\kappa_{\gamma\gamma'}(x',x)|\le C(G,g,\mu)  &  \cdot2^{-(r-1)(D(n,m,n',m')-n'/2)-r_*\cdot\Delta(\gamma,\gamma')}\\
& 
\cdot   \int_{Z(\gamma')} b^\mu_{\gamma'}(x'-y)  \cdot b^{\mu}_{\gamma}(G(y)-x) dy
\end{align*}
for  $x,x'\in E$. Clearly this implies the claim of the lemma  in the case $\max\{|m|,|m'|\}> \max\{n,n'\}/4$.

Next we prove the lemma in  the case $\max\{|m|,|m'|\}\le \max\{n,n'\}/4$. 
In this case, we have $\max\{n(\gamma), n(\gamma')\}> K$ from (R1), and $|n(\gamma')-n(\gamma)|\le 1$ from (R8). In particular, we have  $n(\gamma)\ge K-1$ and $n(\gamma')\ge K-1$. 
It follows from the definition of $D(\gamma,\gamma')$ and (R5) that
\begin{equation}\label{eqn:R4rel}
D(\gamma,\gamma')\ge |m(\gamma')|+n(\gamma')/2\ge n(\gamma')/2\quad \text{for  $(\gamma, \gamma') \in \cR(4)$.}
\end{equation} 
This implies that the diameter of $\supp \psi_{\gamma'}$ is not much larger than $2^{D(\gamma,\gamma')}$, that is, we have
\[
\mathrm{diam}\,  \left(\supp \psi_{\gamma'}\right)\le C_0\cdot 2^{|m(\gamma')|+n(\gamma')/2}\le C_0 \cdot 2^{D(\gamma,\gamma')}
\]
for some constant $C_0>0$ that depends only on $d$. 
From this fact,  we can construct a $C^\infty$ partition of unity 
\[
\left\{\left. \phi_{\gamma\gamma'}^{(\ell)}:E^*\to[0,1]\;\right|\; \ell=0,1,2,\dots\right\}
\]
for each pair $(\gamma, \gamma')\in \cR(4)$ so that the following three conditions hold:
\begin{itemize}
\item[(P1)] $\supp \phi_{\gamma\gamma'}^{(0)}$ is contained in the $2^{D(\gamma,\gamma')-11}$-neighborhood of $\supp \psi_{\gamma'}$,
\item[(P2)] for $\ell\ge 1$, the distance between $\supp \psi_{\gamma'}$ and $\supp \phi_{\gamma\gamma'}^{(\ell)}$ is bounded from below by $2^{D(\gamma,\gamma')+\ell-13}$, and
\item[(P3)] the family of functions $\phi_{\gamma\gamma'}^{(\ell)}$ for $ (\gamma, \gamma')\in \cR(4)$ and $\ell\ge 0$ are uniformly bounded up to scaling in the the following sense:  all the functions
\[
\phi_{\gamma\gamma'}^{(\ell)}\circ A_{\gamma'}\circ
\widetilde{J}_{D(\gamma,\gamma')+\ell}:E^*\to [0,1],\quad (\gamma, \gamma')\in \cR(4), \;\;\ell \ge 0
\]
are bounded in $\mathcal{D}(E^*)$, that is they are supported in a bounded subset of $E^*$ and their $C^s$ norms are uniformly bounded for each $s>0$, where $A_{\gamma}$ is the translation that  defined in Subsection \ref{ss:bdd} and $
\widetilde{J}_{t}:E^*\to E^*$ is defined by $\widetilde{J}_{t}(\xi)= 2^{t}\cdot \xi$. 
\end{itemize}
We will give one way of the construction of $\phi_{\gamma\gamma'}^{(\ell)}$ as above in Remark \ref{rem:consth} at the end of this proof.

Using the partitions of unity as above, we decompose the kernel (\ref{eqn:kappa}) as 
\begin{equation}\label{eqn:kdec}
\kappa_{\gamma\gamma'}(x',x)=(2\pi)^{-3(2d+1)}\sum_{\ell=0}^{\infty}\kappa_{\gamma\gamma'}^{(\ell)}(x',x)
\end{equation}
where  $\kappa_{\gamma\gamma'}^{(\ell)}(x',x)$ on the right hand side is defined by
\begin{equation}\label{eqn:kap}
\kappa_{\gamma\gamma'}^{(\ell)}(x',x)=\int e^{if(x',y',y,x;\xi',\xi,\eta)}\, R^{(\ell)}(y',y;\xi', \xi,\eta) dydy'd\xi d\xi' d\eta,
\end{equation}
with setting 
\begin{align*}
f(x',y',y,x;\xi',\xi,\eta)&=\langle\xi, x'-y'\rangle
+\langle\xi', y'-y\rangle+\langle\eta,G(y)-x \rangle,\\
R^{(\ell)}(y',y;\xi', \xi,\eta) &=(2\pi)^{-3(2d+1)}\rho_{\gamma'}(y') \widetilde{\rho}_{\gamma'}(y)   g(y)  \psi_{\gamma'}(\xi)  \phi_{\gamma\gamma'}^{(\ell)}(\xi')  \widetilde{\psi}_{\gamma}(\eta),\\
\widetilde{\rho}_{\gamma'}(y) &=\chi
\bigl(2^{n(\gamma')/3+1} \|y-z(\gamma')\|\bigr).
\end{align*}
To check that (\ref{eqn:kdec})  holds   pointwise (at least), we use the fact that
\begin{align*}
(2\pi)^{-(2d+1)}\sum_{\ell=0}^\infty\int e^{i\langle \xi', y-y'\rangle} \phi_{\gamma\gamma'}^{(\ell)}(\xi') d\xi'&=(2\pi)^{-(2d+1)}\int e^{i\langle \xi', y-y'\rangle} d\xi'\\
&=\delta(y-y')
\end{align*}
in the sense of distribution and that   $
\widetilde{\rho}_{\gamma'}\cdot{\rho}_{\gamma'}\equiv {\rho}_{\gamma'}$. (We have $\widetilde{\rho}_{\gamma'}\equiv 1$ on the disk in $E$ with center at $z(\gamma')$ and radius $2^{-n(\gamma')/3-1}$, which contains the support of ${\rho}_{\gamma'}$, provided that  $K$ is large enough.)

We write the integration with respect to $y$ and $y'$ in (\ref{eqn:kap}) as
\begin{equation}\label{eqn:kapp}
\kappa_{\gamma\gamma'}^{(\ell)}(x',x; \xi',\xi, \eta)=\int e^{if(x',y',y,x;\xi',\xi,\eta)}\, R^{(\ell)}(y',y;\xi', \xi,\eta) dydy'.
\end{equation}
Below we are going to estimate $\kappa_{\gamma\gamma'}^{(\ell)}(x',x; \xi',\xi, \eta)$ by applying integration by parts to the integral with respect to the variables $y$ and $y'$ in (\ref{eqn:kap}) in two steps. (The argument below is formally  parallel to that in the case $\max\{|m|,|m'|\}> \max\{n,n'\}/4$ given above.)
To this end, we extend the formula (\ref{eqn:intbypart}) of integration by parts to oscillatory integrals on $E\times E$ in an obvious manner. And we regard $y$ and $y'$ as the former and latter variable on $E\times E$ respectively. 

As the first step, we apply the formula of integration by parts along the single vector $(v_0,v_0)$ for $r_*$ times if $\Delta(\gamma,\gamma')>0$ (and  we do nothing otherwise.) As the result, we obtain the expression
\begin{equation}\label{eqn:krl}
\kappa_{\gamma\gamma'}^{(\ell)}(x',x; \xi',\xi, \eta)=\int e^{if(x',y',y,x;\xi',\xi,\eta)}\, R_0^{(\ell)}(y',y;\xi', \xi,\eta) dydy'
\end{equation}
where 
\[
 R_0^{(\ell)}(y',y;\xi', \xi,\eta)=
\frac{ i^{r_*}\cdot
 \hat{v_0}^{r_*}(\rho_{\gamma'}(y')\cdot \widetilde{\rho}_{\gamma'}(y)\cdot  g(y))\cdot  \psi_{\gamma'}(\xi) \cdot \phi_{\gamma\gamma'}^{(\ell)}(\xi')\cdot  \widetilde{\psi}_{\gamma}(\eta)}{(2\pi)^{3(2d+1)}(\pi^*_0(\eta-\xi))^{r_*}}
\]
in the case $\Delta(\gamma,\gamma')>0$ (and just set $R_0^{(\ell)}=R^{(\ell)}$ otherwise). 
Here we wrote  $\hat{v_0}$ for the directional derivative along the vector $(v_0,v_0)$ in $E\times E$.
In the second step, we consider the two cases  $\ell=0$ and $\ell>0$ separately. In fact, the first term in (\ref{eqn:other0}) comes from the case $\ell>0$ and the second from the case $\ell=0$ as we will see below.

Let us first consider the case $\ell=0$. 
In this case, we will apply the formula of integration by parts  along the set of vectors 
$\{(v_i,0)\}_{i=0}^{2d}$  (or, in other words, we will apply the formula (\ref{eqn:intbypart}) of integration by parts to the integral with respect to the variable $y$ along the vectors $\{v_i\}_{i=0}^{2d}$)  for $(r-1)$ times. 
Let us write the result of such application of  integration by parts for \underline{$k$ times} as
\[
\kappa_{\gamma\gamma'}^{(0)}(x',x)=\int e^{if(x',y',y,x;\xi',\xi,\eta)}\, R_k^{(0)}(y',y;\xi', \xi,\eta) dydy'
\]
where $R_k^{(0)}(\cdot)$ for $1\le k\le r-1$ should be defined inductively by
\[
R_{k}^{(0)}=\sum_{j=0}^{2d} 
v_j\left(\frac{i\cdot R^{(0)}_{k-1}\cdot v_j(f)}{\sum_{\ell=0}^{2d}v_\ell(f)^2}\right)\quad \mbox{for $k=1,2,\cdots, r-1$.}
\] 
(Here and below we suppose that $v_j$ are the directional derivatives along $v_j$  with respect to the variable $y$.) 

By induction on $0\le k\le r-1$, we show the following claim.
\begin{sublemma} \label{sublm:dddd}
For any multi-indices $\alpha_1, \alpha_2, \beta,\alpha',\beta'\in (\integer_+)^{2d+1}$ such that 
$|\alpha'|\le (r-1)-k$, we have
\begin{align*}
\|D^{\alpha_1}_\xi D^{\alpha_2}_{\xi'} D^\beta_{\eta}
 D^{\alpha'}_y D^{\beta'}_{y'} R^{(0)}_{k}\|_{L^\infty}
 \le& C(G,g,\alpha_1,\alpha_2,\beta, \alpha',\beta',k)\\
 &\quad \cdot 2^{-k\cdot  (D(\gamma,\gamma')-n(\gamma')/3)-r_*\cdot\Delta(\gamma,\gamma')}\\
&\quad \cdot 2^{-|\alpha_1|n(\gamma')/2-|\alpha_1|_\dag |m(\gamma')|-|\beta|n(\gamma)/2-|\beta|_\dag|m(\gamma)|}\\
&\quad \cdot 2^{-|\alpha_2| D(\gamma,\gamma')+|\alpha'|n(\gamma')/3
+|\beta'|n(\gamma)/2
}.
\end{align*}
\end{sublemma}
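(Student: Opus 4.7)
The plan is to prove the sublemma by induction on $k$, in close analogy with the proof of Sublemma~\ref{sublm:ddd}, adapted to account for two new features absent there: the extra momentum variable $\xi'$ cut off by the partition of unity $\phi_{\gamma\gamma'}^{(0)}$, and the coarser spatial cutoff $\widetilde{\rho}_{\gamma'}$, which localizes $y$ only to scale $2^{-n(\gamma')/3-1}$ rather than $2^{-n(\gamma')/2}$. The per-step improvement $2^{-(D(\gamma,\gamma')-n(\gamma')/3)}$ in the claim reflects precisely the trade-off between the gain from integration by parts and the cost of differentiating this coarser cutoff.

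For the base case $k=0$, the function $R_0^{(0)}$ is either $R^{(0)}$ itself (when $\Delta(\gamma,\gamma')=0$) or the $r_*$-fold derivative of $\rho_{\gamma'}\widetilde{\rho}_{\gamma'}g$ along $v_0$ divided by $(\pi_0^*(\eta-\xi))^{r_*}$. Its mixed derivatives are bounded by combining the scaling estimates for $\psi_{\gamma'}$ and $\widetilde{\psi}_\gamma$ from Lemma~\ref{lm:scaling} (yielding the $\xi$- and $\eta$-factors), the analogue of property (P3) for $\phi_{\gamma\gamma'}^{(0)}$ (yielding the $2^{-|\alpha_2|D(\gamma,\gamma')}$ factor), the derivative bounds $\|D^{\beta'}\rho_{\gamma'}\|_{L^\infty}\le C\cdot 2^{|\beta'|n'/2}$ and $\|D^{\alpha'}\widetilde{\rho}_{\gamma'}\|_{L^\infty}\le C\cdot 2^{|\alpha'|n'/3}$, the assumption $g\in\cC^r(V')$, and the support bound $|\pi_0^*(\eta-\xi)|\ge 2^{n'/2+\Delta(\gamma,\gamma')}$ (which produces the $2^{-r_*\Delta}$ prefactor when $\Delta(\gamma,\gamma')>0$).

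For the inductive step, I would write
\[
R_k^{(0)} = \sum_{j=0}^{2d} v_j\!\left(\frac{i\, R_{k-1}^{(0)}\cdot v_j(f)}{\|DG_y^*(\eta)-\xi'\|^2}\right),
\]
apply the Leibniz and quotient rules, and estimate each resulting term. Two analytic ingredients are essential. First, the lower bound $\|DG_y^*(\eta)-\xi'\|\ge c_*\cdot 2^{D(\gamma,\gamma')}$ on the support of the integrand, obtained by combining property (P1) of $\phi_{\gamma\gamma'}^{(0)}$ with the second half of Lemma~\ref{lm:division}; its hypothesis $\|y-z(\gamma')\|<2^{-n(\gamma)/3}$ is exactly what the cutoff $\widetilde{\rho}_{\gamma'}$ guarantees, together with $|n-n'|\le 1$ from (R8) and the running hypothesis $\max\{|m|,|m'|\}\le\max\{n,n'\}/4$. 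Second, an upper bound on the Hessian $|v_iv_j(f)|=|\langle\eta,D^2 G_y(v_i,v_j)\rangle|$ via Corollary~\ref{cor:local}, applied with basepoint chosen inside $Z(\gamma')$ so as to minimize $\|G(\cdot)-z(\gamma)\|$. Each $v_j$-derivative falling on $R_{k-1}^{(0)}$ is then absorbed by the inductive hypothesis (with $|\alpha'|$ advancing by one, which exactly accounts for the $2^{n'/3}$ factor in the claim); each $v_j$-derivative falling on the rational factor contributes, via the quotient rule, a term controlled by the Hessian bound divided by $\|DG_y^*(\eta)-\xi'\|^2\ge c\cdot 2^{2D}$.

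The main obstacle is the Hessian estimate: the canonical decomposition of $\eta\in\supp\widetilde{\psi}_\gamma$ is based at $z(\gamma)$, not at $G(y)$ or the corollary-style basepoint. Transferring between them introduces an error of size $|\eta_0|\cdot\|G(y)-z(\gamma)\|$, which must be controlled through the defining inequality $d(G(Z(\gamma')),z(\gamma))\le 2^{\widetilde{D}(\gamma,\gamma')-10}$ of $\cR(4)$. Showing that this error, together with the $|\eta_0|\cdot 2^{-n'/3}$ contribution from the coarse localization and the $\|\eta_{+,-}\|\sim 2^{n/2+|m|}$ transversal part, combines to give a bound compatible with the claimed per-step gain of $2^{-(D-n'/3)}$ requires a case-by-case analysis over the three branches in the definition of $D(\gamma,\gamma')$ and $\widetilde{D}(\gamma,\gamma')$, and relies on the choices of $\lambda_*$ and $K$ fixed in Subsection~\ref{ssec:remarks}. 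This balancing is the most delicate step of the argument and is precisely where the contact geometry, via Corollary~\ref{cor:local}, enters the proof essentially.
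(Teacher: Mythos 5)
Your proposal follows the paper's approach: induction on $k$, with the base case from the scaling estimates on $\psi_{\gamma'}$, $\widetilde\psi_\gamma$, $\phi^{(0)}_{\gamma\gamma'}$, $\rho_{\gamma'}$ and $\widetilde\rho_{\gamma'}$, and the inductive step powered by the lower bound $\|DG_y^*(\eta)-\xi'\|\ge c_*2^{D(\gamma,\gamma')}$ (from (P1) together with the second clause of Lemma~\ref{lm:division}) and the Hessian bound from Corollary~\ref{cor:local} transferred via the $\cR(4)$-defining inequality $d(G(Z(\gamma')),z(\gamma))\le 2^{\widetilde D-10}$. You are in fact slightly more careful than the paper in two minor places: you correctly use $\xi'$ rather than $\xi$ in the gradient lower bound (the paper's display appears to have a typo here), and you explicitly flag that the Hessian control for $y\in\supp\widetilde\rho_{\gamma'}$ is weaker than the original (N2) and only fits the claim because of the $n(\gamma')/3$ slack built into the per-step factor, where the paper simply asserts (N1)--(N2) still apply.
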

\begin{proof}
The proof is parallel to that of Sublemma \ref{sublm:ddd}. 
We have
\begin{equation}\label{eqn:dya}
\|D_y^{\alpha'}\widetilde{\rho}_{\gamma'}\|_{L^\infty}\le C_*(\alpha')\cdot 
2^{|\alpha'| n(\gamma')/3}
\end{equation}
and also
\begin{equation}\label{eqn:dya2}
\|D_\xi^{\alpha} \phi_{\gamma\gamma'}^{(\ell)}\|_{L^\infty}\le C_*(\alpha)\cdot 2^{-(D(\gamma,\gamma')+\ell)}
\end{equation}
from the condition (P3) in the choice of $ \phi_{\gamma\gamma'}^{(\ell)}$. 
Using these estimates, we can check the claim for $k=0$. 

From Lemma \ref{lm:division} and (P1), we have   
\[
d(\Pi_{z(\gamma')}(\supp{\phi}^{(0)}_{\gamma\gamma'}),\Pi_{z(\gamma')}(DG^{*}_{y}(\supp \widetilde{\psi}_{\gamma})))\ge 
2^{D(\gamma,\gamma')-11}
\]
for all $y\in \supp \widetilde{\rho}_{\gamma'}$. This implies that 
\[
\sum_{\ell=0}^{2d}v_\ell(f)^2  =\sum_{\ell=0}^{2d} |\langle DG^*_y(\eta)-\xi, v_\ell\rangle|^2=\|DG_y^*(\eta)-\xi\|^2\ge 2^{2(D(\gamma,\gamma')-11)}
\]
if  $R^{(0)}(y',y;\xi', \xi,\eta) \neq 0$. 
Also we have (\ref{eqn:N1}) and (\ref{eqn:N2}) with the assumption
$R(y,\xi,\eta)\neq 0$ replaced by  $R^{(0)}(y',y;\xi', \xi,\eta) \neq 0$. Using these estimates, we can prove the required estimate by  induction  on $k$ as in the proof of Sublemma \ref{sublm:ddd}. 
\end{proof}
By the same argument as in the last part of the proof of Lemma~\ref{lm:kest1} using in addition the fact that the $(2d+1)$-dimensional volume of the support of $\phi^{(0)}_{\gamma\gamma'}$ is bounded by $C_*\cdot 2^{(2d+1)D(\gamma,\gamma')}$, we see that the claim of the sublemma above for $k=r-1$  implies 
\begin{align*}
|\kappa_{\gamma\gamma'}^{(0)}(x',x)|\le 
C&(G,g)  \cdot 2^{-r_*\cdot \Delta(\gamma,\gamma')-(r-1) 
(D(\gamma,\gamma')-n(\gamma')/3)}\\
&\cdot \int \left(\int_{\Ze(\gamma')} b_{\gamma'}^\mu(x'-y') b_{\gamma\gamma',0}^{\mu+2d+2}(y'-y) dy' \right)b_{\gamma}^\mu(G(y)-x)  dy
\end{align*}
where (and also henceforth) we set 
\[
b_{\gamma\gamma',\ell}^{\mu}(x)=
2^{(2d+1)(D(\gamma,\gamma')+\ell)} 
\left\langle 2^{D(\gamma,\gamma')+\ell}\cdot x\right\rangle^{-\mu}\quad \text{for $\mu>0$ and $\ell\ge 0$}.
\]

Next let us consider the case $\ell\ge 1$. In this case,  we will apply the formula of integration by parts  to the integral (\ref{eqn:krl}) along the set of vectors 
$\{(0,v_i)\}_{i=0}^{2d}$  (or, in other words, we will apply the formula (\ref{eqn:intbypart}) of integration by parts to the integral with respect to the variable $y'$ along the vectors $\{v_i\}_{i=0}^{2d}$)  for $\mu'$ times. 
Let us write the result of such application of  integration by parts for \underline{$k$ times} as
\[
\widetilde{\kappa}_{\gamma\gamma'}^{(\ell)}(x',x)=\int e^{if(x',y',y,x;\xi',\xi,\eta)}\, \widetilde{R}_k^{(\ell)}(y',y;\xi', \xi,\eta) dydy'
\]
where $\widetilde{R}_k^{(\ell)}(\cdot)$ for $1\le k\le \mu'$ should be defined inductively by $\widetilde{R}_0^{(\ell)}=R_0^{(\ell)}$ and 
\[
\widetilde{R}_{k}^{(\ell)}=\sum_{j=0}^{2d} 
v'_j\left(\frac{i\cdot \widetilde{R}^{(\ell)}_{k-1}\cdot v'_j(f)}{\sum_{\ell=0}^{2d}v'_\ell(f)^2}\right)\quad \mbox{for $k=1,2,\cdots, r-1$.}
\] 
(Here and below we suppose that $v'_j$ are the directional derivatives along $v_j$ with respect to the variable $y'$.) 

By induction on $0\le k\le \mu'$, we can show the following claim.
\begin{sublemma} 
For any multi-indices $\alpha_1, \alpha_2, \beta,\alpha',\beta'\in (\integer_+)^{2d+1}$, we have
\begin{align*}
\|D^{\alpha_1}_\xi D^{\alpha_2}_{\xi'} D^\beta_{\eta}
 D^{\alpha'}_y D^{\beta'}_{y'} \widetilde{R}_{k}\|_{L^\infty}
 \le& C_*(\alpha_1,\alpha_2,\beta, \alpha',\beta',k)\cdot \|g\|_* \\
 &\quad \cdot 2^{-k\cdot  (D(\gamma,\gamma')-n(\gamma')/2)-r_*\cdot\Delta(\gamma,\gamma')}\\
&\quad \cdot 2^{-|\alpha_1|n(\gamma')/2-|\alpha_1|_\dag |m(\gamma')|-|\beta|n(\gamma)/2-|\beta|_\dag|m(\gamma)|}\\
&\quad \cdot 2^{-|\alpha_2| D(\gamma,\gamma')+|\alpha'|n(\gamma')/3
+|\beta'|n(\gamma)/2
}
\end{align*}
where the coefficient $C_*(\alpha_1,\alpha_2,\beta, \alpha',\beta',k)$ does not depend on $G$ nor $g$. 
\end{sublemma}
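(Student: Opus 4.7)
The plan is to mirror the proof of the preceding sublemma for $R_k^{(0)}$, but integration by parts now runs in the variable $y'$ instead of $y$. The $y'$-derivative of the phase
\[
f = \langle \xi, x'-y'\rangle + \langle \xi', y'-y\rangle + \langle \eta, G(y)-x\rangle
\]
along $v_j$ is $v'_j(f) = \langle \xi'-\xi, v_j\rangle$, so $\sum_{j=0}^{2d} v'_j(f)^2 = \|\xi'-\xi\|^2$. By property~(P2) of the partition of unity $\{\phi^{(\ell)}_{\gamma\gamma'}\}$, we have $\|\xi'-\xi\|\ge 2^{D(\gamma,\gamma')+\ell-13}$ whenever $\xi\in \supp \psi_{\gamma'}$ and $\xi'\in\supp\phi^{(\ell)}_{\gamma\gamma'}$ with $\ell\ge 1$, so the phase is sufficiently non-stationary in $y'$ to drive the iteration; this replaces the application of Lemma~\ref{lm:division} used in the $\ell=0$ case. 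Crucially $v'_j(f)$ and the denominator $\|\xi'-\xi\|^2$ depend only on $\xi,\xi'$, not on $G$, $y$, or $y'$; this is the structural reason the coefficient $C_*$ can be taken independent of $G$ and $g$.

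\textbf{Base case $k=0$.} Distribute $D^{\alpha_1}_\xi D^{\alpha_2}_{\xi'} D^\beta_\eta D^{\alpha'}_y D^{\beta'}_{y'}$ across the factors of $\widetilde R_0^{(\ell)} = R_0^{(\ell)}$ via Leibniz rule: $\xi$-derivatives act on $\psi_{\gamma'}$, giving $2^{-|\alpha_1| n(\gamma')/2 - |\alpha_1|_\dag|m(\gamma')|}$ from Subsection~\ref{ss:PU}; $\xi'$-derivatives act on $\phi^{(\ell)}_{\gamma\gamma'}$, giving $2^{-|\alpha_2|(D+\ell)} \le 2^{-|\alpha_2| D(\gamma,\gamma')}$ by property~(P3) and the chain rule through $\widetilde J_{D+\ell}$; $\eta$-derivatives act on $\widetilde\psi_\gamma$, giving $2^{-|\beta| n(\gamma)/2 - |\beta|_\dag|m(\gamma)|}$; $y'$-derivatives act on $\rho_{\gamma'}$, giving $2^{|\beta'| n(\gamma')/2}$, which is comparable to $2^{|\beta'| n(\gamma)/2}$ since $|n(\gamma)-n(\gamma')|\le 1$ by~(R8); $y$-derivatives act on $\widetilde\rho_{\gamma'}$ or $g$, giving $2^{|\alpha'| n(\gamma')/3}$ via~(\ref{eqn:dya}), while $\|g\|_*$ and the factor $2^{-r_*\Delta(\gamma,\gamma')}$ are inherited from the initial $r_*$-fold IBP along $v_0$ that produced $R_0^{(\ell)}$.

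\textbf{Inductive step.} Since $v'_j(f)$ and $\sum_{\ell'} v'_{\ell'}(f)^2$ are independent of $y,y'$, the recursion reduces to
\[
\widetilde R_{k+1}^{(\ell)} = i\sum_{j=0}^{2d}\frac{v'_j(\widetilde R_k^{(\ell)})\cdot \langle \xi'-\xi, v_j\rangle}{\|\xi'-\xi\|^2}.
\]
Apply $D^{\alpha_1}_\xi D^{\alpha_2}_{\xi'} D^\beta_\eta D^{\alpha'}_y D^{\beta'}_{y'}$ via Leibniz. Each resulting summand is the product of (a) a derivative $D^{\tilde\alpha_1}_\xi D^{\tilde\alpha_2}_{\xi'} D^\beta_\eta D^{\alpha'}_y D^{\beta'+e_j}_{y'} \widetilde R_k^{(\ell)}$ with $\tilde\alpha_i \le \alpha_i$, controlled by the inductive hypothesis (with one extra $y'$-derivative contributing a factor $2^{n(\gamma)/2}$), and (b) a $\xi,\xi'$-derivative of the rational factor $\langle \xi'-\xi,v_j\rangle/\|\xi'-\xi\|^2$ of total order $|\alpha_1 - \tilde\alpha_1| + |\alpha_2 - \tilde\alpha_2|$. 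The rational factor is homogeneous of degree $-1$ in $\xi'-\xi$, so (b) is bounded by $C\cdot \|\xi'-\xi\|^{-1 - |\alpha_1-\tilde\alpha_1| - |\alpha_2-\tilde\alpha_2|} \le C\cdot 2^{-(D(\gamma,\gamma')+\ell)(1 + |\alpha_1 - \tilde\alpha_1| + |\alpha_2 - \tilde\alpha_2|)}$. Combining the extra $\|\xi'-\xi\|^{-1}\le 2^{-D(\gamma,\gamma')+13}$ with the extra $2^{n(\gamma)/2}$ advances the $k$-factor by one unit of $D(\gamma,\gamma')-n(\gamma')/2$; the remaining $\|\xi'-\xi\|^{-|\alpha_1-\tilde\alpha_1|-|\alpha_2-\tilde\alpha_2|}$ compensates the $\xi,\xi'$-derivatives missing from part~(a), closing the induction.

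\textbf{Main obstacle.} The principal burden is the bookkeeping in the inductive step — in particular, checking that the Leibniz redistribution of $\xi$- and $\xi'$-derivatives between $\widetilde R_k^{(\ell)}$ and the rational factor reproduces the bounds $2^{-|\alpha_1| n(\gamma')/2}$ and $2^{-|\alpha_2| D(\gamma,\gamma')}$ in the statement. A secondary point is that the argument is in fact simpler than the analogous $\ell=0$ case: because $v'_j(f)$ is linear in $\xi'-\xi$ and $G$-independent, no derivatives of $G$ ever appear in $\widetilde R_k^{(\ell)}$ and the estimates~(\ref{eqn:N1}) and~(\ref{eqn:N2}) play no role; this is precisely what makes $C_*$ independent of $G$ and $g$.
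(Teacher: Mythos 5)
Your proof is correct and follows the paper's own (very terse) argument exactly: the base case from the derivative bounds (\ref{eqn:dya}), (\ref{eqn:dya2}), the lower bound $\sum_j v'_j(f)^2 = \|\xi'-\xi\|^2\ge 2^{2(D(\gamma,\gamma')+\ell-13)}$ from (P2), and the fact that $v'_j(f)=\langle\xi'-\xi,v_j\rangle$ is independent of $y,y',G$ (so $D^{\alpha}_{y'}f\equiv 0$ for $|\alpha|\ge 2$), which is precisely why no analogue of (\ref{eqn:N1})--(\ref{eqn:N2}) enters and $C_*$ is $G,g$-independent. The Leibniz bookkeeping in your inductive step is a faithful expansion of what the paper leaves implicit.
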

\begin{proof} 
Recall (\ref{eqn:dya}) and (\ref{eqn:dya2}) in the proof of Sublemma \ref{sublm:dddd}. 
We can show the claim in the case $k=0$ using these estimate. Note that the condition (P2) in the definition of $\phi_{\gamma\gamma'}^{(\ell)}$ implies that
\[
\sum_{\ell=0}^{2d}v'_\ell(f)^2\ge 2^{2(D(\gamma,\gamma')+\ell-13)}.
\]
Also we have $D^\alpha_y f\equiv 0$ for any $\alpha$ with $|\alpha|\ge 2$.  Using these estimates, we can show the required estimates by induction on $k$. 
\end{proof}
By the same argument as in the last part of the proof of Lemma~\ref{lm:kest1} using in addition the fact that the $(2d+1)$-dimensional volume of the support of $\phi^{(\ell)}_{\gamma\gamma'}$ is bounded by $C_*\cdot 2^{(2d+1)(D(\gamma,\gamma')+\ell)}$, we see that the claim of the sublemma above for $k=\mu'$  implies that  there exists a constant $C_*$, which is independent of $G$, $g$,  $\lambda$ and $\Lambda$, such that 
\begin{align*}
&|\kappa_{\gamma\gamma'}^{(\ell)}(x',x)|\le C_*\cdot \|g\|_*\cdot  2^{-r_*\cdot\Delta(\gamma,\gamma')-\mu' (D(\gamma,\gamma')+\ell-n(\gamma')/2)}\\
&
\qquad\qquad\qquad \cdot
\int\left(\int_{\Ze(\gamma')} b_{\gamma'}^\mu(x'-y') b_{\gamma\gamma',\ell}^{\mu+2d+2}(y'-y)  dy'\right) b_{\gamma}^\mu(G(y)-x) dy.
\end{align*}

From the inequality  (\ref{eqn:R4rel}), there exists a constant $C_*>0$ such that 
\[
\int_{\Ze(\gamma')} b_{\gamma'}^\mu(x'-y') \cdot b_{\gamma\gamma',\ell}^{\mu+2d+2}(y'-y) dy'\le C_* \cdot  d_{\gamma'}^{-2d-2}(y)\cdot b_{\gamma'}^\mu(x'-y).
\]
Therefore we conclude  the inequality in Lemma \ref{lm:kest2}, 
by putting this inequality in the estimates on $\kappa_{\gamma\gamma'}^{(\ell)}(x',x)$ that we obtained above in the case $\ell=0$ and $\ell \ge 1$. 
\end{proof}

\begin{remark}\label{rem:consth}
We can construct  the partitions of unity $\{\phi_{\gamma\gamma'}^{(\ell)}\}_{\ell\ge 0}$ with the properties (i), (ii) and (iii) in the proof above as follows: 
Let $K_{\gamma\gamma'}^{(\ell)}$ be the $2^{D(\gamma,\gamma')+\ell-12}$-neighborhood  of $\supp\; \psi_{\gamma'}$. Also we define $\phi_0:E\to \real$ by
\[
\phi_0(\eta)=
\left(\int \chi(\|\xi\|) d\xi\right)^{-1} \cdot \chi(\|\eta\|)
\]
where $\chi$ is the function taken in Subsection \ref{ss:poe}, 
so that it is supported on the disk with radius $5/3$ and satisfies $\int \phi_0 d\eta=1$.  
Then we set
\[
H_{\gamma\gamma'}^{(\ell)}(\xi)=
{2^{-(2d+1)(D(\gamma,\gamma')+\ell-13)}}
\int_{K_{\gamma\gamma'}^{(\ell)}}\phi_0
\left(2^{-(D(\gamma,\gamma')+\ell-13)}\cdot \|\xi-\eta\|\right) d\eta
\]
The function $H_{\gamma\gamma'}^{(\ell)}$ is supported on $K_{\gamma\gamma'}^{(\ell+1)}$ and satisfies $H_{\gamma\gamma'}^{(\ell)}\equiv 1$ on $K_{\gamma\gamma'}^{(\ell-1)}$. 
From  (\ref{eqn:R4rel}), the required properties are fulfilled if we set 
\[
\phi_{\gamma\gamma'}^{(0)}(\xi)=H_{\gamma\gamma'}^{(0)}(\xi)\quad \text{and}\quad 
\phi_{\gamma\gamma'}^{(\ell)}(\xi)=H_{\gamma\gamma'}^{(\ell)}(\xi)-H_{\gamma\gamma'}^{(\ell-1)}(\xi)\quad \text{for $\ell\ge 1$.}
\]
\end{remark}

\section{The central part of the operator $\cM$ }\label{sec:center}

In this section, we consider the central part $\cM_1$  defined in Subsection~\ref{ssec:decoM}. Our goal is to prove the following proposition.
\begin{proposition} 
\label{th:m1}
The formal definition of $\cM_1$  in fact gives a bounded operator $\cM_1:\bB^\beta_\nu\to \bB^\beta_{\nu'}$ for any $\nu,\nu'\ge 2d+2$. Further, for the case $\nu=\nu'=\nu_*$, there exists a constant $C_*>0$ such that we have
\begin{equation}\label{eqn:m1}
\|\cM_1(\bu)\|_{\beta,\nu_*}^{(\lambda)}\le C_* \cdot \|g\|_{*}\cdot 2^{-(1-\epsilon)
\Lambda/2} \|\bu\|_{\beta,\nu_*}^{(\lambda)}\quad \text{for\/  $\bu\in\bB^\beta_{\nu_*}$},
\end{equation}
for $G\in \cH(\Lambda,\lambda)$ and $g\in\cC^r(V')$, provided  $\Lambda\ge \Lambda_*$, $\lambda\ge \lambda_*$ and $\Lambda\ge d\lambda$. 
\end{proposition}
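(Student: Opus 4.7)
For $(\gamma,\gamma')\in\cR(1)$ we have $|m(\gamma)|,|m(\gamma')|\le\delta\lambda$, so by (\ref{eqn:wl}) the weights satisfy $\wt(m(\gamma))=\wt(m(\gamma'))=1$. The inequality (\ref{eqn:m1}) therefore reduces to the unweighted $L^2$-type bound
\[
\Big(\sum_{\gamma'}\|d^{\nu_*}_{\gamma'}(\cM_1\bu)_{\gamma'}\|_{L^2}^2\Big)^{1/2}\le C_*\,\|g\|_*\,2^{-(1-\epsilon)\Lambda/2}\,\Big(\sum_\gamma\|d^{\nu_*}_\gamma u_\gamma\|_{L^2}^2\Big)^{1/2}.
\]
Following the scheme of Propositions \ref{pp:m2}--\ref{pp:m4}, I would reduce this to a Schur test on coefficients $K_{\gamma\gamma'}$ controlling $\|d^{\nu_*}_{\gamma'}\cL_{\gamma\gamma'}d^{-\nu_*}_\gamma\|_{L^2\to L^2}$, whose row- and column-sums over $\cR(1)$ must each be at most $C_*\|g\|_*\cdot 2^{-(1-\epsilon)\Lambda/2}$. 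The rapid decay $2^{-r_*\Delta(\gamma,\gamma')}$ supplied by Lemma \ref{lm:kest1} disposes of all pairs with $\Delta(\gamma,\gamma')$ larger than some absolute constant, so the remaining task is to estimate $K_{\gamma\gamma'}$ on pairs with $\Delta(\gamma,\gamma')=O(1)$ and $|n(\gamma)-n(\gamma')|\le 1$.

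On these "resonant" pairs, the input and output wave packets are both concentrated (in frequency) near the Reeb covector direction $\alpha_0(z(\gamma))$, with transverse-frequency slack of at most $2^{n/2+\delta\lambda}$. Because $G$ preserves the contact volume, the naive $L^2$-estimate is useless ($\approx\|g\|_\infty$); the factor $2^{-\Lambda/2}$ must instead come from condition (H4) applied to the subspace $\mathrm{span}(v_0)\oplus E^u\subset\cone_-$. Heuristically, a single input wave packet $u_\gamma$ in the central regime is dispersed by $G$ over a $(d+1)$-dimensional region in phase space whose cotangent-volume is multiplied by $\ge 2^\Lambda$, and it therefore overlaps $\sim 2^\Lambda$ output central wave packets $\psi_{\gamma'}$, each receiving only an $L^2$-mass of order $2^{-\Lambda}\|u_\gamma\|_{L^2}^2$. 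A Cauchy--Schwarz pairing of these two counts produces the claimed $2^{-\Lambda/2}$.

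The main obstacle is to realize this dispersion quantitatively, i.e.\ to rule out constructive interference among the distinct output wave packets $\psi_{\gamma'}$ that cover the image of a single $\gamma$. This is exactly where the contact non-integrability enters decisively: by Corollary \ref{cor:local}, for $\eta$ close to a Reeb covector the discrepancy $DG^*_y(\eta)-DG^*_{z(\gamma')}(\eta)$ is \emph{quadratic} rather than linear in $\|y-z(\gamma')\|$ in its Reeb component, a direct consequence of the vanishing $DG_0(\zero)=D^2G_0(\zero)=0$ from Lemma \ref{lm:local}. This quadratic vanishing enables additional integration by parts in the transverse $y$-variable in the oscillatory-integral formula (\ref{eqn:kappa}), suppressing all but $\sim 2^{O(\delta\lambda)}$ of the formally $\sim 2^\Lambda$ potentially coherent contributions. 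Assembling the resulting Schur coefficients yields (\ref{eqn:m1}), with the $\epsilon$-loss in the exponent absorbing precisely the $O(\delta\lambda)$ coherence volume. This is the most technical step of the whole paper and parallels the strategy developed for the expanding semi-flow model in \cite{Tsujii2008}.
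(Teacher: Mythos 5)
Your heuristic is on the right track (the gain $2^{-\Lambda/2}$ comes from (H4)-controlled dispersion of a central wave packet combined with a Cauchy--Schwarz pairing, and the $O(\delta\lambda)$ loss is absorbed by $\epsilon$), but the proposed implementation has two substantive gaps. First, the Schur test on row/column sums of $K_{\gamma\gamma'}$ that drives Propositions \ref{pp:m2}--\ref{pp:m4} cannot yield $2^{-\Lambda/2}$ here. Once the large-$\Delta$ pairs are removed, an individual central block $\cL_{\gamma\gamma'}$ has operator norm $O(\|g\|_\infty)$: the kernel bound in Lemma~\ref{lm:kest0} integrates $b_\gamma^\mu(G(y)-x)$ over a set whose measure is $O(1)$ by volume preservation, and there is no frequency projection gain. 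So the row/column sums are $O(1)$ at best and a Schur estimate gives no improvement over the trivial bound. The $2^{-\Lambda/2}$ gain is an $\ell^2$ (almost-orthogonality) phenomenon among the outgoing pieces and cannot be captured by an $\ell^1$-type test on block norms. The paper instead factors $\cM_1^{(n,n')}=\cP^{(n')}\circ\cQ^{(n)}$ through conjugation by $H_0$ (which straightens $\alpha_0$ to $\talpha_0$), introduces an auxiliary coarser decomposition $\{\Psi_\sigma\}_{\sigma\in\Sigma}$ (independent of $\xi^+$, so $\Fourier^{-1}\Psi_\sigma$ carries a delta in $x^+$) with the Hilbert space $\tB$, extracts $2^{-\Lambda/2}$ on individual blocks $\cQ_{\gamma\sigma}$ from (H4) (Lemma \ref{lm:keypre}), and controls overlaps by a Cotlar--Stein-type bound on the inner products $\langle\cQ_{\gamma\sigma}(u),\cQ_{\gamma'\sigma}(u')\rangle$ (Lemma \ref{lm:key}); Proposition~\ref{pp:cP} then returns from the $\Sigma$-decomposition to the $\Gamma$-decomposition at no cost. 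None of this structure is present in your sketch.

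Second, you have mislocated the role of the contact structure. Lemma \ref{lm:local} and Corollary \ref{cor:local} (the quadratic vanishing $DG_0(\zero)=D^2G_0(\zero)=0$) are the crucial tools for the \emph{hyperbolic} part (Lemma \ref{lm:division} and Lemma \ref{lm:kest2}), not the central part. The decay of the cross-terms in Lemma \ref{lm:key} comes from a single integration by parts in $y$ along a carefully chosen unit vector $w\in\ker\talpha_0(y(\gamma))$ with $D\hG_{y(\gamma)}(w)\in E_0\oplus E_+$; the lower bound on the phase derivative (Sublemma~\ref{subl:7}) exploits the non-degeneracy of $d\alpha_0$ paired against $\pi_-(z(\gamma')-z(\gamma))$, not the quadratic vanishing of $G_0$. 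Asserting that Corollary \ref{cor:local} enables ``additional integration by parts in the transverse $y$-variable'' in (\ref{eqn:kappa}) is not a substitute for constructing $w$ and proving Sublemma \ref{subl:7}, and the scheme as written does not close.
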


Clearly Theorem~\ref{th:reduced} follows from Proposition \ref{lm:m0}, \ref{pp:m2}, \ref{pp:m3}, \ref{pp:m4} and Proposition \ref{th:m1} above,  if we set $\mathcal{K}(G,g)=\cM_0$ and $\|\cdot \|^{(\lambda)}=\|\cdot \|^{(\lambda)}_{\beta,\nu_*}$.

\subsection{Reduction of the claim}

For integers $n,n'\ge 0$, we set
\[
\cR^{(n,n')}(1)=\{(\gamma,\gamma')\in \cR(1)\mid n(\gamma)=n,\; n(\gamma')=n'\}
\]
and let $\cM_1^{(n,n')}:\bB^\beta_\nu\to \bB^\beta_{\nu'}$ be the operator defined formally by (\ref{eqn:defM}) with $\cR(j)$ replaced by $\cR^{(n,n')}(1)$. Then $\cM_1$ is formally the sum of $\cM_1^{(n,n')}$ for $(n,n')\in \integer_+\times \integer_+$ such that $\max\{n,n'\}>K$ and $|n'-n|\le 1$. 
From the definition of the norm $\|\cdot\|^{(\lambda)}_{\beta,\nu_*}$,  Proposition \ref{th:m1} follows if we prove the claim (\ref{eqn:m1}) with $\cM_1$ replaced by $\cM_1^{(n,n')}$ and with the constant $C_*$ independent of $n$ and $n'$.

Let $\talpha_0$ be the contact form defined by 
\[
\widetilde{\alpha}_0= dx_0+x^{-}\cdot dx^{+}.
\]
Then it holds  $H_0^*(\alpha_0)=\widetilde{\alpha}_0$ for the  diffeomorphism $H_0:E\to E$ defined by
\[
H_0(x_0, x^+, x^-)=(x_0+2^{-1}x^+\cdot x^-, \; 2^{-1/2} x^+,\; 2^{-1/2} x^-).
\]
In the proof below, we regard the diffeomorphism $G:V'\to V$ in $\cH(\lambda,\Lambda)$ as the composition of two contact diffeomorphisms
\[
\begin{CD}
(V',\alpha_0) @>{H_0^{-1}}>> (H_0^{-1}(V'),\talpha_0) @>{G\circ H_0}>> (E,\alpha_0).
\end{CD}
\]
Also we will introduce a Hilbert space $\tB$ and regard $\cM_1^{(n,n')}:\bB^\beta_\nu\to \bB^\beta_{\nu'}$ as the composition of two operators $\cP^{(n')}$ and $\cQ^{(n)}$,
\[
\begin{CD}
\bB^\beta_\nu @>{\cQ^{(n)}}>> \tB @>{\cP^{(n')}}>> \bB^\beta_{\nu'},
\end{CD}
\]
which are associated to the diffeomorphisms $H_0^{-1}$ and $G\circ H_0$ respectively.
As we will see in the next subsection, the operator $\cP^{(n')}$ does nothing harmful and hence
Proposition \ref{th:m1} is  reduced to a proposition on the operator $\cQ^{(n)}$. 

\begin{remark}
The reason for taking this roundabout way is that we need to "straighten" the contact form $\alpha_0$ along the subspace  $E_0\oplus E^+$ so that we can use the formula (\ref{eqn:intbypart}) of integration by parts appropriately in the last part of the proof. 
\end{remark}

We define the transfer operators 
\[
P:C^r(H_0^{-1}(V'))\to C^r(V')\quad \text{and}\quad 
Q:C^r(V)\to C^r(H_0^{-1}(V'))
\]
by $
P u= u\circ H_0^{-1}$ and $Q u= \hat{g}\cdot ( u\circ \hG)$
respectively, where (and henceforth) we set $
\hat{g}=g\circ H_0$ and $\hG=G\circ H_0$. Obviously we have $\cL=P\circ Q$. 

The definition of the Hilbert space $\tB$ is somewhat  similar to that of $\bB^\beta_\nu$. 
We consider the set $\Sigma=\cN \oplus (\integer_+)$  as the index set instead of $\Gamma$. To refer the components of an element 
$\sigma=(n,k,\ell)\in \Sigma$, we set $
n(\sigma)=n$, $k(\sigma)=k$ and $\ell(\sigma)=\ell$. 
For each $\sigma\in \Sigma$, we define 
the functions $\Psi_{\sigma}:E^*\to [0,1]$ and $\widetilde{\Psi}_{\sigma}:E^*\to [0,1]$ by
\[
\Psi_{\sigma}(\xi)=\chi_{n(\sigma),k(\sigma)}(\xi)\cdot \chi_{\ell(\sigma)}(2^{-n(\sigma)/2-2\delta \lambda} \|\pi_{-}^*(\xi)\|)
\]
and
\[
\widetilde{\Psi}_{\sigma}(\xi)=\widetilde{\chi}_{n(\sigma),k(\sigma)}(\xi)\cdot 
\widetilde{\chi}_{\ell(\sigma)}(2^{-n(\sigma)/2-2\delta \lambda} \|\pi_{-}^*(\xi)\|)
\]
respectively, where $\pi_{-}^*(\xi)=\xi^-$ for $\xi=(\xi_0, \xi^+, \xi^-)$ and the functions $\chi_{n,k}$, $\widetilde{\chi}_{n,k}$, $\chi_n$ and $\widetilde{\chi}_n$ are those defined in Section~\ref{sec:part}. 
By definition, the family $\{\Psi_{\sigma}\}_{\sigma\in \Sigma}$ is a partition of unity on $E^*$ and we have $\Psi_{\sigma}\cdot 
\widetilde{\Psi}_{\sigma}\equiv \Psi_{\sigma}$ for each $\sigma\in \Sigma$. Note that the functions $\Psi_{\sigma}(\xi)$ and $\widetilde{\Psi}_{\sigma}(\xi)$ do not depend on the component $\xi^+$ and hence their inverse Fourier transforms are
not functions in the usual sense but the tensor products of the Dirac $\delta$-function on $E_+$ at the origin and  rapidly decaying functions on $E_0\oplus E_-$. For $\mu\ge 2d+2$, there exists a constant $C_*>0$ such that 
\begin{equation}\label{eqn:Fp}
|\Psi_{\sigma}(D)u(x)|= |\Fourier^{-1}\Psi_{\sigma}* u(x)|
\le C_* \cdot  |b^\mu_\sigma*|u| (x)|
\end{equation}
where $b^\mu_\sigma$ is the finite measure on $E$ defined 
by\begin{equation}\label{eqn:bsig}
b^\mu_\sigma(x)= 
\frac{2^{d(n(\sigma)/2+\ell(\sigma)+2\delta\lambda)+n(\sigma)/2}}{  \langle 2^{n(\sigma)/2+\ell(\sigma)+2\delta\lambda} x^- \rangle^{\mu}\cdot \langle 2^{n(\sigma)/2} x_0 \rangle^{\mu}}\cdot \delta(x^+)
\end{equation}
for $x=(x_0,x^+,x^-)$.  
For $\sigma\in \Sigma$, we set\footnote{The factors $(\delta\lambda)^{1/2}$ and $2^{-\Lambda}$ in the definition of $\widetilde{w}(\sigma)$ are not very essential. We  put those factors in order to make the statements a little simpler.}
\[
\widetilde{w}(\sigma)=
\begin{cases}
(\delta \lambda)^{1/2}, &\text{if $\ell(\sigma)=0$};\\
2^{- \Lambda-\ell(\sigma)}, &\text{if $\ell(\sigma)>0$.}\end{cases}
\]
Then we define the Hilbert space $\tB$ as the linear space 
\[
\tB=\left\{(v_{\sigma})_{\sigma\in \Sigma}\;\left|\; 
v_\sigma\in L^2(E),\;
\widetilde{\Psi}_{\sigma}(D)v_{\sigma}=v_{\sigma}, \;  \sum_{\sigma}\widetilde{w}(\sigma)^2 \|v_{\sigma}\|_{L^2}^2<\infty\right.\right\}
\]
equipped with the norm $\|\cdot\|_{\tB}$ defined by
\[
\|\mathbf{v}\|_{\tB}=
\left(\sum_{\sigma\in \Sigma} \widetilde{w}(\sigma)^2 \|v_{\sigma}\|_{L^2}^2
\right)^{1/2}\qquad \text{for } \mathbf{v}=(v_{\sigma})_{\sigma\in \Sigma}\in \tB.
\]

For $n\ge 0$, $\sigma\in \Sigma$ and $\gamma\in \Gamma$, we define the operators 
\[
\cP^{(n)}_{\sigma\gamma}:L^2(E)\to L^2(E)\quad \mbox{ and }\quad \cQ^{(n)}_{\gamma\sigma}:L^2(E)\to L^2(E)
\]
 respectively by
\[
\cP_{\sigma\gamma}^{(n)}(v)=
\begin{cases}
p_{\gamma}(x,D)^*  (P(\widetilde{\Psi}_{\sigma}(D) v)), & \text{if $|m(\gamma)|\le \delta \lambda$ and $n(\gamma)=n$;}\\
0, &\text{otherwise,}
\end{cases}
\]
and
\[
\cQ_{\gamma\sigma}^{(n)}(u)=
\begin{cases}
\Psi_{\sigma}(D)( Q( \widetilde{\psi}_{\gamma}(D) u)), &\text{if $|m(\gamma)|\le \delta \lambda$ and $n(\gamma)=n$;}\\
0,&\text{otherwise.}
\end{cases}
\]
Then we define  
$
\cP^{(n)}: \tB \to \bB^\beta_{\nu'}$ and $ 
\cQ^{(n)}: \bB^\beta_{\nu}\to \tB$ 
formally by
\[
\cP^{(n)}((v_{\sigma})_{\sigma\in \Sigma})=
\left(\sum_{\sigma\in \Sigma} \cP^{(n)}_{\sigma\gamma}(v_{\sigma})\right)_{\gamma\in \Gamma}
\]
and
\[
\cQ^{(n)}((u_{\gamma})_{\gamma\in \Gamma})=
\left(\sum_{\gamma\in \Gamma} \cQ^{(n)}_{\gamma\sigma}(u_{\gamma})\right)_{\sigma\in \Sigma}.
\]
By the definitions,  we have $
\cM_{1}^{(n,n')}=\cP^{(n')}\circ \cQ^{(n)}$ at the formal level. Therefore, in order to prove  Proposition \ref{th:m1}, it is enough to show the following two propositions. 
\begin{proposition}\label{pp:cP}
The formal definition of the operator $\cP^{(n)}$ for $n\ge K$ in fact gives a bounded operator $\cP^{(n)}:\tB\to  \bB^\beta_{\nu'}$ for each $\nu'\ge 2d+2$. Further, for $\nu'\ge 2d+2$, there exists a constant $C_*>0$ such that we have 
\[
\|\cP^{(n)}(\mathbf{v})\|^{(\lambda)}_{\beta,\nu'}\le C_*\|\mathbf{v}\|_{\tB}\quad \text{ for all\/ $\mathbf{v}\in \tB$ and $n\ge K$,}
\]
provided $\Lambda\ge \Lambda_*$ and $\lambda\ge \lambda_*$. 
\end{proposition}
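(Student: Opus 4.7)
The proof treats $\cP^{(n)}$ as an infinite matrix $(\cP^{(n)}_{\sigma\gamma})$ of operators and applies a Schur-type test. Since $\cP^{(n)}_{\sigma\gamma}$ vanishes unless $n(\gamma)=n$ and $|m(\gamma)|\le \delta\lambda$ (for which $\wt(m(\gamma))=1$), the target norm simplifies, and the claim reduces to the two Schur-type estimates
\[
\sup_\gamma \sum_\sigma \frac{K(\gamma,\sigma)}{\widetilde w(\sigma)} \le C_*,\qquad \sup_\sigma \sum_\gamma \frac{K(\gamma,\sigma)}{\widetilde w(\sigma)} \le C_*,
\]
where $K(\gamma,\sigma) := \|d_\gamma^{\nu'}\cP^{(n)}_{\sigma\gamma}\|_{L^2\to L^2}$, deduced from the Cauchy--Schwarz inequality applied with weight $\widetilde w(\sigma)^{-1/2}$.

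Each $\cP^{(n)}_{\sigma\gamma}$ is an integral operator with kernel
\[
K_{\sigma\gamma}(x',z) = \int(\Fourier^{-1}\psi_\gamma)(x'-y)\,\rho_\gamma(y)\,(\Fourier^{-1}\widetilde{\Psi}_\sigma)(H_0^{-1}(y)-z)\,dy.
\]
Corollary~\ref{cor:localized} together with the bound \eqref{eqn:Fp} yields $|K_{\sigma\gamma}(x',z)|\le C_*\int b_\gamma^\mu(x'-y)\rho_\gamma(y)b_\sigma^\mu(H_0^{-1}(y)-z)\,dy$. Combined with the pointwise inequality $d_\gamma^{\nu'}(x')b_\gamma^\mu(x'-y)\rho_\gamma(y)\le C_* b_\gamma^{\mu-\nu'}(x'-y)$ and Young's inequality, this gives the uniform bound $K(\gamma,\sigma)\le C_0$. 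For the $\ell(\sigma)=0$ contributions, the combinatorial constraints $|n(\sigma)-n|=O(1)$ and $|k(\sigma)-k(\gamma)|=O(1)$ required for the $\xi_0$-intervals of $\widetilde{\chi}_{n(\sigma),k(\sigma)}$ and $\chi_{n,k(\gamma)}$ to overlap limit to $O(1)$ the number of $\sigma$'s contributing per $\gamma$, yielding a total $\le C_*(\delta\lambda)^{-1/2}$.

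For $\ell(\sigma)\ge 1$ a finer estimate is needed, because $\widetilde w(\sigma)^{-1}=2^{\Lambda+\ell}$ is large. The key point is the gap between the $\xi^-$-frequency supports: $\supp\psi_\gamma$ has $\|\xi^-\|\lesssim 2^{n/2+\delta\lambda}$, while $\supp\widetilde{\Psi}_\sigma$ has $\|\xi^-\|\sim 2^{n/2+2\delta\lambda+\ell}$. Repeated integration by parts in the $y$-variable in the defining integral of $K_{\sigma\gamma}$, using the smoothness of $\rho_\gamma$ and of $H_0$ (which is a polynomial diffeomorphism, so arbitrarily many integrations by parts are available), yields for every integer $M\ge 1$ the refined estimate $K(\gamma,\sigma)\le C_M\cdot 2^{-M(2\delta\lambda+\ell)}$; equivalently, this reflects the Schwartz-type decay of $\hat\rho_\gamma$ at scale $2^{n/2}$ pitted against the frequency gap $\sim 2^{n/2+2\delta\lambda+\ell}$.

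The main obstacle is calibrating this decay against $2^{\Lambda+\ell}$ uniformly in $\Lambda$. The resolution comes from a geometric refinement: writing $H_0^*\alpha_0=\widetilde{\alpha}_0$, the off-diagonal structure of $(DH_0)^{-T}$ in the $\xi^-$-block shows that the overlap $\supp\psi_\gamma\cap(DH_0)^{-T}\supp\widetilde{\Psi}_\sigma$ requires the near-cancellation $2\,\xi^-\approx x^+\xi_0$ for some $x\in\supp\rho_\gamma$. For fixed $\gamma$, this forces $\ell(\sigma)$ into a bounded range determined by $\log_2|\bk^+(\gamma)|-2\delta\lambda$; for fixed $\sigma$ with $\ell\ge 1$, only the $\gamma$'s with $|\bk^+(\gamma)|\sim 2^{2\delta\lambda+\ell}$ contribute. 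Combining this restricted count with the decay $C_M 2^{-M(2\delta\lambda+\ell)}$ for $M$ chosen depending only on $\beta, d, \delta$, and using the hypothesis $\Lambda\ge d\lambda$ to absorb the $2^{\Lambda}$-factor, verifies both Schur bounds uniformly in $\Lambda$ and $\lambda$. This completes the proof.
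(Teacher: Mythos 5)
Your overall framework — view $\cP^{(n)}$ as a matrix of kernel operators, bound each entry, and conclude by a weighted Schur test together with the scaling/decay properties of the $b_\gamma^\mu$, $b_\sigma^\mu$ weight functions — is indeed the strategy the paper uses, and your treatment of the $\ell(\sigma)=0$ block is essentially what the paper does via Sublemma~\ref{subl:4}.

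However, your resolution of the $\ell(\sigma)\ge 1$ case, which you correctly identify as the crux, has a genuine gap, for two reasons.

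First, the number of integrations by parts cannot be taken to depend only on $\beta,d,\delta$. For $\ell(\sigma)\ge 1$ one has $\widetilde w(\sigma)^{-1}=2^{\Lambda+\ell(\sigma)}$, and the gain per integration by parts is only of order $2^{-\delta\lambda-\ell(\sigma)}$ (each derivative of $\rho_\gamma$ costs a factor $2^{n/2}$, while the transverse frequency gap is of order $2^{n(\sigma)/2+\delta\lambda+\ell(\sigma)}$ and $n\approx n(\sigma)$, so the net gain is $\approx 2^{-\delta\lambda-\ell(\sigma)}$ per application, not $2^{-2\delta\lambda-\ell}$). To beat $2^{\Lambda}$ one therefore needs roughly $\Lambda/(\delta\lambda)$ integrations by parts, and since no upper bound on $\Lambda/\lambda$ is imposed in the statement (only $\Lambda\ge d\lambda$, $\lambda\ge\lambda_*$, $\Lambda\ge\Lambda_*$), this ratio may be arbitrarily large. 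The paper addresses exactly this by fixing an integer $\mu'>2\Lambda/(\delta\lambda)$ (dependent on $\Lambda$ and $\lambda$) and performing $\mu'$ integrations by parts, the per-step constant being absorbed into the $2^{-\delta\lambda}$ factor because $\delta\lambda_*$ is chosen large. Also, ``using $\Lambda\ge d\lambda$ to absorb the $2^{\Lambda}$-factor'' cannot work: a larger $\Lambda$ makes $2^{\Lambda}$ larger, not smaller.

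Second, the ``geometric refinement'' you invoke is spurious. A direct computation of $(DH_0^{-1})^*_y$ shows that, for $w=H_0^{-1}(y)$,
\[
\pi_-^*\bigl((DH_0^{-1})^*_y(\xi)-\xi_0\,\alpha_0(y)\bigr)
= -2^{-1/2}\xi_0 w^+ + 2^{1/2}\xi^- + \xi_0 y^+
= 2^{1/2}\xi^- ,
\]
since $w^+=2^{1/2}y^+$. There is no near-cancellation condition of the form $2\xi^-\approx x^+\xi_0$, no role for $\bk^+(\gamma)$, and hence no restricted count of contributing $\gamma$'s or admissible range of $\ell(\sigma)$. The $\xi^-$-separation (the paper's inequality~(\ref{eqn:sepsupp})) holds for all $\ell(\sigma)\ge 1$ with $n(\gamma)/2\le n(\sigma)/2+\ell(\sigma)$, with no further constraint. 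Consequently the estimate in the $\ell\ge 1$ block must come entirely from the integration-by-parts decay with $\Lambda$-dependent order, exactly as in the paper.
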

\begin{proposition}\label{pp:cQ}
The formal definition of the operator $\cQ^{(n)}$  for $n\ge K$ in fact gives a bounded operator $\cQ^{(n)}:\bB^\beta_{\nu}\to \tB$ for each $\nu\ge 2d+2$. For each $\nu\ge 2d+2$, the operator norms of $\cQ^{(n)}: \bB^\beta_{\nu}\to \tB$  are bounded uniformly for $n\ge K$. 
Further, for the case $\nu=\nu_*$, there exists a constant $C_*>0$ such that  we have
\[
\|\cQ^{(n)}(\bu)\|_{\tB}\le C_* \|g\|_*\cdot 2^{-(1-\epsilon)\Lambda/2}
\|\bu\|^{(\lambda)}_{\beta,\nu_*}\quad \text{ for all\/ $\bu\in \bB^\beta_{\nu_*}$ and $n\ge K$}
\]
for $G:V'\to V$ in $\cH(\Lambda,\lambda)$ and $g\in\cC^r(V')$, provided  $\lambda\ge \lambda_*$, $\Lambda\ge \Lambda_*$ and $\Lambda\ge d\lambda$. 
\end{proposition}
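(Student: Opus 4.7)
Since every $u_\gamma\in\bB^\beta_\nu$ satisfies $\widetilde{\psi}_\gamma(D)u_\gamma=u_\gamma$, the formal definition of $\cQ^{(n)}$ collapses to
\[
\cQ^{(n)}(\bu)_\sigma=\Psi_\sigma(D)\,Q\, w^{(n)},\qquad w^{(n)}:=\sum_{\gamma:\,n(\gamma)=n,\,|m(\gamma)|\le\delta\lambda} u_\gamma,
\]
and the almost-orthogonality of the $u_\gamma$'s induced by the $d_\gamma^\nu$-weight gives $\|w^{(n)}\|_{L^2}\le C_*\,\|\bu\|^{(\lambda)}_{\beta,\nu}$ (as in the Schwarz step (\ref{eqn:sch})). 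So the task reduces to estimating $\sum_\sigma \widetilde{w}(\sigma)^2\|\Psi_\sigma(D)Qw^{(n)}\|_{L^2}^2$.

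For the qualitative assertions (boundedness and its uniformity in $n$), I would view each $\cQ^{(n)}_{\gamma\sigma}$ as an integral operator whose kernel obeys rapid-decay bounds analogous to Lemma~\ref{lm:kest1}: pairs $(\gamma,\sigma)$ with mismatched frequency supports --- either $D\hG^*(\supp\widetilde{\psi}_\gamma)$ far from $\supp\Psi_\sigma$, or $\hG(\supp\rho_\gamma)$ spatially far from the center of $\Fourier^{-1}\Psi_\sigma$ --- contribute negligibly after integration by parts in the oscillatory integral defining the kernel. A Schur-type summation against the weights $\widetilde{w}(\sigma)$, using the finite-overlap estimates (\ref{eqn:chisum})--(\ref{eqn:chisum2}), then yields uniform boundedness in $n$. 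The same bounds already dispose of the contribution of $\ell(\sigma)\ge 1$ to the decay claim: the trivial $L^2$-estimate $\|\Psi_\sigma(D) Q w^{(n)}\|_{L^2}\le C_*\|g\|_*\|w^{(n)}\|_{L^2}$ combined with $\widetilde{w}(\sigma)^2=2^{-2\Lambda-2\ell}$ gives a total $\lesssim 2^{-2\Lambda}\|g\|_*^2\|w^{(n)}\|_{L^2}^2$, exceeding the target.

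The substantive case is $\ell(\sigma)=0$, where $\widetilde{w}(\sigma)^2=\delta\lambda$ offers no decay. By almost-orthogonality of the $\Psi_\sigma$'s restricted to $\ell=0$ the contribution equals $\|\Pi_0 Q w^{(n)}\|^2_{L^2}$ up to a bounded factor, where $\Pi_0$ is essentially the Fourier projection onto $\{\|\xi^-\|\lesssim 2^{n/2+2\delta\lambda}\}$. I would write this as $\langle w^{(n)}, Q^{*}\Pi_0^{*}\Pi_0 Qw^{(n)}\rangle$ and unfold $Q^{*}\Pi_0 Q$ as an oscillatory integral operator with phase $\langle\xi,\hG^{-1}(x)-\hG^{-1}(w)\rangle$. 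Localizing $w^{(n)}$ in space at scale $2^{-n/3}$ (so the nonlinearity of $\hG$ is controlled through Corollary~\ref{cor:local}) and changing variable by $\eta=(D\hG)^{-*}_w(\xi)$ replaces the cutoff $\Pi_0$ by a symbol supported where $\|\pi^{*}_{-}(D\hG^*(w)\eta)\|\lesssim 2^{n/2+2\delta\lambda}$. Condition (H4) guarantees that the $\eta^-$-volume of this region is $\le 2^{-\Lambda}\,(2^{n/2+2\delta\lambda})^d$, since $D\hG$ expands $(d{+}1)$-dimensional volumes in $\cone_-$ by a factor $\ge 2^\Lambda$. Intersecting with $\supp\widehat{w^{(n)}}\subset\{\|\eta^-\|\lesssim 2^{n/2+\delta\lambda}\}$ and applying Plancherel yields
\[
\|\Pi_0 Q w^{(n)}\|_{L^2}^2\lesssim 2^{d\delta\lambda-\Lambda}\|g\|_*^2\,\|w^{(n)}\|_{L^2}^2,
\]
which multiplied by $\delta\lambda$ is dominated by $2^{-(1-\epsilon)\Lambda}\,(\|\bu\|^{(\lambda)}_{\beta,\nu_*})^2$ thanks to the constant choices in Subsection~\ref{ssec:remarks}; taking square roots gives the desired bound.

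The main obstacle is making this Fourier-side volume argument rigorous despite the nonlinearity of $\hG$. The conjugation $(D\hG)^{-*}$ is position-dependent, so the passage from $\Pi_0\circ Q$ to a Fourier multiplier in $\eta$ is only valid microlocally; one must implement it through an additional microlocal partition (in the spirit of $\{\phi^{(\ell)}_{\gamma\gamma'}\}$ in Lemma~\ref{lm:kest2}), bounding off-diagonal remainders via integration by parts and invoking Corollary~\ref{cor:local} to quantify the variation of $D\hG^*$ within each piece. The decomposition $\cL=P\circ Q$ with $\hG=G\circ H_0$ is what makes the argument viable: because the straightened contact form $\talpha_0=dx_0+x^-\cdot dx^+$ does not involve $dx^-$, the cutoff $\Psi_\sigma$ --- which constrains only $\xi^-$ --- is aligned with the variables along which the integrations by parts must be carried out.
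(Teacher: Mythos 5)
Your overall architecture (collapse to $w^{(n)}=\sum_{\gamma}u_\gamma$, crude Schur estimate for $\ell(\sigma)\ge 1$, substantive work for $\ell(\sigma)=0$) mirrors the paper's split $\cQ^{(n)}=\widehat{\cQ}^{(n)}+(\cQ^{(n)}-\widehat{\cQ}^{(n)})$, and the $\ell\ge 1$ dismissal is fine and slightly cleaner than Lemma \ref{lm:ccQ}. The gap is in the $\ell(\sigma)=0$ case, and it is not, as you diagnose, primarily the position-dependence of $(D\hG)^{-*}$.

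Even for a \emph{linear} contact $\hG$ the chain ``change variable $\eta=(D\hG)^{-*}\xi$, (H4) gives a Fourier region of small measure, Plancherel yields a small fraction of $\|w^{(n)}\|_{L^2}^2$'' is not a theorem: restricting $\widehat{w^{(n)}}$ to a subset of small measure does not give a proportionally small $L^2$ integral unless $|\widehat{w^{(n)}}|^2$ is approximately equidistributed on its support. That equidistribution is automatic for a \emph{single} wave packet localized by $d_\gamma^{\nu_*}$ --- which is exactly the paper's Lemma \ref{lm:keypre}, and your volume heuristic reproduces precisely its $2^{-\Lambda+2d\delta\lambda}$ scale --- but $w^{(n)}$ is a superposition of order $2^{(2d+1)n/2}$ wave packets, and the modulations $e^{-i\langle\eta,z(\gamma)\rangle}$ can constructively interfere inside the thin slab. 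Lumping them into one global $\Pi_0 Q w^{(n)}$ throws away the weight structure $\{d_\gamma^{\nu_*}\}$ that licensed the volume argument for each piece, and a Schur-type bound without off-diagonal decay does not recover it, because for fixed $\sigma$ the number of $\gamma$ with $(\gamma,\sigma)\in S$ grows without bound with $n$. What the paper does instead is prove the quantitative almost-orthogonality $|\langle\cQ_{\gamma\sigma}(u),\cQ_{\gamma'\sigma}(u')\rangle|\lesssim 2^{-\Lambda+2d\delta\lambda}\langle 2^{n/2-2\delta\lambda}\|z(\gamma)-z(\gamma')\|\rangle^{-(2d+2)}\|d_\gamma^{\nu_*}u\|\,\|d_{\gamma'}^{\nu_*}u'\|$ (Lemma \ref{lm:key}) and sum it; the decisive case (iv), where $z(\gamma)$ and $z(\gamma')$ are separated in the $E_-$ direction so that $\hG^{-1}$ pulls them spatially close, is handled by Sublemma \ref{subl:7}: a direction $\unv$ is chosen using $d\alpha_0$ along which the phase of $\cQ^*_{\gamma'\sigma}\cQ_{\gamma\sigma}$ oscillates at rate $\gtrsim 2^n\|\pi_-(z(\gamma)-z(\gamma'))\|$, and one integration by parts yields the off-diagonal decay. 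That is where the non-integrability of the contact form enters, and it is not a refinement of the microlocal partition you allude to --- it is the mechanism that replaces your unjustified Plancherel step. Your paragraph on ``bounding off-diagonal remainders via integration by parts'' gestures at the right repair, but without identifying the twisted direction $\unv$ and the gain from $d\talpha_0$, the argument does not close; an adversarial choice of the coefficients of the $u_\gamma$ (constant over a box of $\bk^-$-indices, so that the $\cQ_{\gamma\sigma}u_\gamma$ all land on top of one another after $\hG^{-1}$ contracts $E_-$) defeats the naive volume bound.

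Two smaller remarks. First, the paper's $\cQ-\widehat{\cQ}$ also absorbs $\ell=0$ pairs with $\Delta(n,k,n(\sigma),k(\sigma))>0$; in your scheme these ride along in the global $\ell=0$ estimate, which is fine in principle but should be noted. Second, the factor $\widetilde{w}(\sigma)^2=\delta\lambda$ for $\ell=0$, together with the lattice sums in $\bk'$ and $m'$, is what produces the polynomial-in-$\lambda$ and $2^{O(\delta\lambda)}$ losses that the $\epsilon$ swallows; your write-up asserts those are ``dominated by $2^{-(1-\epsilon)\Lambda}$ thanks to the constant choices'' without tracing where they come from, whereas in the paper they appear explicitly after summing Lemma \ref{lm:key} over $\gamma'$ and $\sigma$ --- worth spelling out, since it is where the hypothesis $\Lambda\ge d\lambda$ and the choice $(2\beta+5d+2)\delta<\epsilon$ are actually used.
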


In the following subsections, we prove Proposition \ref{pp:cP} and \ref{pp:cQ}. We henceforth consider a fixed $n\ge K$ and write $\cP$, $\cP_{\sigma\gamma}$, $\cQ$ and $\cQ_{\gamma\sigma}$ respectively for $\cP^{(n)}$, $\cP_{\sigma\gamma}^{(n)}$, $\cQ^{(n)}$ and $\cQ_{\gamma\sigma}^{(n)}$ for simplicity, though we keep dependence of them  on $n$ in mind. 
Notice that we will write  $C_*$, $C(G)$ and $C(G,g)$ only for constants that do not depend on $n$.

\subsection{The operator $\cP$}
In this subsection, we consider the operator $\cP=\cP^{(n)}$  and  prove Proposition \ref{pp:cP}. 
The structure of the proof is similar to that of Proposition \ref{pp:m2}. 
Fix some integers $\mu\ge \nu'+2d+2$ and $\mu'>2\Lambda/(\delta\lambda)$. 
For $\sigma\in \Sigma$ and $k\in \integer$ such that $(n,k)\in \cN$, we set 
\begin{align*}
K_{\sigma, k}&=2^{-\mu' (\Delta(n(\sigma), k(\sigma), n,k)+\delta \lambda +\ell(\sigma))}\cdot   (1/\widetilde{w}(\sigma))
\intertext{
if $\ell(\sigma)>0$ and $n/2\le n(\sigma)/2+\ell(\sigma)$, and otherwise we set}
K_{\sigma, k}&=2^{-2\mu'\cdot  \Delta(n(\sigma), k(\sigma), n,k)} \cdot  
(1/\widetilde{w}(\sigma)).
\end{align*}

We use the following sublemma, whose proof is postponed for a while. 
\begin{sublemma}\label{subl:4} There exists a constant $C_*>0$ such that 
\[
\sup_{\sigma} 
\left(\sum_{k:(n,k)\in \cN} K_{\sigma, k}\right)\le 
\frac{C_*}{ \sqrt{\delta \lambda}}
\quad\text{and}\quad
\sup_{k:(n,k)\in \cN}
\left( \sum_{\sigma} K_{\sigma, k}\right) \le \frac{C_*}{ \sqrt{\delta \lambda}}.
\]
\end{sublemma}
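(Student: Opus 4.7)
The plan is to partition the index set $\Sigma$ into three regimes matching the branches in the definitions of $K_{\sigma,k}$ and $\widetilde{w}(\sigma)$: (A) $\ell(\sigma)=0$; (B) $\ell(\sigma)>0$ and $n/2\le n(\sigma)/2+\ell(\sigma)$; (C) $\ell(\sigma)>0$ and $n/2> n(\sigma)/2+\ell(\sigma)$. In each regime I will bound $K_{\sigma,k}$ by $2^{-\mu'\Delta(n(\sigma),k(\sigma),n,k)}$ times a factor that either contains a convergent tail in $\ell(\sigma)$ or is absolutely small, so that the remaining sum collapses via (\ref{eqn:chisum}) and (\ref{eqn:chisum2}).

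In regime (A), $\widetilde{w}(\sigma)=\sqrt{\delta\lambda}$ and the ``otherwise'' branch gives $K_{\sigma,k}=2^{-2\mu'\Delta}/\sqrt{\delta\lambda}$, so both sums are immediately bounded by $C_*/\sqrt{\delta\lambda}$ by (\ref{eqn:chisum2}) and (\ref{eqn:chisum}) with $s=2\mu'$. In regime (B), writing $K_{\sigma,k}=2^{-\mu'\Delta}\cdot 2^{-\mu'\delta\lambda+\Lambda}\cdot 2^{-(\mu'-1)\ell(\sigma)}$, the hypothesis $\mu'>2\Lambda/(\delta\lambda)$ forces the middle factor to be at most $2^{-\Lambda}$, while the last factor provides a convergent geometric tail in $\ell(\sigma)\ge1$. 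Summing the first factor over $k$ (respectively over $(n(\sigma),k(\sigma))$) by (\ref{eqn:chisum2}) (respectively (\ref{eqn:chisum})), one is left with $C_*\cdot 2^{-\Lambda}$, which is much smaller than $1/\sqrt{\delta\lambda}$ for $\Lambda\ge\Lambda_*$.

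The main obstacle is regime (C), because $\widetilde{w}(\sigma)^{-1}=2^{\Lambda+\ell(\sigma)}$ now multiplies $2^{-2\mu'\Delta}$ without the aid of the $2^{-\mu'\delta\lambda}$ factor. The key observation is that the constraint $n>n(\sigma)+2\ell(\sigma)$ forces $|n-n(\sigma)|\ge 2$, and since $n\ge K\ge 10$, (\ref{eqn:dnk}) supplies $\Delta\ge n/2-3\ge K/2-3$ together with $\Delta\ge \ell(\sigma)-3$ (using $n(\sigma)\ge 0$ in $n/2>n(\sigma)/2+\ell(\sigma)$). Averaging these two lower bounds and writing
\[
2^{-2\mu'\Delta}\cdot 2^{\Lambda+\ell(\sigma)}\le 2^{-\mu'\Delta}\cdot 2^{-\mu'K/4+3\mu'+\Lambda}\cdot 2^{-(\mu'/2-1)\ell(\sigma)},
\]
the last factor ensures summability in $\ell(\sigma)\ge 1$ (once $\mu'$ is taken $\ge 4$), (\ref{eqn:chisum2})/(\ref{eqn:chisum}) absorbs $2^{-\mu'\Delta}$, and the surviving prefactor $2^{-\mu'K/4+3\mu'+\Lambda}$ is made much smaller than $1/\sqrt{\delta\lambda}$ by enlarging $K$; this is legitimate since, per the setting-of-constants convention of Subsection \ref{ssec:remarks}, the threshold $K$ in the definition of $\cR(0)$ is still free to be chosen at this point in the argument, depending on $\lambda$ and $\Lambda$.
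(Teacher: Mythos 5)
Your proof is correct and follows essentially the same route as the paper: the paper first handles $\ell(\sigma)=0$ directly, then for $\ell(\sigma)>0$ splits into the sums $\sum_k^{*}$ and $\sum_k^{**}$, which are precisely your regimes (B) and (C), and it bounds these using (\ref{eqn:chisum}), (\ref{eqn:chisum2}), the hypothesis $\mu'>2\Lambda/(\delta\lambda)$, and a sufficiently large choice of $K$. One small point in your favor: in regime (C) you make explicit how the factor $2^{\ell(\sigma)}$ coming from $1/\widetilde{w}(\sigma)$ is absorbed, by splitting the bound $\Delta\ge n/2-3$ into the two pieces $\Delta\ge K/2-3$ and $\Delta\ge\ell(\sigma)-3$ so that the resulting estimate is manifestly uniform in $\ell(\sigma)$ and summable over $\ell(\sigma)\ge 1$; the paper's displayed conclusion $\sum_k K_{\sigma,k}\le C_*2^{\Lambda-\mu'\delta\lambda}$ in the $\ell(\sigma)>0$ case leaves this step implicit (its intermediate bound $C_*2^{\Lambda+\ell(\sigma)-(K/3+3)\mu'}$ is not literally uniform in $\ell(\sigma)$ without invoking $\Delta\ge n/2-3>\ell(\sigma)-3$ as you do). Your appeal to the freedom in choosing $K$ is exactly the convention laid out in Subsection \ref{ssec:remarks}.
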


Consider a pair $(\sigma,\gamma)\in \Sigma\times \Gamma$ such that $n(\gamma)=n$ and $|m(\gamma)|\le \delta \lambda$. We regard the operator $\cP_{\sigma\gamma}$  as an integral operator
\[
\cP_{\sigma\gamma} u(x')= (2\pi)^{-2(2d+1)}\int \kappa_{\sigma\gamma}(x',x) u(x) dx
\]
with the kernel
\begin{equation}\label{eqn:kgs}
\kappa_{\sigma\gamma}(x',x)=
\int  e^{i\langle \xi, x'-y\rangle+i\langle \eta, H_0^{-1}(y)-x\rangle} \rho_{\gamma}(y)
\psi_{\gamma}(\xi) \widetilde{\Psi}_{\sigma}(\eta) dy d\xi d\eta.
\end{equation}
In order to apply the formula (\ref{eqn:intbypart}) of integration by parts to this kernel, we prepare two estimates. The first  is that 
\begin{align*}
d\left(\pi^*_0((DH_0)_{y}^*(\supp\widetilde{\Psi}_{\sigma})),
\pi^*_0( \supp\psi_{\gamma})\right)&\ge d\left(\supp \widetilde{\chi}_{n(\sigma), k(\sigma)}, \supp \chi_{n(\gamma), k(\gamma)}\right)\\
&\ge 2^{\Delta(n(\sigma),k(\sigma), n(\gamma),k(\gamma))+n(\gamma)/2}
\end{align*}
for all $y\in E$ 
when $\Delta(n(\sigma),k(\sigma), n(\gamma),k(\gamma))> 0$. 
This follows immediately from the definitions. The second  is that 
\begin{equation}\label{eqn:sepsupp}
d\left((DH_0^{-1})_{y}^*(\supp\widetilde{\Psi}_{\sigma}), \supp\psi_{\gamma}\right)\ge 2^{n(\sigma)/2+\delta\lambda+\ell(\sigma)}
\end{equation}
for all $y\in \supp \rho_\gamma$ 
if $\ell(\sigma)>0$  and 
$
n(\gamma)/2\le n(\sigma)/2+\ell(\sigma)$. This can be checked as follows. By the assumption $\ell(\sigma)>0$, the  support of $\widetilde{\Psi}_{\sigma}$ is contained in the region
\[
\{ \xi=(\xi_0, \xi^+, \xi^-)\mid |\xi_0|\le 2^{n(\sigma)+1}, \|\xi_-\| \ge 2^{n(\sigma)/2+\ell(\sigma)+2\delta\lambda-1}\}.
\]
Then, from the definition of $H_0$,  $(DH_0^{-1})_{y}^*(\supp\widetilde{\Psi}_{\sigma})$ should be contained in
\[
\{ \xi=(\xi_0, \xi^+, \xi^-)\mid |\xi_0|\le 2^{n(\sigma)+1}, \|\pi^*_-(\xi-\xi_0\cdot \alpha(y))\| \ge 2^{n(\sigma)/2+\ell(\sigma)+2\delta\lambda-1}\}.
\]
On the other hand, $\supp\psi_{\gamma}$ is contained in 
\[
\{ \xi=(\xi_0, \xi^+, \xi^-)\mid |\xi_0|\le 2^{n(\gamma)+2}, \|\pi^*_-(\xi-\xi_0\cdot \alpha(z(\gamma)))\| \le 2^{n(\gamma)/2+|m(\gamma)|+2}\}
\]
where $|m(\gamma)|\le \delta \lambda$. Since we assume $y\in \supp \rho_\gamma$, we have
\[
\|\alpha(y)-\alpha(z(\gamma))\|=\|y-z(\gamma)\|\le \sqrt{2d+1}\cdot 2^{-n(\gamma)+1}
\]
Therefore (\ref{eqn:sepsupp}) follows from the assumption $
n(\gamma)/2\le n(\sigma)/2+\ell(\sigma)$, provided that $\delta\lambda$ is sufficiently large and, if fact, the choice that we made in Subsection \ref{ssec:remarks} is quite enough. 

From the two estimates prepared above, we  obtain the following estimateson the 
kernel $\kappa_{\sigma\gamma}(x',x)$ of the operator $\cP_{\sigma\gamma}$.
\begin{lemma}\label{lm:Psg}
There exists a constant $C_*>0$  such that 
\begin{align*}
|\kappa_{\sigma\gamma}(x',x)|\le  C_*  \cdot 
K_{\sigma,  k(\gamma)}\cdot \widetilde{w}(\sigma) 
\int_{\Ze(\gamma)} b^\mu_{\gamma}(x'-y) b^\mu_{\sigma}(H_0(y)-x) dy
\end{align*}
for $(\sigma, \gamma)\in \Sigma\times \Gamma$ such that $|m(\gamma)|\le \delta\lambda$.
\end{lemma}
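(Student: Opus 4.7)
The plan is to represent $\kappa_{\sigma\gamma}(x',x)$ as an oscillatory integral in $(y,\xi,\eta)$ with phase $f(y;\xi,\eta) = \langle\xi,x'-y\rangle + \langle\eta, H_0^{-1}(y)-x\rangle$ and amplitude $\rho_\gamma(y)\psi_\gamma(\xi)\widetilde\Psi_\sigma(\eta)$, then iterate the integration-by-parts formula~(\ref{eqn:intbypart}) in the $y$-variable, exploiting the two separation estimates stated just above the lemma. The $\xi$- and $\eta$-integrations will be controlled by the standard localization bounds from Corollary~\ref{cor:localized} and (\ref{eqn:Fp}) together with the volume bounds on the supports of $\psi_\gamma$ and $\widetilde\Psi_\sigma$, producing the spatial factor $\int_{\Ze(\gamma)} b^\mu_\gamma(x'-y)\,b^\mu_\sigma(H_0(y)-x)\,dy$.

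A direct computation shows $v_0(f) = \eta_0 - \xi_0$, which is independent of $y$, so $v_0^k(f)\equiv 0$ for $k\ge 2$. Iterating integration by parts along $v_0$ alone therefore produces only $y$-derivatives of $\rho_\gamma$, each bounded by $C_*\cdot 2^{n(\gamma)/2}$, divided by $(\eta_0-\xi_0)^N$. Set $\Delta := \Delta(n(\sigma),k(\sigma),n(\gamma),k(\gamma))$. When $\Delta > 0$, the first separation estimate gives $|\eta_0-\xi_0|\ge 2^{n(\gamma)/2+\Delta}$, so each iteration contributes a clean factor $2^{-\Delta}$. In the ``otherwise'' case of the definition of $K_{\sigma,k}$ (i.e.\ $\ell(\sigma)=0$, or $\ell(\sigma)>0$ with $n(\gamma)/2>n(\sigma)/2+\ell(\sigma)$), performing $2\mu'$ iterations along $v_0$ alone yields the required factor $2^{-2\mu'\Delta}$ (trivially when $\Delta=0$).

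In the regime $\ell(\sigma)>0$ and $n(\gamma)/2\le n(\sigma)/2+\ell(\sigma)$, I would combine $\mu'$ iterations of the $v_0$-scheme above (yielding $2^{-\mu'\Delta}$) with enough iterations using the full orthonormal basis $\{v_j\}_{j=0}^{2d}$ to produce the additional factor $2^{-\mu'(\delta\lambda+\ell(\sigma))}$. The second separation estimate (\ref{eqn:sepsupp}) gives $\sum_j v_j(f)^2 = \|(DH_0^{-1})_y^*\eta-\xi\|^2\ge 2^{2(n(\sigma)/2+\delta\lambda+\ell(\sigma))}$ on $\supp\rho_\gamma$, while $|v_jv_k(f)|\le |\eta_0|\le 2^{n(\sigma)+1}$ (since $H_0^{-1}$ has only the mixed $y^+_iy^-_i$ second derivative nonvanishing). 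The full-basis scheme thus contributes $2^{-\delta\lambda}$ per iteration from the ratio $\|\nabla_y\rho_\gamma\|/\|v(f)\|$ under the assumption $n(\gamma)/2\le n(\sigma)/2+\ell(\sigma)$, with an extra $2^{-\mu'\ell(\sigma)}$ extracted by observing that on the annular support of $\widetilde\Psi_\sigma$ in $\eta^-$ (at scale $2^{n(\sigma)/2+2\delta\lambda+\ell(\sigma)}$) the lower bound on $\|v(f)\|$ grows with $\|\eta^-\|$ and the $d\eta^-$-volume is controlled correspondingly. Multiplying the two schemes and using that $K_{\sigma,k(\gamma)}\cdot\widetilde w(\sigma) = 2^{-\mu'(\Delta+\delta\lambda+\ell(\sigma))}$ in this regime completes the argument.

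The main technical subtlety is the unboundedness of $\eta^+$ on $\supp\widetilde\Psi_\sigma$: since $\widetilde\Psi_\sigma$ does not constrain $\eta^+$ at all, the $d\eta$-integration in (\ref{eqn:kgs}) must be interpreted distributionally along the $x^+$-direction, which is precisely what produces the Dirac factor in the definition (\ref{eqn:bsig}) of $b^\mu_\sigma$. The full-basis integration-by-parts scheme handles this convergence issue automatically, because $\|v(f)\|$ grows linearly in $\|\eta^+\|$ through the $+$-components of $(DH_0^{-1})_y^*\eta-\xi$, so sufficiently many iterations render the $\eta^+$-integral absolutely convergent, while the remaining $\eta_0$- and $\eta^-$-integrations are estimated by the support volumes of $\widetilde\Psi_\sigma$ and Corollary~\ref{cor:localized}. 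Bookkeeping the decay against the definition of $K_{\sigma,k}$ then yields the stated bound.
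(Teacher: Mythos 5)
Your proposal follows essentially the same route as the paper's own (sketched) proof: you cast $\kappa_{\sigma\gamma}$ as an oscillatory integral in $(y,\xi,\eta)$, observe that $v_0(f)=\eta_0-\xi_0$ is constant in $y$ so the $v_0$-scheme produces a clean $2^{-\Delta}$ per iteration from the first separation estimate, then switch to the full orthonormal basis and use (\ref{eqn:sepsupp}) in the regime $\ell(\sigma)>0$, $n(\gamma)/2\le n(\sigma)/2+\ell(\sigma)$; the $\xi$- and $\eta$-integrals are then absorbed into the $b^\mu_\gamma$ and $b^\mu_\sigma$ factors exactly as in the proof of Lemma~\ref{lm:kest1}/Lemma~\ref{lm:kest2}. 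Two small points where you diverge from the text are worth noting. First, in the special regime you iterate $v_0$ only $\mu'$ times and then ask for ``enough'' full-basis iterations; the paper iterates $v_0$ for $2\mu'$ times in all cases with $\Delta>0$ and then does exactly $\mu'$ full-basis iterations, and that precise bookkeeping matters: when $n(\gamma)\gg n(\sigma)$ the per-iteration gain of the full-basis scheme is governed by $\max\{n(\sigma)/2+2\delta\lambda+\ell(\sigma)-n(\gamma)/2,\,\Delta\}$, which is not individually $\ge\delta\lambda+\ell(\sigma)$, so one really needs the extra factor $2^{-\mu'\Delta}$ from the first stage (combined with $\Delta\ge n(\gamma)/2-3$ via (\ref{eqn:dnk})) to close the estimate. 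Your ``enough iterations'' wording is therefore not wrong, but it papers over the point that the number of iterations is finite and that the $v_0$-stage contributes crucially to the $2^{-\mu'\ell(\sigma)}$ piece; spelling out a fixed iteration count such as $2\mu'+\mu'$ (or $\mu'+2\mu'$) is cleaner. Second, your final paragraph about $\eta^+$-convergence is not how the paper handles that direction: since $\widetilde\Psi_\sigma$ is independent of $\eta^+$ and the full-basis step is skipped when $\ell(\sigma)=0$ or $n/2>n(\sigma)/2+\ell(\sigma)$, the $\eta^+$-integral is genuinely a Dirac $\delta$ there, and the paper systematically absorbs it through (\ref{eqn:Fp})--(\ref{eqn:bsig}) rather than by regaining absolute convergence via integration by parts. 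Your observation that iterating in $y$ makes the $\eta^+$-integral absolutely convergent is true in the regimes where the full-basis step is performed, but it is an extra remark, not the mechanism the bound actually rests on.
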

\begin{proof} Since the poof is almost parallel to the former part of the proof of 
Lemma \ref{lm:kest2}, we only outline the argument. We take an orthonormal basis $\{v_j\}_{j=0}^{2d}$ of $E$ including $v_0=\partial/\partial x_0$. First, to the integral with respect to $y$ in (\ref{eqn:kgs}),
we apply the formula (\ref{eqn:intbypart}) of integration by parts  
along the single vector $v_0$ for $2\mu'$ times  if $\Delta(n(\sigma),k(\sigma), n(\gamma),k(\gamma))> 0$, and do nothing otherwise. 
Second, to the result of the previous step,  we  apply the formula (\ref{eqn:intbypart}) of integration by parts  
along the set of vectors $\{v_j\}_{j=0}^{2d}$ for $\mu'$~times if  $\ell(\sigma)>0$  and  $n(\gamma)/2\le n(\sigma)/2+\ell(\sigma)$ (and again do nothing otherwise). For these two steps, we can  proceed just as in the former part of  proof of Lemma \ref{lm:kest2} with obvious changes. 
In the places where we used the estimates on the derivatives of $\widetilde{\psi}_\gamma$, we use the estimate that
\[
\|D_\xi^\alpha \widetilde{\Psi}_{\sigma}\|_{L^\infty}\le C_*(\alpha) \cdot 
2^{-|\alpha|n(\sigma)/2-|\alpha|_\dag (2\delta\lambda+\ell(\sigma)) }
\]
for $\alpha=(\alpha_0, \alpha_1^-, \cdots, \alpha_d^-)\in (\integer_+)^{d+1}$, where $|\alpha|_\dag=|\alpha|-\alpha_0$. (Notice that the function $\widetilde{\Psi}_{\sigma}(\xi)$ does not depend on the variable $\xi^+$ in $\xi=(\xi_0, \xi^+, \xi^-)$.) Also we use 
(\ref{eqn:sepsupp}) 
in the place where we used (\ref{eqn:cldiv}). Then, as the result, we obtain the claim of the lemma. 
\end{proof}

Once we have Lemma \ref{lm:Psg}, we can proceed as in the proof of Proposition~\ref{pp:m2}.   By Young inequality,  we get
\[
\left(\|\cP^{(n)}(\mathbf{v})\|_{\beta,\nu'}^{(\lambda)}\right)^2\le 
\sum_{k:(n,k)\in \cN}
\sum_{m: |m|\le \delta \lambda}\;\;\left\| \sum_{\sigma\in \Sigma} K_{\sigma,k}\cdot  \widetilde{w}(\sigma)\cdot |b_\sigma^\mu *v_{\sigma}| 
\right\|_{L^2}^2
\]
Then, by Schwarz inequality, Sublemma \ref{subl:4} and Remark \ref{rem:card}, we obtain
\[
\left(\|\cP^{(n)}(\mathbf{v})\|_{\beta,\nu'}^{(\lambda)}\right)^2 \le 
C_* \sum_{\sigma\in \Sigma}
  \widetilde{w}(\sigma)^2\cdot
 \left\|v_{\sigma}\right\|_{L^2}^2= C_* \|\mathbf{v}\|_{\tB}^2
\]
for $\mathbf{v}=(v_\sigma)_{\sigma\in \Sigma}\in \tB$. This is nothing but the claim of Proposition \ref{pp:cP}.
Now we finish the proof by proving Sublemma \ref{subl:4}.
\begin{proof}[Proof of Sublemma \ref{subl:4}]
We prove the former inequality. 
The latter can be proved similarly. If $\ell(\sigma)=0$, the sum $\sum_{k:(n,k)\in \cN}  K_{\sigma,k}$ is bounded by
\begin{align*}
C_* (\delta\lambda)^{-1/2} \sum_{k:(n,k)\in \cN} 2^{-2\mu'\cdot  \Delta(n(\sigma), k(\sigma), n,k)}\le C_* (\delta\lambda)^{-1/2}
\end{align*}
from  (\ref{eqn:chisum}).
If $\ell(\sigma)>0$, the sum  $\sum_{k:(n,k)\in \cN}  K_{\sigma,k}$ is bounded by
\[
C_*  2^{\Lambda+\ell(\sigma)} 
\left(\sum_{k}^{*} 2^{-\mu'\cdot  (\Delta(n(\sigma), k(\sigma), n,k)+\delta \lambda+\ell(\sigma))}+ \sum_{k}^{**} 2^{-2\mu'\cdot  \Delta(n(\sigma), k(\sigma), n,k)}\right)
\]
where $\sum_{k}^{*}$ (resp. $\sum_{k}^{**}$) denotes the sum over $k\in \integer$ such that $(n,k)\in \cN$ and 
$n\le n(\sigma)/2+\ell(\sigma)$ (resp. $n>n(\sigma)/2+\ell(\sigma)$).
For the first sum, we have
\[
\sum_{k}^{*} 2^{-\mu'\cdot  (\Delta(n(\sigma), k(\sigma), n,k)+\delta \lambda+\ell(\sigma))}<C_* 2^{-\mu' \delta \lambda-\mu'\ell(\sigma) }
\]
from (\ref{eqn:chisum}).  The conditions $\ell(\sigma)>0$ and $n/2>n(\sigma)/2+\ell(\sigma)$  imply  that 
$|n-n(\sigma)|>2$ and hence, by (\ref{eqn:dnk}),  that 
\[
\Delta(n(\sigma), k(\sigma), n,k)\ge n/2-3\ge K/2-3.
\]
Hence, 
for the second sum, we have
\begin{align*}
\sum_{k}^{**} 2^{-2\mu'  \Delta(n(\sigma), k(\sigma), n,k)}&\le 2^{-(K/3+3)\mu'} \sum_{k}^{**} 2^{-\mu'  \Delta(n(\sigma), k(\sigma), n,k)}\le C_* 2^{-(K/3+3)\mu'}
\end{align*}
where we used (\ref{eqn:chisum}) in the second inequality. 
Therefore, if we take sufficiently large $K$, we have 
\[
\sum_{k:(n,k)\in \cN}  K_{\sigma,k}\le C_* 2^{\Lambda -\mu'\cdot \delta \lambda}.
\]
Since we have $\mu'\cdot \delta \lambda>2\Lambda>\Lambda+\lambda$ from the choice of $\mu'$, this implies the former inequality in the case $\ell(\sigma)>0$. 
\end{proof}

\subsection{The operator $\cQ$} 
In the remaining part of this section, we consider the operator $\cQ=\cQ^{(n)}$ and prove Proposition \ref{pp:cQ}. 
Consider $(\gamma,\sigma)\in \Gamma\times \Sigma$ such that $n(\gamma)=n$ and $|m(\gamma)|\le \delta \lambda$. 
We regard the operator $\cQ_{\gamma\sigma}$ as an integral operator
\[
\cQ_{\gamma\sigma} u(x')= (2\pi)^{-2(2d+1)}\int \kappa_{\gamma\sigma}(x',x) u(x) dx
\]
with the kernel
\[
\kappa_{\gamma\sigma}(x',x)=\int  e^{i\langle \xi, x'-y\rangle+i\langle \eta, \hG(y)-x\rangle} \hat{g}(y) 
{\Psi}_{\sigma}(\xi) \widetilde{\psi}_{\gamma}(\eta)  dy d\xi d\eta.
\]
We can show the following estimate in the same way as Lemma \ref{lm:kest0} and \ref{lm:kest1}. 
\begin{lemma}\label{lm:kest3}
For $\mu\ge 2d+2$,
there exists a constant $C_*>0$ such that 
\[
|\kappa_{\gamma\sigma}(x',x)|\le 
C_*\cdot  \|g\|_{L^\infty} \int b^\mu_{\sigma}(x'-y)\cdot  b^\mu_{\gamma}(\hG(y)-x) dy
\]
for $x,x'\in E$ and $(\gamma,\sigma)\in \Gamma\times \Sigma$ and, further, that 
\begin{align*}
&|\kappa_{\gamma\sigma}(x',x)| \\
&\le C_* \|g\|_{*}\cdot 2^{-r_* (\Delta(n(\gamma), k(\gamma), n(\sigma), k(\sigma))+n(\sigma)/2)}   \cdot  \int b^\mu_{\sigma}(x'-y) b^\mu_{\gamma}(\hG(y)-x) dy
\end{align*}
whenever $\Delta(n(\gamma), k(\gamma), n(\sigma), k(\sigma))>0$.

\end{lemma}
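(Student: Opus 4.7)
The starting point is the integral representation
\[
\kappa_{\gamma\sigma}(x',x)=\int e^{if(y;x,x';\xi,\eta)}\,\hat g(y)\,\Psi_\sigma(\xi)\,\widetilde\psi_\gamma(\eta)\,dy\,d\xi\,d\eta,
\]
with phase $f:=\langle\xi,x'-y\rangle+\langle\eta,\hG(y)-x\rangle$, obtained by inserting the definitions of $\Psi_\sigma(D)$, $Q$ and $\widetilde\psi_\gamma(D)$ into $\cQ_{\gamma\sigma}=\Psi_\sigma(D)\circ Q\circ\widetilde\psi_\gamma(D)$. The two estimates then follow exactly as in Lemmas~\ref{lm:kest0} and \ref{lm:kest1}, with only minor modifications that I now outline.

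For the first estimate, performing the $\xi$- and $\eta$-integrations before the $y$-integration reshapes $\kappa_{\gamma\sigma}$ into
\[
(2\pi)^{2(2d+1)}\int (\Fourier^{-1}\Psi_\sigma)(x'-y)\,\hat g(y)\,(\Fourier^{-1}\widetilde\psi_\gamma)(\hG(y)-x)\,dy.
\]
The bound $|\Fourier^{-1}\Psi_\sigma|\le C_*\, b^\mu_\sigma$ is precisely (\ref{eqn:Fp}), and $|\Fourier^{-1}\widetilde\psi_\gamma|\le C_*\, b^\mu_\gamma$ is Corollary~\ref{cor:localized}. Combined with $|\hat g(y)|\le\|g\|_{L^\infty}$, this yields the claim.

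For the sharpened estimate when $\Delta>0$, the plan is to apply the integration-by-parts formula (\ref{eqn:intbypart}) to the $y$-integral along the single vector field $v_0=\partial/\partial x_0$ for $r_*$ iterations. The crucial observation is that $\hG=G\circ H_0$ preserves $v_0$: $G$ does by condition~(H1), and $H_0$ does by direct inspection of its formula. Consequently $v_0(f)=\pi_0^*(\eta-\xi)$ is constant in $y$ and $v_0^k(f)\equiv 0$ for $k\ge 2$; moreover, since $H_0$ commutes with $v_0$, one has $v_0^k\hat g=(v_0^k g)\circ H_0$, so $\|v_0^k\hat g\|_{L^\infty}\le\|g\|_*$ for $k\le r_*$. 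Iterating the formula therefore simply replaces the integrand by
\[
\frac{(-i)^{r_*}\,v_0^{r_*}\hat g(y)\,\Psi_\sigma(\xi)\,\widetilde\psi_\gamma(\eta)}{(\pi_0^*(\eta-\xi))^{r_*}}.
\]
On the supports of $\Psi_\sigma$ and $\widetilde\psi_\gamma$ one has $\pi_0^*(\xi)\in\supp\chi_{n(\sigma),k(\sigma)}$ and $\pi_0^*(\eta)\in\supp\widetilde\chi_{n(\gamma),k(\gamma)}$ — the latter because $\Phi_\gamma$ fixes the $\xi_0$-coordinate — so the hypothesis $\Delta(n(\gamma),k(\gamma),n(\sigma),k(\sigma))>0$ gives the lower bound $|\pi_0^*(\eta-\xi)|\ge 2^{n(\sigma)/2+\Delta(\cdots)}$. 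Combining this with $\|v_0^{r_*}\hat g\|_{L^\infty}\le\|g\|_*$ and then bounding the $\xi$- and $\eta$-integrations by (\ref{eqn:Fp}) and Corollary~\ref{cor:localized} exactly as in the last part of the proof of Lemma~\ref{lm:kest1} produces the desired prefactor $2^{-r_*(n(\sigma)/2+\Delta)}\|g\|_*$.

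There is no serious obstacle here. The only feature worth flagging is that, because $\cQ_{\gamma\sigma}$ contains no spatial cut-off in $y$ analogous to the factor $\rho_{\gamma'}$ present in Lemma~\ref{lm:kest1}, the $r_*$ derivatives falling onto $\hat g$ produce no compensating factor $2^{r_* n(\sigma)/2}$; this is precisely why the exponential gain here is $2^{-r_*(n(\sigma)/2+\Delta)}$ rather than the weaker $2^{-r_*\Delta}$ of Lemma~\ref{lm:kest1}. The only point requiring verification is that $\hG$ preserves $v_0$, which is indeed the whole reason for inserting $H_0$ and passing through the intermediate operator $Q$.
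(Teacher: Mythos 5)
Your proposal is correct and follows essentially the same route the paper takes, namely repeating the arguments of Lemmas \ref{lm:kest0} and \ref{lm:kest1} after inserting $\hat g=g\circ H_0$ in place of $\rho_{\gamma'}g$. You also correctly isolate and verify the two points the paper's terse proof and accompanying remark depend on: that $\hG=G\circ H_0$ preserves $v_0$ so $v_0(f)=\pi_0^*(\eta-\xi)$ is $y$-independent and $\|v_0^{r_*}\hat g\|_{L^\infty}\le\|g\|_*$, and that the absence of a $\rho_{\gamma'}$ cutoff (whose derivatives would contribute $2^{r_*n(\gamma')/2}$) is exactly what produces the extra $n(\sigma)/2$ in the exponent.
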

\begin{remark}
Notice that we have the additional term $n(\sigma)/2$ in the second claim above compared with Lemma \ref{lm:kest1}. This is because there is no longer the term $\rho_{\gamma'}$ which produced the factor $2^{n(\gamma')/2}$ for each differentiation.  
\end{remark}

Let $S=S(n)$ be the set of pairs $(\gamma,\sigma)\in \Gamma\times \Sigma$ such that 
\[
n(\gamma)=n,\quad |m(\gamma)|\le \delta \lambda, \quad 
\ell(\sigma)=0 \quad\text{and}\quad \Delta(n(\gamma), k(\gamma), n(\sigma), k(\sigma))=0.
\]
We define the operator $\widehat{\cQ}=\widehat{\cQ}^{(n)}:\bB^\beta_\nu\to \tB$ formally by
\[
\widehat{\cQ}(\bu)=\left(\sum_{\gamma:(\gamma,\sigma)\in S} \cQ_{\gamma\sigma}(u_\gamma)\right)_{\sigma\in \Sigma}\quad \text{ for $\bu=(u_\gamma)_{\gamma\in \Gamma}\in \bB^\beta_\nu$.}
\]
This is actually the main part of the operator $\cQ$ and considered  in the following two subsections. 
The next lemma tells that the remainder part $\cQ -\widehat{\cQ}:\bB^\beta_\nu\to \tB$ of $\cQ$, defined by
\[
(\cQ -\widehat{\cQ})(\bu)=\left(\sum_{\gamma:(\gamma,\sigma)\notin S} \cQ_{\gamma\sigma}(u_\gamma)\right)_{\sigma\in \Sigma}\quad \text{ for $\bu=(u_\gamma)_{\gamma\in \Gamma}\in \bB^\beta_\nu$,}
\]
does not do harm. 
\begin{lemma}\label{lm:ccQ}
The formal definition of $(\cQ -\widehat{\cQ})$ above in fact gives a bounded operator $(\cQ -\widehat{\cQ}):\bB^\beta_\nu\to \tB$ for any $\nu\ge 2d+2$. Further, for $\nu\ge 2d+2$, there exists a constant $C_*>0$, which is independent of $n$, such that we have
\[
\|(\cQ -\widehat{\cQ})(\bu)\|_{\tB}\le 
C_* 2^{-\Lambda/2}\|g\|_*\|\bu\|_{\beta,\nu}^{(\lambda)}\quad \text{for $\bu\in \bB^\beta_{\nu}$}
\]
for any $G:V'\to V$ in $\cH(\Lambda,\lambda)$ and  $g\in\cC^r(V')$,  provided that $\Lambda\ge \Lambda_*$, $\lambda\ge \lambda_*$ and $\Lambda\ge d\lambda$.
\end{lemma}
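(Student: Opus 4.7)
The strategy is to apply a Schur-type argument to the matrix of operators $\{\cQ_{\gamma\sigma}\}_{(\gamma,\sigma)\notin S}$, following the template established in Propositions \ref{pp:m2}--\ref{pp:m4} with the kernel estimates of Lemma \ref{lm:kest3} as the analytic input. For a pair $(\gamma,\sigma)$ with $n(\gamma)=n$ and $|m(\gamma)|\le \delta\lambda$ to lie outside $S$, either $\ell(\sigma)\ge 1$ or the frequency separation $\Delta(n(\gamma),k(\gamma),n(\sigma),k(\sigma))\ge 1$ must hold; accordingly I split $\cQ - \widehat{\cQ}$ into two pieces, $\mathcal{A}$ collecting the terms with $\ell(\sigma)\ge 1$ and $\mathcal{B}$ collecting those with $\ell(\sigma)=0$ and $\Delta\ge 1$. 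The first (generic) kernel bound in Lemma \ref{lm:kest3} will be used for $\mathcal{A}$ and the second (strong-decay) bound $\lesssim 2^{-r_*(\Delta+n(\sigma)/2)}$ for $\mathcal{B}$.

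Each piece is handled by the same scheme as in Proposition \ref{pp:m2}. Grouping $\gamma$ by $(n(\gamma),k(\gamma),m(\gamma))=(n,k,m)$, I introduce
\[
v_{n,k,m}(x) = \Bigl(\sum_{\gamma:\, n,k,m} d_\gamma^{2\nu}(x)\,|u_\gamma(x)|^2\Bigr)^{1/2},
\]
so that $\sum_{\gamma:n,k,m}|u_\gamma(x)|\le C_* v_{n,k,m}(x)$ for $\nu\ge 2d+2$. Combining this with Young's inequality (using that $b^\mu_\sigma$ and $b^\mu_{n,m}$ have uniformly bounded $L^1$ mass and that $|\det D\hG|=2^{-d}$ is constant) yields
\[
\Bigl\|\sum_{\gamma: n,k,m}\cQ_{\gamma\sigma}(u_\gamma)\Bigr\|_{L^2}\le C_*\|g\|_*\,\kappa_{\sigma,k,m}\,\|v_{n,k,m}\|_{L^2},
\]
with $\kappa_{\sigma,k,m}=1$ on $\mathcal{A}$ and $\kappa_{\sigma,k,m}=2^{-r_*(\Delta(n,k,n(\sigma),k(\sigma))+n(\sigma)/2)}$ on $\mathcal{B}$. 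One Schwarz inequality in $(k,m)$, followed by swapping the $\sigma$ and $(k,m)$ summations, reduces the problem to bounding the two Schur sums $\sup_\sigma \sum_{k,m}\kappa_{\sigma,k,m}$ and $\sup_{k,m}\sum_\sigma \widetilde{w}(\sigma)^2\kappa_{\sigma,k,m}$.

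The combinatorial estimates \eqref{eqn:chisum}--\eqref{eqn:chisum2} control the $k$-sums, and $m$ ranges over only $O(\delta\lambda)$ values. For $\mathcal{A}$ one obtains $\sup_\sigma \sum_{k,m}\kappa \le C_*\delta\lambda$, while $\sup_{k,m}\sum_\sigma \widetilde{w}(\sigma)^2\kappa \le C_* 2^{-2\Lambda}$ because $\widetilde{w}(\sigma)^2 = 2^{-2\Lambda-2\ell(\sigma)}$ is geometric in $\ell(\sigma)\ge 1$; the resulting bound $C_*\|g\|_*\sqrt{\delta\lambda}\cdot 2^{-\Lambda}$ is absorbed into $C_*\|g\|_*\,2^{-\Lambda/2}$ under the assumption $\Lambda\ge \Lambda_*$. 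For $\mathcal{B}$ the extra factor $2^{-r_* n(\sigma)/2}\le 2^{-r_* K/2}$ in $\kappa$ is vastly smaller than $2^{-\Lambda/2}$ once $K$ is chosen sufficiently large. I expect the main obstacle to be the sub-regime of $\mathcal{A}$ in which $\Delta=0$: no decay is then available from the kernel side, so one must confirm by hand, using Remark \ref{rem:card}, that for each fixed $(k,m)$ only boundedly many $(n(\sigma),k(\sigma))$ satisfy $\Delta=0$, so that the weight $\widetilde{w}(\sigma)^2$ alone must deliver all the required smallness after summing the geometric series in $\ell(\sigma)$.
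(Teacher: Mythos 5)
Your strategy --- a Schur-type estimate with Lemma~\ref{lm:kest3} supplying the kernel bounds --- is the same as the paper's, but the decomposition of $\cQ-\widehat{\cQ}$ is organized in a way that makes the Schur sums for $\mathcal{A}$ diverge. You put into $\mathcal{A}$ every pair with $\ell(\sigma)\ge 1$, including those with $\Delta(n,k,n(\sigma),k(\sigma))\ge 1$, and propose the $\Delta$-independent (``generic'') kernel bound there, i.e.\ $\kappa_{\sigma,k,m}\equiv 1$ on $\mathcal{A}$. Neither of your two Schur sums is then finite: for fixed $\sigma$, the number of $k$ with $(n,k)\in\cN$ is of order $2^{n/2}$ (and $n\ge K$ is large; the constant must be uniform in $n$), so $\sup_\sigma\sum_{k,m}\kappa$ is of order $\delta\lambda\cdot 2^{n/2}$, not $C_*\delta\lambda$; and for fixed $(k,m)$, the sum $\sum_\sigma\widetilde{w}(\sigma)^2\kappa$ runs over \emph{all} $(n(\sigma),k(\sigma))\in\cN$ with $\ell(\sigma)\ge 1$, and while $\widetilde{w}(\sigma)^2=2^{-2\Lambda-2\ell(\sigma)}$ is geometric in $\ell(\sigma)$, it gives no decay in $(n(\sigma),k(\sigma))$, so the sum is infinite. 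The obstacle you flag in the last paragraph --- the $\Delta=0$ sub-regime, handled by Remark~\ref{rem:card} --- is in fact the harmless part; the divergence comes from $\Delta\ge 1$ inside $\mathcal{A}$.

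The correct partition, which is what the paper's definition of $K_{k,\sigma}$ encodes, is to use the $\Delta$-decaying bound of Lemma~\ref{lm:kest3} whenever $\Delta\ge 1$, \emph{regardless of $\ell(\sigma)$}, and reserve the generic bound only for pairs with $\Delta=0$ and $\ell(\sigma)\ge 1$. In the first regime, (\ref{eqn:dnk}) gives $\Delta+n(\sigma)/2\ge\max\{n,n(\sigma)\}/2-3\ge K/2-3$, so $2^{-r_*(\Delta+n(\sigma)/2)}$ is exponentially small in $K$ and the sums over $k$ and over $(n(\sigma),k(\sigma))$ are controlled by (\ref{eqn:chisum})--(\ref{eqn:chisum2}). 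In the second regime Remark~\ref{rem:card} caps the number of $(n(\sigma),k(\sigma))$ at a constant, so the geometric sum of $\widetilde{w}(\sigma)$ over $\ell(\sigma)\ge 1$ delivers the factor $2^{-\Lambda}$. After this re-partition your Schur argument goes through, and your slightly sharper bookkeeping of the $\widetilde{w}(\sigma)$-weight (a $\sqrt{\delta\lambda}$ loss rather than the paper's $\delta\lambda$) is also acceptable, since either is absorbed into $C_*2^{-\Lambda/2}$ by $\Lambda\ge d\lambda$.
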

\begin{proof} 
For $\sigma\in \Sigma$ and $k\in\integer$ such that $(n,k)\in \cN$, we set
\[
K_{k,\sigma}=
\begin{cases}
2^{-r_*(\Delta(n, k, n(\sigma), k(\sigma))+ n(\sigma)/2)} 
\|g\|_*  \widetilde{w}(\sigma),
& \!\text{if $\Delta(n, k, n(\sigma), k(\sigma))>0$;}\\
\|g\|_*\cdot \widetilde{w}(\sigma),
&\!\mbox{if}\left[\parbox{38mm}{$\Delta(n, k, n(\sigma), k(\sigma))=0$ and  $\ell(\sigma)>0$}\right];\\
0,& \text{otherwise.}
\end{cases}
\]
Then we have
\begin{sublemma} There exists a constant $C_*>0$ such that 
\[
\sup_{\sigma\in \Sigma}
\left( \sum_{k:(n,k)\in \cN} K_{k,\sigma} 
\right)\le C_*2^{-\Lambda} \|g\|_*,\quad
\sup_{k:(n,k)\in \cN}
\left( \sum_{\sigma} K_{k,\sigma}
\right)\le C_* 2^{-\Lambda}\|g\|_*.
\]
\end{sublemma}
\begin{proof} 
Note that we are suming $n\ge K$. 
From (\ref{eqn:dnk}), we always have
\[
\Delta(n,k, n(\sigma),k(\sigma))+n(\sigma)/2\ge \max\{ n, n(\sigma)\}/2-3.
\]
If $\Delta(n, k, n(\sigma), k(\sigma))>0$,
this implies
\[
K_{k,\sigma}\le 2^{-(K/2-3)} \cdot 2^{-(r_*-1)(\Delta(n, k, n(\sigma), k(\sigma))+ n(\sigma)/2)} 
\|g\|_*  \widetilde{w}(\sigma).
\]
Therefore, using (\ref{eqn:chisum}), we see that the sums in the claim above restricted to the case $\Delta(n, k, n(\sigma), k(\sigma))>0$ can be bounded by an arbitrarily small constant, if we take sufficiently large $K$. 
We can estimate the sums restricted to the case $\Delta(n, k, n(\sigma), k(\sigma))=0$ by using the definition of $\widetilde{w}(\sigma)$ and recalling Remark \ref{rem:card},   to obtain the claim of the sublemma. 
\end{proof}
By Lemma \ref{lm:kest3} and Young inequality, we have
\begin{align*}
\|(\cQ -\widehat{\cQ})(\bu)\|_{\tB}^2&\le C_*
 \sum_{\sigma} 
\left\|\sum_{m:|m|\le \delta \lambda}\sum_{k:(n,k)\in \cN} K_{k,\sigma} \cdot 
b^{\mu}_{n,m}*\left(\sum_{\gamma:n,k,m}|u_{\gamma}|\right)
\right\|^2_{L^2}
\end{align*}
for $\bu=(u_\gamma)_{\gamma\in \Gamma}\in \bB^\beta_\nu$, where $\sum_{\gamma:n,k,m}$ denotes the sum over $\gamma\in \Gamma$ such that $n(\gamma)=n$, $m(\gamma)=m$ and $k(\gamma)=k$. Hence, by Schwarz inequality, (\ref{eqn:sch}) and the inequalities above on the sums of $K_{k,\sigma}$, we obtain that 
\begin{align*}
\|(\cQ &-\widehat{\cQ})(\bu)\|_{\tB}^2\\
&\le C_*\delta \lambda\cdot 2^{-\Lambda} \|g\|_* \cdot \sum_{\sigma} \sum_{m:|m|\le\delta \lambda}\;\sum_{k:(n,k)\in \cN} K_{k,\sigma}\sum_{\gamma:n,k,m}\|d_\gamma^\nu u_{\gamma}\|_{L^2}^2\\
&\le C_*(\delta \lambda)^2 \cdot 2^{-2\Lambda} \|g\|_*^2\cdot 
\left(\|\bu\|_{\beta,\nu}^{(\lambda)}\right)^2.
\end{align*}
From the assumption $\Lambda\ge d\lambda$, this implies the conclusion of the lemma. 
\end{proof}

\subsection{The operator $\widehat{\cQ}$} 
In this subsection and the next, we consider the operator $\widehat{\cQ}=\widehat{\cQ}^{(n)}:\bB^\beta_\nu\to \tB$.
Using Lemma \ref{lm:kest3}, it is easy to check that the formal definition of $\widehat{\cQ}$ gives a bounded operator $\widehat{\cQ}:\bB^\beta_\nu\to \tB$ and the operator norm is bounded by $C_*(\delta \lambda)\|g\|_*$. 
This and Lemma \ref{lm:ccQ} imply the former statement of Proposition \ref{pp:cQ} on boundedness of $\cQ$.
To prove the latter statement, we  need more precise estimates. We begin with 
\begin{lemma}\label{lm:keypre}
If $(\gamma,\sigma)\in S$ and if $u\in L^2(E)$ satisfies $\widetilde{\psi}_\gamma(D) u=u$ and $\|d_\gamma^{\nu_*}u\|_{L^2}<\infty$,
we have $
\|\cQ_{\gamma\sigma}(u)\|_{L^2}\le  C_* 2^{-\Lambda/2+d\delta \lambda}\|g\|_{L^\infty} \|d_\gamma^{\nu_{*}} u\|_{L^2}$.
\end{lemma}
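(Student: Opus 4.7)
The claimed factor $2^{-\Lambda/2+d\delta\lambda}$ decomposes naturally: $2^{-\Lambda/2}$ reflects the $(d+1)$-dimensional unstable-volume expansion of $\hG$ guaranteed by $(H4)$, while $2^{d\delta\lambda}$ absorbs the thickness $\sim 2^{n(\gamma)/2+\delta\lambda}$ of $\supp\widetilde{\psi}_\gamma$ in the transversal directions $\xi^+,\xi^-$. The plan is to argue via Plancherel on the frequency side, exploiting the hyperbolic expansion of $D\hG^*$ in the $\xi^-$-direction to restrict the effective support of $\hat u$.

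Starting from Plancherel and the assumption $\widetilde{\psi}_\gamma(D)u=u$, I would write
\[
\|\cQ_{\gamma\sigma}u\|_{L^2}^{2}=C_{*}\!\int\!|\Psi_\sigma(\xi)|^{2}\!\left|\int\widetilde{\psi}_\gamma(\eta)\,\hat u(\eta)\,A(\xi,\eta)\,d\eta\right|^{2}\!d\xi,\qquad A(\xi,\eta):=\int e^{i(\eta\cdot\hG(y)-\xi\cdot y)}\hat g(y)\,dy,
\]
so that the phase $\phi_y(\xi,\eta)=\eta\cdot\hG(y)-\xi\cdot y$ has critical points $\xi=D\hG^{*}_{y}\eta$. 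Since $(\gamma,\sigma)\in S$, we have $\ell(\sigma)=0$, forcing $|\xi^{-}|\le C_{*}\cdot 2^{n(\sigma)/2+2\delta\lambda}$ on $\supp\Psi_\sigma$, while $|m(\gamma)|\le\delta\lambda$ forces $|\eta^{-}|\le C_{*}\cdot 2^{n/2+\delta\lambda}$ on $\supp\widetilde{\psi}_\gamma$. The hyperbolicity condition $(H3)$ (applied to the dual map) shows that $D\hG^{*}_{y}$ expands $E^{*}_{-}$ by a factor of at least $2^{\lambda}$ modulo the central direction. Hence the critical-point equation can be satisfied only if $\eta$ lies in the thin ``good'' subset $G_{0}:=\{|\eta^{-}|\lesssim 2^{n/2+2\delta\lambda-\lambda}\}$ of $\supp\widetilde{\psi}_\gamma$.

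My next step is to split the inner $\eta$-integral over $G_{0}$ and its complement. On the complement, the phase $\phi_{y}$ is non-stationary in $y^{-}$ with derivative of size $\ge C_{*}\cdot 2^{n/2+2\delta\lambda-\lambda}\cdot\text{(gap factor)}$, so that applying the integration-by-parts formula \eqref{eqn:intbypart} along $\{v_{j}\}_{j=d+1}^{2d}$ for $r_{*}$-many times --- exactly as in the proof of Lemma~\ref{lm:kest2} --- yields a rapidly decaying error. On the good region, I would bound the contribution by Cauchy--Schwarz (Schur test), using the $\eta^{-}$-volume estimate $|G_{0}\cap\supp\widetilde{\psi}_\gamma|_{\eta^{-}}\le C_{*}\cdot 2^{d(n/2+2\delta\lambda-\lambda)}$ --- a factor of $2^{d(\delta\lambda-\lambda)}\le 2^{-\Lambda+d\delta\lambda}$ smaller than the full $\eta^{-}$-volume of $\supp\widetilde{\psi}_\gamma$ (using $\Lambda\ge d\lambda$). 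The spatial localization via the weight $d_\gamma^{\nu_{*}}$ confines the effective $y$-integration in $A(\xi,\eta)$ to a bounded domain and furnishes the weighted norm $\|d_\gamma^{\nu_{*}}u\|_{L^{2}}$ after a standard Schwarz-type argument as in \eqref{eqn:sch}. Combining these pieces yields $\|\cQ_{\gamma\sigma}u\|_{L^{2}}^{2}\le C_{*}\|g\|_{L^{\infty}}^{2}\cdot 2^{-\Lambda+2d\delta\lambda}\|d_\gamma^{\nu_{*}}u\|_{L^{2}}^{2}$, which gives the claim after taking square roots.

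The main technical obstacle is the non-stationary-phase integration-by-parts step on $G_{0}^{c}$: since $D\hG^{*}_{y}$ is not diagonal and depends on the base point $y$, one must ensure uniform control on the phase gradient $\partial_{y^{-}}\phi_{y}(\xi,\eta)=-\xi^{-}+(D\hG^{*}_{y}\eta)^{-}$ as $y$ varies over the $y$-support of $\hat g$. This is precisely where Corollary~\ref{cor:local}, a consequence of the fact that $\hG$ preserves the contact form $\talpha_0$, becomes indispensable: it guarantees that $D\hG^{*}_{y}-D\hG^{*}_{y'}$ is controlled quadratically in $\|y-y'\|$ when paired with the $\xi_0$-direction, so that the gap $(D\hG^{*}_{y}\eta)^{-}-\xi^{-}$ stays of the expected size uniformly in $y$ and the iterated integration by parts produces no harmful loss.
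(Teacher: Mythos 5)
Your approach is genuinely different from the paper's and, as written, it does not deliver the claimed bound. The paper's proof is short and entirely in physical space: one applies the Cauchy--Schwarz inequality inside the convolution $\cQ_{\gamma\sigma}u=\Fourier^{-1}\Psi_\sigma*(\hat g\cdot(u\circ\hG))$, then Young's inequality, reducing everything to the single quantity $\|\,|\Fourier^{-1}\Psi_\sigma|*|d_\gamma^{-2\nu_*}\circ\hG|\,\|_{L^\infty}$. Because $\Psi_\sigma$ is independent of $\xi^+$, the kernel $\Fourier^{-1}\Psi_\sigma$ is a Dirac mass in the $x^+$-variable tensored with a decaying function of $(x_0,x^-)$, so that convolution is a $(d+1)$-dimensional integral along an affine translate of $E_0\oplus E_-\subset\cone_-$; condition~(H4) then gives a Jacobian $\ge 2^\Lambda$ for the change of variables, and the normalization of $b_\sigma^\mu$ contributes the residual $2^{2d\delta\lambda}$. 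The whole argument rests on the dimension-reduction built into the choice of $\Psi_\sigma$ plus the volume expansion~(H4); neither stationary phase nor the contact structure is used.

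Your proposal has three concrete gaps. First and most seriously, the numerology fails: restricting $\eta^-$ to a slab of thickness $2^{n/2+2\delta\lambda-\lambda}$ (via the one-directional expansion $2^\lambda$ from~(H3)) gives a volume ratio of $2^{d(\delta\lambda-\lambda)}=2^{-d\lambda+d\delta\lambda}$, and since $\Lambda\ge d\lambda$ we have $2^{-d\lambda}\ge 2^{-\Lambda}$, so this is a \emph{weaker} bound than the target $2^{-\Lambda+2d\delta\lambda}$ -- the inequality you need goes the wrong way whenever $\Lambda>d\lambda$. To get $2^{-\Lambda}$ one must invoke the determinant bound of~(H4), not the single-direction bound of~(H3), which is exactly what the paper's physical-space integral does. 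Second, the claimed $|\eta^-|\lesssim 2^{n/2+\delta\lambda}$ on $\supp\widetilde\psi_\gamma$ is false: by construction $\widetilde\psi_\gamma=\widetilde\psi_{n,k,m}\circ\Phi_\gamma$, so its support lies near the ray through $\alpha_0(z(\gamma))=(1,z^-,-z^+)$ rather than the $\xi_0$-axis, and for $|\eta_0|\sim 2^n$ the $\eta^-$-component of $\alpha_0(z(\gamma))$ contributes up to $\sim 2^n\|z^+(\gamma)\|\gg 2^{n/2+\delta\lambda}$; you never account for this tilt (this is precisely why the paper first conjugates by $H_0$). Third, you present Corollary~\ref{cor:local} as indispensable here, but the contact structure plays no role in Lemma~\ref{lm:keypre}; it enters only in the finer Lemma~\ref{lm:key} that follows. (A smaller issue: the phase $\eta\cdot\hG(y)-\xi\cdot y$ is only $C^r$ in $y$, so $r_*$-fold integration by parts in the transverse directions is not available -- only $r-1$ folds, as in the proof of Lemma~\ref{lm:kest2}.)
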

\begin{proof} By using Schwarz inequality and Young inequality, we have
\begin{align*}
\|\cQ_{\gamma\sigma}(u)\|_{L^2}^2
&\le
C_* \|g\|_{L^\infty}^2 
\left\||
\Fourier^{-1}\Psi_{\sigma}| * \left|(d_\gamma^{\nu_{*}} u)^2\circ \hG\right| 
\cdot |\Fourier^{-1}\Psi_{\sigma}| * \left|d_\gamma^{-{2\nu_{*}}}\circ \hG\right| \right\|_{L^1}
\\
&\le 
C_* \|g\|_{L^\infty}^2 
\| d_\gamma^{\nu_{*}} u\|_{L^2}^2  
\left\||\Fourier^{-1}\Psi_{\sigma}| * \left| d_\gamma^{-{2\nu_{*}}}\circ \hG\right| \right\|_{L^\infty}.
\end{align*}
For the last factor, we have that
\begin{align}\label{eqn:Fsp}
|&\Fourier^{-1}\Psi_{\sigma}| * \left| d_\gamma^{-{2\nu_{*}}}\circ \hG)\right| (x_0,x^+,x^-)\\
&\le C_* 2^{d(n(\sigma)/2+2\delta\lambda)+n(\sigma)/2}
\cdot \int d^{-2\nu_*}_\gamma(G(x_0+y_0, x^++y^+, x^-)) dy_0dy^+
\notag\\
&\le C_* \cdot 2^{d(n(\sigma)/2)+2\delta\lambda)+n(\sigma)/2}\cdot 2^{-\Lambda -(d+1) n(\sigma)/2}\le C_* 2^{-\Lambda+2d\delta\lambda}
\notag
\end{align}
where we used  (\ref{eqn:bsig}) in the first inequality and 
the condition (H4) in the definition of  $\cH(\lambda, \Lambda)$ in the second.  
We therefore obtain the estimate in the lemma. 
\end{proof}
The next lemma, which improves Lemma \ref{lm:keypre} above, is the core  of our argument on the central part.   
\begin{lemma}\label{lm:key}
There exists a constant $C_*>0$ such that 
\[
|\langle \cQ_{\gamma\sigma}(u), \cQ_{\gamma'\sigma}(u')\rangle_{L^2}|
\le C_* \cdot \frac{2^{-\Lambda+2d\delta \lambda}\cdot \|g\|_{L^\infty}^2\cdot 
\|d_\gamma^{\nu_{*}}u\|_{L^2}\cdot \|d_{\gamma'}^{\nu_{*}} u'\|_{L^2} 
}{\langle 2^{n/2-2\delta\lambda}\|z(\gamma)-z(\gamma')\|\rangle^{2d+2}}
\]
for any $(\gamma,\sigma), (\gamma',\sigma)\in S$ and  any $u,u'\in L^2(E)$ satisfying $\widetilde{\psi}_\gamma(D) u=u$,  $\widetilde{\psi}_{\gamma'}(D) u'=u'$, $\|d_\gamma^{\nu_{*}} u\|_{L^2}<\infty$ and $\|d_{\gamma'}^{\nu_{*}} u'\|_{L^2}<\infty$.
\end{lemma}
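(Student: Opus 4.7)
The strategy refines the Cauchy--Schwarz bound of Lemma~\ref{lm:keypre} to extract the additional polynomial decay factor from the almost-orthogonality of the two wavepackets $\cQ_{\gamma\sigma}u$ and $\cQ_{\gamma'\sigma}u'$, whose centers sit near $y_\gamma:=\hG^{-1}(z(\gamma))$ and $y_{\gamma'}:=\hG^{-1}(z(\gamma'))$ respectively. First, rewrite the inner product as $\langle\Psi_\sigma^2(D)F,F'\rangle$ with $F(x):=\hat g(x)\,u(\hG(x))$ and $F'(x):=\hat g(x)\,u'(\hG(x))$. The key structural observation is that $\Psi_\sigma(\xi)$ depends only on $(\xi_0,\xi^-)$, so $\Fourier^{-1}\Psi_\sigma^2=\delta(x^+)\otimes k_\sigma(x_0,x^-)$ for a Schwartz kernel $k_\sigma$ with $\|k_\sigma\|_{L^1}=O(1)$ that decays at scales $2^{-n/2}$ in $x_0$ and $2^{-n/2-2\delta\lambda}$ in $x^-$. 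Consequently $\Psi_\sigma^2(D)$ smooths only in $(x_0,x^-)$, preserving the $x^+$-coordinate, and the pairing becomes the $k_\sigma$-weighted average over shifts $(y_0,y^-)$ of the fibrewise integrals $\int F(x_0-y_0,x^+,x^--y^-)\overline{F'(x_0,x^+,x^-)}\,dx$, which are diagonal in $x^+$.

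Second, for each fixed $x^+$ I would apply Cauchy--Schwarz in $(x_0,x^-)$ and then change variables via the map $(x_0,x^-)\mapsto\hG(x_0,x^+,x^-)$ on that slice. By condition (H4) applied with $Y=\langle v_0\rangle\oplus E_-\subset\cone_-$, the resulting Jacobian is bounded below by $2^\Lambda$; combining this with $\|k_\sigma\|_{L^1}=O(1)$ and the $2^{2d\delta\lambda}$-loss already visible in \eqref{eqn:Fsp}, one reproduces the prefactor
\[
C_*\,2^{-\Lambda+2d\delta\lambda}\|g\|_{L^\infty}^2\,\|d_\gamma^{\nu_*}u\|_{L^2}\|d_{\gamma'}^{\nu_*}u'\|_{L^2}
\]
of Lemma~\ref{lm:keypre}.

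Third, to produce the decay factor $\langle 2^{n/2-2\delta\lambda}\|z(\gamma)-z(\gamma')\|\rangle^{-(2d+2)}$, I would exploit the polynomial tails of $u,u'$ encoded in the weights $d_\gamma^{\nu_*},d_{\gamma'}^{\nu_*}$ (allowed since $\nu_*\ge 2d+2$ with ample margin from the choice $\nu_*\ge 6(\beta/\delta+d+1)$ of Subsection~\ref{ssec:remarks}). At each shift $(y_0,y^-)$ the point $\hG(x-y_0 e_0-y^-)$ sits at distance at most $O(|y_0|+2^\lambda\|y^-\|)$ from $\hG(x)$; since the $L^1$-mass of $k_\sigma$ is effectively concentrated where $|y_0|\lesssim 2^{-n/2}$ and $\|y^-\|\lesssim 2^{-n/2-2\delta\lambda}$, and since $\hG(x)$ lies near $z(\gamma')$ when $F'(x)\neq 0$, one gets $d_\gamma(\hG(x-y_0 e_0-y^-))\gtrsim 2^{n/2-2\delta\lambda}\|z(\gamma)-z(\gamma')\|$ whenever the separation dominates the typical shift. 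Integrating the $\nu_*$-th power of this against $k_\sigma$ produces the claimed polynomial decay with exponent $(2d+2)$, the remaining $\nu_*-(2d+2)$ powers being absorbed into the $L^2$-norms via the change of variables of the previous step.

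The main obstacle is precisely the expansion of $E_-$ by $\hG$: a shift of $y^-$ in $x$ can translate $\hG(x)$ by as much as $2^\lambda\|y^-\|$, so one cannot simply replace $\hG(x-y_0 e_0-y^-)$ by $z(\gamma')$ in $d_\gamma(\cdot)$. The resolution is to split the $y^-$-integration into the region $\|y^-\|\lesssim 2^{-n/2+2\delta\lambda-\lambda}$, where the shift is harmless, and the complementary region, where one gains from the polynomial decay of $k_\sigma$ in $y^-$ at scale $2^{-n/2-2\delta\lambda}$; provided $\nu_*$ is large enough, this trade-off converts the $k_\sigma$-tail into the clean decay $\langle 2^{n/2-2\delta\lambda}\|z(\gamma)-z(\gamma')\|\rangle^{-(2d+2)}$ uniformly in the direction of $z(\gamma)-z(\gamma')$ and without further loss in $\lambda$.
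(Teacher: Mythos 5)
Your argument does reproduce the paper's treatment of the easier parts of the lemma---the cases labeled (i), (ii), (iii) in the paper, where the separation $\|z(\gamma)-z(\gamma')\|$ is comparable to the cube size, or is $\gtrsim 2^{(-1/2+\tau)n}$, or has dominant $E_0\oplus E_+$ component. In those regimes $\hG^{-1}$ does not contract the separation, and spatial decay via the weights $d_\gamma, d_{\gamma'}$ and the tails of the kernel $k_\sigma$ suffices, much as in the paper. But there is a genuine gap in the paper's case (iv), where $\|\pi_-(z(\gamma)-z(\gamma'))\|>\|z(\gamma)-z(\gamma')\|/10$. There $\hG^{-1}$ contracts the $E_-$ direction by a factor of roughly $2^{-\lambda}$, so the preimages $y(\gamma)=\hG^{-1}(z(\gamma))$, $y(\gamma')=\hG^{-1}(z(\gamma'))$ can sit at distance $\sim 2^{-\lambda}\|z(\gamma)-z(\gamma')\|$, which is comparable to or smaller than the scale $2^{-n/2-2\delta\lambda}$ on which $k_\sigma$ lives. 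Writing $\|z(\gamma)-z(\gamma')\|\sim 2^{-n/2+2\delta\lambda}L$ with $1\le L\lesssim 2^{\lambda}$, the shift $y^-$ that aligns the two wavepackets has $\|y^-\|\sim 2^{-\lambda-n/2+2\delta\lambda}L$, and the gain from the $k_\sigma$-tail is only $\langle 2^{4\delta\lambda-\lambda}L\rangle^{-N}=O(1)$, nowhere near the required $\langle L\rangle^{-(2d+2)}$; meanwhile $\|\hG(x-y)-\hG(x)\|$ can be as large as $2^{\lambda}\|y^-\|\sim\|z(\gamma)-z(\gamma')\|$, so the weights $d_\gamma,d_{\gamma'}$ supply no decay either. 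Your proposed split of the $y^-$-integration at scale $2^{-n/2+2\delta\lambda-\lambda}$ does not fix this: the critical shift lies exactly in the window where you neither control the $\hG$-translations nor gain from the tail of $k_\sigma$.

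The paper closes case (iv) by a mechanism of an entirely different nature, which your proposal omits: an oscillatory-integral argument exploiting the non-integrability of the contact form. After inserting cutoffs $h,h'$ localizing near $y(\gamma)$ and $y(\gamma')$, the modified inner product $\langle v,v'\rangle$ is rewritten as an oscillatory integral in $y$ with phase $f(y,z,\xi,\xi')=\langle\xi',\hG(y+z)\rangle-\langle\xi,\hG(y)\rangle$, and one integrates by parts once along a unit vector $w$ chosen so that $D\hG_{y(\gamma)}(w)\in E_0\oplus E_+$, $\langle\widetilde{\alpha}_0(y(\gamma)),w\rangle=0$, and $d\alpha_0\bigl(D\hG_{y(\gamma)}(w),\pi_-(z(\gamma')-z(\gamma))\bigr)$ is maximal. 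Sublemma~\ref{subl:7} then gives $|D_w f|\gtrsim 2^{n}\|\pi_-(z(\gamma')-z(\gamma))\|\cdot\|\pi_+(D\hG_{y(\gamma)}(w))\|$; the nondegeneracy of $d\alpha_0$ on $E_+\oplus E_-$, i.e.\ the contact structure, is exactly what makes this lower bound strictly positive. It is this phase oscillation---not any spatial separation between the wavepackets---that produces the factor $\langle 2^{n/2-2\delta\lambda}\|z(\gamma)-z(\gamma')\|\rangle^{-(2d+2)}$ in the hardest regime, and without it the lemma cannot be proved.
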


We first show that the latter claim of Proposition \ref{pp:cQ} follows from 
Lemma~\ref{lm:key}  and Lemma \ref{lm:ccQ}.
From Lemma \ref{lm:key} and Remark \ref{rem:card}, it follows
\begin{align*}
&\sum_{\gamma:(\gamma,\sigma)\in S} \;
\sum_{\gamma':(\gamma',\sigma)\in S} \;
 |\langle \cQ_{\gamma\sigma}(u_\gamma), \cQ_{\gamma'\sigma}(u_{\gamma'})\rangle_{L^2}|\\
 &\qquad \le 2^{-\Lambda+2d\delta\lambda}\cdot 
 \|g\|_{L^\infty}^2\cdot \sum_{\gamma:(\gamma,\sigma)\in S} \;
\sum_{\gamma':(\gamma',\sigma)\in S} \;
\frac{\|d_{\gamma}^{\nu_{*}} u_{\gamma}\|_{L^2}^2 }{\langle 2^{n/2-2\delta\lambda}\|z(\gamma)-z(\gamma')\|\rangle^{2d+2}}\\
&\qquad 
\le C_*\cdot \delta \lambda\cdot 2^{-\Lambda+(6d+2)\delta\lambda}
 \|g\|_{L^\infty}^2\cdot \sum_{\gamma:(\gamma,\sigma)\in S} 
  \;
\|d_{\gamma}^{\nu_{*}} u_{\gamma}\|_{L^2}^2 
\end{align*}
for $\sigma\in \Sigma$ with $\ell(\sigma)=0$.
Taking sum of both sides over $\sigma\in \Sigma$ with $\ell(\sigma)=0$ and recalling Remark \ref{rem:card} again, we obtain
\begin{align*}
\|\widehat{\cQ}(\bu)\|_{\tB}^2&\le \widetilde{w}(0)^2\cdot \sum_{\sigma\in \Sigma: \ell(\sigma)=0} \;
\sum_{\gamma:(\gamma,\sigma)\in S} \;
\sum_{\gamma':(\gamma',\sigma)\in S} \;
 |\langle \cQ_{\gamma\sigma}(u_\gamma), \cQ_{\gamma'\sigma}(u_{\gamma'})\rangle_{L^2}|\\
 &\le  C_*  \cdot (\delta \lambda)^2\cdot 2^{-\Lambda+(6d+2)\delta \lambda}  \|g\|_{L^\infty}^2
 \sum_{\gamma:m(\gamma)\le \delta\lambda} 
\|d_\gamma^{\nu_{*}}u_\gamma\|_{L^2}^2
\\
&\le C_* \cdot (\delta \lambda)^{2}\cdot 2^{-\Lambda+(6d+2)\delta\lambda}\|g\|_{L^\infty}^2 \cdot
 (\|\bu\|_{\beta,{\nu_{*}}}^{(\lambda)})^2
\end{align*}
for $\bu=(u_\gamma)_{\gamma\in \Gamma}\in \bB^\beta_{\nu_{*}}$. 
Therefore the operator norm of $\widehat{\cQ}:\bB^\beta_{\nu_{*}}\to \tB$ is bounded by $C_* 2^{-(1-\epsilon )\Lambda/2}$ from the choice of $\delta$ in Subsection \ref{ssec:remarks}, provided that we consider the norm $\|\cdot\|_{\beta, \nu_*}^{(\lambda)}$ on $\bB^\beta_{\nu_{*}}$. This, together with 
  Lemma \ref{lm:ccQ}, implies the latter claim of Proposition~\ref{pp:cQ}. 

\newcommand\K{\mathcal{K}}
\newcommand{\tK}{\widetilde{\K}}

We prove Lemma \ref{lm:key} in the remaining part of this subsection and in the next subsection. 
We consider $(\gamma,\sigma), (\gamma',\sigma)\in S$ and $u, u'\in L^2(E)$ satisfying the assumptions   in Lemma \ref{lm:key} and  prove the conclusion of Lemma \ref{lm:key} in each of the following four cases separately: \begin{itemize}
\item[(i)] $\|z(\gamma)-z(\gamma')\| \le 2^{-n/2+2\delta \lambda}$, 
\item[(ii)] $\|z(\gamma)-z(\gamma')\| \ge 2^{(-1/2+\tau)n}$ with $\tau=1/(5(d+1))$,
\item[(iii)] neither  (i) nor (ii), but $\|\pi_-(z(\gamma)-z(\gamma'))\|\le \|z(\gamma)-z(\gamma')\|/10$,
\item[(iv)] neither  (i) nor (ii), but  $\|\pi_-(z(\gamma)-z(\gamma'))\|> \|z(\gamma)-z(\gamma')\|/10$.
\end{itemize}
In the first case (i) is the case where the points $z(\gamma)$ and $z(\gamma')$ are  close to each other (relative to the size of the cube $Z(\gamma)$ and $Z(\gamma')$). In this case, the claim of Lemma \ref{lm:key} is an immediate consequence of Lemma \ref{lm:keypre}. 
The second case (ii) and the third case (iii) will turn out to be the cases where $\hG^{-1}(z(\gamma))$ and $\hG^{-1}(z(\gamma'))$ are far from each other. 
The proof of Lemma \ref{lm:key} in these two cases is not difficult and will be  given in the remainder part of this subsection. 
The fourth case (iv) is the most important case where $z(\gamma)$ and $z(\gamma')$ are not close to each other but $\hG^{-1}(z(\gamma))$ and $\hG^{-1}(z(\gamma'))$ may come close to each other by hyperbolicity of~$\hG$.  This last case will be  considered in the next subsection. 
\begin{proof}[Proof of Lemma \ref{lm:key} in the case {\rm (iii)}] 
We first show the following claim, which is a consequence of  geometric properties of $\hG$.  
\begin{sublemma}\label{subl:6}
There exists a constant $C_*>0$ such that, for given point $y\in E$,  either of the following two conditions holds: the condition that
\begin{equation}\label{eqn:spt}
\frac{1}{\langle 2^{n/2} z\rangle
d_\gamma(\hG(y-z))}\le   
\frac{ C_*}
{2^{n/2}\|z(\gamma')-z(\gamma)\|} \quad  \text{ for all $z\in E_0\oplus E_-$,}
\end{equation}
or 
the same condition with $\gamma$ and $\gamma'$ exchanged. 
\end{sublemma}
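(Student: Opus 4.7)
The plan is a proof by contradiction: assume both (\ref{eqn:spt}) and its $\gamma\leftrightarrow\gamma'$ analogue fail. Setting $\gamma_1=\gamma$ and $\gamma_2=\gamma'$, this produces $z_1,z_2\in E_0\oplus E_-$ with
$\langle 2^{n/2}z_i\rangle\cdot\langle 2^{n/2}(\hG(y-z_i)-z(\gamma_i))\rangle<2^{n/2}\|z(\gamma)-z(\gamma')\|/C_*$ for $i=1,2$. Since each bracket is $\ge 1$, both factors are separately bounded by $2^{n/2}\|z(\gamma)-z(\gamma')\|/C_*$, so that $\|z_i\|,\|\hG(y-z_i)-z(\gamma_i)\|\le\|z(\gamma)-z(\gamma')\|/C_*$. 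Applying the fundamental theorem of calculus along the segment from $y-z_2$ to $y-z_1$, one writes $\hG(y-z_1)-\hG(y-z_2)=L(z_2-z_1)$ with $L:=\int_0^1 D\hG_{y-z_2+t(z_2-z_1)}\,dt$, and the triangle inequality yields $\|L(z_2-z_1)-(z(\gamma)-z(\gamma'))\|\le 2\|z(\gamma)-z(\gamma')\|/C_*$.

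The central geometric ingredient is that $L$ inherits the hyperbolic structure of $D\hG=DG\circ DH_0$. The identity $\hG_* v_0=v_0$ yields $Lv_0=v_0$; and since $G\in\cH(\lambda,\Lambda)$ and $DH_0$ acts compatibly with the splitting, conditions (H2) and (H3) imply that for every $w\in E_-$ the projection $\pi_{+,-}(Lw)$ lies in $\cone_-$ with $\|\pi_{+,-}(Lw)\|\ge c\cdot 2^\lambda\|w\|$ for a uniform $c>0$. Decomposing $z_2-z_1=sv_0+w$ with $s\in\mathbb{R}$ and $w\in E_-$ gives $L(z_2-z_1)=sv_0+Lw$. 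Taking the $\pi_-$-projection of the approximation, using the cone lower bound $\|\pi_-(Lw)\|\ge(10/\sqrt{101})c\cdot 2^\lambda\|w\|$ together with hypothesis (iii) that $\|\pi_-(z(\gamma)-z(\gamma'))\|\le\|z(\gamma)-z(\gamma')\|/10$, yields $\|w\|\lesssim 2^{-\lambda}\|z(\gamma)-z(\gamma')\|$. Consequently $\|\pi_+(Lw)\|\le\|\pi_-(Lw)\|/10$ is small, and the $\pi_+$-projection of the approximation delivers $\|\pi_+(z(\gamma)-z(\gamma'))\|\le(1/100+O(1/C_*+2^{-\lambda}))\|z(\gamma)-z(\gamma')\|$.

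With both $\pi_+$ and $\pi_-$ projections of $z(\gamma)-z(\gamma')$ now controlled, the Pythagorean identity forces $|\pi_0(z(\gamma)-z(\gamma'))|\ge(9/10)\|z(\gamma)-z(\gamma')\|$ (say). Taking the $\pi_0$-projection of the approximation and noting $|\pi_0(Lw)|\le\|Lw\|=O(\|w\|)$ is negligible, one concludes $|s|\ge(9/10-O(1/C_*))\|z(\gamma)-z(\gamma')\|$. But $\|z_2-z_1\|\ge|s|$, whereas $\|z_2-z_1\|\le\|z_1\|+\|z_2\|\le 2\|z(\gamma)-z(\gamma')\|/C_*$, which is a contradiction for $C_*$ sufficiently large. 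The principal obstacle is to justify that the averaged map $L$—rather than $D\hG$ at a single point—still realizes the cone and expansion bounds of (H2)--(H3) with uniform constants; this is where one exploits the scale $\|z_2-z_1\|\le 2\|z(\gamma)-z(\gamma')\|/C_*\le 2^{(-1/2+\tau)n+1}/C_*$ (the bound from excluding case (ii)) together with $n\ge K$ large, so that the variation of $D\hG$ along the short segment is negligible relative to the expansion factor $2^\lambda$.
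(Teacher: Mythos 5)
Your overall strategy—argue by contradiction, linearize $\hG$ along the short segment, and compare the three projections—is the same as the paper's (the paper is deliberately terse here, leaving exactly this geometric verification "not difficult to check"), but the crucial $\pi_0$-step contains an error that actually needs the contact structure to repair. You assert $|\pi_0(Lw)|\le\|Lw\|=O(\|w\|)$, treating $L$ as a bounded operator with norm $O(1)$. That is false: $L$ inherits the expansion of $D\hG$ on $E_-$-like directions, so $\|Lw\|\gtrsim 2^\lambda\|w\|$. Since your own $\pi_-$-estimate gives $\|w\|\lesssim 2^{-\lambda}\|z(\gamma)-z(\gamma')\|$, the bound you are actually entitled to from this line of reasoning is $|\pi_0(Lw)|\le\|Lw\|\lesssim\|z(\gamma)-z(\gamma')\|$, which is the \emph{same} order as the quantity you are trying to isolate in $s$, and so does not yield the conclusion $|s|\ge(9/10-O(1/C_*))\|z(\gamma)-z(\gamma')\|$. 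Conditions (H2) and (H3) constrain only the $\pi_{+,-}$-components of $D\hG(w)$ and say nothing at all about $\pi_0(D\hG(w))$; without further input, $\pi_0(Lw)$ could absorb most of $\pi_0(z(\gamma)-z(\gamma'))$ and no contradiction results.

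The missing input is (H1), i.e.\ $\hG^*\alpha_0=\widetilde{\alpha}_0$, together with the elementary fact that $\widetilde{\alpha}_0=dx_0+x^-\cdot dx^+$ annihilates $E_-$. This forces $D\hG_x(w)\in\ker\alpha_0(\hG(x))$ for every $w\in E_-$, i.e.\
\[
\pi_0\bigl(D\hG_x(w)\bigr)=-\,(\hG(x))^-\cdot\pi_+\bigl(D\hG_x(w)\bigr)+(\hG(x))^+\cdot\pi_-\bigl(D\hG_x(w)\bigr),
\]
and hence, using $\|\hG(x)^{\pm}\|\le 1$ and the cone bound $\|\pi_+\|\le\|\pi_-\|/10$,
\[
\bigl|\pi_0(D\hG_x(w))\bigr|\le\tfrac{11}{10}\,\bigl\|\pi_-(D\hG_x(w))\bigr\|.
\]
Averaging this along the short segment (where the linearization approximation keeps $D\hG$ essentially constant) and combining with your $\pi_-$-estimate $\|\pi_-(Lw)\|\le(1/10+O(1/C_*))\|z(\gamma)-z(\gamma')\|$ gives $|\pi_0(Lw)|\lesssim\tfrac{11}{100}\|z(\gamma)-z(\gamma')\|$—not negligible, but strictly less than the $\ge 0.99\|z(\gamma)-z(\gamma')\|$ you derived for $|\pi_0(z(\gamma)-z(\gamma'))|$, so the contradiction with $|s|\le 2\|z(\gamma)-z(\gamma')\|/C_*$ still goes through. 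You should make the appeal to (H1) explicit: it is the only place the contact preservation enters this sublemma, and it is not replaceable by the cone/expansion conditions alone.
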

\begin{remark} If $\hG$ is not defined at $y-z$, we suppose that $d_\gamma(\hG(y-z))=\infty$ and  (\ref{eqn:spt}) holds trivially. 
\end{remark}
\begin{proof}
If $\|z\|\ge \|z(\gamma)-z(\gamma')\|/100$, the inequality in  (\ref{eqn:spt}) with  $C_*=100$ holds obviously and so does the same inequality with $\gamma$ and $\gamma'$ exchanged, because $d_\gamma(\cdot)\ge 1$.
Thus we may assume   
\begin{equation}\label{eqn:zg}
\|z\|<\|z(\gamma)-z(\gamma')\|/100\le 10^{-2}\cdot 2^{(-1/2+\tau)n}\le  10^{-2}\cdot 2^{(-1/2+\tau)K}
\end{equation}
in the conditions in the sublemma. 

Since we are considering the case (iii), we have
\[
\|\pi_{0,+}(z(\gamma)-z(\gamma')\|)\ge (9/10)\cdot  \|z(\gamma)-z(\gamma')\|\ge  9\cdot  \|\pi_-(z(\gamma)-z(\gamma'))\|.
\]
Recall that  $\hG=G\circ H_0$ for $G\in \cH(\lambda,\Lambda)$. Since 
(\ref{eqn:zg}) implies that we may suppose that $\hG$ is well-approximated by its linearization (at $z(\gamma)$, say) if we take large $K$, 
it is not difficult to check that we have either 
\[
d(\hG(y-z),z(\gamma))\ge C_*^{-1} \|z(\gamma)-z(\gamma')\|\quad \mbox{for all $z\in E_0\oplus E_-$ satisfying (\ref{eqn:zg})}
\]
or the same condition with $\gamma$ and $\gamma'$ exchanged, depending on the position of the point $\hG(y)$ relative to $z(\gamma)$ and $z(\gamma')$. This implies the conclusion of the sublemma.
\end{proof}

Let $Y$ be the set of points $y\in E$ for which (\ref{eqn:spt})  holds. 
Then  we have
\begin{align*}
&|\cQ_{\gamma\sigma}(u)(y)|=| \Fourier^{-1}\Psi_{\sigma}*(\hat{g} \cdot (u\circ \hG))(y)|\\
&\le \|g\|_{L^\infty} \int_{E_0\oplus E_-}
\frac{|\Fourier^{-1}\Psi_{\sigma}(z)|\cdot\langle 2^{n/2}z\rangle^{(2d+2)}\cdot  (d_\gamma^{2d+2}\cdot u) \circ \hG(y-z)}{\langle 2^{n/2}z\rangle^{(2d+2)} \cdot d_\gamma^{2d+2}(\hG(y-z))}dz 
\\
&\le \frac{C_*\cdot \|g\|_{L^\infty}}{(2^{n/2}\|z(\gamma')-z(\gamma)\|)^{2d+2}} \\
&\qquad\cdot  \int \left(|\Fourier^{-1}\Psi_{\sigma}(z)|\cdot \langle 2^{n/2} z\rangle^{2d+2}\right) \cdot   (d_\gamma^{2d+2}\cdot  u) \circ \hG(y -z) dz
\end{align*}
for $y\in Y$, where we used (\ref{eqn:spt})  in the latter inequality. 
Recall the estimates (\ref{eqn:Fp}) and (\ref{eqn:bsig}) on the factor $\Fourier^{-1}\Psi_{\sigma}(z)$ and check that $\Fourier^{-1}\Psi_{\sigma}(z)\cdot \langle 2^{n/2} z\rangle^{2d+2}$ enjoys the same estimates  
with the exponent $\mu$ replaced by $\mu-2d-2$.  
Then, by an argument parallel to that in the proof of Lemma \ref{lm:keypre}, we see 
\begin{align*}
&\|\cQ_{\gamma\sigma}(u)\cdot \mathbf{1}_{Y}\|_{L^2}^2\\
&\le \frac{C_*\cdot \|g\|_{L^\infty}^2}{(2^{n/2}\|z(\gamma')-z(\gamma)\|)^{2(2d+2)}} \\
&\qquad 
\cdot \|d_\gamma^{\nu_*}u\|_{L^2}^2 \cdot \int 
\left(|\Fourier^{-1}\Psi_{\sigma}(z)|\cdot \langle 2^{n/2} z\rangle^{2d+2}\right) \cdot   (d_\gamma^{2d+2-\nu_*}) \circ \hG(y -z) dz\\
&
\le \frac{C_*\cdot \|g\|_{L^\infty}^2}{(2^{n/2}\|z(\gamma')-z(\gamma)\|)^{2(2d+2)}}
\cdot  \|d_\gamma^{\nu_{*}}  u\|_{L^2}^2 \cdot 
2^{-\Lambda+2d\delta \lambda}.
\end{align*}
Exchanging $\gamma$ and $\gamma'$ in the argument above, we  obtain the same estimate for $\|\cQ_{\gamma'\sigma}(u')\cdot \mathbf{1}_{E\setminus Y}\|_{L^2}^2$. Since $|\langle \cQ_{\gamma\sigma}(u), \cQ_{\gamma'\sigma}(u')\rangle_{L^2}|$ is bounded by
\begin{align*}
\|\cQ_{\gamma\sigma}(u)\cdot \mathbf{1}_{Y}\|_{L^2}\cdot \|\cQ_{\gamma'\sigma}(u')\|_{L^2}+ \|\cQ_{\gamma'\sigma}(u')\cdot \mathbf{1}_{E\setminus Y}\|_{L^2}\cdot \|\cQ_{\gamma\sigma}(u)\|_{L^2},
\end{align*}
we conclude  Lemma \ref{lm:key} from the estimates above and Lemma \ref{lm:keypre}.
\end{proof}

\begin{proof}[Proof of Lemma \ref{lm:key} in the case {\rm (ii)}]
Note that we can show the claim of Sublemma \ref{subl:6} in the case (ii) easily {\em if 
we allow the constant $C_*>0$ in it to depend on~$G$}. Thus, following the argument above for the case (iii) and replacing  $2d+2$ by $2d+3$ there, we obtain the estimate
\[
|\langle \cQ_{\gamma\sigma}(u), \cQ_{\gamma'\sigma}(u')\rangle_{L^2}|
\le C(G) \frac{2^{-\Lambda+2d\delta \lambda} \|g\|_{L^\infty}^2\cdot 
\|d_\gamma^{\nu_{*}} u\|_{L^2} \cdot \|d_{\gamma'}^{\nu_{*}} u'\|_{L^2} 
}{\langle 2^{n/2}\|z(\gamma)-z(\gamma')\|\rangle^{2d+3}}
\]
with $C(G)$ a constant which depends on the diffeomorphism $G$. But this implies the lemma because $C(G)/\langle2^{n/2}\|z(\gamma)-z(\gamma')\|\rangle<C(G) 2^{-\tau n}<1$ in the case (ii), provided that we take large $K$ according to $G$. 
\end{proof}

\subsection{The main part of the proof of Lemma \ref{lm:key}}
\label{ss:key}
In this subsection, 
we prove Lemma \ref{lm:key} in the case (iv).
This completes the proof of Proposition \ref{th:m1} and hence that of the main theorem.  

If either $z(\gamma)$ or $z(\gamma')$ is not contained in the image $G(V')$ of $\hat{G}$, we have $d_{\gamma}(y)\ge C(G,g) 2^{n/2}$ and $ 
d_{\gamma'}(y)\ge C(G,g) 2^{n/2}$  for all $y\in \supp \hat{g}$ and hence we can prove the conclusion of Lemma \ref{lm:key} easily, taking large $K$ according to $G$ and $g$.
Therefore we henceforth suppose that $z(\gamma)$ and $z(\gamma')$ are contained in $G(V')$ and let $y(\gamma)$ and $y(\gamma')$  be the unique points in $V'$ such that $\hG(y(\gamma))=z(\gamma)$ and $\hG(y(\gamma'))=z(\gamma')$ respectively.

In order to extract the main part of $\langle \cQ_{\gamma\sigma}(u), \cQ_{\gamma'\sigma}(u')\rangle_{L^2}$, we consider the $C^\infty$ functions  $h,h':E\to [0,1]$ defined by
\begin{align}\label{eqn:defh}
h(y)&= \chi\left(
\frac{20\|\pi_{-}(D\hG_{y(\gamma)}(y-y(\gamma)))\|}
{\|\pi_-(z(\gamma)-z(\gamma'))\|}\right)\cdot \chi(2^{n/3}\cdot \|y-y(\gamma)\|)
\intertext{and}
h'(y)&=h(y-y(\gamma)+y(\gamma'))\\
&= \chi\left(
\frac{20\|\pi_{-}(D\hG_{y(\gamma)}(y-y(\gamma')))\|}
{\|\pi_-(z(\gamma)-z(\gamma'))\|}\right)\cdot \chi(2^{n/3}\cdot \|y-y(\gamma')\|)\notag
\end{align} 
where $\chi$ is the function defined in the beginning of Section \ref{sec:part}.
Since we have
\[
\|y(\gamma)-y(\gamma')\|\le C(G)\cdot \|z(\gamma)-z(\gamma')\|\le C(G)\cdot 2^{(-1/2+\tau)n}
\]
in the case (iv) and the right hand side is much smaller than $2^{-n/3}$ (provided that we take large $K$), the supports of $h$ and $h'$ are contained in the disk with center at $y(\gamma)$ and radius $2^{-n/3+1}$. In particular,  $\hG$ is well approximated by its linearization at  $y(\gamma)$ on that disk up to the error term bounded by  $C(G) (2^{-n/3})^2\ll 2^{-n/2}$.

From the definitions of the function $h$ and $h'$ above, we can show that 
\begin{align*}
&d_\gamma^{-1}\circ \hG(y)\le C_* 2^{-n/2}\|\pi_-(z(\gamma)-z(\gamma'))\|^{-1}\quad \text{for $y\in \supp(1-h)$}
\intertext{and that}
&d_{\gamma'}^{-1}\circ \hG(y)\le C_* 2^{-n/2}\|\pi_-(z(\gamma)-z(\gamma'))\|^{-1}\quad \text{for $y\in \supp(1-h')$.}
\end{align*}
In fact, if $\|y-y(\gamma)\|\ge 2^{-n/3}$, we have $
d_\gamma^{-1}\circ \hG(y)\le  C(G) 2^{-(n/2-n/3)}$
and  obtain the first inequality by a crude estimate,  taking sufficiently large $K$. Otherwise, the condition $y\in \supp(1-h)$ implies that we have
\[
20\|\pi_{-}(D\hG_{y(\gamma)}(y-y(\gamma)))\|\ge \|\pi_-(z(\gamma)-z(\gamma'))\|
\] 
and, hence, we obtain the first inequality again by using the linear approximation of $G$ mentioned above.  We can show the second inequality in the same manner. 

Set $v= {\Psi}_\sigma(D) (h\cdot Q (u))$ and $
v'= {\Psi}_\sigma(D) (h'\cdot Q (u'))$.
Then, in the same manner  as we proved the first inequality  in the proof of Lemma \ref{lm:keypre}, we see that
\begin{align*}
\|\cQ_{\gamma\sigma}(u)-v\|_{L^2}^2&=\|{\Psi}_\sigma(D) ((1-h)\cdot Q (u))\|_{L^2}^2\\
&\le \|g\|_{L^\infty}^2\cdot 
\| d_\gamma^{\nu_{*}} u\|_{L^2}^2  \cdot 
\left\|
|\Fourier^{-1}\Psi_{\sigma}|*\left|
(1-h)\cdot d_\gamma^{-{2\nu_{*}}}\circ \hG \right|\right\|_{L^\infty}.
\end{align*}
From the estimate on $d_\gamma^{-1}\circ \hG$ above, the last factor above is bounded by
\[
C_*\cdot 2^{-\Lambda+2d\delta \lambda}\cdot \langle 2^{n/2}\|\pi_-(z(\gamma)-z(\gamma'))\|\rangle^{-2\nu_*+d+2}.
\]
Hence we obtain
\[
\|\cQ_{\gamma\sigma}(u)-v\|_{L^2}^2\le 
C_* \cdot \frac{2^{-\Lambda+2d\delta \lambda}\cdot \|g\|_{L^\infty}^2\cdot 
\|d_\gamma^{\nu_{*}} u\|_{L^2}^2 
}{\langle 2^{n/2}\|z(\gamma)-z(\gamma')\|\rangle^{2{\nu_{*}}-d-2}}\;\;.
\]
Similarly we obtain the parallel estimate for $\|\cQ_{\gamma'\sigma}(u')-v'\|_{L^2}^2$.
Therefore, by Lemma \ref{lm:keypre} and the choice of $\nu_*$,  we obtain
\begin{align}\label{eqn:Qdif}
|\langle  \cQ_{\gamma\sigma}(u), &\cQ_{\gamma'\sigma}(u')\rangle_{L^2} -\langle v, v'\rangle_{L^2}|\le C_* 
\cdot \frac{2^{-\Lambda+2d\delta \lambda} \|g\|_{L^\infty}^2
\|d_\gamma^{\nu_{*}} u\|_{L^2} \|d_{\gamma'}^{\nu_{*}} u'\|_{L^2} 
}{\langle 2^{n/2}\|z(\gamma)-z(\gamma')\|\rangle^{2d+2}}.
\notag
\end{align} 

Now it is left to show that 
\begin{equation}\label{eqn:Qdif2}
|\langle v, v'\rangle_{L^2}| \le C_* \frac{2^{-\Lambda+2d\delta \lambda} \|g\|_{L^\infty}^2
\|d_\gamma^{\nu_{*}} u\|_{L^2} \|d_{\gamma'}^{\nu_{*}} u'\|_{L^2} 
}{\langle 2^{n/2-2\delta\lambda}\|z(\gamma)-z(\gamma')\|\rangle^{2d+2}}.
\end{equation} 
\begin{remark} The proof of (\ref{eqn:Qdif2}) below is the most essential part of our argument on the central part, where we will use the non-integrability of the contact form $\alpha_0$. Note however that the estimates therein are rather rough. 
\end{remark}

From the assumption $\widetilde{\psi}_{\gamma}(D) u=u$ and $\widetilde{\psi}_{\gamma'}(D) u'=u'$ in Lemma \ref{lm:key}, we may rewrite the functions $v$ and $v'$ as
\[
v= \Fourier\Psi_\sigma* (h\cdot \hat{g}((\widetilde{\psi}_{\gamma}(D)u)\circ \hG))\quad\mbox{and}\quad
v'= \Fourier\Psi_\sigma* (h\cdot \hat{g}((\widetilde{\psi}_{\gamma'}(D)u')\circ \hG))
\]
respectively. 
Hence, setting 
\[
f(y,z,\xi,\xi')=\langle \xi', \hG(y+z) \rangle-\langle\xi, \hG(y) \rangle
\]
and 
\[
\K(z)={\Fourier\Psi_\sigma}* \Fourier\Psi_{\sigma}(z) =\int  \Fourier\Psi_{\sigma}(z') \cdot {\Fourier\Psi_\sigma}(z-z')  dz',
\]
we write $\langle v, v'\rangle_{L^2}$ as
\[
\langle v, v'\rangle_{L^2}=(2\pi)^{-2(2d+1)}\int \K(z) \left(\int S(x,x';z) 
\cdot \overline{u(x)}\cdot  u'(x') dx dx' \right)dz,
\]
where $S(x,x';z)$ denotes  the integral
\begin{align*}
\int 
e^{-i\langle \xi, x\rangle +i\langle \xi',x'\rangle-i f(y,z,\xi,\xi')} \widetilde{\psi}_{\gamma}(\xi) \widetilde{\psi}_{\gamma'}(\xi')
  \hat{g}(y) \hat{g}(y+z)h(y) h'(y+z) dy d\xi d\xi'.
\end{align*}
Notice  that $\K(z)$ is the tensor product of the Dirac $\delta$-function on $E_+$ at the origin and a rapidly decaying function on $E_0\oplus E_-$.

We are going to apply the formula (\ref{eqn:intbypart}) of integration by parts to the integral with respect to the variable $y$ in $S(x,x';z)$ above.
To this end, we set up a unit vector $\unv\in E$ as follows.  
Recall that we have 
\[
d\alpha_0=2\cdot dx^-\wedge dx^+=2\sum_{i=1}^d dx^-_i\wedge dx^+_i.
\] 
We define $\unv$ as the unique  unit vector such that $D\hG_{y(\gamma)}(\unv)\in E_0\oplus E_+$, that 
\begin{align*}
&\langle \widetilde{\alpha}_0(y(\gamma)), \unv\rangle =\langle {\alpha}_0(z(\gamma)), D\hG_{y(\gamma)}(\unv)\rangle=0
\end{align*}
and that
\begin{align*}
d\alpha_0(D\hG_{y(\gamma)}({\unv}), 
\pi_{-}&(z(\gamma')-z(\gamma)))\\
&=2  \|\pi_{-}(z(\gamma')-z(\gamma))\|\|\pi_{+}(D\hG_{y(\gamma)}({\unv}))\|.
\end{align*}
The next sublemma tells that the term $e^{-i f(y,z,\xi,\xi')}$ in $S(x,x';z)$ as a function of $y$ oscillates very  fast in the direction of $w$. 
\begin{sublemma}\label{subl:7}
If  $y+z\in \supp h'$ for $y\in \supp h$ and  $z\in E_0\oplus E_-$, and if  $\xi \in \supp \widetilde{\psi}_\gamma$ and $\xi'\in \supp \widetilde{\psi}_{\gamma'}$, we have
\[
|D_w f(y,z,\xi,\xi')|
\ge 2^{n-10}\cdot\|\pi_-(z(\gamma')-z(\gamma))\|\cdot
  \|\pi_+(D\hG_{y(\gamma)}({\unv})))\|,
\]
where $D_w$ denotes the directional derivative along the unit vector $w$ with respect to the variable $y$, that is, 
\[
D_w f(y,z,\xi,\xi')=\langle \xi', D\hG_{y+z}({\unv})\rangle -\langle \xi, D\hG_{y}({\unv})\rangle.
\]

\end{sublemma}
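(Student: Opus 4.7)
The plan is to expand $D_w f$ via a decomposition of $\xi,\xi'$ dictated by the contact form $\alpha_0$, isolate the dominant contribution coming from the non-integrability identity $\hG^*(d\alpha_0) = d\widetilde\alpha_0$, and show the remaining terms are strictly smaller than the target lower bound. First I would write $\xi = \xi_0\alpha_0(z(\gamma))+\xi_{+,-}$ and $\xi' = \xi'_0\alpha_0(z(\gamma'))+\xi'_{+,-}$ with $\xi_0 := \pi_0^*(\xi)$, $\xi_{+,-}\in E_+^*\oplus E_-^*$, and similarly for $\xi'$. Since $(\gamma,\sigma),(\gamma',\sigma)\in S$ share the same $n$ and essentially the same $k$, on the supports one has $|\xi_0|,|\xi'_0|\ge 2^{n-2}$, $\|\xi_{+,-}\|,\|\xi'_{+,-}\|\le C\cdot 2^{n/2+\delta\lambda}$, and $|\xi_0-\xi'_0|\le C\cdot 2^{n/2}$. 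Using $\hG^*\alpha_0 = \widetilde\alpha_0$, the normalization $\widetilde\alpha_0(y(\gamma))(\unv) = 0$, and the formula $\widetilde\alpha_0(y)(\unv) = \pi_-(y-y(\gamma))\cdot\unv^+$, a direct computation gives
\[
D_w f \;=\; \xi'_0\,z^-\!\cdot\unv^+ \;+\; (\xi'_0-\xi_0)\,\pi_-(y-y(\gamma))\cdot\unv^+ \;+\; \mathcal{E},
\]
where $\mathcal E$ collects the contributions of $\xi_{+,-},\xi'_{+,-}$ together with the Taylor-type corrections needed to replace $\alpha_0(\hG(y)),\alpha_0(\hG(y+z))$ by $\alpha_0(z(\gamma)),\alpha_0(z(\gamma'))$.

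Next I would apply $\hG^*(d\alpha_0) = d\widetilde\alpha_0$ at $y(\gamma)$ to the pair $(\unv,z)$: since $z^+ = 0$ (because $z\in E_0\oplus E_-$) and $\pi_-(D\hG_{y(\gamma)}(\unv)) = 0$, three of four terms on each side vanish, leaving
\[
z^-\!\cdot\unv^+ \;=\; 2\,\pi_+(D\hG_{y(\gamma)}(\unv))\cdot\pi_-(D\hG_{y(\gamma)}(z)).
\]
The support conditions on $h,h'$ in \eqref{eqn:defh} bound both $\|\pi_-(D\hG_{y(\gamma)}(y-y(\gamma)))\|$ and $\|\pi_-(D\hG_{y(\gamma)}(y+z-y(\gamma')))\|$ by $\|\pi_-(z(\gamma')-z(\gamma))\|/12$, while the quadratic Taylor error in $\hG$ between $y(\gamma)$ and $y(\gamma')$ is of order $2^{(-1+2\tau)n}$; together they yield $\|\pi_-(D\hG_{y(\gamma)}(z))-\pi_-(z(\gamma')-z(\gamma))\|\le\tfrac{1}{6}\|\pi_-(z(\gamma')-z(\gamma))\|$. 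The orientation condition defining $\unv$ makes $\pi_+(D\hG_{y(\gamma)}(\unv))$ antiparallel to $\pi_-(z(\gamma')-z(\gamma))$ in $\real^d$, converting the inner product into a product of norms; combined with $|\xi'_0|\ge 2^{n-2}$, the principal term is bounded below by $2^{n-1}\,\|\pi_+(D\hG_{y(\gamma)}(\unv))\|\,\|\pi_-(z(\gamma')-z(\gamma))\|$.

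Finally I would estimate the errors. The cross-term is at most $C\,2^{n/6}$ since $\|y-y(\gamma)\|\le 2^{-n/3+1}$, and each piece of $\mathcal E$ is controlled by Corollary~\ref{cor:local}: Taylor corrections of the form $\xi_0\langle\alpha_0(z(\gamma))-\alpha_0(\hG(y)),D\hG_y(\unv)\rangle$ contribute at most $C(2^{n/3}+2^{n/6+\delta\lambda})$, and the $\xi_{+,-}$-piece is bounded by $C\,2^{n/2+\delta\lambda}\,\|\pi_+(D\hG_{y(\gamma)}(\unv))\|$ after using $\pi_-(D\hG_{y(\gamma)}(\unv)) = 0$. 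Case~(iv) guarantees $\|\pi_-(z(\gamma')-z(\gamma))\|\ge 2^{-n/2+2\delta\lambda}/10$, and the standing choice of constants from Section~\ref{ssec:remarks} ensures every error sits well below the main term, giving $|D_w f|\ge 2^{n-10}\,\|\pi_+(D\hG_{y(\gamma)}(\unv))\|\,\|\pi_-(z(\gamma')-z(\gamma))\|$. The hard part is the $\xi_{+,-}$ contribution: its naive size $2^{n/2+\delta\lambda}\,\|\pi_+\|$ is comparable to the main term $\sim 2^{n/2+2\delta\lambda}\,\|\pi_+\|$, and the extra factor $2^{\delta\lambda}$ that saves the day ultimately comes from the contact-geometric vanishing $DG_0(\zero) = D^2G_0(\zero) = 0$ in Lemma~\ref{lm:local} (transmitted via Corollary~\ref{cor:local}); without this structural cancellation, the error would overwhelm the main contribution.
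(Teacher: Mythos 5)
Your overall route is the same as the paper's: decompose $\xi,\xi'$ along $\alpha_0$, identify the principal term as a copy of $d\widetilde\alpha_0(z,\unv)$, convert it via the pullback identity $\hG^*(d\alpha_0)=d\widetilde\alpha_0$ together with the defining properties of $\unv$ (namely $\pi_-(D\hG_{y(\gamma)}\unv)=0$ and the orientation condition), and push everything else into error terms controlled by the support conditions on $h,h'$. The paper packages things slightly differently, taking as main term $\xi_0\bigl(\langle\alpha_0(\hG(y+z)),D\hG_{y+z}\unv\rangle-\langle\alpha_0(\hG(y)),D\hG_{y}\unv\rangle\bigr)$ rather than splitting off a separate cross term $(\xi_0'-\xi_0)\,\pi_-(y-y(\gamma))\cdot\unv^+$, but that is only bookkeeping; the substance agrees.

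Two things are off, however. The closing claim that the saving factor $2^{\delta\lambda}$ "ultimately comes from" the vanishing $DG_0(\zero)=D^2G_0(\zero)=0$ of Lemma~\ref{lm:local} via Corollary~\ref{cor:local} is a misattribution: neither result is invoked anywhere in the paper's proof of this sublemma. The factor is purely combinatorial in origin. The definition of $\cR(1)$ forces $|m(\gamma)|,|m(\gamma')|\le\delta\lambda$, so $\|\xi_{+,-}\|,\|\xi'_{+,-}\|\lesssim 2^{n/2+\delta\lambda}$, while the exclusion of case~(i) together with the case-(iv) hypothesis gives $\|\pi_-(z(\gamma)-z(\gamma'))\|\gtrsim 2^{-n/2+2\delta\lambda}$; the margin is literally $2\delta\lambda-\delta\lambda$. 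The only contact geometry actually used here is the single first-order identity $d\widetilde\alpha_0(z,\unv)=d\alpha_0\bigl(D\hG_{y(\gamma)}z,\,D\hG_{y(\gamma)}\unv\bigr)$ (the second-order vanishing of Lemma~\ref{lm:local} enters elsewhere, in Lemma~\ref{lm:division} and Lemma~\ref{lm:kest2}). Separately, your bounds on the cross term ($C\,2^{n/6}$) and the Taylor corrections ($C(2^{n/3}+2^{n/6+\delta\lambda})$) drop the factor $\|\pi_+(D\hG_{y(\gamma)}\unv)\|$, which is not safe: since $D\hG_{y(\gamma)}\unv\in E_0\oplus E_+$ lies roughly in the $\hG$-contracting cone, that factor can be as small as $\sim 2^{-\lambda}$, yet the target lower bound is proportional to it. The paper avoids this by bounding each error bracket by $\tfrac13|\xi_0|\,\|\pi_-(z(\gamma)-z(\gamma'))\|\,\|D\hG_{y(\gamma)}\unv\|$ and then using $\alpha_0(z(\gamma))\bigl(D\hG_{y(\gamma)}\unv\bigr)=0$ to show $\|D\hG_{y(\gamma)}\unv\|$ is comparable to $\|\pi_+(D\hG_{y(\gamma)}\unv)\|$. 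You did carry the $\|\pi_+\|$ factor for the $\xi_{+,-}$-piece; the same care is needed for the other error terms, otherwise the estimate would only survive by enlarging $K$ in a $\lambda$-dependent way that the paper does not require.
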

We postpone the proof of this sublemma for a while. 
Under the same  assumption as  in the sublemma above, we have 
\begin{align}\label{eqn:dvf2}
|D_{\unv}^k f(y,z,\xi,\xi')|& \le C(G)\cdot \max\{\|\xi\|, \|\xi'\|\}\cdot \|z\|\\&\le C(G)\cdot 2^{n}\cdot \|z(\gamma')-z(\gamma)\|\notag
\end{align}
for $k=1,2$, where we used the estimate
\[
\|z\|\le C_* \|D\hG_{y(\gamma)}(y+z) -D\hG_{y(\gamma)}(y)\| \le C_*\|z(\gamma)-z(\gamma)\|
\]
that follows from the hyperbolic property of $D\hG_{y(\gamma)}$ (or that of $G$) and the definitions of $h$ and $h'$.  Also we have 
\begin{equation}\label{eqn:h}
\|D_{\unv} h\|_{L^\infty}\le C_* 2^{n/3}\quad\text{and}\quad \|D_{\unv} h'\|_{L^\infty} \le C_* 2^{n/3}
\end{equation}
from the condition $D\hG_{y(\gamma)}(w)\in E_0\oplus E_+$ in the choice of $w$. 

Now we apply the formula (\ref{eqn:intbypart}) of integration by parts along the single vector ${\unv}$ once to the integration with respect to $y$ in the integral $S(x,x';z)$. 
Then the result should be written in the form 
\begin{equation}\label{eqn:iexf}
\int 
e^{-i\langle \xi, x\rangle +i\langle \xi',x'\rangle-i f(y,z,\xi,\xi')}
 \widetilde{\psi}_{\gamma}(\xi) \widetilde{\psi}_{\gamma}(\xi')  R(y,z;x,x'; \xi, \xi') dy d\xi d\xi'.
\end{equation}
By using Sublemma \ref{subl:7}, (\ref{eqn:dvf2}) and (\ref{eqn:h}), we see that there exists a constant $C_{\alpha, \beta}(G,g)$ for each multi-indices $\alpha$ and $\beta$, which may depend on $G$, $g$ and $\lambda$, such that 
\[
\|\partial^\alpha_{\xi} \partial^\beta_{\xi'} R\|_{L^\infty} \le 
\frac{C_{\alpha, \beta}(G,g)\cdot  2^{-
(|\alpha| + |\beta|) n/2}}
{2^{n} \|z(\gamma)-z(\gamma')\|}.
\]
This implies that we have
\[
\left\|\partial^\alpha_{\xi} \partial^\beta_{\xi'} (\widetilde{\psi}_{\gamma}(\xi) \widetilde{\psi}_{\gamma}(\xi')  R(y,z;x,x'; \xi, \xi'))\right\|_{L^\infty} \le 
\frac{C_{\alpha, \beta}(G,g)\cdot  2^{-
(|\alpha| + |\beta|) n/2}}
{2^{n} \|z(\gamma)-z(\gamma')\|}.
\]
Therefore, performing integration with respect to the variables $\xi$ and $\xi'$ in (\ref{eqn:iexf}) and recalling the argument in the proof of Lemma \ref{lm:kest1},  we obtain the estimate   
\[
|S(x,x';z)|\le C(G,g)  \int 
\frac{|\K(z)|   b^{2d+2}_{n,0}(\hG(y)-x)   b^{2d+2}_{n,0}(\hG(y+z)-x')}
{ 2^{n} \|z(\gamma)-z(\gamma')\|}
dz dy.
\]
By this estimate and Young inequality, we obtain 
\[
|\langle v, v'\rangle_{L^2}|\le  C(G,g) \cdot \frac{
\|d_\gamma^{\nu_*} u\|_{L^2} \|d_{\gamma'}^{\nu_*} u'\|_{L^2} 
}{2^{n}\|z(\gamma)-z(\gamma')\|}.
\]
This implies (\ref{eqn:Qdif2}), since we have $(2d+2)\tau <1/2$ from the choice of $\tau$ and 
\[
(2^{n/2}\|z(\gamma)-z(\gamma')\|)^{2d+2}\le 
2^{(2d+2)\cdot \tau n}\le 
2^{((2d+2)\cdot \tau-1/2) n}\cdot 2^{n}\|z(\gamma)-z(\gamma')\|.
\]
(Recall that  $n\ge K$ and that we may take large $K$ depending on $G$ and $g$.)

Finally we complete the proof by proving Sublemma \ref{subl:7}.
\begin{proof}[Proof of Sublemma \ref{subl:7}] 
Recall that the supports of $h$ and $h'$ are contained in the disk with center at $y(\gamma)$ and radius $2^{-n/3+1}$ and that  $\hG$ is well approximated by its linearization at $y(\gamma)$ on that disk. From the assumption that $y$ and $y+z$ belong to $\supp h$ and $\supp h'$ respectively, we see that 
\[
\|\pi_-(D\hG_{y(\gamma)}(z)-z(\gamma')-z(\gamma))\|< \|\pi_-(z(\gamma')-z(\gamma))\|/4.
\]
From the choice of the vector $w$, we see that
\begin{align*}
|\langle  \alpha_0(\hG(y+z)), & D\hG_{y+z}({\unv})\rangle -\langle \alpha_0(\hG(y)), D\hG_{y}({\unv})\rangle| \\
&=|\langle  \talpha_0(y+z),  {\unv}\rangle -\langle \talpha_0(y), {\unv})\rangle|= |d\talpha_0(z,w)|
 \\
& \ge |d\alpha_0(\pi_-(z(\gamma')-z(\gamma)), D\hG_{y(\gamma)}({\unv}))|/2\\
&=  \|\pi_-(z(\gamma')-z(\gamma))\| \|\pi_+(D\hG_{y(\gamma)}({\unv}))\|.
\end{align*}
Since  $n(\gamma)=n(\gamma')=n$ and  
\[\Delta(n(\gamma), k(\gamma), n(\sigma), k(\sigma))=\Delta(n(\gamma'), k(\gamma'), n(\sigma), k(\sigma))=0
\]
from the definition of $S$, we have
\[
2^{n-2}\le |\xi_0|\le 2^{n+2},\quad 2^{n-2}\le |\xi'_0|\le 2^{n+2}\quad \mbox{and}\quad
|\xi_0-\xi'_0|\le 2^{n/2+5}
\]
for $\xi_0=\pi^*_0(\xi)$ and $\xi'_0=\pi^*_0(\xi')$.
Therefore the lemma follows if we show 
\begin{align*}
\langle \xi_0\cdot \alpha_0(\hG(y+z))-\xi',& D\hG_{y+z}({\unv})\rangle \le |\xi_0|   \|\pi_-(z(\gamma)-z(\gamma'))\| 
\|D\hG_{y(\gamma)}({\unv})\|/3
\end{align*}
and 
\[
\langle \xi_0\cdot  \alpha_0(\hG(y))-\xi, D\hG_{y}({\unv})\rangle \le |\xi_0|  \|\pi_-(z(\gamma)-z(\gamma'))\| 
\|D\hG_{y(\gamma)}({\unv})\|/3.
\]
These can be proved by a straightforward estimate. Below we prove the former inequality. The latter can be proved similarly.

Since we have $y+z\in \supp h'$, $\xi'\in \supp \widetilde{\psi}_{\gamma'}$ and $|m(\gamma')|\le \delta \lambda$, it holds
\begin{align*}
&\|\pi_{+,0}(D\hG_{y+z}({\unv}))\|\le  2 \|D\hG_{y(\gamma)}({\unv})\|, \quad \text{and }\\
&\|\pi_{+,0}^*(\xi_0 \cdot \alpha_0(\hG(y+z))-\xi')\|\\
&\qquad\le 
|\xi_0|\cdot  \|\pi_+^*(\alpha_0(\hG(y+z))-\alpha_0(z(\gamma')))\|+|\xi_0-\xi'_0| \cdot \|\pi_+^*(\alpha_0(z(\gamma')))\|\\
&\qquad\qquad  +
\|\pi_+^*(\xi'_0 \cdot \alpha_0(z(\gamma')))-\xi')\|\\
&\qquad \le  
|\xi_0|\|\pi_-(\hG(y+z)-\hG(z(\gamma')))\|+2^{n/2+6}+
2^{n/2+\delta \lambda+5}\\
&\qquad \le |\xi_0|  \|\pi_-(z(\gamma)-z(\gamma'))\|/10
\end{align*}
where, in the last inequality,  we used the facts that $\delta\lambda\ge \delta \lambda_*\ge 10$ and that 
\[
\|\pi_{-}(z(\gamma)-z(\gamma'))\|\ge \|z(\gamma)-z(\gamma')\|/10>2^{-n/2+2\delta \lambda-4}.
\]
By a rough estimate using the condition $D\hG_{y(\gamma)}(w)\in E_0\oplus E_+$ in the choice of $w$, we see that
\begin{align*}
&\|\pi_{-}(D\hG_{y+z}({\unv}))\|
\le C(G) \|(y+z)-y(\gamma)\|  \le  C(G) 2^{-n/3}\quad \mbox{and also} \\
&\|\pi^*_{-}(\xi_0 \cdot \alpha_0(\hG(y+z))-\xi')\|\le C(G)
 2^{(2/3)n}.
\end{align*}
Clearly these inequalities yield the required estimate.
\end{proof}

\appendix

\section{Proof of Lemma \ref{lm:wd}}\label{apd1}
Let $p_n(\xi)=\chi_n(|\xi|)$ and $\tilde{p}_n(\xi)=\widetilde{\chi}_n(|\xi|)$ for $n\ge 0$, where $\chi_n$ and $\widetilde{\chi}_n$ are those defined in Subsection \ref{ss:PU}. For $u\in C^\infty(\disk)$, we define $u_\gamma=p_\gamma(x,D)^* u$ for $\gamma\in \Gamma$ and $u_n=p_n(D) u$ for $n\ge 0$. We may and do suppose that the norm on the Sobolev space $W^s$ is defined by 
\[
\|u\|_{W^s}^2:=\sum_{n\ge 0} 2^{2sn} \|u_n\|^2_{L^2}.
\] 
Set $\widetilde{n}(\gamma)=\max\{n(\gamma), m(\gamma)+(n(\gamma)/2)\}$ for $\gamma\in \Gamma$. Then there exists a constant $c>0$  such that 
if $|\widetilde{n}(\gamma)-n|>c$, we have
\[
d(\mathrm{supp}(\psi_\gamma), \mathrm{supp}(\tilde{p}_n))> 2^{\max\{n,\widetilde{n}(\gamma)\}-c}.
\]

We first prove $W^s(\disk)\subset \cB^\beta_\nu$ for $s>\beta$ and $\nu\ge 2d+2$ by showing  $\|u\|_{\beta,\nu}\le C \|u\|_{W^s}$ for $u\in C^\infty(\disk)$. 
For each $n\ge 0$, we have
\begin{align*}
\sum_{\gamma:\widetilde{n}(\gamma)=n}&\|d_\gamma^\nu\cdot  u_\gamma\|_{L^2}^2=
\sum_{\gamma:\widetilde{n}(\gamma)=n}\left\|d_\gamma^\nu\cdot  \sum_{n'=0}^\infty p_\gamma(x,D)^* \tilde{p}_{n'}(D)u_n \right\|_{L^2}^2.
\end{align*}
We regard the operator $u\mapsto d_{\gamma}\cdot p_\gamma(x,D)^*   p_{n'}(D)u$ as an integral operator with the kernel  
\begin{align*}&\kappa_{n',\gamma}(x,x')=
\frac{1}{(2\pi)^{2(2d+1)}}\int   d_{\gamma}(x') e^{i\langle \xi, x'-y\rangle +i\langle \eta, y-x\rangle }
\rho_{\gamma}(y) \psi_\gamma(\xi) p_{n'}(\eta) dy d\xi d\eta.
\end{align*}
Fix some $\mu>\max\{2d+2, s\}$. Similarly to 
Lemma \ref{lm:kest0}, we have
\begin{align*}
|\kappa_{n',\gamma}(x,x')|&\le C \int_{\Ze(\gamma)}d_\gamma^\nu(x') b_{\gamma}^{\mu+\nu}(x'-y) b_{n,0}^{\mu}(y-x) dy\\
&\le C\int_{\Ze(\gamma)} b_{\gamma}^{\mu}(x'-y) b_{n,0}^{\mu}(y-x) dy.
\end{align*}
Further, in the case $|n'-\widetilde{n}(\gamma)|> c$, 
we can show
\[
|\kappa_{\gamma,n'}(x,x')|\le C 2^{-\mu \max\{n', \widetilde{n}(\gamma)\}/2}\int_{\Ze(\gamma)} b_{\gamma}^{\mu}(x'-y) b_{n,0}^\mu(y-x) dy,
\]
applying the formula (\ref{eqn:intbypart}) of integration by parts along a set of vectors $\{v_j\}_{j=0}^{2d}$ that form an orthogonal basis of $E$ for $\mu$ times to the integral with respect $y$ in $\kappa_{n',\gamma}(x,x')$. 
Therefore we obtain, using Young inequality, that 
\begin{align*}
\sum_{\gamma:\widetilde{n}(\gamma)=n}\|d_\gamma^\nu  u_\gamma\|_{L^2}^2\le &C n^2 \sum_{n':|n'-n|\le c} \|u_{n'}\|_{L^2}^2\\
&\quad +
 C n^2 \sum_{n':|n'-n|> c}2^{-\mu \max\{n', n\}/2 }\|u_{n'}\|_{L^2}^2.
\end{align*}
Take weighted sum of the both sides with respect to $n$ with weight $2^{\beta n}$. Then the weighted sum of the left hand side is not smaller than $\|u\|_{\beta,\nu}$ and that of the right hand side is bounded by $C\|u\|_{W^s}$. Thus we conclude $\|u\|_{\beta,\nu}\le C \|u\|_{W^s}$.

We next prove $\cB^\beta_\nu\subset W^{-s}(\disk)$  for $s>\beta$ and $\nu\ge 2d+2$ by showing $\|u\|_{W^{-s}}\le C \|u\|_{\beta,\nu}$. 
We have
\begin{align*}
\| u_n\|_{L^2}^2 &=\left\|p_n(D) 
\left(\sum_{|\widetilde{n}(\gamma)-n|<c} u_\gamma\right)\right\|_{L^2}^2\le C\left\|\sum_{|\widetilde{n}(\gamma)-n|<c} u_\gamma\right\|_{L^2}^2
\end{align*}
Since we have 
\[
|(u_\gamma, u_{\gamma'})|\le C_*\langle 2^{n/2}(z(\gamma)-z(\gamma')) \rangle^{-2\nu} \|(d_\gamma)^{\nu} u_\gamma\|_{L^2}
\|(d_{\gamma'})^{\nu} u_{\gamma'}\|_{L^2}
\]
for any pair $(\gamma, \gamma')\in \Gamma\times \Gamma$ and since  the left hand side above vanishes if the supports of $\psi_\gamma$ and $\psi_{\gamma'}$ does not meet, we obtain
\[
\left\|\sum_{|\widetilde{n}(\gamma)-n|<c} u_\gamma\right\|_{L^2}^2\le C
\sum_{|\widetilde{n}(\gamma)-n|<c} \left\| u_\gamma\right\|_{L^2}^2
\]
Take weighted sum of the both sides with respect to $n$ with weight $2^{-s n}$. Then the weighted sum of the left hand side is not smaller than $C^{-1} \|u\|_{\beta,\nu}$ and that of the right hand side is bounded by  $C\|u\|_{W^{-s}}$, provided $s>\beta$. Thus we conclude $\|u\|_{W^{-s}}\le C \|u\|_{\beta,\nu}$.
\bibliographystyle{amsplain}
\bibliography{mybib}

\providecommand{\bysame}{\leavevmode\hbox to3em{\hrulefill}\thinspace}
\providecommand{\MR}{\relax\ifhmode\unskip\space\fi MR }
\providecommand{\MRhref}[2]{%
  \href{http://www.ams.org/mathscinet-getitem?mr=#1}{#2}
}
\providecommand{\href}[2]{#2}
\begin{thebibliography}{10}

\bibitem{Aebischer}
B.~Aebischer, M.~Borer, M.~K{\"a}lin, Ch. Leuenberger, and H.~M. Reimann,
  \emph{Symplectic geometry}, Progress in Mathematics, vol. 124, Birkh\"auser
  Verlag, Basel, 1994. \MR{MR1296462 (96a:58082)}

\bibitem{Anantharaman00}
Nalini Anantharaman, \emph{Precise counting results for closed orbits of
  {A}nosov flows}, Ann. Sci. \'Ecole Norm. Sup. (4) \textbf{33} (2000), no.~1,
  33--56. \MR{MR1743718 (2002c:37048)}

\bibitem{Anosov69}
D.~V. Anosov, \emph{Geodesic flows on closed {R}iemann manifolds with negative
  curvature.}, Proceedings of the Steklov Institute of Mathematics, No. 90
  (1967). Translated from the Russian by S. Feder, American Mathematical
  Society, Providence, R.I., 1969. \MR{MR0242194 (39 \#3527)}

\bibitem{Baladi05}
Viviane Baladi, \emph{Anisotropic {S}obolev spaces and dynamical transfer
  operators: {$C\sp \infty$} foliations}, Algebraic and topological dynamics,
  Contemp. Math., vol. 385, Amer. Math. Soc., Providence, RI, 2005,
  pp.~123--135. \MR{MR2180233 (2007c:37022)}

\bibitem{BaladiTsujii07}
Viviane Baladi and Masato Tsujii, \emph{Anisotropic {H}\"older and {S}obolev
  spaces for hyperbolic diffeomorphisms}, Ann. Inst. Fourier (Grenoble)
  \textbf{57} (2007), no.~1, 127--154. \MR{MR2313087}

\bibitem{BaladiTsujii08}
\bysame, \emph{Dynamical determinants and spectrum for hyperbolic
  diffeomorphisms}, Geometric and probabilistic structures in dynamics,
  Contemp. Math., vol. 469, Amer. Math. Soc., Providence, RI, 2008, pp.~29--68.
  \MR{MR2478465}

\bibitem{BaladiVallee05}
Viviane Baladi and Brigitte Vall{\'e}e, \emph{Exponential decay of correlations
  for surface semi-flows without finite {M}arkov partitions}, Proc. Amer. Math.
  Soc. \textbf{133} (2005), no.~3, 865--874 (electronic). \MR{MR2113938
  (2006d:37047)}

\bibitem{Blank02}
Michael Blank, Gerhard Keller, and Carlangelo Liverani,
  \emph{Ruelle-{P}erron-{F}robenius spectrum for {A}nosov maps}, Nonlinearity
  \textbf{15} (2002), no.~6, 1905--1973. \MR{MR1938476 (2003m:37033)}

\bibitem{Bowen75}
Rufus Bowen, \emph{Equilibrium states and the ergodic theory of {A}nosov
  diffeomorphisms}, Springer-Verlag, Berlin, 1975, Lecture Notes in
  Mathematics, Vol. 470. \MR{MR0442989 (56 \#1364)}

\bibitem{Chernov98}
N.~I. Chernov, \emph{Markov approximations and decay of correlations for
  {A}nosov flows}, Ann. of Math. (2) \textbf{147} (1998), no.~2, 269--324.
  \MR{MR1626741 (99d:58101)}

\bibitem{ColletET84}
P.~Collet, H.~Epstein, and G.~Gallavotti, \emph{Perturbations of geodesic flows
  on surfaces of constant negative curvature and their mixing properties},
  Comm. Math. Phys. \textbf{95} (1984), no.~1, 61--112. \MR{MR757055
  (85m:58143)}

\bibitem{Dolgopyat98}
Dmitry Dolgopyat, \emph{On decay of correlations in {A}nosov flows}, Ann. of
  Math. (2) \textbf{147} (1998), no.~2, 357--390. \MR{MR1626749 (99g:58073)}

\bibitem{Dolgopyat98b}
\bysame, \emph{Prevalence of rapid mixing in hyperbolic flows}, Ergodic Theory
  Dynam. Systems \textbf{18} (1998), no.~5, 1097--1114. \MR{MR1653299
  (2000a:37014)}

\bibitem{Dolgopyat02}
\bysame, \emph{On mixing properties of compact group extensions of hyperbolic
  systems}, Israel J. Math. \textbf{130} (2002), 157--205. \MR{MR1919377
  (2003m:37037)}

\bibitem{FMNT05}
Michael Field, Ian Melbourne, Matthew Nicol, and Andrei T{\"o}r{\"o}k,
  \emph{Statistical properties of compact group extensions of hyperbolic flows
  and their time one maps}, Discrete Contin. Dyn. Syst. \textbf{12} (2005),
  no.~1, 79--96. \MR{MR2121250 (2006e:37045)}

\bibitem{Hopf39}
Eberhard Hopf, \emph{Statistik der geod\"atischen {L}inien in
  {M}annigfaltigkeiten negativer {K}r\"ummung}, Ber. Verh. S\"achs. Akad. Wiss.
  Leipzig \textbf{91} (1939), 261--304. \MR{MR0001464 (1,243a)}

\bibitem{Hormander3}
Lars H{\"o}rmander, \emph{The analysis of linear partial differential
  operators. {I}}, Classics in Mathematics, Springer-Verlag, Berlin, 2003.
  \MR{MR1996773}

\bibitem{Hormander1}
\bysame, \emph{The analysis of linear partial differential operators. {III}},
  Classics in Mathematics, Springer, Berlin, 2007, Pseudo-differential
  operators, Reprint of the 1994 edition. \MR{MR2304165 (2007k:35006)}

\bibitem{Iwata07}
Yukiko Iwata, \emph{A generalized local limit theorem for mixing semi-flows},
  Hokkaido Mathematical Journal \textbf{37} (2008), no.~1, 215--240.

\bibitem{Katok94}
Anatole Katok, \emph{Infinitesimal {L}yapunov functions, invariant cone
  families and stochastic properties of smooth dynamical systems}, Ergodic
  Theory Dynam. Systems \textbf{14} (1994), no.~4, 757--785, With the
  collaboration of Keith Burns. \MR{MR1304141 (95j:58097)}

\bibitem{Liverani04}
Carlangelo Liverani, \emph{On contact {A}nosov flows}, Ann. of Math. (2)
  \textbf{159} (2004), no.~3, 1275--1312. \MR{MR2113022 (2005k:37048)}

\bibitem{McKean72}
H.~P. McKean, \emph{Selberg's trace formula as applied to a compact {R}iemann
  surface}, Comm. Pure Appl. Math. \textbf{25} (1972), 225--246. \MR{MR0473166
  (57 \#12843a)}

\bibitem{MelbourneTorok02}
Ian Melbourne and Andrei T{\"o}r{\"o}k, \emph{Central limit theorems and
  invariance principles for time-one maps of hyperbolic flows}, Comm. Math.
  Phys. \textbf{229} (2002), no.~1, 57--71. \MR{MR1917674 (2003k:37012)}

\bibitem{Moore84}
Calvin~C. Moore, \emph{Exponential decay of correlation coefficients for
  geodesic flows}, Group representations, ergodic theory, operator algebras,
  and mathematical physics (Berkeley, Calif., 1984), Math. Sci. Res. Inst.
  Publ., vol.~6, Springer, New York, 1987, pp.~163--181. \MR{MR880376
  (89d:58102)}

\bibitem{Nussbaum70}
Roger~D. Nussbaum, \emph{The radius of the essential spectrum}, Duke Math. J.
  \textbf{37} (1970), 473--478. \MR{MR0264434 (41 \#9028)}

\bibitem{Pollicott92}
Mark Pollicott, \emph{Exponential mixing for the geodesic flow on hyperbolic
  three-manifolds}, J. Statist. Phys. \textbf{67} (1992), no.~3-4, 667--673.
  \MR{MR1171148 (93i:58119)}

\bibitem{Pollicott99}
\bysame, \emph{On the mixing of {A}xiom {A} attracting flows and a conjecture
  of {R}uelle}, Ergodic Theory Dynam. Systems \textbf{19} (1999), no.~2,
  535--548. \MR{MR1685406 (2001d:37038)}

\bibitem{PollicottSharp01}
Mark Pollicott and Richard Sharp, \emph{Asymptotic expansions for closed orbits
  in homology classes}, Geom. Dedicata \textbf{87} (2001), no.~1-3, 123--160.
  \MR{MR1866845 (2003b:37051)}

\bibitem{Ratner87}
Marina Ratner, \emph{The rate of mixing for geodesic and horocycle flows},
  Ergodic Theory Dynam. Systems \textbf{7} (1987), no.~2, 267--288.
  \MR{MR896798 (88j:58103)}

\bibitem{Stoyanov01}
Luchezar Stoyanov, \emph{Spectrum of the {R}uelle operator and exponential
  decay of correlations for open billiard flows}, Amer. J. Math. \textbf{123}
  (2001), no.~4, 715--759. \MR{MR1844576 (2002f:37061)}

\bibitem{Stoyanov07}
\bysame, \emph{Spectra of {R}uelle transfer operators for {A}xiom {A} flows on
  basic sets}, Preprint (2007).

\bibitem{Taylor}
Michael~E. Taylor, \emph{Pseudodifferential operators and nonlinear {PDE}},
  Progress in Mathematics, vol. 100, Birkh\"auser Boston Inc., Boston, MA,
  1991. \MR{MR1121019 (92j:35193)}

\bibitem{Tsujii2008}
Masato Tsujii, \emph{Decay of correlations in suspension semi-flows of
  angle-multiplying maps}, Ergodic Theory Dynam. Systems \textbf{28} (2008),
  291--317.

\end{thebibliography}


\end{document}